\begin{document}

\title{Geometrization of principal series representations of reductive
  groups}

\author{Masoud Kamgarpour} 
\address{Max-Planck-Institut f\"ur
  Mathematik, Vivatsgasse 7, 53111 Bonn, Germany}
\email{masoudkomi@gmail.com}
%\address{The University of British Columbia,  Vancouver, Canada V6T 1Z2}
%\email{masoud@math.ubc.ca}

\author{Travis Schedler}
\address{MIT Department of Mathematics, Room 2-172,
77 Massachusetts Avenue
Cambridge, MA 02139-4307}
\email{trasched@gmail.com}

\thanks{M.K. acknowledges the hospitality of the University of
  Calgary. T.S. was supported by an AIM Fellowship and NSF grant
  DMS-0900233.}

%\subjclass[2000]{20C15, 14G15} 
\subjclass[2010]{22E50, 20G25}

\keywords{Principal series representations, geometric Satake
  isomorphism, compact open subgroups, Hecke algebras, geometrization,
  clean perverse sheaves}

\begin{abstract}
  In geometric representation theory, one often wishes to describe
  representations realized on spaces of invariant functions as trace
  functions of equivariant perverse sheaves.  In the case of principal
  series representations of a connected split reductive group $G$ over
  a local field, there is a description of families of these
  representations realized on spaces of functions on $G$ invariant
  under the translation action of the Iwahori subgroup, or a suitable
  smaller compact open subgroup, studied by Howe, Bushnell and Kutzko,
  Roche, and others.  In this paper, we construct categories of
  perverse sheaves whose traces recover the families associated to
  regular characters of $T(\Fq\dblsqbrs{t})$, and prove conjectures of
  Drinfeld on their structure. We also propose conjectures on the
  geometrization of families associated to more general characters.
  \end{abstract}
\maketitle

{\small \setcounter{tocdepth}{1} \tableofcontents}
 %\tableofcontents

\section{Introduction: main results and
  conjectures} \label{s:introduction}

To every complex of constructible sheaves on a variety over a finite
field, Grothendieck attached the trace of Frobenius function on its
rational points. He then initiated a program to study geometric (or
sheaf-theoretic) analogues of various classical constructions on
rational points.  In this article, we study geometric analogues of
principal series representations of $G(\Fq\dblparens{t})$, where $G$
is a connected split reductive group over $\Fq\dblsqbrs{t}$. Our main
theorems, stated in \S \ref{sss:theorems}, concern geometrizing the
principal series representations that are associated to characters of
the torus $T(\Fq\dblparens{t})$ whose restrictions to
$T(\Fq\dblsqbrs{t})$ are \emph{regular}, i.e., have trivial
stabilizers under the Weyl group action. In \S \ref{ss:conjectures} we
present conjectures concerning the geometrization of more general
families of principal series representations.
 
The geometric objects we study are certain (twisted equivariant)
perverse sheaves on quotients of the loop group of $G$. Before getting
into details, let us mention two notable features of this work. First,
the quotients we consider are not, in general, proper. As far as we
know, considering perverse sheaves on non-proper quotients of the loop
group is not standard in geometric representation theory. Second, some
of the perverse sheaves that arise in the regular setting turn out to
be \emph{clean} (see Theorem \ref{t:main}). This means that these
perverse sheaves are the extensions by zero of shifted local systems
on certain locally closed subvarieties. This fact is a reflection of
the particularly simple Hecke algebras that arise in the regular case.

\subsection{Motivation} \label{ss:motivation} Let $\Fq$ be a finite
field of order $q$, $F=\Fq\dblparens{t}$, $\cO=\Fq\dblsqbrs{t}$, and
$\fp = t\cO \subseteq \cO$.  Let $G$ be a connected split reductive
group over $\cO$ and $T$ be a maximal split torus.  Choose a Borel
subgroup $B$ containing $T$. A principal series representation of
$G(F)$ is a representation obtained by parabolic induction of a
character of $T(F)$. Note that a principal series representation is,
roughly speaking, realized on a space of twisted functions on
$G(F)/B(F)$. It is well known that $G(F)$ and $B(F)$ are the sets of
$\Fq$-points of group ind-schemes $\bG$ and $\bB$ over
$\Fq$. Therefore, the naive geometric analogue of principal series
representations should be perverse sheaves on $\bG/\bB$. The problem
is that the latter ind-scheme is not an inductive limit of schemes of
finite type (we will henceforth call this property \emph{ind-finite
  type}).  For this reason, the category of perverse sheaves on
$\bG/\bB$ is not well understood.\footnote{For some results regarding
  perverse sheaves on $\bG/\bB$ see, e.g., \cite{Finkelberg98} and
  \cite{Finkelberg99}.} This issue is the source of much difficulty in
geometric representation theory.

\subsubsection{A family of unramified representations} 
One way to overcome this difficulty is to geometrize representations
in families. As an example, let us consider the geometrization of
unramified (principal series) representations.  Let
$\sW^c=\ind_{G(\cO)}^{G(F)} \bone$, where $\bone$ denotes the trivial
character.  We think of $\sW^c$ as a family of unramified (principal
series) representations. The endomorphism ring of this family
identifies with the spherical Hecke algebra
$\sH^c=\sH(G(F),G(\cO))$. The Satake isomorphism states that
$\sH^c\iso \K_0(\Rep(\hG))$, where the latter is the Grothendieck
group of the category of finite dimensional rational representations
of the dual (complex reductive) group of $G$.

\subsubsection{Geometrizing the unramified family} It is known that
$G(\cO)$ (resp. $G(F)$) is the group of $\Fq$-points of a proalgebraic
group $\bG_\cO$ (resp. a group ind-scheme $\bG$) over $\Fq$.  Fix a
prime $\ell$ not dividing $q$.  Let $\bGr:=\bG/\bG_\cO$ denote the
affine Grassmannian. Let
\[
\sWg^c=\Perv(\bGr), \quad \sWg^{c,\der} = \sD(\bGr), \quad \sHg^c =
\Perv_{\bG_\cO}(\bGr), \quad \sHg^{c,\der}= \sD_{\bG_\cO}(\bGr).
\]
Here $\sD$ denotes the bounded constructible derived category of
$\bQl$-sheaves and $\Perv$ denotes the subcategory of perverse sheaves
(see Appendix \S \ref{s:perverse}).  \commentout{(resp. $\sHg^c$)
  denote the category of $\ell$-adic perverse sheaves
  (resp. $\bG_\cO$-equivariant perverse sheaves) on the affine
  Grassmannian . Let $\sWg^{c,\der}$ (resp. $\sHg^{c,\der})$ denote
  the derived category $\sD(\bGr)$ (resp. $\sD_{\bG_O}(\bGr)$). }
There is a convolution functor
\[
\star: \sWg^c \times
\sHg^c \to \sWg^{c,\der}.
\]
This functor restricts to a functor $\star: \sHg^c\times \sHg^c\ra
\sHg^{c,\der}$.  Let $\LocSys(\Spec \Fq)$ denote the monoidal category
of $\ell$-adic local systems on $\Spec(\Fq)$. Note that this category
is equivalent to the category of finite dimensional continuous
$\ell$-adic representations of $\Gal(\bFq/\Fq)$. The following theorem
is known as the geometric Satake isomorphism, and is due to Lusztig
\cite{Lusscf}, Ginzburg \cite{Ginzburg00}, Mirkovi\'c and Vilonen
\cite{MV}, and Beilinson and Drinfeld \cite[\S 5]{BD}.

\begin{theorem}\label{t:satake} 
  \begin{enumerate}
  \item[(i)] The category $\sHg^c$ is closed under convolution.
  \item[(ii)] There is an equivalence of monoidal abelian categories
    $\Rep(\hG)\boxtimes \LocSys(\Spec \Fq) \iso \sHg^c$.\footnote{Most
      references work with the loop group over an algebraically closed
      field (e.g., $\bC$ or $\bFq$). In this case, $\LocSys(\Spec
      \Fq)$ disappears. On the other hand, Gaitsgory
      \cite{Gaitsgory01} works over $\Fq$; see the proof of
      Proposition 1 in \emph{op.~cit.}.}
    \end{enumerate} 
\end{theorem}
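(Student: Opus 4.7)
The plan is to follow the standard Tannakian-reconstruction strategy: endow $\sHg^c$ with a symmetric monoidal structure via convolution, produce a fiber functor to $\ell$-adic Galois representations, and identify the resulting Tannakian dual group with $\hG$. The geometric Satake theorem is of course a deep result with a long history, so the proposal below is a sketch of the standard argument rather than a new route.

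For part (i), introduce the convolution diagram
\[
\bGr \times \bGr \xleftarrow{p} \bG \times \bGr \xrightarrow{q} \bG \times^{\bG_\cO} \bGr \xrightarrow{m} \bGr,
\]
and define $A \star B$ as $m_*$ of the twisted external product $A \,\ttboxtimes\, B$ that descends along $q$. The essential geometric input is that, when restricted to the preimage of any finite-dimensional Schubert variety, the multiplication map $m$ is stratified semi-small with respect to the stratification by $\bG_\cO$-orbits. Semi-smallness (with $m$ proper on each finite piece) forces $m_*$ to preserve perversity on $\bG_\cO$-equivariant objects, so $\sHg^c$ is closed under $\star$.

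For part (ii), I would construct a fiber functor $F := \bigoplus_n \mathrm{H}^n(\bGr_{\bar\Fq}, -)$ to $\bZ$-graded continuous $\ell$-adic $\Gal(\bar\Fq/\Fq)$-representations. Exactness of $F$ follows from semi-smallness combined with the decomposition theorem, and faithfulness from the fact that every simple object in $\sHg^c$ has nonzero cohomology in the top-dimensional MV cycles. To obtain a commutativity constraint, I would pass to the Beilinson-Drinfeld Grassmannian over a smooth curve $X$: global sections over configurations in $X^2$ interpolate between the fusion product (when the two points are distinct) and the convolution product (when they collide), and commutativity of fusion provides the symmetric-monoidal structure on $(\sHg^c,\star)$. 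Tannakian reconstruction (Deligne-Milne) then yields $\sHg^c \iso \Rep(\widetilde G)$ for some affine group scheme $\widetilde G$ over $\bQl$; the Galois action on $F$ makes this an equivalence of $\LocSys(\Spec \Fq)$-linear categories, contributing the $\boxtimes\,\LocSys(\Spec \Fq)$ factor as in Gaitsgory's treatment.

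The main obstacle is identifying $\widetilde G$ with the Langlands dual $\hG$. I would do this by decomposing $F$ into weight spaces indexed by the coweight lattice $X_*(T)$, using the semi-infinite cells, i.e., the orbits $S_\mu$ of the opposite unipotent loop group $\bN^-(F)$. The weight-space identification
\[
F(A)_\mu \;\simeq\; \mathrm{H}^{2\rho(\mu)}_c(S_\mu, A\rvert_{S_\mu})
\]
matches the cocharacter lattice of $G$ with the character lattice of $\widetilde G$, and MV cycles realize canonical bases of the weight spaces. A rank-one analysis on minuscule Schubert varieties (where one reduces to computations on $\mathbb{P}^1$-bundles and reads off coroots from the Picard-Lefschetz/hard-Lefschetz structure of $\sl_2$-representations) identifies the root datum of $\widetilde G$ with that of $\hG$. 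Combining these steps completes the equivalence; the remaining Galois-equivariance refinements to upgrade the statement from $\bar\Fq$ to $\Fq$ are essentially formal once purity of the IC sheaves of Schubert varieties is invoked.
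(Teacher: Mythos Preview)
The paper does not give its own proof of this statement: Theorem~\ref{t:satake} is quoted as the geometric Satake isomorphism and attributed to Lusztig, Ginzburg, Mirkovi\'c--Vilonen, and Beilinson--Drinfeld, with a citation to Gaitsgory for the $\Fq$-refinement. So there is nothing in the paper to compare your argument against beyond those references.

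Your sketch is a faithful high-level summary of the Mirkovi\'c--Vilonen approach (semi-smallness of the convolution map for part~(i), fusion via the Beilinson--Drinfeld Grassmannian for the commutativity constraint, the weight functors via semi-infinite orbits, and Tannakian reconstruction to identify the dual group), together with the Galois-equivariant refinement as in Gaitsgory's treatment. As a proof outline this is correct and is precisely the route taken in the cited literature; it is not an alternative argument. One small point: your fiber functor as written, $\bigoplus_n \mathrm{H}^n(\bGr_{\bar\Fq},-)$, is the global cohomology functor, and its exactness is usually argued via the weight filtration/semi-infinite decomposition rather than directly from semi-smallness plus the decomposition theorem; but this is a matter of emphasis rather than a gap.
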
 

The nontrivial part of Theorem \ref{t:satake}.(i) is that the
convolution of two objects of $\sHg^c$ is perverse (and not merely an
object of the equivariant derived category $\sHg^{c,\der}$). The next
theorem states that this remains true if one of the perverse sheaves
is allowed to lie in the larger category $\sWg^c$.

\begin{theorem}[\cite{Gaitsgory01}]\label{t:gaitsgory}
  $\sHg^c$ acts on $\sWg^c$ by convolution.
\end{theorem}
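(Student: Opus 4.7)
The plan is to prove that the convolution
\[
\cF \star \cG := m_*(\cF \ttboxtimes \cG)
\]
is perverse whenever $\cF \in \sWg^c$ and $\cG \in \sHg^c$. Here $m: \bG \times^{\bG_\cO} \bGr \to \bGr$ denotes the multiplication map, and $\cF \ttboxtimes \cG$ is the twisted external product descended from $\cF \boxtimes \cG$ along the $\bG_\cO$-torsor $\bG \times \bGr \to \bG \times^{\bG_\cO} \bGr$. Since $\Perv(\bGr) \subset \sD(\bGr)$ is closed under extensions and every object of $\sHg^c$ admits a finite filtration with simple subquotients, it suffices to treat $\cG$ simple; by Theorem \ref{t:satake}, such $\cG$ has the form $\mathrm{IC}_\lambda \otimes \cL$ for a dominant coweight $\lambda$ and a simple local system $\cL$ on $\Spec \Fq$. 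The factor $\cL$ only produces a Frobenius twist and does not affect perversity, so I reduce to $\cG = \mathrm{IC}_\lambda$, the intersection cohomology sheaf of the closed Schubert variety $\overline{\bGr^\lambda}$.

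The key geometric input is that $\overline{\bGr^\lambda}$ is finite-dimensional and projective, so the restriction
\[
m_\lambda : X \,\widetilde\times\, \overline{\bGr^\lambda} \longrightarrow \bGr,
\]
where $X \subset \bGr$ is a finite-dimensional closed subscheme supporting $\cF$ and along which $\cF$ is constructible with respect to a stratification $\Sigma$, is \emph{proper}. I would then establish that $m_\lambda$ is \emph{stratified semi-small} with respect to $\Sigma$ in the source and the $\bG_\cO$-orbit stratification on the target: for each orbit $\bGr^\mu$, the fibers of $m_\lambda$ over $\bGr^\mu$ have dimension at most $\tfrac{1}{2}\bigl(\dim X + \dim \overline{\bGr^\lambda} - \dim \bGr^\mu\bigr)$. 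The input here is the Mirkovi\'c--Vilonen dimension estimate for the fibers of the convolution map, expressed in terms of MV cycles, which furnishes exactly the required bound. Granting this, the twisted product $\cF \ttboxtimes \mathrm{IC}_\lambda$ is perverse on the convolution space after the appropriate shift, and proper stratified semi-small maps preserve perversity, so $\cF \star \cG = (m_\lambda)_*(\cF \ttboxtimes \mathrm{IC}_\lambda)$ is perverse.

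The principal obstacle is verifying this semi-smallness bound uniformly over the possible stratifications of $X$, which are not assumed to be compatible with the $\bG_\cO$-orbit stratification, and checking that the twisted external product is itself perverse in the required sense. A robust alternative, which is in fact the approach used by Gaitsgory in \cite{Gaitsgory01}, is to realize the convolution via nearby cycles on a Beilinson--Drinfeld degeneration: the two-point BD Grassmannian $\bGr_{\bA^1, 2}$ has generic fiber $\bGr \times \bGr$ and special fiber isomorphic to the convolution Grassmannian $\bG \times^{\bG_\cO} \bGr$. One spreads $\cF \boxtimes \cG$ out to a perverse sheaf on the total space whose nearby cycles at $0$ compute $\cF \star \cG$. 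Since external products of perverse sheaves are perverse and the nearby cycles functor is t-exact for the perverse t-structure (Gabber), perversity of $\cF \star \cG$ is immediate.
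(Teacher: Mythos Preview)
The paper does not give its own proof of this theorem; it is stated in the introduction as a background result and attributed to Gaitsgory \cite{Gaitsgory01}. So there is no proof in the paper to compare your proposal against.

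That said, your second approach---realizing convolution as nearby cycles on the Beilinson--Drinfeld Grassmannian and invoking Gabber's t-exactness of nearby cycles---is precisely Gaitsgory's argument in \cite{Gaitsgory01}, and your sketch of it is correct. Your first approach via semi-smallness is more delicate for exactly the reason you flag: when $\cF$ is an arbitrary perverse sheaf on $\bGr$ (not $\bG_\cO$-equivariant), there is no a priori compatibility between a stratification making $\cF$ constructible and the Schubert stratification, so the usual MV dimension estimates do not directly control the fibers over the strata of $\cF$. This is why Gaitsgory's deformation argument is the standard route for the non-equivariant statement.
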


\subsubsection{The subject of this paper} 
In this paper, we consider the problem of geometrizing families of
principal series representations associated to nontrivial characters
$\bmu:T(\cO)\ra \bQlt$.  The families we consider have been studied by
Howe, Bushnell and Kutzko, Roche, and others. In particular, they
explain how to realize these families as representations induced from
characters of compact open subgroups. From the geometric point of
view, the advantage of inducing from a compact open subgroup $J$ is
that the corresponding quotient of varieties, $\bG/\bJ$, turns out to
be of ind-finite type.

Our main theorems, in the case of $G=\GL_N$, were conjectured by
Drinfeld in June 2005. Two of our main theorems are analogues of
Theorems \ref{t:satake} and \ref{t:gaitsgory} in the regular
setting. The other theorem concerns a phenomenon unique to the regular
setting: namely, that the irreducible objects of $\sHg$ turn out to be
clean.
% \emph{clean extensions of shifted local systems}, i.e., extensions by
% zero. Such a result does not hold in the unramified setting discussed
% above.

\subsubsection{Connections to local geometric Langlands} 
Frenkel and Gaitsgory have outlined a program for geometrizing (or
categorifying) the local Langlands correspondence; see
\cite{Frenkel06} and \cite{Frenkel07}. Theorems \ref{t:satake} and
\ref{t:gaitsgory} play important roles in their description of the
unramified and tamely ramified part of this correspondence. We expect
that our main results will have applications in the wildly ramified
part of the Frenkel-Gaitsgory program.  In particular, in future work,
we hope to construct the geometric analogue of an irreducible
principal series representation as a category on which
$G\dblparens{t}$ acts.  We expect that this category is related to the
category of representations of the affine Kac-Moody algebra $\hfg$ at
the critical level via an infinite dimensional analogue of
Bernstein-Beilinson localization, as conjectured in \cite{Frenkel06}.

\subsection{Principal series representations via compact open
  subgroups: recollections}
We continue using the notation introduced at the beginning of previous
section. Henceforth, we assume that $q$ is restricted as in \S
\ref{sss:residue}. Fix a character $\bmu:T(\cO)\ra \bQlt$.

 \subsubsection{A Family of principal series representations}\label{sss:familyofRep}
 Let
\begin{equation}
  \displaystyle \Pi:=\iota_{B(F)}^{G(F)} \left(\ind_{T(\cO)}^{T(F)} \bmu \right).
\end{equation} 
Here ``$\ind$'' denotes the compact induction and $\iota$ denotes the
(unnormalized) parabolic induction. We think of $\Pi$ as the family of
principal series representations of $G(F)$ associated to characters of
$T(F)$ whose restriction to $T(\cO)$ is $\bmu$.  In the language of
\cite{Bernstein84}, $\Pi$ is a projective generator of the Bernstein
block of representations of $G(F)$ corresponding to $(T(\cO),\bmu)$.
Inducing endomorphisms, we obtain a canonical homomorphism
\begin{equation} 
\End_{T(F)}\left(\ind_{T(\cO)}^{T(F)} \bmu \right) \ra \End_{G(F)}(\Pi).
\end{equation} 
In the case that $\bmu$ is regular, one can show that this is an
isomorphism; see, e.g., \cite[\S 1.9]{Roche09}.

 \subsubsection{Realization of $\Pi$ via compact open subgroups}
 The following theorem, in its full generality, is due to
 Roche. Previous results in this direction (for $\GL_N$) were obtained
 by Howe \cite{Howe73} and Bushnell and Kutzko \cite{Bushnell98,
   Bushnell99}.
 
 \begin{theorem}[\cite{Roche98}] \label{t:Roche} There exists a
   compact open subgroup $J\subset G(F)$ containing $T(\cO)$ such that
   \begin{enumerate}
  \item[(i)] $\bmu$ extends to a character $\mu:J\ra \bQlt$. 
  \item[(ii)]There exists an isomorphism of $G(F)$-modules
    $\sW:=\ind_J^{G(F)}\mu \cong \Pi$. \label{t:howeBK}
   \end{enumerate} 
 \end{theorem}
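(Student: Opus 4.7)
The plan is to build $J$ as the subgroup of $G(F)$ generated by $T(\cO)$ together with a collection of affine root subgroups chosen using $\bmu$, extend $\bmu$ to a character $\mu$ on $J$, and then establish the intertwining isomorphism via a Mackey-style calculation.

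First, I would use $\bmu$ to assign a depth $d_\alpha \in \bZ_{\geq 0}$ to each finite root $\alpha$, defined roughly as the smallest integer $n$ such that $\bmu$ is trivial on $\alpha^\vee(1+t^n\cO) \subset T(\cO)$; this uses the standard filtration of $T(\cO)$ by congruence subgroups, which is exhausted by $\ker\bmu$ because $\bmu$ has finite order. Then I would define $J$ to be the subgroup generated by $T(\cO)$ together with the affine root subgroups $U_{\alpha+n}(\cO)$ for all finite roots $\alpha$ and integers $n \geq r_\alpha$, where $r_\alpha$ is a suitable function of the depths (in the tamely ramified case, essentially $\lceil d_\alpha/2\rceil$, with an asymmetric adjustment between positive and negative roots). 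A direct computation with the Chevalley commutation relations, combined with the definition of $r_\alpha$, shows that this set is closed under commutators, so $J$ is a compact open subgroup containing $T(\cO)$.

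To construct $\mu$, I would decree that it equals $\bmu$ on $T(\cO)$ and is trivial on every affine root subgroup inside $J$, and then check that this prescription extends consistently to a homomorphism on $J$. The required compatibility is exactly that, whenever a commutator of two root-subgroup elements of $J$ lands in $T(\cO)$, the resulting torus element lies in $\ker\bmu$; the depths $r_\alpha$ have been chosen precisely so that this holds.

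The core of the argument is the isomorphism $\sW = \ind_J^{G(F)}\mu \cong \Pi$. By Frobenius reciprocity, any $G(F)$-map $\sW \to \Pi$ corresponds to a $(J,\mu)$-equivariant vector in $\Pi$, and there is a canonical candidate: the function on $G(F)$ supported on the double coset $B(F)\cdot J$ that sends $1$ to the characteristic function of $T(\cO)$ inside $\pi := \ind_{T(\cO)}^{T(F)}\bmu$. To show the resulting map is an isomorphism, I would apply the Mackey decomposition of $\Pi|_J$ over $B(F)\backslash G(F)/J$, producing a sum indexed by coset representatives $w$, and reduce to showing that for $w\neq 1$ the intertwining space $\Hom_{wB(F)w^{-1}\cap J}(\mu,\pi^w)$ vanishes. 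This is the main obstacle, and the technical crux of Roche's approach: the intersection $wB(F)w^{-1}\cap J$ decomposes as a product of affine root subgroups with a piece of $T(\cO)$, and the thresholds $r_\alpha$ are engineered precisely so that $\bmu$ restricts nontrivially to the relevant subgroup of $T(\cO)$, forcing the character sum to vanish. Once this key vanishing is established, the identity coset contributes a one-dimensional $\Hom$ space, and comparing endomorphism rings (both $\End_{G(F)}(\sW)$ and $\End_{G(F)}(\Pi)$ are identified with the same Hecke algebra of $T(F)/T(\cO)$ in the regular case, as noted in \S\ref{sss:familyofRep}) forces the canonical nonzero map between progenerators of a single Bernstein block to be an isomorphism.
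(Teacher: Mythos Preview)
The paper does not prove this theorem; it is quoted from \cite{Roche98}. The construction of $J$ and $\mu$ you sketch matches what the paper records in \S\ref{sss:compactGr}: your depths $d_\alpha$ are the conductors $c_\alpha=\cond(\bmu\circ\alpha^\vee)$, your thresholds $r_\alpha$ are Roche's function $f_{\bmu}$ of \eqref{e:fbmu}, and the extension of $\bmu$ to $\mu$ uses that $T_{f_{\bmu}}\subseteq\ker\bmu$, so $\bmu$ lifts along $J\to T(\cO)/T_{f_{\bmu}}$ as described there.

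Your argument for part (ii), however, has a gap. The Mackey vanishing you describe would at best bound $\dim\Hom_{G(F)}(\sW,\Pi)$, but a nonzero map between two modules is not an isomorphism without further input. Your concluding step assumes both $\sW$ and $\Pi$ are progenerators of the same Bernstein block; for $\Pi$ this is standard, but for $\sW$ it amounts to showing that $(J,\mu)$ is a type for that block, which is precisely the main content of Roche's theorem and is not delivered by the intertwining calculation you outline. Moreover, the theorem is stated for arbitrary $\bmu$, while the endomorphism-ring identification you invoke from \S\ref{sss:familyofRep} is only available in the regular case; and even then, two progenerators of a block with abstractly isomorphic endomorphism rings need not be isomorphic without an additional argument. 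Roche's actual route is through the Bushnell--Kutzko machinery of covers: one proves $(J,\mu)$ is a $G$-cover of the torus type $(T(\cO),\bmu)$, which requires intertwining computations in the spirit of what you describe but organized so as to verify the cover axioms, after which the general theory of covers yields the isomorphism. Note that the paper itself writes down a candidate map $\Phi:\sW\to\Pi$ in \S\ref{sss:expliso} and explicitly declines to prove it is an isomorphism, relying instead on the citation.
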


 In the language of \cite{Bushnell98}, $(J,\mu)$ is a \emph{type} for
 the Bernstein block defined by $(T(\cO),\bmu)$.

 \begin{exam} \label{ex:Iwahori} Suppose the character $\bmu:T(\cO)\ra
   \bQlt$ factors through a character $\nu$ of $T(\Fq)$. Then one can
   (and Roche does) take $J$ to be the Iwahori subgroup $I\subset
   G(F)$. Let $T_1 < T(\cO)$ be the subgroup generated by the image of
   $1+\fp$ under all coweights, cf.~\S \ref{sss:reductive} below.
   Then, the character $\mu$ is defined to be the composition
\[
I\ra I/I_u \cong T(\Fq)\cong T(\cO)/T_1 \rar{\nu} \bQlt,
\]
where $I_u$ is the prounipotent radical of $I$. More generally,
Roche's subgroup $J$ equals the Iwahori subgroup if and only if $(\bmu
\circ \alpha^\vee): \Gm(\cO)\ra \bQlt$ factors through a character of
$\Gm(\Fq)$ for all coroots $\alpha^\vee:\bbG_m \ra T$ of $G$.
\end{exam}

\begin{exam} \label{ex:GLN} Suppose $G=\GL_N$. Identify $T(\cO)$ with
  $(\cOt)^N$ and write $\bmu=(\bmu_1,...,\bmu_N)$. Suppose the
  conductor $\cond(\bmu_i/\bmu_j)$ equals a fixed integer $n$ for all
  $1\leq i,j\leq N$. (The conductor of a character $\chi:\cOt\ra
  \bQlt$ is the smallest positive integer $c$ for which
  $\chi(1+\fp^c)=\{1\}$.) Then
  \[J = \begin{pmatrix} \cOt & \fp^{[\frac{n}{2}]} & \cdots & \fp^{[\frac{n}{2}]}  \\
    \fp^{[\frac{n+1}{2}]} & \cOt & \cdots & \fp^{[\frac{n}{2}]}  \\
    \vdots & \vdots & \ddots & \vdots \\
    \fp^{[\frac{n+1}{2}]} & \fp^{[\frac{n+1}{2}]} & \cdots & \cOt
  \end{pmatrix}.
\]
\end{exam}

\subsubsection{Endomorphism algebras} \label{sss:endalgs}
Given a group $K$, a character $\chi$ of $K$, and a space $X$ on which
$K$ acts (by a left action), we say a function $f$ is
$(K,\chi)$-invariant if
 \[
 f(k \cdot x) = \chi(k) f(x), \quad\quad \forall \, k \in K, \,\,
 \forall \, x \in X.
 \]
 In this language, $\sW=\ind_J^{G(F)}\mu$ is the space of compactly
 supported $(J,\mu^{-1})$-invariant functions on $G(F)$ with respect
 to right multiplication (the inverse is here since our convention is
 to use left actions: so this says that $f(j \cdot_R g) =
 f(g)\mu(j)^{-1}$, where $j \cdot_R g := g j^{-1}$).  Let
 $\sH:=\sH(G(F),J,\mu)$ denote the space of compactly supported
 $(J\times J,\mu\times \mu^{-1})$-invariant functions.  So $\sH$ is
 the space of functions $f:G(F)\ra \bQl$ satisfying
\[
f(jgj')=\mu(j)f(g)\mu(j'),\quad \quad \forall j\in J, \, g\in G(F).
\]
Convolution defines a right action $\star:\sW \times \sH\ra \sW$. It
is a standard fact that this action identifies $\sH$ with
$\End_{G(F)}(\sW)$. The isomorphism of $G(F)$-modules $\sW\cong \Pi$
defines an isomorphism of algebras $\sH\cong \End_{G(F)}(\Pi)$.

\subsubsection{Description of $\sH$ in the regular
  case} \label{sss:regularEndo} In view of \S \ref{sss:familyofRep},
in the case that $\bmu$ is regular, we obtain an isomorphism
\begin{equation} \label{eq:SatakeReg}
\Psi: \K_0(\Rep(\hT)) \iso  \sH
\end{equation} 
Since these algebras are commutative, one can show that $\Psi$ is
\emph{canonical}; i.e., it does not depend on the choice of 
isomorphism $\Pi\iso \sW$.  Using general results of Bushnell and
Kutzko on types, Roche \cite[\S 6]{Roche98} has proved that $\Psi$
sends each irreducible character of $\hT$ to an element of $\sH$
supported on a corresponding double coset of $J$, which determines the
image uniquely up to a nonzero constant.  In this paper, we compute
these constants explicitly; see Theorem \ref{t:clambda}.

\subsection{Geometrization in the regular case: main
  theorems} \label{ss:geometrizationRegular} In this subsection, we
geometrize Roche's family $\sW$, the Hecke algebra $\sH$, and the
action of $\sH$ on $\sW$ by convolution, \emph{in the case that the
  family $\sW$ is associated to a regular character of $T(\cO)$}.

The combinatorial description of $J$ makes it obvious that it is equal
to the group of $\Fq$-points of a connected proalgebraic group $\bJ$
over $\Fq$ (see \S \ref{sss:Jgeom} for an alternative explanation).
Using standard constructions from the sheaf-function dictionary, we
construct a one-dimensional character sheaf $\cM$ on $\bJ$ whose trace
of Frobenius function equals the character $\mu$. The ind-scheme
$\bG/\bJ$ is of ind-finite type; thus, we can consider the category of
(twisted perverse) sheaves and the bounded constructible derived
category of sheaves on this quotient. We observe that $\bG/\bJ$
\emph{is not proper} unless $J$ is the Iwahori subgroup.

We define $\sWgd$ to be the bounded $(\bJ,\cM^{-1}$)-equivariant
constructible derived category of sheaves on $\bG$. Since $\bJ$ acts
freely on $\bG$, this derived category can be defined naively; that
is, it equals the category of $(\bJ,\cM^{-1})$-equivariant complexes
of sheaves on $\bG$ with bounded constructible cohomology.  Let $\sWg$
denote the perverse heart of this triangulated category, which should
be thought of as the category of $\cM^{-1}$-twisted perverse sheaves
on $\bG/\bJ$ (we will define this more precisely in \S \ref{s:geom};
see also the remark below).

Next, define $\sHg$ as the category of $(\bJ\times \bJ,\cM\boxtimes
\cM^{-1})$-equivariant perverse sheaves on $\bG$ (again, we will
define this more precisely in \S \ref{s:geom}).  We define the
convolution with compact support as a functor $\star: \sWgd \times
\sHg \ra \sWgd$. There is also a convolution without compact support
defining a functor with the same source and target. A priori, they
need not coincide, since $\bG/\bJ$ is not proper (unless $J$ is the
Iwahori subgroup, as mentioned above). However, it turns out that the
two notions of convolution nonetheless coincide (Corollary
\ref{c:!=*}).

\begin{remark}\label{r:twperv} One can probably define
  $(\bJ,\cM^{-1})$-equivariant perverse sheaves on $\bG$ in (at least)
  three equivalent, but philosophically distinct, ways:
  \begin{enumerate}
  \item[(i)] Twist the category of perverse sheaves on
    $\bG/\bJ$ by a certain gerbe associated to the pair
    $(\bJ,\cM^{-1})$.
  
\item[(ii)] 
The local system $\cM$ becomes trivial after pulling back
  to a certain finite central cover $\tbJ\ra \bJ$. One then defines
  $(\bJ,\cM^{-1})$-equivariant perverse sheaves on $\bG$ as a certain
  full abelian subcategory of perverse sheaves on the
  ind-Deligne-Mumford stack $\bG/\tbJ$. 
    
\item[(iii)] There exists a proalgebraic normal subgroup $\bJ'<\bJ$
  such that $\cM$ is trivial on $\bJ'$ and $\bJ/\bJ'$ is a commutative
  algebraic group (Lemma \ref{l:J'algebraic}). Thus, $\cM$ is the
  pullback of a local system $\cM_0$ on the quotient algebraic group
  $\bA:=\bJ/\bJ'$. One then defines a $(\bJ,\cM^{-1})$-equivariant
  perverse sheaf on $\bG$ to be an $(\bA,\cM_0^{-1})$-equivariant
  perverse sheaf on $\bG/\bJ'$.
  
\end{enumerate} 
We only consider the last approach in the present text; see \S
\ref{s:geom}. For more details regarding the first two approaches, see
\S \ref{ss:alternativeTwist}.
\end{remark}

\subsubsection{Statements of the main results} \label{sss:theorems} We
continue to use the notation employed above. So $\sW$ is the family of
principal series representation associated to a regular
character of $T(\cO)$ and $\sH$ is the endomorphism ring of this
family. The abelian categories of perverse sheaves $\sWg$ and $\sHg$
are the geometric analogues of these spaces.  

\commentout{Recall that
  an irreducible perverse sheaf is \emph{clean} if it is isomorphic to
  the extension by zero of a shifted local system on a locally closed
  subvariety.}

\begin{theorem}\label{t:main} The simple objects of $\sHg$ are clean. 
\end{theorem}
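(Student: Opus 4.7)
The plan is to classify the simple objects of $\sHg$ by the orbit-stabilizer method and then verify cleanness orbit by orbit using a character-theoretic obstruction argument. By the general theory of equivariant perverse sheaves, applied via the algebraic model of Remark \ref{r:twperv}(iii), every simple object of $\sHg$ has the form $j_{!*}(\cL[\dim O])$ where $O \subset \bG$ is a $\bJ\times\bJ$-orbit, $j : O \hookrightarrow \bG$ is the locally closed embedding, and $\cL$ is an irreducible $(\bJ\times\bJ, \cM \boxtimes \cM^{-1})$-equivariant local system on $O$. Cleanness amounts to the statement that $j_!(\cL[\dim O])$ is already perverse, i.e., that the $*$-restriction of $j_{!*}(\cL[\dim O])$ to every orbit $O' \subset \overline{O}\setminus O$ vanishes.

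First I would analyze the $\bJ\times\bJ$-orbits on $\bG$. Since $\bJ$ contains the Iwahori only ``thickened'' along root subgroups by the depths governing $\bmu$, the double-coset set $\bJ\backslash\bG/\bJ$ is controlled by (a variant of) the extended affine Weyl group together with $T(\cO)$-cosets, and a representative $n_w$ of each orbit can be chosen in $N_{\bG}(\bT)$. For each such $n_w$, one computes the stabilizer $\mathrm{Stab}_{\bJ\times\bJ}(n_w) = \{(j_1, j_2) : j_1 = n_w j_2 n_w^{-1}\}$ explicitly in terms of the root-subgroup decomposition of $\bJ$. Second, I would determine which orbits $O_w$ admit nonzero $(\bJ\times\bJ, \cM\boxtimes\cM^{-1})$-equivariant local systems: these are exactly those for which $\cM \boxtimes \cM^{-1}$ is trivial on $\mathrm{Stab}_{\bJ\times\bJ}(n_w)$. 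Matching with the description of $\sH$ recalled in \S \ref{sss:regularEndo}, the regularity of $\bmu$ should imply that the orbits supporting nonzero equivariant local systems are indexed by $\K_0(\Rep(\hT))$, with the local system on each such orbit unique up to isomorphism.

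Third, to establish cleanness, I would fix a simple $F = j_{!*}(\cL[\dim O_w])$ and a boundary orbit $O_{w'} \subset \overline{O_w} \setminus O_w$, and show that the $*$-stalk of $F$ at a representative $n_{w'}$ vanishes. By $(\bJ\times\bJ, \cM\boxtimes\cM^{-1})$-equivariance, this stalk carries an action of $\mathrm{Stab}_{\bJ\times\bJ}(n_{w'})$ on which $\cM\boxtimes\cM^{-1}$ must act by the prescribed character. But a degeneration from $O_w$ to $O_{w'}$ enlarges the stabilizer by an ``extra'' subgroup coming from root subgroups that became fixed at $n_{w'}$, and on this extra piece $\cM\boxtimes\cM^{-1}$ pulls back to a character determined by comparing $\bmu$ with a Weyl-translate of itself. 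Regularity of $\bmu$ is precisely the statement that such comparisons produce nontrivial characters, so this restriction is nontrivial and the stalk is forced to vanish. Together with the fact that the IC sheaf of a smooth orbit closure differs from the $j_!$ extension only through these boundary stalks, cleanness follows.

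The main obstacle I anticipate is this last step: matching the character $\cM\boxtimes\cM^{-1}$ restricted to the boundary stabilizers with the regularity condition on $\bmu$. Carrying this out requires a careful root-subgroup analysis of $\bJ$ in the style of Example \ref{ex:GLN} and an explicit identification of the ``extra stabilizer'' at each degeneration as a product of root subgroups whose character is computable in terms of $\bmu$ composed with coroots. A minor technical issue is that $\bG$ is an ind-scheme; this is handled by working on the finite-type quotients $\bG/\bJ'$ provided by Remark \ref{r:twperv}(iii), where the usual stalk and equivariance formalism applies without change.
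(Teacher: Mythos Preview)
Your first two steps are sound and match the paper: simples of $\sHg$ are indeed supported on the ``relevant'' orbits $\bJ^\lambda := \bJ t^\lambda \bJ/\bJ'$ (indexed by $\Lambda$), and the character obstruction you describe in step two is exactly what shows that stalks at \emph{irrelevant} points vanish (this is Proposition~\ref{p:restrictionIrrelevant} in the paper, resting on Proposition~\ref{p:irrconn}.(ii)).

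The gap is in step three. Your stalk-vanishing argument only applies to boundary orbits $O_{w'}$ on which $\cM\boxtimes\cM^{-1}$ restricts nontrivially to the stabilizer --- but those are precisely the \emph{irrelevant} orbits you already discarded in step two. The closure $\overline{\bJ^\lambda}$ typically contains other \emph{relevant} orbits $\bJ^\nu$ with $\nu\neq\lambda$ (already for $G=\mathrm{SL}_2$ with $J$ the Iwahori, one has $\bJ^0\subset\overline{\bJ^\lambda}$ for every nonzero $\lambda$). At such $\bJ^\nu$ the representative $t^\nu$ has trivial Weyl part, so ``comparing $\bmu$ with a Weyl-translate of itself'' yields the trivial comparison; concretely, $\cM\boxtimes\cM^{-1}$ is trivial on the entire stabilizer of $t^\nu$ (this is Lemma~\ref{l:Flambda}.(i)). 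Hence no character obstruction forces the stalk of $j^\lambda_{!*}$ at $t^\nu$ to vanish, and your argument stops here. Regularity of $\bmu$ buys you nothing at these points.

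The paper supplies the missing idea by exploiting the convolution structure rather than a direct stalk computation. The key input is Proposition~\ref{p:conv}: $j_!^\lambda \star j_!^\nu \cong j_!^{\lambda+\nu}$ for all $\lambda,\nu\in\Lambda$ (proved via a combinatorial coset computation and a semismallness estimate). This makes $\lambda\mapsto j_!^\lambda$ monoidal, and in particular $j_!^{-\nu}\star-$ is inverse to $j_!^{\nu}\star-$, which forces $\Ext^\bullet(j_!^\lambda\otimes\cL, j_!^\nu\otimes\cK)=0$ whenever $\lambda\neq\nu$ (Corollary~\ref{c:BMWExt}). Cleanness then follows formally: any subobject $j_{!*}^\nu\otimes\cL\hookrightarrow j_!^\lambda$ gives a nonzero map $j_!^\nu\otimes\cL\to j_!^\lambda$, whence $\nu=\lambda$, and one unwinds to get $j_!^\lambda\cong j_{!*}^\lambda$. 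So the substantive work happens at the level of convolution, not at the level of boundary stabilizers.
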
 
For a precise definition of clean, see Definition \ref{d:clean}.

\begin{theorem}\label{t:main2}
\begin{enumerate}
\item[(i)] The category $\sHg$ is closed under convolution. 
  \item[(ii)] There exists an equivalence of monoidal abelian categories 
\[
\Psi_{\geom}: \Rep \hT \boxtimes \LocSys(\Spec \Fq) \iso \sHg.
\]
\end{enumerate} 
\end{theorem}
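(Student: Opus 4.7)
The plan is to leverage the cleanness result (Theorem \ref{t:main}) together with the combinatorial structure of $\bJ$-double cosets in $\bG$ so that convolution in $\sHg$ reduces to an essentially trivial gluing computation, mirroring on the geometric side the Satake-type isomorphism $\Psi\colon \K_0(\Rep\hT) \iso \sH$ recalled in \S\ref{sss:regularEndo}.

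First I would classify the simple objects of $\sHg$. By Roche's description recalled in \S\ref{sss:regularEndo}, the basis of $\sH$ is indexed by those $\bJ$-double cosets $C_\lambda \subset \bG$ supporting a $(J\times J, \mu\boxtimes \mu^{-1})$-invariant function, and these are indexed by the coweight lattice $X_*(T) = X^*(\hT)$. For each $\lambda$, Theorem \ref{t:main} provides a clean simple object $\mathrm{IC}_\lambda \in \sHg$ supported on $\overline{C_\lambda}$; being clean it equals $j_{\lambda,!}\cL_\lambda[\dim C_\lambda]$ for a one-dimensional $(\bJ\times\bJ,\cM\boxtimes\cM^{-1})$-equivariant local system $\cL_\lambda$ on a smooth open subvariety of $C_\lambda$, and these exhaust the simples. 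I would then define $\Psi_{\geom}$ by sending the irreducible character of $\hT$ corresponding to $\lambda$ to $\mathrm{IC}_\lambda$, and extend $\LocSys(\Spec\Fq)$-linearly.

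For (i) and the monoidal structure, I would compute $\mathrm{IC}_\lambda \star \mathrm{IC}_\mu$ directly on an open stratum of the support. Since both factors are cleanly extended from open strata and Corollary \ref{c:!=*} identifies $\star_!$ with $\star_*$, the convolution is determined by its restriction to the image of the multiplication map
\[
m\colon C_\lambda \times^{\bJ} C_\mu \longrightarrow \bG.
\]
Checking set-theoretically that this image lies in $\overline{C_{\lambda+\mu}}$, and that on the open part $m$ is a locally trivial fibration whose fibers contribute only a Frobenius twist, would identify $\mathrm{IC}_\lambda \star \mathrm{IC}_\mu$ with $\mathrm{IC}_{\lambda+\mu}\otimes \tau_{\lambda,\mu}$ for a one-dimensional local system $\tau_{\lambda,\mu}$ on $\Spec \Fq$. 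In particular the convolution is perverse, proving (i), and the associator and unit constraints descend to cocycle conditions on the $\tau_{\lambda,\mu}$ that one trivializes by an explicit choice of $\Fq$-rational basepoints in each $C_\lambda$, compatibly with the trace-function computation of the Satake constants in Theorem \ref{t:clambda}.

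For (ii), essential surjectivity is immediate since every simple object of $\sHg$ is an $\mathrm{IC}_\lambda$ tensored with a local system on $\Spec\Fq$, and cleanness of the simples plus disjointness of supports forces $\sHg$ to be semisimple. Fully-faithfulness reduces to
\[
\operatorname{Hom}_{\sHg}(\mathrm{IC}_\lambda, \mathrm{IC}_\mu) = 0 \text{ for } \lambda \neq \mu, \qquad \operatorname{End}_{\sHg}(\mathrm{IC}_\lambda) = \LocSys(\Spec\Fq),
\]
both of which follow from cleanness. The monoidal structure on $\Rep\hT \boxtimes \LocSys(\Spec\Fq)$ then matches via $\lambda \otimes \mu \mapsto \lambda+\mu$ by the convolution computation above. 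The main obstacle I expect is precisely that convolution computation: showing that $m$ is a fibration on the appropriate open locus, and that the twists $\tau_{\lambda,\mu}$ can be coherently trivialized to give a genuine monoidal equivalence (not just an equivalence of abelian categories). This is the geometric incarnation of Roche's determination of the structure constants for $\sH$, and is the step where regularity of $\bmu$ is essential: it is what forces $C_\lambda \cdot C_\mu$ to collapse onto the single double coset $C_{\lambda+\mu}$ instead of spreading across a union indexed by the affine Weyl group.
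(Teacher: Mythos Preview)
Your proposal has a genuine geometric gap and two logical/categorical errors.

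\textbf{The geometric gap.} You assert that the image of the multiplication map $m\colon C_\lambda \times^{\bJ} C_\mu \to \bG$ lies in $\overline{C_{\lambda+\mu}}$, and that $m$ is a fibration over the open stratum. This is false for general $\lambda,\mu$. In the paper (Proposition~\ref{p:cosetMult}), the product $Jt^\lambda J\cdot Jt^\mu J$ equals the single coset $Jt^{\lambda+\mu}J$ only when $\lambda,\mu$ are both dominant or both antidominant; for mixed $\lambda,\mu$ the image contains many other (irrelevant) $\bJ\times\bJ$-orbits. What the paper actually proves is the subtler statement that the only \emph{relevant} orbit in the image is $\bJ^{\lambda+\mu}$ (Proposition~\ref{p:cosetMult}.(c)), and then combines this with the vanishing of equivariant stalks on irrelevant orbits (Proposition~\ref{p:restrictionIrrelevant}) and a semismallness estimate (Proposition~\ref{p:semismall}) to conclude that $j_!^\lambda\star j_!^\nu$ is perverse and isomorphic to $j_!^{\lambda+\nu}$ (Proposition~\ref{p:conv}). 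Regularity of $\bmu$ enters not by collapsing the coset product, but by killing the irrelevant stalks. Your sketch skips precisely this mechanism.

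\textbf{Two further errors.} First, $\sHg$ is \emph{not} semisimple over $\Fq$: the factor $\LocSys(\Spec\Fq)$ (continuous $\ell$-adic $\hat\bZ$-representations) has nontrivial self-extensions, and the paper explicitly notes this (footnote in Remark~\ref{r:Whittaker}). The correct statement is that every object is a direct sum of objects $j_!^\lambda\otimes\cL$, which follows from the Ext computation in Corollary~\ref{c:BMWExt}. Second, you invoke Corollary~\ref{c:!=*} to identify $\star_!$ with $\star_*$, but in the paper that corollary is \emph{deduced from} Theorem~\ref{t:monEquiv} (it uses rigidity of $\sHg$), so citing it here is circular.

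\textbf{The paper's logical order.} Note that in the paper the flow is the reverse of yours: the convolution formula $j_!^\lambda\star j_!^\nu\cong j_!^{\lambda+\nu}$ (Proposition~\ref{p:conv}) is proved first by the geometric argument above, then the Ext computation (Corollary~\ref{c:BMWExt}) follows from it, and cleanness (Theorem~\ref{t:clean}) is deduced last. Theorem~\ref{t:main2} is then an immediate consequence of Proposition~\ref{p:conv} and Corollary~\ref{c:BMWExt}. Starting from cleanness does not let you bypass the convolution computation.
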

%As in the unramified setting, underlying the first statement is the
%fact that $\sHg$ is a full abelian subcategory of $\sWg$.
\begin{theorem} \label{t:action}
 Convolution defines a monoidal abelian action $\star: \sWg
  \times \sHg \ra \sWg$. \label{con:Drinfeld}
\end{theorem}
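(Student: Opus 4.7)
The plan is to reduce to the case of simple objects of $\sHg$ by dévissage, exploit the cleanness supplied by Theorem \ref{t:main} to make the convolution geometrically tractable, and then verify the monoidal-action axioms formally. The content of the statement breaks into three points: (i) $\mathcal{F} \star \mathcal{S}$ lies in $\sWg$ for all $\mathcal{F} \in \sWg$ and $\mathcal{S} \in \sHg$, (ii) exactness in each variable, and (iii) compatibility with the monoidal structure on $\sHg$ furnished by Theorem \ref{t:main2}. Points (ii) and (iii) reduce to (i): an exact sequence $0 \to \mathcal{S}' \to \mathcal{S} \to \mathcal{S}'' \to 0$ in $\sHg$, convolved with $\mathcal{F}$, yields a distinguished triangle in $\sWgd$ that becomes short exact once perversity of all three terms is known; and the associator for $\mathcal{F} \star \mathcal{S}_1 \star \mathcal{S}_2$ comes from the usual triple-convolution diagram over $\bG \times^{\bJ} \bG \times^{\bJ} \bG$, together with Corollary \ref{c:!=*}, which legitimizes proper base change in spite of the non-properness of $\bG/\bJ$.

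For (i), I would use Theorem \ref{t:main2}(ii) to filter any object of $\sHg$ by simples, each of which, by Theorem \ref{t:main}, is a clean perverse sheaf of the form $j_{!}\widetilde{\mathcal{L}}[d] \cong j_{*}\widetilde{\mathcal{L}}[d]$ for a locally closed $(\bJ \times \bJ)$-orbit $j \colon Y \hookrightarrow \bG$ and a $(\cM \boxtimes \cM^{-1})$-twisted local system $\widetilde{\mathcal{L}}$. This reduces the problem to proving $\mathcal{F} \star \mathcal{S}$ perverse for such a simple $\mathcal{S}$. Cleanness combined with Corollary \ref{c:!=*} allows me to compute the convolution on the open orbit and on its closure interchangeably; the crux is then to analyze the restriction of the multiplication map $\bar m \colon \bG \times^{\bJ} Y \to \bG$ to the support of the external product $\mathcal{F} \boxtimes \mathcal{S}$. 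Regularity of $\bmu$ forces, via the classification in Theorem \ref{t:main2}, the orbit $Y$ to be the $\bJ$-orbit through a coset representative $t^\lambda \bJ$ attached under $\Psi_{\geom}$ to an irreducible character of $\hT$; right multiplication by $t^\lambda$ then induces an isomorphism $\bG/\bJ \cong \bG/(t^\lambda \bJ t^{-\lambda})$, and $(\bJ, \cM)$-equivariance intertwines $\mathcal{F} \star \mathcal{S}$ with the pushforward of $\mathcal{F}$ along this translation, twisted by $\widetilde{\mathcal{L}}$. Such a pushforward is manifestly perverse, establishing (i).

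The main obstacle is the geometric analysis just sketched. A priori, convolution on the non-proper ind-scheme $\bG/\bJ$ (for $J$ strictly larger than the Iwahori) need not preserve perversity, and $!$- and $*$-convolutions need not coincide; both pathologies are cured by the cleanness of the simples of $\sHg$, but translating cleanness into perversity of $\mathcal{F} \star \mathcal{S}$ requires a precise understanding of the $\bJ$-double-coset structure of $\bG$ and of how right translation by the representatives $t^\lambda$ interacts with the $\cM$-equivariance governing $\sWg$. Once this geometry is in hand, exactness (ii) and the monoidal-action axioms (iii) follow formally from the standard associator and unit for convolution and from the dévissage argument outlined above.
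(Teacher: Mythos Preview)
Your reduction by d\'evissage to the simple objects $j_!^\lambda \cong j_*^\lambda$ of $\sHg$, and your observation that the monoidal-action axioms and exactness follow formally once perversity of $\cF \star j_!^\lambda$ is known, are correct and match the paper's strategy. The gap is in your argument for that perversity.

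Your claim that right multiplication by $t^\lambda$ reduces the convolution to a translation, so that $\cF \star j_!^\lambda$ is ``manifestly perverse,'' does not work. The element $t^\lambda$ does not normalize $\bJ$ unless $\lambda$ is central, so the orbit $\bJ t^\lambda \bJ$ is strictly larger than the single coset $t^\lambda \bJ$, and the restricted convolution map $p^\lambda: \bG \times_\bJ \bJ^\lambda \to \bX$ has positive-dimensional fibers. There is no way to identify $\cF \star j_!^\lambda$ with a translate of $\cF$ twisted by a local system: objects of $\sWg$ carry only right $(\bJ,\cM^{-1})$-equivariance, not left, so you cannot collapse the left $\bJ$-factor in $\bJ t^\lambda \bJ$ against the support of $\cF$. (If $\cF$ were in $\sHg$ rather than $\sWg$, your intuition would be salvageable via Proposition~\ref{p:conv}; and while $-\star j_!^\lambda$ is indeed an autoequivalence of $\sWgd$ by associativity and $j_!^\lambda \star j_!^{-\lambda} \cong j_!^0$, an autoequivalence of a triangulated category need not be $t$-exact.)

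The paper's argument is the Artin sandwich, and this is exactly the point at which the ingredients you cited---cleanness and Corollary~\ref{c:!=*}---should have been deployed. Since $\bJ \times \bA$ is solvable, the orbit $\bJ^\lambda$ is affine, and hence $p^\lambda$ is an affine morphism. Artin's theorem (Theorem~\ref{t:Artin}) then gives $\cF \star_! j_!^\lambda = p^\lambda_!(\cF \tboxtimes \cF_\lambda) \in \sDg$. For the other bound, cleanness (Theorem~\ref{t:clean}) gives $j_!^\lambda \cong j_*^\lambda$, and Corollary~\ref{c:!=*} gives $\cF \star_! j_!^\lambda \cong \cF \star_* j_*^\lambda$; Artin's theorem applied to $p^\lambda_*$ yields $\cF \star_* j_*^\lambda \in \sDl$. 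Combining, $\cF \star_! j_!^\lambda \in \sDg \cap \sDl = \sWg$. You invoked cleanness and Corollary~\ref{c:!=*} only to ``compute the convolution on the open orbit and on its closure interchangeably,'' and then abandoned them for the translation heuristic; their actual role is to supply the $\sDl$ half of perversity.
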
 

Theorems \ref{t:main2} and \ref{t:action} are the analogues, in the
regular setting, of Theorems \ref{t:satake} and \ref{t:gaitsgory},
respectively. Observe that, taking the trace of Frobenius, we obtain
the isomorphism (\ref{eq:SatakeReg}) and the action of $\sH$ on
$\sW$. For more precise statements of the above theorems and their
proofs, see \S \ref{s:conv}.
 
\begin{remark} The abelian category $\sHg$ should be the perverse
  heart of the bounded $(\bJ\times \bJ, \cM\boxtimes
  \cM^{-1})$-equivariant constructible derived category of sheaves on
  $\bG$. For a discussion of definition of this twisted equivariant
  derived category see \S \ref{sss:triangulated}. In view of cleanness
  (Theorem \ref{t:main}), it is reasonable to ask whether the correct
  bounded $(\bJ\times \bJ, \cM\boxtimes \cM^{-1})$-equivariant derived
  category in the regular setting is merely the bounded derived
  category of $\sHg$; we do not address this issue here (note that
  this statement is, however, a special case of Conjecture
  \ref{conj:affineHecke}).
\end{remark}

\begin{remark}\label{r:Roma} As mentioned in Example \ref{ex:Iwahori}, if 
  $\bmu$ factors through $T(\Fq)$, then the corresponding subgroup
  equals the Iwahori subgroup $I$. The category of perverse sheaves on
  the affine flag variety $\bG/\bI$ and the corresponding bounded
  derived category have been studied extensively; see, for instance,
  \cite{Roma09}, \cite{Roma2009}, \cite{RomaWeylI}, \cite{RomaWeylII},
  and \cite{RomaWeylIII}. Therefore, in this case, it might be
  possible to extract our results from the aforementioned (or related)
  references. We have not attempted to do this. On the other hand, we
  are not aware of any references where perverse sheaves on $\bG/\bJ$
  are studied, where $J$ is one of Roche's subgroups other than the
  Iwahori.
\end{remark}

\begin{remark}\label{r:Whittaker} 
  There is some similarity between our setup and that of
  \cite{Frenkel01}. In \emph{op.~cit.}, the authors define and study a
  category of perverse sheaves which geometrize $(U(F),
  \chi)$-invariant functions on $G(F)/G(\cO)$, where $U < B$ is the
  unipotent radical, and $\chi:U(F) \to \Fq$ is a generic character,
  which together with a fixed character $\psi: \Fq \to \bQlt$ yields a
  character $\psi \circ \chi: U(F) \to \bQlt$.
%(The analogue of generic
%  characters in our situation is regular characters.) 
  The irreducible objects of their category are in bijection with
  dominant coweights. Their main result states that, like in our
  situation, these irreducible perverse sheaves are clean, and that
  their category is semisimple.\footnote{Although our category $\sHg$
    is not semisimple, that is an artifact of working over $\Fq$
    instead of over $\bFq$ as is done in \cite{Frenkel01}; note that,
    after base change to $\bFq$, $\sHg$ becomes semisimple in our
    setting.}  Note that, unlike our categories, their categories of
  perverse sheaves are defined in global terms, using Drinfeld's
  compactification of the moduli space of bundles on
  curves;\footnote{As explained in, e.g., \cite[\S 1.1.1]{Roma09},
    this category can probably be alternatively defined in a purely
    local way as a certain subcategory of perverse sheaves on the
    affine flag variety $\bG/\bI$.}
  also, their geometrization of the character $\chi: U(F) \to \Fq$ is
  a homomorphism $\mathbf{U} \to \mathbb{G}_a$ (rather than a
  character sheaf).
\end{remark}

\subsection{Geometrization in the non-regular case:
  conjectures} \label{ss:conjectures} The discussions of this
subsection are not used anywhere else in the paper; in particular, a
reader who is only interested in the regular case or not interested in
conjectures can skip this subsection and go to \S
\ref{ss:restrictions}.
   
Let $\bmu: T(\cO)\ra \bQlt$ be an arbitrary character.  We would like
to geometrize the family $\Pi=\Pi_{\bmu}$ of principal series
representations induced from characters of $T(F)$ whose restriction to
$T(\cO)$ is $\bmu$ (see \S \ref{sss:familyofRep} for the precise
definition of $\Pi$). By Theorem \ref{t:Roche}, $\Pi\cong
\sW=\ind_J^{G(F)} \mu$, and its endomorphism ring identifies with
$\sH=\sH(G(F),J,\mu)$.  To geometrize, we would like to replace
$(J,\mu^{-1})$- and $(J \times J, \mu \times \mu^{-1})$-invariant
functions on $G(F)$ by $(\bJ,\cM^{-1})$- and $(\bJ \times \bJ, \cM
\times \cM^{-1})$-equivariant perverse sheaves on $\bG$.  However, it
is known that, in general, the category of $(\bJ \times \bJ,\cM \times
\cM^{-1})$-equivariant perverse sheaves on $\bG$ is not closed under
convolution (this fails already in the unramified setting: the
convolution of two $\bI$-equivariant perverse sheaves on $\bG/\bI$ is
not necessarily perverse).

In what follows, we propose two remedies:
\begin{enumerate}
\item[(I)] geometrize $\sW$ and $\sH$ using the equivariant derived
  categories;
\item[(II)] provided $\bmu$ is of a special form (which we call
  parabolic), geometrize a closely related family of representations
  and its endomorphism ring using perverse sheaves.
\end{enumerate}

\subsubsection{Roche's description of $\sH$} \label{sss:affineHecke}
According to \cite[\S 6]{Roche98}, for arbitrary $\bmu$, there
exists a (possibly disconnected) split reductive group $L$ over $F$
such that
\begin{equation} \label{eq:affineHecke} \sH=\sH(G(F),J,\mu) \cong
  \sH(L(F), I_{L^\circ}),
\end{equation} 
where $L^\circ$ is the connected component of the identity of $L$ and
$I_{L^\circ}$ is an Iwahori subgroup of $L^\circ$.\footnote{Note that
  according to \S 8 of \emph{op.~ cit.}, $L^\circ$ is an endoscopic
  group of $G$ (but not necessarily a subgroup).}  Note that in the
unramified setting (i.e., $\bmu=\bone$), we have $L=L^\circ=I$; see
Example \ref{ex:Iwahori}. On the other hand, in the regular setting,
$L=L^\circ=T$; see \S \ref{sss:regularEndo}.

\subsubsection{Geometrization via the derived
  category} \label{sss:triangulated} Let $\sHg^\der$ denote the
(bounded) equivariant derived category $\sD_{(\bJ\times \bJ,
  \cM\boxtimes \cM^{-1})}(\bG)$. Since the action of $\bJ\times \bJ$
is not free, the latter is not necessarily the same as the derived
category of $(\bJ\times \bJ,\cM\boxtimes \cM^{-1})$-equivariant
complexes of sheaves with bounded constructible cohomology. We do not
know of a reference that gives a proper definition. The correct
definition can probably be obtained by modifying the approach of
Bernstein and Lunts \cite{Bernstein94} (they consider the case where
$\cM$ is trivial). Alternatively, one can try to twist triangulated
categories following \cite[\S I.2]{Reich10}.  Henceforth, we assume
that one has the correct definition of this twisted equivariant
derived category. Convolution of sheaves should then endow
$\sHg^{\der}$ with a structure of monoidal triangulated category. The
following is a conjectural geometrization of \eqref{eq:affineHecke}.

\begin{conj} \label{conj:affineHecke} There exists an equivalence of
  monoidal triangulated categories $\sHg^\der \cong
  \sD_{\bI_{\bL^\circ}} (\bL/\bI_{\bL^\circ})$.
\end{conj}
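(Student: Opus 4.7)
The plan is to lift Roche's Morita equivalence \eqref{eq:affineHecke} to the level of equivariant derived categories, so the very first task is to fix a satisfactory definition of $\sHg^\der = \sD_{(\bJ \times \bJ,\, \cM \boxtimes \cM^{-1})}(\bG)$. Since $\bJ \times \bJ$ does not act freely, I would follow either the twisted Bernstein--Lunts framework (combining \cite{Bernstein94} with \cite[\S I.2]{Reich10}) or, equivalently, the approach of Remark \ref{r:twperv}(iii): descend along $\bJ'$ and take $(\bJ/\bJ',\cM_0^{-1})$-equivariance on both sides. The outcome should be a triangulated category with an intrinsic convolution monoidal structure coming from pull--push along the multiplication map of $\bG$, into which $\sHg$ embeds as the perverse heart.

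With the foundations in place, the strategy is to geometrize Roche's recipe directly. Recall that $L$ is built from the data $(G,\bmu)$ together with auxiliary choices (essentially a Heisenberg representation in the Bushnell--Kutzko picture), and that Roche produces an explicit basis of $\sH(G(F),J,\mu)$ indexed by double cosets $JgJ$ whose representatives can be identified, up to the action of the component group $L/L^\circ$, with $L(F)/I_{L^\circ}$. The regular case, where $L = L^\circ = T$, is essentially settled by Theorem \ref{t:main2}, modulo the separate question of identifying $\sHg^\der$ with the bounded derived category of $\sHg$. A natural attempt in the general case is to construct an ind-scheme $\bZ$ fitting in a correspondence
\[
  \bG/\bJ \,\longleftarrow\, \bZ \,\longrightarrow\, \bL/\bI_{\bL^\circ},
\]
whose pull--push implements the desired equivalence.

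The main steps, in order, would be: (i) construct a candidate functor $F : \sD_{\bI_{\bL^\circ}}(\bL/\bI_{\bL^\circ}) \to \sHg^\der$ from such a correspondence, or more abstractly from a bimodule kernel supported on $\bZ$; (ii) verify that $F$ carries the unit of convolution to the unit and intertwines the two convolutions; (iii) compute the Hom-complex between a compact generator on the right and its image on the left, and match it with Roche's algebraic calculation \cite[\S 6]{Roche98}; (iv) deduce full faithfulness on compact generators, and essential surjectivity from a standard generation argument. In the regular case the whole scheme should specialize to Theorem \ref{t:main2} together with a formal derived enhancement, providing a useful sanity check.

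The main obstacle is the combination of step (iii) with the monoidality in step (ii): both require an honest geometric version of Roche's auxiliary choices, rather than a function-level translation. In particular, the disconnectedness of $L$ enters the algebraic isomorphism through an action of the finite group $L/L^\circ$ on $\sH(L(F), I_{L^\circ})$, and this component group is not visible from the naive geometry of $\bG$; accommodating it likely requires working with a suitable gerbe or a twisted form of $\bL$ as a substitute for Roche's Heisenberg representation data. A reasonable order of attack is to first handle the conjecture under the \emph{parabolic} hypothesis introduced in \S \ref{ss:conjectures}, where the recipe for $L$ simplifies dramatically, and then bootstrap to the general case.
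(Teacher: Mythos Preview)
The statement you are addressing is a \emph{conjecture}, not a theorem: the paper does not prove it. Indeed, the entire subsection \S\ref{ss:conjectures} in which it appears is flagged as containing only conjectures, and the paper explicitly says that the correct definition of $\sHg^\der$ is not even settled (``We do not know of a reference that gives a proper definition''). So there is no proof in the paper to compare your proposal against.

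Your write-up is a reasonable research outline rather than a proof, and you seem aware of this: you identify the foundational gap (defining the twisted equivariant derived category), propose a correspondence-kernel strategy, and honestly flag the main obstruction (geometrizing Roche's auxiliary choices and the component group $L/L^\circ$). None of this is wrong as a plan, but it does not constitute a proof, and the paper does not claim one exists. If the assignment was to reproduce the paper's argument, the correct answer is simply that the paper offers none; if it was to sketch a strategy, you should make explicit that you are proposing an approach to an open problem, not proving a result.
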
 

For applications to the Langlands program, we would like to have a
description of $\sHg^\der$ in Langlands dual terms. In the case that
$L$ is connected (i.e. $L=L^\circ$), there is an answer explained to
us by Bezrukavnikov (see the very similar \cite[Theorem
4.2.(a)]{Bezicm}):
\begin{theorem}\label{t:bezrukavnikov} There exists a canonical
  equivalence of triangulated categories
  \begin{equation} \label{e:bezrukavnikov} \sD_{\bI_\bL}
    (\bL/{\bI_\bL}) \iso \sD \QCoh_{\hL} (\tcN \times^R_{\hfl} \tcN).
  \end{equation} 
  where $\tcN$ is the Springer resolution of the cone of nilpotent
  elements in $\hfl$ and the $\times^R_{\hfl}$ means one must take a
  derived (dg-algebra) fibered product.\footnote{According to
    Bezrukavnikov, the reason the fibered product is derived has to do
    with the fact that $\tcN \times_{\hfl} \tcN$ is not a complete
    intersection in $\tcN \times \tcN$. To avoid this one can work
    instead with the monodromic equivariant category
    $\sD_{\bI_{\bL,u}}(\bL/\bI_{\bL}) \iso \sD \QCoh_{\hL}
    (\tilde{\mathfrak{l}} \times_{\hfl} \tcN)$, where
    $\tilde{\mathfrak{l}} \onto \hfl$ is the Grothendieck-Springer
    resolution, and $\bI_{\bL,u}$ is the prounipotent radical of
    $\bI_\bL$. Perhaps, in our situation, one might similarly prefer
    to define $\sHgd$ as a certain monodromic equivariant category;
    % e.g., $\bJ'$-biequivariant with a certain $\cM$-monodromic
    % condition for $\bJ$, rather than $(\bJ \times \bJ,\cM \times
    % \cM^{-1})$-equivariant;
    if correctly defined, we could ask for an
    analogue of Conjecture \ref{conj:affineHecke} that states that the
    result is equivalent to $\sD_{\bI_{u,\bL}}(\bL/\bI_\bL)$.}
  \end{theorem}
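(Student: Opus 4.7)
The plan is to adapt Bezrukavnikov's proof of this equivalence (the cited Theorem 4.2.(a) of \cite{Bezicm} is the paradigm), which proceeds via central sheaves from the affine Grassmannian, tilting generators, and a Koszul-style identification of Ext algebras. Following the strategy suggested in the footnote of the statement, I would first handle the monodromic version, establishing the analogous equivalence
\[
\sD_{\bI_{\bL,u}}(\bL/\bI_\bL) \iso \sD\QCoh_{\hL}(\tilde{\mathfrak{l}} \times_{\hfl} \tcN),
\]
where the ordinary fibered product suffices because $\tilde{\mathfrak{l}} \to \hfl$ is flat. The stated equivalence is then deduced by imposing unipotent monodromy on the constructible side and restricting along $\{0\} \hookrightarrow \hfl$ on the coherent side, the derived fibered product being the price paid for this specialization.

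The functor $\Phi : \sD_{\bI_\bL}(\bL/\bI_\bL) \to \sD\QCoh_{\hL}(\tcN \times^R_{\hfl} \tcN)$ would be built from several structural inputs. First, geometric Satake for $\bL$ identifies $\Rep(\hL)$ with $\Perv_{\bL_\cO}(\bGr_\bL)$, and Gaitsgory's nearby cycles construction on the Beilinson--Drinfeld degeneration of $\bGr_\bL$ to the affine flag variety yields a central monoidal functor $Z: \Rep(\hL) \to \sD_{\bI_\bL}(\bL/\bI_\bL)$. The logarithm of monodromy of this family upgrades $Z$ to an action of $\QCoh(\hfl/\hL)$ on $\sD_{\bI_\bL}(\bL/\bI_\bL)$; this installs the $\QCoh_{\hL}(\hfl)$-linear structure that both sides of the desired equivalence carry. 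Second, a Wakimoto-type filtration on the antispherical module and the coherent realization of standard/costandard objects produce a distinguished generator whose endomorphisms map to the structure sheaf of the target Steinberg-style dg-scheme, pinning down $\Phi$.

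The key technical step is to verify that $\Phi$ is an equivalence. My plan is to proceed via tilting perverse sheaves $T_w$ indexed by the extended affine Weyl group $W_{\mathrm{aff}}^{\mathrm{ext}}$, which generate $\sD_{\bI_\bL}(\bL/\bI_\bL)$ and are characterized by having both standard and costandard filtrations. One checks (a) that $\Phi(T_w)$ corresponds to an explicit tilting coherent sheaf on $\tcN \times^R_{\hfl} \tcN$ associated to $w$ under the natural bijection with an equivariant $K$-theoretic basis (Kazhdan--Lusztig--Ginzburg), and (b) that $\Phi$ induces an isomorphism on $\mathrm{Ext}$ groups between tilting generators. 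Step (b) reduces, via centrality and the Wakimoto filtration, to a computation in $\hL$-equivariant coherent cohomology of $\tcN$ that matches the stalk computation of tilting sheaves on the affine flag variety. Monoidality of $\Phi$ is then checked on generators by comparing convolution on both sides using the centrality of $Z$.

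The main obstacle, as highlighted in the footnote, is the genuinely derived nature of $\tcN \times^R_{\hfl} \tcN$: since $\tcN \times_{\hfl} \tcN$ is not a complete intersection in $\tcN \times \tcN$, the right-hand side is not the derived category of an ordinary scheme and must be handled as a dg-category, so one must ensure that $\Phi$ respects this enhancement and that the Ext computation in step (b) correctly incorporates the $\mathrm{Tor}$ contributions coming from the derived intersection. Circumventing this issue by first proving the Grothendieck--Springer version (where the fibered product is underived) and only then specializing over $\{0\} \in \hfl$ is, in my view, the essential technical device, and it absorbs nearly all of the real difficulty of the argument.
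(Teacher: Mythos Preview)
The paper does not prove this theorem. Theorem~\ref{t:bezrukavnikov} appears in \S\ref{ss:conjectures}, a subsection of conjectures explicitly marked as not used elsewhere; the theorem is attributed to Bezrukavnikov and cited as ``see the very similar \cite[Theorem 4.2.(a)]{Bezicm}'' without any argument. So there is no ``paper's own proof'' to compare your proposal against.

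Your outline is a plausible high-level sketch of the strategy in the cited Bezrukavnikov reference (central sheaves via Gaitsgory's nearby cycles, tilting generators, reduction from the monodromic/Grothendieck--Springer version to the strictly Iwahori-equivariant/derived Steinberg version). As such it is appropriate as a pointer to the literature, but it is not what this paper does, nor is it a self-contained proof: essentially every step you list (the action of $\QCoh(\hfl/\hL)$, the identification of images of tiltings with explicit coherent sheaves, the Ext computation) is itself a substantial theorem in \cite{Bezicm} and companion papers, and your proposal does not supply the arguments for any of them.
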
 
 
  The above conjecture (together with Bezrukavnikov's theorem) gives a
  geometrization of $\sH$, in the sense that $\K_0(\sHg^{\der}) \cong
  \sH$.  Moreover, the monoidal category $\sHg^\der$ acts on
  $\sWg^\der$ geometrizing the action of $\sH$ on $\sW$.

  \subsubsection{Parabolic characters} \label{sss:parabolic} To
  geometrize using only perverse sheaves (as opposed to the whole
  equivariant derived category) we need to consider not the family
  $\sW$, but a closely related family which we will call $\sW^c$. In
  the case $\bmu=\bone$, we defined this family in \S
  \ref{ss:motivation}. In general, we only know how to define this
  family when the character $\bmu$ is parabolic (and do not know if it
  should exist more generally).

  \begin{definition} A character $\bmu:T(\cO)\ra \bQlt$ is
    \emph{parabolic} if the stabilizer $\Stab_W(\bmu)\subseteq W$ of
    $\bmu$ is a parabolic subgroup of $W$, i.e., there exists a
    parabolic subgroup $P\subseteq G$ whose associated Weyl group is
    $\Stab_W(\bmu)$.\footnote{Equivalently, the group $L$ associated
      to $\bmu$ of \S \ref{sss:affineHecke} is the Levi of a parabolic
      subgroup of $G$.}
\end{definition}

Let $\bmu$ be a parabolic character. Let $L$ denote the Levi of the
parabolic associated to $\bmu$. Thus, $L$ is a connected split
reductive subgroup of $G$. It is easy to see that $\bmu$ extends to a
character of $L(\cO)$ which, by an abuse of notation, will also be
denoted by $\bmu$. Define a new family of principal series
representations by
\begin{equation}
\Pi^c:= \iota_{P(F)}^{G(F)}(\ind_{L(\cO)}^{L(F)} \bmu).
\end{equation} 
We think of $\Pi^c$ also as a family of principal series
representations associated to $\bmu$. We believe that $\Pi^c$ can also
be realized by inducing a character of a compact open subgroup of
$G(F)$. To this end, let $J^c:=J.L(\cO)$. One can show that
$\bmu:L(\cO)\ra \bQlt$ extends to a character $\mu^c: J^c\ra
\bQlt$. Let $\sW^c:=\ind_{J^c}^{G(F)} \mu^c$ and $\sH^c
:= \End_{G(F)}(\sW^c) = \sH(G(F),J^c,\mu^c)$.

\begin{conj} \label{c:centralFamily} 
  \begin{enumerate}
  \item[(i)] There exists an isomorphism of $G(F)$-modules $\Pi^c
    \cong \sW^c$.
\item[(ii)] There exists a ``Satake-type" isomorphism $\sH^c \cong \K_0(\Rep(\hL))$.
\end{enumerate} 
\end{conj}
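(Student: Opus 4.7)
\emph{The strategy} is to realize the pair $(J^c, \mu^c)$ as a Bushnell--Kutzko \emph{cover} of $(L(\cO), \bmu)$ in $G(F)$. In the cover framework, (a) compact induction from a cover is canonically identified with parabolic induction from the covered pair, and (b) the associated Hecke algebras are isomorphic in a manner compatible with Jacquet restriction. Combining these facts with a version of the Satake isomorphism for $L$ would then yield both (i) and (ii).

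One first checks that $\mu^c$ is well-defined on $J^c = J \cdot L(\cO)$: the parabolic hypothesis allows $\bmu$ to extend from $T(\cO)$ to $L(\cO)$, and Roche's construction of $J$ supplies the compatibility of $\mu$ with this extension on the overlap $J \cap L(\cO)$. Next one establishes an Iwahori-type decomposition $J = J^-_P \cdot J_L \cdot J^+_P$, where $J^{\pm}_P = J \cap U^{\pm}_P(F)$ and $J_L = J \cap L(F)$, along with the consequent decomposition $J^c = J^-_P \cdot L(\cO) \cdot J^+_P$, on which $\mu^c$ restricts to (the extension of) $\bmu$ on $L(\cO)$ and is trivial on the unipotent factors. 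A standard Mackey-type calculation, exploiting this decomposition and the Iwasawa identity $G(F) = P(F) \cdot G(\cO)$ (noting $J^c \subseteq G(\cO)$), then produces the canonical $G(F)$-equivariant isomorphism $\sW^c \iso \Pi^c$ of (i).

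For (ii), taking endomorphisms in (i) and invoking the theory of covers yields
\[
\sH^c \iso \End_{L(F)}\bigl(\ind_{L(\cO)}^{L(F)} \bmu\bigr) = \sH(L(F), L(\cO), \bmu).
\]
The crucial input here is the vanishing of intertwiners corresponding to Weyl elements $w \in W \smallsetminus W_L$, which would require $\bmu$ to be $w$-fixed, and thus fails by the parabolic hypothesis $\Stab_W(\bmu) = W_L$. A twisted Satake isomorphism then identifies $\sH(L(F), L(\cO), \bmu)$ with $\K_0(\Rep(\hL))$: since $\bmu$ is $W_L$-fixed, the classical argument via the Cartan decomposition of $L(F)$ and the $\bmu$-eigenfunction basis of the double cosets carries over, with the twist absorbed into a choice of central character of $\hL$.

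The main obstacle is the cover-theoretic input in the last step, i.e., the vanishing of intertwiners indexed by $W \smallsetminus W_L$. While this is expected on general grounds from the Bernstein decomposition---since $w \cdot \bmu$ for $w \notin W_L$ labels a distinct Bernstein component---turning this expectation into a proof adapted to Roche's explicit $J^c$ requires generalizing the regular-case analysis of \cite[\S 6]{Roche98} from $T$ to an arbitrary Levi $L$. In particular, one must analyze how $J$ decomposes under $W$-conjugation and verify the appropriate braid-type relations among the candidate intertwining operators; we do not attempt this generalization here.
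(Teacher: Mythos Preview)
The statement you are attempting to prove is stated in the paper as a \emph{conjecture}, and the paper does not supply a proof. The paper offers only a brief remark sketching a strategy for part (ii): embed $\sH^c$ into $\sH$, use Roche's isomorphism $\sH \cong \sH(L(F), I_L)$ with the affine Hecke algebra of $L$, and then argue combinatorially that the $J^c$-double cosets of $G(F)$ supporting elements of $\sH^c$ correspond to $I_L$-double cosets labeled by $W_L$-invariant coweights, which also index a basis of $\K_0(\Rep(\hL))$. The paper gives no argument whatsoever for part (i), listing only cases where it is known (regular $\bmu$, unramified $\bmu$, and $G=\GL_N$ via Howe).

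Your route is genuinely different from the paper's sketch. For (ii), rather than embedding $\sH^c$ into the larger affine Hecke algebra and identifying a sub-basis, you propose to realize $(J^c,\mu^c)$ as a Bushnell--Kutzko cover of $(L(\cO),\bmu)$, which would identify $\sH^c$ directly with the twisted spherical Hecke algebra $\sH(L(F), L(\cO), \bmu)$ of the Levi, and then apply a twisted Satake isomorphism for $L$. This is more structural and has the advantage of treating (i) and (ii) in a single framework; the paper's sketch, by contrast, bypasses (i) entirely and is purely combinatorial, avoiding the cover machinery but leaving the algebra structure (not just the underlying vector space) to be matched.

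Your own assessment of the gap is accurate and matches the status in the paper: establishing that $(J^c, \mu^c)$ is actually a cover of $(L(\cO), \bmu)$ is the missing step, and it is not a formality. The vanishing of intertwiners for $w \in W \smallsetminus W_L$ that you flag is exactly the kind of support computation that Roche carries out in the regular case and that would need to be redone for $J^c$; neither you nor the paper provides it. So your proposal is a reasonable outline of an approach, but it remains a proof sketch of a conjecture, which is consistent with how the paper treats the statement.
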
 
\begin{remark} It should not be difficult to prove part (ii): first,
  $\sH^c$ is a subalgebra of $\sH$, which by \eqref{eq:affineHecke} is
  isomorphic to $\sH(L(F),I_{L})$.  The latter is the affine Hecke
  algebra for $L$, whose basis consists of functions supported on
  $I_L$-double cosets labeled by the affine Weyl group of $L$.  Then,
  the $J^c$-double cosets of $G(F)$ which support functions of $\sH^c$
  should correspond to the $I_L$-double cosets of $L(F)$ labeled by
  $\Stab_W(\bmu)$-invariant coweights of $\hT$, cf.~\S
  \ref{ss:relevantCosets}.  These, in turn, are also a basis for
  $\K_0(\Rep(\hL))$, which should yield (ii).
\end{remark}
\begin{exam} We now list some cases where the above conjecture is
  known to be true:
\begin{enumerate}
\item[(i)] Every regular character $\bmu: T(\cO)\ra \bQlt$ is parabolic. In this case, the $G(F)$-modules $\sW^c$ and $\sW$ (and, therefore, the algebras $\sH^c$ and $\sH$) coincide. 
\item[(ii)] The trivial character (corresponding to the unramified
  case) is parabolic. In this case, $J=I$ and $J^c=G(\cO)$.
\item[(iii)] If $G=\GL_N$ then every character of $T(\cO)$ is
  parabolic. In this case, the above conjecture was proved in
  \cite{Howe73} (with a somewhat different choice of $J^c$).
  % (still of the form $J^c = J_f \cdot L(\cO)$ up to some choice of
  % $B$ and $f: \Delta \to \bZ$).  Howe's results are the main
  % evidence for our conjecture.
\end{enumerate} 
\end{exam}

\subsubsection{Geometrization of families associated to parabolic
  characters} \label{sss:parabolicGeom} Let $\bmu$ be a parabolic
character and let $J^c, \mu^c, \sW^c$ and $\sH^c$ be as above. One can
show that $J^c$ is the group of points of a proalgebraic group $\bJ^c$
and that $\mu^c$ is the trace of Frobenius function of a
one-dimensional character sheaf $\cM^c$ on $\bJ^c$. Let $\sHg^c$ and
$\sWg^c$ be the abelian categories of twisted equivariant perverse
sheaves corresponding to $\sH^c$ and $\sW^c$.  Explicitly, $\sHg^c =
\Perv_{(\bJ^c \times \bJ^c, \cM^c \times (\cM^c)^{-1})}(\bG)$ and
$\sWg^c = \Perv_{(\bJ^c, (\cM^c)^{-1})}(\bG)$.
\begin{conj} \label{conj:shgc}
  \begin{enumerate}
\item[(i)] $\sHg^c$ is closed under convolution. 
\item[(ii)] There exists an equivalence of monoidal abelian categories
  $\sHg^{c} \cong \Rep \hL \boxtimes \LocSys(\Spec \Fq)$.
\end{enumerate} 
\end{conj}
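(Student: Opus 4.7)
The plan is to reduce Conjecture \ref{conj:shgc} to the geometric Satake equivalence (Theorem \ref{t:satake}) applied to the Levi $\bL$, by constructing a parabolic-induction-type equivalence relating $\sHg^c$ to $\Perv_{\bL_\cO}(\bGr_{\bL})$. The key structural input is that $\Stab_W(\bmu) = W_L$, so $\bmu$ is fixed by the Weyl group of $\bL$ and extends to $L(\cO)$. From the vantage point of $\bL$, the Roche subgroup attached to $(\bL,\bmu)$ is simply $\bL_\cO$ itself (the ``unramified'' case for $\bL$), so the Langlands-dual description of the Hecke category on the $\bL$-side is $\Perv_{\bL_\cO}(\bGr_{\bL})\cong \Rep\hL\boxtimes\LocSys(\Spec\Fq)$ via Theorem \ref{t:satake}. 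The remaining work is to identify this category with $\sHg^c$.

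Concretely, choose the parabolic $\bP = \bU_P\cdot\bL$ with unipotent radical $\bU_P$. The subgroup $\bJ^c$ should admit an Iwahori-type factorization $\bJ^c = (\bJ\cap\bU_P)\cdot\bL_\cO\cdot(\bJ\cap\bU_P^-)$, producing a correspondence
\[ \bGr_{\bL}\;\xleftarrow{\pi_{\bL}}\;\bP/(\bP\cap\bJ^c)\;\xrightarrow{\iota}\;\bG/\bJ^c, \]
in which $\pi_{\bL}$ is smooth with pro-unipotent fibers and $\iota$ is a locally closed embedding onto the ``big Bruhat cell.'' Define the candidate functor by $\Phi(\cF) := \iota_{!*}\bigl(\pi_{\bL}^{*}\cF[d]\bigr)$, twisted along the $\bU_P$ and $\bU_P^-$ directions by pullbacks of $\cM$ so that the output is $(\bJ^c\times\bJ^c,\cM^c\boxtimes(\cM^c)^{-1})$-equivariant. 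Promoting $\Phi$ to a monoidal functor then follows the template of Theorems \ref{t:main2} and \ref{t:action} from the regular case.

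To verify that $\Phi$ is an equivalence, compare simple objects: the $\bJ^c$-double cosets in $\bG$ that support functions of $\sH^c$ are parametrized by dominant coweights of $\hL$, cf.~\S \ref{ss:relevantCosets}, matching the parametrization of simple objects of $\Perv_{\bL_\cO}(\bGr_{\bL})$ by Theorem \ref{t:satake}. One shows that $\Phi$ sends the IC sheaf of an $\bL_\cO$-orbit closure in $\bGr_{\bL}$ to the IC sheaf of the corresponding $\bJ^c$-double coset closure in $\bG/\bJ^c$, establishing essential surjectivity on simples and, via $\Hom$ computations that reduce to $\Hom$ in $\Perv_{\bL_\cO}(\bGr_{\bL})$, full faithfulness. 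Part (i) then follows from the monoidal structure: for $\cF_1,\cF_2 \in \Perv_{\bL_\cO}(\bGr_{\bL})$, $\cF_1\star\cF_2$ is perverse by Theorem \ref{t:satake}(i) applied to $\bL$, and a direct comparison of convolution diagrams yields $\Phi(\cF_1\star\cF_2)\cong\Phi(\cF_1)\star\Phi(\cF_2)$, which simultaneously establishes closure of $\sHg^c$ under convolution and the monoidality of $\Phi$.

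The principal obstacle will be establishing this monoidal compatibility: it requires carefully tracking how the character sheaf $\cM$ pulls back along the convolution diagram and verifying that contributions from $\bU_P$ and $\bU_P^-$ cancel precisely because $\bmu$ is $W_L$-invariant. A subsidiary obstacle, absent from the regular case (where Theorem \ref{t:main} provides cleanness), is that simple objects of $\sHg^c$ need not be clean; one must perform a genuine intersection-cohomology computation on $\bL_\cO$-orbits in $\bGr_{\bL}$ in the spirit of Mirkovi\'c--Vilonen, and transfer the result to $\bG/\bJ^c$ through $\Phi$. Finally, to carry out the argument rigorously, the foundational setup of $\sHg^c$ (definition and description via a proalgebraic quotient in the spirit of Lemma \ref{l:J'algebraic}) must first be developed for the non-Iwahori subgroup $\bJ^c$; this step, while expected to be routine, is a prerequisite that lies outside the present paper.
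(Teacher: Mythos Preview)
The statement you are addressing is labeled \emph{Conjecture} \ref{conj:shgc} in the paper, and the paper does not prove it. It appears in \S\ref{ss:conjectures}, which the authors explicitly flag as a section of conjectures, not results: ``The discussions of this subsection are not used anywhere else in the paper; in particular, a reader who is only interested in the regular case or not interested in conjectures can skip this subsection.'' There is therefore no paper proof to compare your proposal against.

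Your proposal is not a proof but a research strategy, and you are candid about this: you identify the monoidal compatibility of $\Phi$ as a ``principal obstacle,'' the absence of cleanness as a ``subsidiary obstacle'' requiring genuine IC computations, and the foundational setup for $\bJ^c$ as a prerequisite lying ``outside the present paper.'' That honesty is appropriate. The overall shape of the strategy --- reduce to geometric Satake for $\bL$ via a parabolic-induction correspondence --- is natural and consistent with the paper's own heuristics (e.g., the remark after Conjecture \ref{c:centralFamily} that $J^c$-double cosets supporting $\sH^c$ should match $\Stab_W(\bmu)$-invariant coweights). But several steps are more delicate than you indicate: the claimed Iwahori-type factorization of $\bJ^c$ needs justification; the assertion that $\iota$ is a locally closed embedding onto a ``big cell'' is not obvious for general parabolic $\bmu$; and showing that $\iota_{!*}$ intertwines convolution is exactly the kind of statement that, in the unramified setting, already requires the full machinery of \cite{Gaitsgory01} or \cite{MV}. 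In short, this is a plausible outline for attacking an open problem, not a proof, and the paper treats it as such.
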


\begin{conj} $\sHg^c$ acts on $\sWg^c$ by convolution.
\end{conj}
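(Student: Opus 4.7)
The plan is to adapt Gaitsgory's proof of Theorem \ref{t:gaitsgory} to the parabolic twisted setting, reducing to a twisted convolution problem on the affine Grassmannian of the Levi $L$. Assuming Conjecture \ref{conj:shgc}.(ii), every object of $\sHg^c$ is built by successive extensions from simple objects corresponding to irreducible representations $V \in \Rep \hL$. By d\'evissage and the long exact sequence in perverse cohomology, it suffices to verify that the convolution $\cF \star \cG$ is perverse whenever $\cF$ is a simple object of $\sWg^c$ and $\cG$ is a generator of $\sHg^c$ indexed by a dominant coweight of $\hL$.

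I would organize the convolution via the standard twisted diagram $m\colon \bG \, \tilde{\times}^{\bJ^c} (\bG/\bJ^c) \to \bG/\bJ^c$, so that $\cF \star \cG = m_!(\cF \, \tilde{\boxtimes} \, \cG)$. Because $J^c = J \cdot L(\cO)$, the quotient $\bG/\bJ^c$ admits a projection to a partial affine flag variety for $G$, with fibers of the form $L(\cO)/J_L$ (geometrically), where $J_L \subseteq L(F)$ is Roche's compact open subgroup of $L$ associated to $\bmu|_{T(\cO)}$. Crucially, by the parabolic hypothesis, $\bmu$ is $W_L$-invariant and hence extends to a character of $L(\cO)$, placing us in a twisted analogue of the spherical situation on $L$. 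Using this structure, one can factor the convolution diagram for $\bG/\bJ^c$ through the analogous convolution diagram for the affine Grassmannian of $L$, and then invoke a twisted version of Gaitsgory's theorem on $L$, whose proof should proceed by the Mirkovi\'c--Vilonen semi-smallness argument combined with the fact that the tame character sheaf $\cM^c$ restricted to (the proalgebraic group of) $L(\cO)$ is compatible with the factorization structure on $\bL/L(\cO)$. Monoidality of the resulting action follows formally from associativity of convolution together with the equivalence in Conjecture \ref{conj:shgc}.(ii).

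The hard part will be (i) establishing the twisted analogue of Gaitsgory's theorem on the Levi and (ii) showing that the induction step from the affine Grassmannian of $L$ up to $\bG/\bJ^c$ preserves perversity, which amounts to checking that the relevant fibration is smooth of constant relative dimension on strata supporting $\cG$. There is also the technical issue that $\bG/\bJ^c$ is not ind-proper, so one must verify $m_! = m_*$ on the relevant complexes by a support argument analogous to the one used in the regular case to deduce Corollary \ref{c:!=*}. Among these, the twisted Mirkovi\'c--Vilonen argument on the Levi is the deepest: one must identify $\cM^c|_{L(\cO)}$ with a local system on $\bL/L(\cO)$ that factorizes over the Beilinson--Drinfeld affine Grassmannian of $L$, which uses the parabolic hypothesis in an essential way and is not guaranteed for more general non-regular characters.
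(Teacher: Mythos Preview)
The statement you are attempting to prove is explicitly labeled a \emph{conjecture} in the paper; the authors offer no proof. There is therefore nothing in the paper to compare your proposal against. The paper proves the regular case (Theorem~\ref{t:action}) by an entirely different, elementary mechanism---Artin's affine vanishing plus the cleanness/rigidity of $\sHg$---which does not generalize to the parabolic setting because the simple objects of $\sHg^c$ are no longer clean and $\sHg^c$ is not a Picard category.

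As for the content of your sketch: it is a plausible outline but not a proof. You assume Conjecture~\ref{conj:shgc}.(ii), which is itself open in the paper, and you correctly flag three substantial gaps---(i) a twisted analogue of Gaitsgory's theorem on the Levi, (ii) preservation of perversity along the induction step, and (iii) the $m_! \cong m_*$ issue on a non-proper quotient. Each of these is genuinely hard. In particular, your proposed factorization through the affine Grassmannian of $L$ is not obviously available: the quotient $\bG/\bJ^c$ does not literally fiber over $\Gr_L$ in a way that lets you pull back the Mirkovi\'c--Vilonen stratification, and the ``twisted Gaitsgory theorem on the Levi'' you invoke would itself require a version of Conjecture~\ref{conj:shgc} for $L$ (where $\bmu$ restricted to $L$ is central, hence essentially the untwisted spherical case---but one must still check the twist behaves under the parabolic induction step). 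Your remark about factorizing $\cM^c$ over the Beilinson--Drinfeld Grassmannian of $L$ is the right instinct for (i), but it is a serious project in its own right. In short: reasonable strategy, but what you have written is a research program, not a proof.
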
 
Note that the above conjectures specialize to Theorems \ref{t:satake}
and \ref{t:gaitsgory} in the unramified case and Theorems
\ref{t:main2} and \ref{t:action} in the regular case. We think of the
equivalence of Conjecture \ref{conj:shgc} as a geometric Satake
isomorphism for $\bmu$.

\subsubsection{Central functor} 
One can ask if there is a categorification of the inclusion $\sH^c
\cong Z(\sH) \into \sH$.  In the unramified setting this was done by
Gaitsgory \cite{Gaitsgory01}:\footnote{This was, in a sense, the first
  step behind the description in Theorem \ref{t:bezrukavnikov} of
  $\sD_{\bI}(\bG/\bI)$.} there exists a functor
 \[
\sZ: \sHg^c = \Perv_{\bG_{\cO}}(\bGr) \ra \sHg = \Perv_{\bI}(\bG/\bI),
 \]
 such that $\sZ(-) \star -$ and $- \star \sZ(-)$ yield well-defined,
 isomorphic functors $\Perv_{\bG_\cO}(\bGr) \times
 \Perv_{\bI}(\bG/\bI) \to \Perv_{\bI}(\bG/\bI)$.  Considered as a
 functor to $\sD_\bI(\bG/\bI)$, $\sZ$ is monoidal. Moreover, according
 to \cite{Zhu10}, $\sZ$ can be upgraded to a monoidal functor
 $\sD_{\bG_\cO}(\bGr) \ra \sD_\bI(\bG/\bI)$, also with the property
 that $\sZ(\cF) \star -$ is isomorphic to $- \star \sZ(\cF)$, and
 which induces an isomorphism from the K-theory of the source to the
 center of the K-theory of the target.
  
\begin{conj}\label{conj:cent} 
\begin{enumerate}
\item[(i)] There exists a functor $\sZ: \sHg^{c}\ra \sHg$ such that
  $\sZ(-) \star -$ and $- \star \sZ(-)$ yield well-defined, isomorphic
  actions of $\sHg^c$ on $\sWg$.
\item[(ii)] The functor $\sZ$ can be upgraded to a monoidal functor
  $\sHg^{c,\der}\ra \sHg^\der$, such that $\sZ(-) \star -$ is
  isomorphic to $- \star \sZ(-)$, and which induces an isomorphism
  from the K-theory ring of the source to the center of the K-theory
  ring of the target.
  \end{enumerate} 
\end{conj}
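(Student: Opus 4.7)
The plan is to adapt the nearby cycles construction of Gaitsgory \cite{Gaitsgory01} and Zhu \cite{Zhu10} to the parabolic setting, realizing $\sZ$ via a one-parameter degeneration of the pair $(\bJ, \cM)$ into the pair $(\bJ^c, \cM^c)$. Concretely, I would first construct a group ind-scheme $\tilde{\bJ}$ over a smooth curve $X$ (say $X = \mathbb{A}^1$ with distinguished point $0$) such that $\tilde{\bJ}|_0 \cong \bJ^c$ and $\tilde{\bJ}|_x \cong \bJ$ for $x \neq 0$, together with a compatible family $\tilde{\cM}$ of character sheaves on $\tilde{\bJ}$ whose specializations recover $\cM^c$ and $\cM$. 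Since $\bJ \subseteq \bJ^c$ and both level structures are built from the same character $\bmu$ of $T(\cO)$, it is natural to expect that $\tilde{\bJ}$ can be built by glueing $\bJ^c$ over $0$ to the constant family $\bJ$ over $X \setminus \{0\}$ using a family of level subgroups on the formal disc, and that $\cM^c$ restricts compatibly to $\cM$.

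Given such a family, I would define $\sZ(\mathcal{F})$ for $\mathcal{F} \in \sHg^c$ as the nearby cycles $\Psi_X$ applied to a canonical extension of $\mathcal{F}$ along the corresponding family of quotients $\bG/\tilde{\bJ}$ over $X$. Standard properties of nearby cycles on Beilinson-Drinfeld-type families would then yield: (a) $\sZ(\mathcal{F}) \in \sHg$ after an appropriate cohomological shift; (b) the $(\bJ \times \bJ, \cM \boxtimes \cM^{-1})$-equivariant structure on $\sZ(\mathcal{F})$ is inherited from the $\tilde{\bJ}$-equivariance of the total space; (c) the centrality isomorphism $\sZ(\mathcal{F}) \star \mathcal{G} \cong \mathcal{G} \star \sZ(\mathcal{F})$ for $\mathcal{G} \in \sWg$ arises from a fusion argument on a two-point degeneration over $X \times X$, where the two possible collision orders of a $\bJ^c$-point and a $\bJ$-point produce canonically isomorphic nearby cycles.

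For part (ii), exactly the same construction applied at the level of derived categories yields a triangulated functor $\sHg^{c,\der} \to \sHg^\der$; monoidality and the property $\sZ(-) \star - \cong - \star \sZ(-)$ follow from the corresponding fusion interpretations on the convolution Grassmannian, using factorization structures \`a la Beilinson-Drinfeld. The K-theoretic assertion then reduces, via the sheaf-function dictionary, to the classical statement $\sH^c \cong Z(\sH)$, which in turn follows from Roche's isomorphism \eqref{eq:affineHecke} together with the standard description of the center of an affine Hecke algebra in terms of Weyl-invariants in $\K_0(\Rep(\hT))$, matched with $\K_0(\Rep(\hL))$ via Conjecture \ref{c:centralFamily}.(ii).

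The principal obstacles are threefold. First, constructing the degeneration $\tilde{\bJ}$ together with a compatible family of character sheaves $\tilde{\cM}$ is subtle: Roche's subgroups $J^c$ and $J$ are defined by explicit combinatorial recipes that do not obviously admit algebraic interpolation, and one will likely need to filter $\tilde{\bJ}$ by the various root-subgroup level structures entering the definition of $J$ and track $\tilde{\cM}$ through this filtration. Second, the derived category $\sHg^\der$ is not rigorously defined in the literature (as already flagged in \S \ref{sss:triangulated}), and establishing a well-behaved monoidal structure on it is a prerequisite for even making sense of part (ii). Third, and most delicate, is proving compatibility of nearby cycles with the $\tilde{\cM}$-twists and with convolution in the present non-Iwahori, potentially non-connected setting; this is the step I expect to constitute the main technical obstacle, since the character-sheaf data genuinely differs between the special and generic fibers and must be tracked coherently through the degeneration.
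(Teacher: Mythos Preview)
The statement you are attempting to prove is labeled as a \emph{conjecture} in the paper (Conjecture~\ref{conj:cent}), and the paper offers no proof of it. It appears in \S\ref{ss:conjectures}, which the authors explicitly flag as speculative and not used elsewhere. So there is no proof in the paper to compare your proposal against.

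Your outline is a reasonable sketch of the expected strategy: it is the natural adaptation of the Gaitsgory--Zhu nearby-cycles construction to the present twisted, parabolic setting, and the obstacles you identify (constructing the degeneration of $(\bJ,\cM)$ to $(\bJ^c,\cM^c)$, the lack of a rigorous definition of $\sHg^\der$, and the compatibility of nearby cycles with the character-sheaf twists) are precisely the gaps that make this a conjecture rather than a theorem. In particular, the second obstacle is one the authors themselves flag in \S\ref{sss:triangulated}: until the twisted equivariant derived category $\sHg^\der$ is properly defined, part~(ii) cannot even be formulated precisely. Your proposal should therefore be read as a plausible program, not a proof, and that is consistent with how the paper treats the statement.
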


Roughly speaking, the above conjectures says that we can think of
$\sHg^{c,\der}$ as a ``monoidal center'' of the triangulated category
$\sHg^\der$. The monoidal abelian category $\sHg^c$ is the perverse
heart of this monoidal center.
  
\begin{remark} Let $\bmu$ be an arbitrary character of $T(\cO)$.
  Possibly, one could still geometrize $\sH^c$ as the perverse heart
  $\sHg^c$ of some ``central'' subcategory of $\sHg^{\der}$. If so, we
  would hope that $\sHg^c$ is closed under convolution and has a
  description in Langlands dual terms. Moreover, one can ask if
  Conjectures \ref{conj:shgc} and \ref{conj:cent} still hold, at least
  when the group $L$ from \S \ref{sss:affineHecke} is connected.
  % Note that, when $L$ is connected, it follows
  % from (\ref{eq:affineHecke}) and the usual Satake Isomorphism that
  % $\K_0(\Rep(\hL)) \cong Z(\sH(G(F), J, \mu))$, as in Remark
  % \ref{r:parabolicSatake}.
   \end{remark}

   \subsection{Restrictions on the field, group, and
     coefficients} \label{ss:restrictions} In this paper, we work over
   $\Fq\dblparens{t}$, assuming that $q$ is mildly restricted
   depending on which reductive group we are considering; see \S
   \ref{sss:residue} for details. This restriction is necessary
   because it is required by \cite{Roche98} for Theorem
   \ref{t:Roche}.(ii) (we actually relax the conditions of that
   theorem in view of \cite{Yu-ctsr}: see \S \ref{sss:residue}).  We
   note that the main results of this paper and their proofs (\S$\!$\S
   \ref{s:geom} - \ref{s:conv}) also hold for $k\dblparens{t}$, where
   $k$ is an algebraically closed field of characteristic restricted
   as in \S \ref{sss:residue};
% or of
%   characteristic zero; 
   the main caveat is that
%, in the characteristic
%   zero setting, 
   one does not necessarily have a $\bmu$, but only a choice of
   Roche's group $\bJ$ and an appropriate character sheaf $\cM$ and
   subgroup $\bJ' < \bJ$ on which $\cM$ is trivial; see Remark
   \ref{r:algclsd} for details. We can also work over algebraically
   closed fields of characteristic zero, at the price of making $J$ be
   necessarily the Iwahori subgroup; see Remark \ref{r:charzero}.

   In the unequal characteristic setting, one also has group
   ind-schemes (resp. group schemes) $\bG$ (resp. $\bG_\cO$) such that
   $\bG(\Fq)=G(F)$ (resp. $\bG_\cO(\Fq)=G(\cO)$). However, as far as
   we know, there is no realization of the affine Grassmannian as an
   ind-scheme of ind-finite type; see, for instance,
   \cite{Kreidl10}. Therefore, we don't know how to make sense of
   sheaves on the affine Grassmannian in the unequal characteristic.

 We consider connected split reductive groups $G$ over $\cO$. One can
 ask if there are analogues of our results for non-split groups.  In
 this regard, we note that Haines and Rostami have proved a version of
 Satake isomorphism for non-split groups \cite{Haines10}. Furthermore,
 X. Zhu has proved an analogue of Theorem \ref{t:gaitsgory} for
 non-split groups \cite{Zhu10}. In particular, for quasisplit groups,
 where principal series representations still make sense, we expect
 that our results should admit a generalization.

 We work with $\bQl$-sheaves on the \'{e}tale topology. Over $\bC$,
 one can also work with sheaves in the complex topology. In
 particular, in \cite{MV}, Mirkovi\'c and Vilonen prove the geometric
 Satake isomorphism for sheaves of $R$-modules on the complex topology
 of the loop group, where $R$ is a commutative Noetherian unital ring
 of finite global dimension. Our main results and proofs (\S$\!$\S
 \ref{s:geom} - \ref{s:conv}) also extend to this setting, at least
 when $R$ is a field; see Remark \ref{r:anycoeff} for details.

\subsection{Acknowledgements} We are indebted to V. Drinfeld for
sharing his conjectures with us and for helpful conversations.  We
thank D. Gaitsgory for helping us at crucial stages. We are grateful
to R. Howe for bringing to our attention his remarkable paper
\cite{Howe73}. A. Roche gave informative answers to many of our
questions and helped us understand the realization of representations
via compact open subgroups. Finally, we would like to thank
S. Arkhipov, D. Ben-Zvi, R. Bezrukavnikov, M. Boyarchenko,
W. Casselman, P. Etingof, J. Gordon, T. Haines, J. Kamnitzer,
C. Mautner, D. Nadler, D. Nikshych, R. Reich, P. Sally, K. Vilonen,
Z. Yun, and X. Zhu for helpful conversations.

%%%%%%%%%%%%%%%%%%%%%%%%%%%%%%%%%%%%%%%%%%%%%%%%%%%%%%%%

\section{Roche's compact open subgroups} \label{s:compactGroups}

\subsection{Conventions} \label{ss:conventions} In the present
section, as well as \S \ref{s:repViaCompact}, $F$ and $\cO$ need not
be $\Fq\dblparens{t}$ and $\Fq\dblsqbrs{t}$ as we assume in the rest
of the paper; it suffices for $F$ to be a local field with ring of
integers $\cO$, unique maximal ideal $\fp$, residue field $\Fq$, and
uniformizer $t$.

\subsubsection{Reductive groups} \label{sss:reductive} Let $G$ be a
connected split reductive group over $\cO$.  Fix a split maximal torus
$T < G$. Let $\Delta\subset \Hom(T,\bbG_m)$ denote the set of roots of
$G$ with respect to $T$. Let $\Lambda = \Hom(\Gm,T)$ be the coweight
lattice.
%, and let $\Delta^\vee\subset \Lambda$ denote the set of
%coroots. 
To an element $\lambda \in \Lambda$, we associate $t^\lambda
= \lambda(t) \in T(F)$.

For every $\alpha \in \Delta$, let $u_\alpha: \bbG_a \to G$ be the
corresponding one-parameter subgroup, where $\bbG_a$ is the additive
group.  Let $U_\alpha < G$ be the image. For all $i \in \bZ$, let
$U_{\alpha,i} = u_\alpha(\fp^i) < G(F)$. Moreover, for $i \geq 1$, let
$T_i$ be the subgroup of $T(\cO)$ generated by the image of $1+\fp^i$
under all coweights, i.e., the image of $1+\fp^i$ under the natural
isomorphism of topological groups $\Lambda \otimes_{\bZ} F^\times \iso
T(F)$.  In particular, for $i \geq 1$, $T_i$ and $U_{\alpha,i}$ are
the kernels of $T(\cO) \to T(\cO/\fp^i)$ and $U_\alpha(\cO) \to
U_\alpha(\cO/\fp^i)$.

Fix a partition $\Delta = \Delta_+ \sqcup \Delta_-$ into positive and
negative roots.  Let $B=B^+$ denote the Borel subgroup defined by
$\Delta_+$ and $B^-$ denote the Borel defined by $\Delta_-$. Let
$U=U^+$ denote the unipotent radical of $B$ and let $U^-$ denote the
unipotent radical of $B^-$.  Let $\Lambda_+ \subseteq \Lambda$ denote
the subset of dominant coweights; that is,
\[
\Lambda_+:=\{ \lambda\in \Lambda \, |\, \alpha(\lambda)\geq 0 \quad
\forall \, \, \alpha\in \Delta_+\}.
\]
Then $-\Lambda_{+}$ is the set of antidominant coweights.  (Note that,
by our conventions, $\Lambda_+ \cap -\Lambda_+$ is the sublattice of
coweights whose image is in the center of $G$.)

\subsubsection{Representation theory over $\bC$ vs. $\bQl$} 
We fix, once and for all, a prime number $\ell$ not a factor of $q$,
and an isomorphism of fields $\bQl \cong \bC$. Using the isomorphism,
we carry over results regarding complex coefficients to the $\bQl$
case.
% Certain results in the theory of
% smooth representation of $p$-adic groups are proved with complex
% coefficients. Using this isomorphism, we interpret them as statements
% about representations over $\bQl$.

\subsection{Roche's compact open subgroup}\label{ss:rocheGroups}
Suppose that $f: \Delta \to \bZ$ is a function satisfying the
properties
\begin{enumerate}
\item[(a)] $f(\alpha) + f(\beta) \geq f(\alpha+\beta)$, whenever
  $\alpha, \beta, \alpha+\beta \in \Delta$;
\item[(b)] $f(\alpha) + f(-\alpha) \geq 1$.
\end{enumerate}

Define the following subgroups of $G(F)$: 
\begin{gather*}
  U_f := \langle U_{\alpha, f(\alpha)} \mid \alpha \in \Delta \rangle; \\
  U_{f,\alpha} := U_f \cap U_\alpha(F); \\
  J_f := \langle U_f, T(\cO) \rangle; \\
  T_f := \prod_{\alpha \in \Delta} \alpha^\vee
  (1+\fp^{f(\alpha)+f(-\alpha)}) < T(F).
\end{gather*}
Then Roche proved (based on results from \cite{BTgrcl1})

\begin{lemma}\cite[Lemma 3.2]{Roche98} \label{l:uroche} 
\begin{enumerate}
\item[(i)] $U_{f,\alpha} = U_{\alpha, f(\alpha)}$ for all $\alpha \in
  \Delta$;
\item[(ii)] The product map $\prod_{\alpha \in \Delta_{\pm}}
  U_{\alpha,f(\alpha)} \to U_f^{\pm}$ is bijective for any ordering of
  the factors in the product and any choice of sign $\pm$;
\item[(iii)] $U_f$ has the direct product decomposition $U_f = U_f^-
  T_f U_f^+$;
\item[(iv)] $J_f$ has the direct product decomposition $J_f = U_f^-
  T(\cO) U_f^+$.
\end{enumerate}
\end{lemma}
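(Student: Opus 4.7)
The plan is to prove the statements in the logical order (ii) $\Rightarrow$ (i) $\Rightarrow$ (iii) $\Rightarrow$ (iv), following the standard Bruhat--Tits machinery of concave functions. The essential input is Chevalley's commutator formula: for roots $\alpha,\beta$ with $\alpha+\beta\neq 0$,
\[
[u_\alpha(x), u_\beta(y)] \;=\; \prod_{\substack{i,j\geq 1 \\ i\alpha+j\beta \in \Delta}} u_{i\alpha+j\beta}\bigl(c_{ij}^{\alpha,\beta}\,x^i y^j\bigr)
\]
for suitable structure constants $c_{ij}^{\alpha,\beta}$, together with a similar identity for the ``rank one'' case $\beta=-\alpha$ producing a $T$-factor proportional to $\alpha^\vee(1+xy)$ modulo higher-order root terms.

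For (ii), fix an ordering of $\Delta_+$ and let $H^+$ denote the image of the product map. The inclusion $H^+\subseteq U_f^+$ is obvious. To prove the reverse inclusion and that the product map is a bijection, I would show that $H^+$ is in fact a subgroup, stable under reordering. Both facts reduce, by an induction on the height of the root being moved past another, to checking that each commutator $[u_\alpha(x), u_\beta(y)]$ with $x\in \fp^{f(\alpha)}$ and $y\in \fp^{f(\beta)}$ lies in $H^+$. By the commutator formula, each factor has the form $u_{i\alpha+j\beta}(c\cdot x^iy^j)$ with $x^iy^j \in \fp^{if(\alpha)+jf(\beta)}$; concavity condition (a) (applied iteratively) gives $if(\alpha)+jf(\beta)\geq f(i\alpha+j\beta)$ for $i,j\geq 1$, so each factor lies in $U_{i\alpha+j\beta, f(i\alpha+j\beta)}\subseteq H^+$. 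Injectivity of the product map then follows from the corresponding statement for the big cell of the ambient reductive group $G(F)$, since the map factors through the usual bijection $\prod_{\alpha\in\Delta_+} U_\alpha(F) \iso U^+(F)$.

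Statement (i) is a direct consequence of (ii): $U_{\alpha,f(\alpha)} \subseteq U_{f,\alpha}$ is obvious, and conversely any $g\in U_f \cap U_\alpha(F)$ lies either in $U_f^+$ or $U_f^-$ (after separating out any possible $T_f$-component via (iii), proved next), hence in $\prod_\beta U_{\beta,f(\beta)}$; uniqueness of coordinates in $U^\pm(F)$ forces all components with $\beta\neq \alpha$ to vanish. For (iii), the key new ingredient beyond (ii) is the commutator formula in the ``rank one'' direction: for $x\in\fp^{f(\alpha)}$, $y\in\fp^{f(-\alpha)}$, condition (b) ensures $xy\in\fp$, so the identity $[u_\alpha(x),u_{-\alpha}(y)] = \alpha^\vee(1+xy)\cdot(\text{elements of }U_f^\pm)$ produces a torus factor in $T_f$. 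Combined with (ii), this yields $U_f = U_f^{-}T_f U_f^{+}$; the decomposition is \emph{direct} because the corresponding decomposition $G(F) \supset U^{-}(F)\cdot T(F)\cdot U^{+}(F)$ of the big cell is. For (iv), since $T(\cO)$ acts on each $U_{\alpha,i}$ by scaling via $\cO^\times$, it normalizes $U_f^{\pm}$, and $T_f\subseteq T(\cO)$; hence $J_f = \langle U_f, T(\cO)\rangle = U_f^{-}T(\cO) U_f^{+}$, directly.

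The principal obstacle is the proof of (ii): the induction on root height used to straighten the product and to verify that $H^+$ is a group requires a careful bookkeeping of how concavity (a) interacts with iterated commutators (one needs $\sum_k i_k f(\alpha_k) \geq f(\sum_k i_k \alpha_k)$ for all positive combinations lying in $\Delta$, which follows from (a) by induction but must be tracked). Everything else reduces to this once Chevalley's formula is in hand.
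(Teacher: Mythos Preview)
The paper does not supply its own proof of this lemma: it is quoted verbatim from \cite[Lemma 3.2]{Roche98}, with the attribution ``based on results from \cite{BTgrcl1}.'' So there is nothing to compare against here; your sketch is essentially the standard Bruhat--Tits argument via Chevalley commutators and concavity that one finds in those sources.

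Two small points on the write-up. First, your declared logical order (ii) $\Rightarrow$ (i) $\Rightarrow$ (iii) $\Rightarrow$ (iv) does not match what you actually do: in arguing (i) you invoke (iii) (``after separating out any possible $T_f$-component via (iii), proved next''), so the real dependency is (ii) $\Rightarrow$ (iii) $\Rightarrow$ (i) $\Rightarrow$ (iv). This is harmless since your proof of (iii) uses only (ii). Second, the rank-one identity is not literally a commutator but a rewriting of the form $u_\alpha(x)\,u_{-\alpha}(y) = u_{-\alpha}(y')\,\alpha^\vee(1+xy)\,u_\alpha(x')$, valid when $1+xy$ is a unit; condition (b) guarantees $xy\in\fp$, so this is fine, but the formula as you stated it should be adjusted.
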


Henceforth, we assume that $f$ is fixed and write $J=J_f$ for the
corresponding compact open subgroup of $G(F)$.  The above lemma
implies that, under a suitable ordering of $\Delta$ (i.e., any
ordering of $\Delta_+$ followed by any ordering of $\Delta_-$, or vice
versa), there are direct product decompositions

 \begin{equation} \label{eq:conjLambda} t^\lambda J t^{-\lambda} = T(\cO)
   \times \prod_{\alpha \in \Delta}
   U_{\alpha,f(\alpha)+\alpha(\lambda)};
\end{equation} 

\begin{equation} \label{eq:intersectLambda} J \cap t^\lambda J
  t^{-\lambda} = T(\cO) \times \prod_{\alpha \in \Delta}
  U_{\alpha,\max\{f(\alpha),f(\alpha)+\alpha(\lambda)\}};
  \end{equation} 

  \begin{equation}\label{eq:intersect2Lambdas}
    (t^{-\lambda} U_f^+ t^\lambda) \cap (t^{-\nu} U_f^+ t^\nu) = 
    \prod_{\alpha \in
      \Delta_+} U_{\alpha,f(\alpha)-\min\{   \alpha(\lambda), 
      \alpha(\nu)           \}}.
\end{equation}

We will make frequent use of the decomposition of Lemma
\ref{l:uroche}.(iv).  For convenience, let $J^- := U_f^-, J^0 :=
T(\cO)$, and $J^+ := U_f^+$, so that $J = J^- J^0 J^+$, which is a
direct product decomposition.  We refer to this as the Iwahori
decomposition of $J$; we will also use that the decomposition remains
valid if the three factors $J^-, J^0$, and $J^+$ are rearranged in any
order.  Note that if $f$ is defined to be $0$ on the positive roots
and $1$ on the negative roots, then $J$ coincides with the Iwahori
subgroup of $G(F)$ defined by $\Delta_+$, and the above product is the
usual Iwahori decomposition.

\subsection{Relevant double cosets} \label{ss:relevantCosets} We are
particularly interested in the following special double cosets of $J$,
since, by \cite[Theorem 4.15]{Roche98}, for a regular character $\bmu:
T(\cO) \to \bQlt$, they are the only ones that support $(J \times J,
\mu \times \mu^{-1})$-invariant functions (cf.~\S \ref{sss:endalgs}),
for $J = J_{f_{\bmu}}$ and $\mu: J \to \bQlt$ as defined in
\emph{op.~cit.}, and recalled in \S \ref{sss:compactGr} below.

\begin{definition} \label{d:relevant} A {\it{relevant double coset}}
  is a double coset of $J$ in $G(F)$ of the form $Jt^\lambda J$, for
  $\lambda \in \Lambda$.
  \end{definition}  

  We now establish some elementary properties of relevant double
  cosets.
\begin{lemma} \label{l:conjJ} Suppose $\lambda\in \Lambda_{+}$.
\begin{enumerate} 
\item[(i)] $t^\lambda J^+ t^{-\lambda} \subseteq J^+$.
\item[(ii)] $t^{-\lambda} J^{-} t^\lambda \subseteq J^-$.
\item[(iii)] $t^\lambda J^0 t^{-\lambda} = J^0$. 
\end{enumerate} 
\end{lemma}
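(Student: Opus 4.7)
The plan is to deduce all three containments directly from the Iwahori decomposition of $J$ (Lemma \ref{l:uroche}.(iv)) combined with the standard formula for conjugating root subgroups by $t^\lambda$. Recall that for any $\alpha \in \Delta$ and $i \in \bZ$, conjugation by $t^\lambda$ sends the one-parameter subgroup element $u_\alpha(x)$ to $u_\alpha(t^{\alpha(\lambda)} x)$, so
\[
t^\lambda \, U_{\alpha,i} \, t^{-\lambda} = U_{\alpha,\, i+\alpha(\lambda)}.
\]
This is the only non-formal input; everything else is rearranging factors.

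For (iii), I would observe that $J^0 = T(\cO)$ sits in the abelian group $T(F)$, which contains $t^\lambda$, so $t^\lambda J^0 t^{-\lambda} = J^0$ immediately. This part needs no assumption on $\lambda$.

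For (i), I would use the product decomposition $J^+ = U_f^+ = \prod_{\alpha \in \Delta_+} U_{\alpha,f(\alpha)}$ from Lemma \ref{l:uroche}.(ii). Conjugating factor by factor,
\[
t^\lambda J^+ t^{-\lambda} = \prod_{\alpha \in \Delta_+} U_{\alpha,\, f(\alpha)+\alpha(\lambda)}.
\]
Since $\lambda \in \Lambda_+$, we have $\alpha(\lambda) \geq 0$ for every $\alpha \in \Delta_+$, so each factor $U_{\alpha,f(\alpha)+\alpha(\lambda)}$ sits inside $U_{\alpha,f(\alpha)}$. Hence the product lies in $J^+$. Part (ii) is identical in spirit: $J^- = \prod_{\alpha \in \Delta_-} U_{\alpha,f(\alpha)}$, and conjugation by $t^{-\lambda}$ replaces $f(\alpha)$ by $f(\alpha) - \alpha(\lambda)$; for $\alpha \in \Delta_-$ dominance of $\lambda$ gives $\alpha(\lambda) \leq 0$, so $-\alpha(\lambda) \geq 0$ and each factor again shrinks.

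The main (in fact only) subtlety is making sure that the direct product decomposition of $U_f^{\pm}$ really does commute with factor-by-factor conjugation, i.e.~that the ordered product on the right still exhausts $t^\lambda U_f^+ t^{-\lambda}$. This is immediate from Lemma \ref{l:uroche}.(ii) applied to both sides (and is essentially what \eqref{eq:conjLambda} records). No further input is needed, so there is no real obstacle here; the lemma is a bookkeeping exercise with the Iwahori decomposition.
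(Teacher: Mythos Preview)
Your argument is correct and matches the paper's proof: the paper simply cites \eqref{eq:conjLambda} (which you effectively re-derive from Lemma~\ref{l:uroche}.(ii)) together with the definition of dominance for (i) and (ii), and calls (iii) obvious. There is no substantive difference.
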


\begin{proof} (i) and (ii) follow immediately from
  \eqref{eq:conjLambda} and definition of dominant and antidominant
  coweights. (iii) is obvious.
\end{proof} 

The following proposition will be crucial for us. Particularly note
the ``miracle'' of part (c), which will be a key ingredient of the
proof of Proposition \ref{p:conv}.
\begin{proposition} \label{p:cosetMult}
\begin{enumerate}
\item[(a)] For all $\lambda\in \Lambda_{+}$, $t^{-\lambda}J
  t^{\lambda} J= t^{-\lambda} J^{+} t^{\lambda} \times J^{0}\times
  J^{-}$.
\item[(b)] For all $\lambda, \nu\in \Lambda_{+}$, $Jt^\lambda Jt^\nu
  J=Jt^{\lambda+\nu}J$.
\item[(c)] Suppose $\lambda, \nu, \kappa \in \Lambda$.  Then
  $Jt^{\kappa}J\cap Jt^{\lambda}J t^{\nu}J$ is empty unless
  $\kappa=\lambda + \nu$.
\item[(d)] For all $\lambda, \nu \in \Lambda_+$, $Jt^{\nu} Jt^{-\nu} J
  \bigcap J t^{-\lambda} J t^{\lambda} J = J$.
\end{enumerate}
\end{proposition}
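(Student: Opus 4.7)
All four parts rest on the Iwahori factorization $J = J^- J^0 J^+$ of Lemma \ref{l:uroche}(iv) together with the contraction/expansion properties of Lemma \ref{l:conjJ}.

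For (a), I would expand $t^{-\lambda} J t^\lambda$ as the triple product $(t^{-\lambda} J^- t^\lambda) J^0 (t^{-\lambda} J^+ t^\lambda)$. For $\lambda \in \Lambda_+$ the first factor is contained in $J^-$ and the third contains $J^+$. Multiplying by $J = J^- J^0 J^+$ on the right and shuffling $J^0$-factors (which normalize each root subgroup) absorbs the contracted factor $t^{-\lambda} J^- t^\lambda$ into the outer $J^-$ and the trailing $J^+$ into the expanded $t^{-\lambda} J^+ t^\lambda$, yielding $(t^{-\lambda} J^+ t^\lambda) \cdot J^0 \cdot J^-$. Uniqueness of this decomposition as a direct product follows from $U^+(F) \cap B^-(F) = \{e\}$, which forces each factor of such a triple product to be individually determined. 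For (b), the inclusion $\supseteq$ is immediate; for $\subseteq$, decompose the middle $J$ as $J^+ J^0 J^-$ and use the inclusions $t^\lambda J^+ \subseteq J^+ t^\lambda$ (equivalent to $t^\lambda J^+ t^{-\lambda} \subseteq J^+$) and $J^- t^\nu \subseteq t^\nu J^-$ to absorb each of the three factors into a neighboring copy of $J$, leaving $t^{\lambda+\nu}$ sandwiched between two copies of $J$.

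Part (c) is the decisive step. After rewriting the two expressions for $x \in Jt^\kappa J \cap J t^\lambda J t^\nu J$ and multiplying by appropriate elements of $J$, one reduces to $t^\kappa = j_7 t^\lambda j_4 t^\nu j_8$ with $j_4, j_7, j_8 \in J$; conjugating $j_7$ and $j_8$ by $t^{\pm\lambda}$ and $t^{\pm\nu}$ rearranges this into
\[
t^{\kappa-\lambda-\nu} \;\in\; \bigl((t^{-\lambda} J t^\lambda) \cdot J \cdot (t^\nu J t^{-\nu})\bigr) \cap T(F).
\]
The ``miracle'' is that this intersection lies in $T(\cO)$. Each of the three conjugate groups admits an Iwahori factorization with torus piece $T(\cO)$ and root pieces $U_{\alpha,\cdot}$. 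Expanding the triple product as nine factors and demanding the result lie in $T(F)$ forces, by uniqueness of the big Bruhat decomposition $U^-(F) \cdot T(F) \cdot U^+(F)$ combined with $U^+(F) \cap B^-(F) = \{e\}$, all six $U^\pm$-factors to cancel individually; the remaining product of three $T(\cO)$-elements lies in $T(\cO)$. The conclusion $\kappa = \lambda + \nu$ then follows from $T(F)/T(\cO) = \Lambda$.

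For (d), the inclusion $J \subseteq$ both sides is immediate (take all middle factors trivial and use $t^\nu t^{-\nu} = t^{-\lambda} t^\lambda = e$). For the reverse, any element $x$ of the intersection belongs to a single $J$-double coset $JgJ$ contained in both products; applying (c) with $(\lambda,\nu) \mapsto (\nu,-\nu)$ and with $(-\lambda,\lambda)$ forces $JgJ = J$ whenever $JgJ$ is relevant. The non-relevant cosets are ruled out by combining the Iwahori-type descriptions of each side as $J \cdot (t^\mu J t^{-\mu}) \cdot J$ (using analogs of (a) with $+$ and $-$ swapped) with direct Bruhat-uniqueness arguments showing the non-relevant constituents of the two products are disjoint. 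The principal obstacle is the Bruhat-uniqueness step in (c); the other three parts are careful bookkeeping with Iwahori factorization.
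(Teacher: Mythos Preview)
Your arguments for (a) and (b) are fine and match the paper's. The real issue is (c), and (d) inherits the problem.

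In (c) you reduce correctly to
\[
t^{\kappa-\lambda-\nu}\;\in\;\bigl(t^{-\lambda}Jt^{\lambda}\bigr)\cdot J\cdot\bigl(t^{\nu}Jt^{-\nu}\bigr)\cap T(F),
\]
but your claim that ``expanding the triple product as nine factors \dots\ forces all six $U^{\pm}$-factors to cancel individually'' is not justified. Writing $a^{-}a^{0}a^{+}\cdot b^{-}b^{0}b^{+}\cdot c^{-}c^{0}c^{+}$, the inner products $a^{+}b^{-}$ and $b^{+}c^{-}$ lie in $U^{+}(F)\,U^{-}(F)$, which is \emph{not} contained in the big cell $U^{-}(F)\,T(F)\,U^{+}(F)$; such a product may require a nontrivial Weyl element to decompose, so there is no mechanism forcing the unipotent pieces to cancel pairwise. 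The uniqueness of the big-cell decomposition only applies once you know an element already lies in that cell, and nothing in your argument establishes this for the intermediate products. The paper avoids this by first observing that $J$ does not depend on the choice of $\Delta_{+}$, so one may assume $\lambda$ is dominant; then $Jt^{\lambda}Jt^{\nu}J\subseteq Jt^{\lambda}J^{-}t^{\nu}J$, and after stripping the outer $J$'s via the Iwahori decomposition one is comparing a subset of $t^{\kappa}J^{0}U^{+}(F)$ with a subset of $t^{\lambda+\nu}U^{-}(F)$, whose intersection is visibly empty unless $\kappa=\lambda+\nu$. That WLOG step is the missing idea.

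For (d), your plan to invoke (c) only controls the relevant cosets in the intersection; you then appeal to unspecified ``direct Bruhat-uniqueness arguments'' for the non-relevant ones, which is where all the content would lie. The paper instead proves (d) directly, by the same device as (c): using dominance of $\lambda,\nu$ to contain one side in $J\cdot t^{\nu}J^{-}t^{-\nu}\cdot J\subseteq J\cdot U^{-}(F)\cdot J$ and the other in $J\cdot t^{-\lambda}J^{0}J^{+}t^{\lambda}\cdot J\subseteq J\cdot T(\cO)U^{+}(F)\cdot J$, whose intersection (after stripping outer $J$'s) is $\{e\}$.
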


\begin{proof}
  Lemma \ref{l:conjJ} implies that $t^{-\lambda} J t^\lambda
  J=(t^{-\lambda} J^{+} t^\lambda) J^{0}J^{-}$. Part (a) follows by
  observing that $t^{-\lambda} J^+ t^\lambda \subseteq U^+(F)$, and
  that $G(F) = U^+(F) T(F) U^-(F)$ is a direct product decomposition.

For (b), note that 
\[
Jt^\nu J t^\lambda J = J t^\nu (J^+J^0J^-) t^{\lambda} J = J (t^\nu
J^+ t^{-\nu}) t^\nu J^0 t^{\lambda} (t^{-\lambda} J^- t^\lambda) J
\subseteq J t^\nu J^0 t^\lambda J = J t^{\nu+\lambda} J.
  \]
The reverse inclusion is obvious. 

Next, for (c), first note that, up to the choice of positive roots
$\Delta_+ \subseteq \Delta$ (which does not change $J$ and therefore
does not change the statement), we can assume that $\lambda$ is
dominant. Then,
\begin{equation*}
  Jt^{\kappa}J \cap Jt^{\lambda}J t^{\nu}J \subseteq J t^\kappa J \cap 
  J t^\lambda J^- t^\nu J
  = J[J^+ J^0 t^\kappa J^+ J^0 \cap J^- t^\lambda J^- t^\nu J^-]J.
\end{equation*}
The last equality is easily established using the Iwahori
decomposition (and that $J^0 J^+ = J^+ J^0$ and similarly $J^0 J^- =
J^- J^0$).  Now, $J^+ J^0 t^\kappa J^+ J^0\subset t^{\kappa} J^0
U^+(F)$ and $J^- t^\lambda J^- t^\nu J^-\subset t^{\lambda+\nu}
U^-(F)$. Their intersection evidently is $\{t^{\lambda+\nu}\}$ if
$\kappa = \lambda+\nu$ and is empty otherwise.

Finally, for (d), the containment $\supseteq$ is obvious. Then,
\begin{multline*}
  Jt^{\nu} Jt^{-\nu} J \cap J t^{-\lambda} J t^{\lambda} J \subseteq J
  t^\nu J^- t^{-\nu} J \cap J t^{-\lambda} J^+ t^\lambda J \\ = J[J^-
  t^\nu J^- t^{-\nu} J^- \cap J^+ J^0 t^{-\lambda} J^+ t^\lambda J^+
  J^0]J \subseteq J[t^\nu J^- t^{-\nu} \cap t^{-\lambda} J^0 J^+
  t^\lambda]J = J. \qedhere
\end{multline*}
\end{proof}

The main application of parts (b) and (d) is
\begin{corollary}\label{c:proddom} Let $\lambda, \nu \in \Lambda_+$ or 
  $\lambda,\nu\in -\Lambda_{+}$. Then, the multiplication map defines
  a bijection $(Jt^\lambda J) \times_{J} (J t^\nu J) \ra
  Jt^{\lambda+\nu}J$.
\end{corollary}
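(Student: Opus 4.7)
\medskip

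\noindent\textbf{Plan.} The claim is essentially a clean packaging of parts (b) and (d) of Proposition~\ref{p:cosetMult}, so my plan is to derive surjectivity from (b) and injectivity from (d), and then reduce the antidominant case to the dominant one by inversion.

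\medskip

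\noindent First I would assume $\lambda, \nu \in \Lambda_+$. For surjectivity: the image of the multiplication map $Jt^\lambda J \times Jt^\nu J \to G(F)$ is, by definition, the set product $(Jt^\lambda J)(Jt^\nu J) = J t^\lambda J t^\nu J$, which by Proposition~\ref{p:cosetMult}(b) equals $J t^{\lambda+\nu} J$. Hence the induced map on the $J$-fiber product surjects onto $J t^{\lambda+\nu} J$.

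\medskip

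\noindent For injectivity, recall that in $(Jt^\lambda J) \times_J (Jt^\nu J)$ we have $(x,y) \sim (xj^{-1}, jy)$ for $j \in J$. Suppose $(x_1, y_1)$ and $(x_2, y_2)$ both map to the same element, i.e., $x_1 y_1 = x_2 y_2$. Setting $j := x_1^{-1} x_2 = y_1 y_2^{-1}$, I would observe that
\[
j = x_1^{-1} x_2 \in (Jt^\lambda J)^{-1}(Jt^\lambda J) = J t^{-\lambda} J t^\lambda J,
\qquad
j = y_1 y_2^{-1} \in (Jt^\nu J)(Jt^\nu J)^{-1} = J t^\nu J t^{-\nu} J.
\]
By Proposition~\ref{p:cosetMult}(d), the intersection of these two sets is $J$, so $j \in J$. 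Then $(x_2, y_2) = (x_1 j, j^{-1} y_1)$, placing the two pairs in the same $J$-orbit.

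\medskip

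\noindent Finally, for the case $\lambda, \nu \in -\Lambda_+$, I would reduce to the dominant case by inversion: the involution $g \mapsto g^{-1}$ on $G(F)$ carries $Jt^\mu J$ to $Jt^{-\mu} J$ and intertwines the multiplication maps
\[
(Jt^\lambda J) \times_J (Jt^\nu J) \longrightarrow Jt^{\lambda+\nu} J
\quad\text{and}\quad
(Jt^{-\nu} J) \times_J (Jt^{-\lambda} J) \longrightarrow Jt^{-\lambda-\nu} J
\]
via $(x,y) \mapsto (y^{-1}, x^{-1})$. Since $-\nu, -\lambda \in \Lambda_+$, the previous case applies and gives the result.

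\medskip

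\noindent There is no real obstacle: the combinatorial heart of the argument is contained in Proposition~\ref{p:cosetMult}(b) and (d). The only mild care needed is keeping the $J$-equivariance of the fiber product straight when converting the equation $x_1 y_1 = x_2 y_2$ into the element $j \in J$ that witnesses the equivalence.
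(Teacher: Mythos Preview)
Your argument is correct and matches the paper's proof essentially verbatim: surjectivity from Proposition~\ref{p:cosetMult}(b), injectivity from Proposition~\ref{p:cosetMult}(d) via the element $j = x_1^{-1}x_2 = y_1 y_2^{-1}$. The paper leaves the antidominant case implicit, so your explicit reduction by inversion is a nice addition.
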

Here, for any subsets $S_1, S_2 \subseteq G(F)$ invariant under right
and left multiplication by $J$, respectively, $S_1 \times_J S_2 := S_1
\times S_2 / ((s_1,s_2) \sim (s_1j^{-1}, js_2), \forall j \in J)$
(i.e., it is the quotient of $S_1 \times S_2$ by the inner adjoint
action of $J$).

\begin{proof}
  First of all, note that, by Proposition \ref{p:cosetMult}.(b), the
  multiplication map is surjective. To prove it is injective, suppose
  that $xy=x'y'$, with $x,x'\in Jt^\lambda J$ and $y,y'\in Jt^\nu
  J$. Then $x^{-1}x'\in Jt^{-\lambda}J t^{\lambda} J$, whereas
  $y(y')^{-1}\in Jt^\nu J t^{-\nu} J$. Since $x^{-1}x'=y(y')^{-1}$,
  Proposition \ref{p:cosetMult}.(d) implies that $x^{-1}x'\in J$.
 \end{proof}

 \subsection{Volume of relevant double cosets and
   semismallness} \label{ss:volume} In this section we prove some
 results we need about volumes of double cosets. Fix a left-invariant
 Haar measure, $\vol$, on $G(F)$ such that $J$ has measure $1$.

 \begin{lemma}\label{l:vollam} For all $\lambda \in \Lambda$, $\vol(J
   t^\lambda J) = q^{\sum_{\alpha\in \Delta_+} |\alpha(\lambda)|}$.
\end{lemma}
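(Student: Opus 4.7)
The plan is to reduce the volume computation to counting cosets of $J$ in $J t^\lambda J$, and then to factor this count root-by-root using the explicit description \eqref{eq:intersectLambda}.

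Since $\vol$ is left-invariant and $\vol(J) = 1$, we have
\[
\vol(J t^\lambda J) = |J t^\lambda J / J| = [J : J \cap t^\lambda J t^{-\lambda}],
\]
where the last equality uses the standard bijection between right cosets of $J$ inside $J t^\lambda J$ and cosets of $J \cap t^\lambda J t^{-\lambda}$ inside $J$ via $j \mapsto j t^\lambda$.

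Next, I would plug in \eqref{eq:intersectLambda}: comparing the decomposition $J = T(\cO) \times \prod_\alpha U_{\alpha, f(\alpha)}$ (Lemma \ref{l:uroche}) with
\[
J \cap t^\lambda J t^{-\lambda} = T(\cO) \times \prod_{\alpha \in \Delta} U_{\alpha, \max\{f(\alpha),\, f(\alpha)+\alpha(\lambda)\}},
\]
the torus factors cancel and the index factors as a product over roots. Since $U_{\alpha, i}/U_{\alpha, i+k} \cong \fp^i/\fp^{i+k}$ has cardinality $q^k$ for $k \geq 0$, the $\alpha$-factor contributes $q^{\max\{0,\,\alpha(\lambda)\}}$. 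Thus
\[
\vol(J t^\lambda J) = q^{\sum_{\alpha \in \Delta} \max\{0,\,\alpha(\lambda)\}}.
\]

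Finally, pairing each positive root $\alpha \in \Delta_+$ with its negative $-\alpha \in \Delta_-$ gives $\max\{0, \alpha(\lambda)\} + \max\{0, -\alpha(\lambda)\} = |\alpha(\lambda)|$, so the exponent collapses to $\sum_{\alpha \in \Delta_+} |\alpha(\lambda)|$, as desired. There is no real obstacle here; the only point requiring a moment's care is verifying that, in Roche's normalization of $J$, the intersection formula \eqref{eq:intersectLambda} truly gives a direct product decomposition so that the index splits multiplicatively over $\alpha$, which is guaranteed by Lemma \ref{l:uroche}.
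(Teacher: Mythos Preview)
Your argument is correct and follows exactly the same line as the paper's: reduce to $[J : J \cap t^\lambda J t^{-\lambda}]$ via the orbit-stabilizer count, then invoke \eqref{eq:intersectLambda}. The paper simply says ``the result then follows from \eqref{eq:intersectLambda}'' without writing out the root-by-root index computation or the $\alpha \leftrightarrow -\alpha$ pairing, whereas you spell these out; otherwise the proofs are identical.
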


\begin{proof}
  By left-invariance, $\vol(J t^\lambda J)$ is the number of left
  cosets of $J$ in $J t^\lambda J$.  Since $J$ acts transitively by
  left multiplication on the set of left cosets in $J t^\lambda J$
  with stabilizer of $t^\lambda J$ equal to $J \cap (t^{\lambda} J
  t^{-\lambda})$, we obtain that $\vol(J t^\lambda J) = |J / (J \cap
  (t^{\lambda} J t^{-\lambda}))|$.  The result then
  follows from \eqref{eq:intersectLambda}. 
\end{proof}

\begin{corollary} \label{c:volumeCosetMult}
\begin{enumerate} 
\item[(i)] For all $\lambda\in \Lambda$, $\vol(J t^\lambda J) = \vol(J
  t^{-\lambda} J)$.
\item[(ii)] For all $\lambda,\nu\in \Lambda_{+}$, $\log_q
  \vol(Jt^{\lambda}Jt^{\nu}J)=\log_q \vol(Jt^{\lambda}J)+ \log_q
  \vol(Jt^{\nu}J)$.
\item[(iii)] For $\lambda,\nu\in \Lambda_+$,
$\log_{q} \vol \bigl(t^{-\lambda} J t^\lambda J \cap t^{-\nu} J t^\nu J
\bigr)= \frac{1}{2} [ \log_{q}\vol (Jt^\nu J) + \log_{q}\vol
(Jt^\lambda J) - \log_{q} \vol (Jt^{\nu-\lambda}J) ]$. 
\end{enumerate}
\end{corollary}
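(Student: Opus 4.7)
Part (i) is immediate from Lemma \ref{l:vollam} since $|\alpha(-\lambda)| = |\alpha(\lambda)|$.

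For part (ii), Proposition \ref{p:cosetMult}(b) gives $Jt^\lambda J t^\nu J = J t^{\lambda+\nu} J$, so by Lemma \ref{l:vollam},
\[
\log_q \vol(Jt^\lambda J t^\nu J) = \sum_{\alpha \in \Delta_+} |\alpha(\lambda+\nu)|.
\]
Since $\lambda, \nu \in \Lambda_+$, each $\alpha(\lambda), \alpha(\nu) \geq 0$ for $\alpha \in \Delta_+$, so $|\alpha(\lambda+\nu)| = \alpha(\lambda) + \alpha(\nu) = |\alpha(\lambda)| + |\alpha(\nu)|$, and another application of Lemma \ref{l:vollam} finishes the job.

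The substance is in part (iii). Set $S := t^{-\lambda}Jt^\lambda J \cap t^{-\nu} J t^\nu J$. Both factors are right-$J$-invariant (each being of the form $X \cdot J$), hence so is $S$, and therefore $\vol(S)$ equals the number of right $J$-cosets in $S$. By Proposition \ref{p:cosetMult}(a), $t^{-\lambda}Jt^\lambda J = (t^{-\lambda} J^+ t^\lambda) \cdot J^0 \cdot J^-$ with the multiplication map a bijection, and similarly for $\nu$. Using the uniqueness of the Bruhat factorization $U^+(F) \cdot T(F) \cdot U^-(F) \hookrightarrow G(F)$, and the fact that both sets lie in this big cell, I can intersect factorwise to obtain
\[
S = B \cdot J^0 \cdot J^-, \qquad B := t^{-\lambda} J^+ t^\lambda \cap t^{-\nu} J^+ t^\nu.
\]
By \eqref{eq:intersect2Lambdas}, $B = \prod_{\alpha \in \Delta_+} U_{\alpha,\, f(\alpha)-\min\{\alpha(\lambda),\alpha(\nu)\}}$, which is a subgroup containing $J^+$. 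Combined with the Iwahori decomposition $J = J^+ J^0 J^-$, this yields $S = BJ$, whence the number of right $J$-cosets is
\[
|B / (B \cap J)| = |B/J^+| = \prod_{\alpha \in \Delta_+} q^{\min\{\alpha(\lambda),\alpha(\nu)\}} = q^{\sum_{\alpha \in \Delta_+} \min\{\alpha(\lambda),\alpha(\nu)\}},
\]
using that $U^+(F) \cap J = J^+$ and the standard index formula $[U_{\alpha,i} : U_{\alpha,j}] = q^{j-i}$ for $j \geq i$.

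To match the right-hand side of (iii), apply the elementary identity $\min(a,b) = \tfrac{1}{2}(a+b-|a-b|)$ to each summand: since $\lambda, \nu \in \Lambda_+$,
\[
\sum_{\alpha \in \Delta_+} \min\{\alpha(\lambda),\alpha(\nu)\} = \tfrac{1}{2} \sum_{\alpha \in \Delta_+} \bigl(|\alpha(\lambda)| + |\alpha(\nu)| - |\alpha(\lambda-\nu)|\bigr),
\]
and Lemma \ref{l:vollam} rewrites the three sums as $\log_q \vol(Jt^\lambda J)$, $\log_q \vol(Jt^\nu J)$, and $\log_q \vol(Jt^{\nu-\lambda}J)$ respectively. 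The principal obstacle is the bookkeeping needed to justify the factorwise intersection and the identification $S = BJ$; once this is in place, the $\min$-identity delivers the formula essentially for free.
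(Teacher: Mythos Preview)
Your proof is correct and follows essentially the same route as the paper: part (i) is immediate from Lemma~\ref{l:vollam}, part (ii) comes from Proposition~\ref{p:cosetMult}(b) together with Lemma~\ref{l:vollam}, and part (iii) reduces via Proposition~\ref{p:cosetMult}(a) to computing the volume of $\bigl((t^{-\lambda}J^+t^\lambda)\cap(t^{-\nu}J^+t^\nu)\bigr)J$ using \eqref{eq:intersect2Lambdas} and the identity $\min(a,b)=\tfrac{1}{2}(a+b-|a-b|)$. Your write-up supplies more explicit justification for the factorwise intersection and the coset count than the paper does, but the argument is the same.
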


\begin{proof} (i) follows immediately from Lemma \ref{l:vollam}. (ii)
  follows from Lemma \ref{l:vollam} and Corollary \ref{c:proddom}. For
  (iii), note that by Proposition \ref{p:cosetMult}.(a), $t^{-\lambda}
  J t^\lambda J \cap t^{-\nu} J t^\nu J = ((t^{-\lambda} J^+
  t^\lambda) \cap (t^{-\nu} J^+ t^\nu)\bigr) J$. Thus, by
  \eqref{eq:intersect2Lambdas},
\begin{multline*}
  \log_{q} \vol \bigl( \bigl((t^{-\lambda} J^+ t^\lambda) \cap
  (t^{-\nu} J^+ t^\nu)\bigr) J \bigr) = \sum_{\alpha\in \Delta_+} \min
  \{ \alpha(\lambda), \alpha(\nu) \}
  = \sum_{\alpha\in \Delta_+} \frac{1}{2} \bigl( \alpha(\lambda) + \alpha(\nu) - |\alpha(\lambda) - \alpha(\nu)| \bigr)= \\
  = \frac{1}{2} \bigl( \sum_{\alpha \in \Delta_+} |\alpha(\lambda)| +
  |\alpha(\nu)| - |\alpha(\lambda-\nu)| \bigr) = \frac{1}{2} [
  \log_{q}\vol (Jt^\nu J) + \log_{q}\vol (Jt^\lambda J) - \log_{q}
  \vol (Jt^{\nu-\lambda}J) ]. \qedhere
\end{multline*}
\end{proof}

\subsubsection{Semismallness} \label{sss:semismall} Abusively, we will
let $\vol$ also denote the product Haar measure on $G(F) \times G(F)$.

\begin{proposition} \label{p:semismall} Let $\lambda$ be dominant and
  $\nu$ be antidominant coweights.  Let $p^{\lambda,\nu}:
  (Jt^{\lambda}J)\times (Jt^{\nu}J) \ra G(F)$ denote the restriction
  of the multiplication map. For every $x\in
  Jt^{\lambda+\nu}J$,\footnote{Note that according to Proposition
    \ref{p:cosetMult}.(c), the only relevant double coset in the image
    of $p^{\lambda,\nu}$ is $Jt^{\lambda+\nu} J$.}
\[
\log_{q}\vol((p^{\lambda,\nu})^{-1}(x)) = \frac{1}{2} \bigl(
\log_{q}\vol(Jt^{\lambda}J)+ \log_{q} \vol(Jt^\nu J) - \log_{q}
\vol(Jt^{\lambda+\nu} J ) \bigr).
\]
\end{proposition}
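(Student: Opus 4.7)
The plan is to parameterize the fiber by its first coordinate: since $ab = x$ determines $b = a^{-1}x$, the fiber $(p^{\lambda,\nu})^{-1}(x)$ is naturally identified with the set
\[
F_x := \{a \in Jt^\lambda J : a^{-1}x \in Jt^\nu J\} \subseteq G(F),
\]
whose volume we measure using Haar measure on $G(F)$ (note that as a literal subset of $G(F) \times G(F)$ the fiber has product-Haar measure zero, so this first-coordinate parameterization is essential to make sense of the statement).

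First, I would verify that $\vol(F_x)$ is constant as $x$ ranges over $Jt^{\lambda+\nu}J$. If $x' = j_1 x j_2$ with $j_1, j_2 \in J$, then the map $a \mapsto j_1 a$ is a measure-preserving bijection $F_x \to F_{x'}$: left multiplication by $j_1$ preserves $Jt^\lambda J$, and $(j_1 a)^{-1}(j_1 x j_2) = a^{-1}x j_2 \in Jt^\nu J$ if and only if $a^{-1}x \in Jt^\nu J$, since right multiplication by $j_2^{-1}$ preserves $Jt^\nu J$.

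Next, specializing to $x = t^{\lambda+\nu}$, I would invert: $a^{-1}t^{\lambda+\nu} \in Jt^\nu J$ is equivalent to $t^{-\lambda-\nu}a \in Jt^{-\nu}J$, i.e.\ to $a \in t^{\lambda+\nu}Jt^{-\nu}J$. Hence
\[
F_{t^{\lambda+\nu}} = Jt^\lambda J \cap t^{\lambda+\nu}Jt^{-\nu}J,
\]
and left-translating by $t^{-\lambda}$ (which preserves Haar volume) yields
\[
\vol(F_{t^{\lambda+\nu}}) = \vol\bigl(t^{-\lambda}Jt^\lambda J \cap t^\nu Jt^{-\nu}J\bigr).
\]

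Finally, I would apply Corollary~\ref{c:volumeCosetMult}(iii) to the pair $(\lambda, -\nu)$, both of which are dominant by hypothesis, to get
\[
\log_q \vol\bigl(t^{-\lambda}Jt^\lambda J \cap t^\nu Jt^{-\nu}J\bigr) = \tfrac{1}{2}\bigl[\log_q\vol(Jt^{-\nu}J) + \log_q\vol(Jt^\lambda J) - \log_q \vol(Jt^{-\nu-\lambda}J)\bigr],
\]
and then use Corollary~\ref{c:volumeCosetMult}(i) to rewrite $\log_q \vol(Jt^{-\nu}J) = \log_q \vol(Jt^\nu J)$ and $\log_q \vol(Jt^{-\nu-\lambda}J) = \log_q \vol(Jt^{\lambda+\nu}J)$, which produces the claimed formula. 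The main content is already packaged in Corollary~\ref{c:volumeCosetMult}(iii); there is no genuine obstacle, as the argument is just a reduction of the mixed (dominant, antidominant) case to the purely dominant case via translation by $t^{-\lambda}$.
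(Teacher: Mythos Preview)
Your proof is correct and follows essentially the same route as the paper's. The paper likewise projects the fiber onto its first coordinate, identifies the image with $Jt^\lambda J \cap xJt^{-\nu}J$, translates to $t^{-\lambda}Jt^\lambda J \cap t^\nu Jt^{-\nu}J$, and then invokes Corollary~\ref{c:volumeCosetMult}(iii); the only cosmetic difference is that the paper handles a general $x = jt^{\lambda+\nu}j'$ directly rather than first reducing to $x = t^{\lambda+\nu}$ by the $J \times J$-action.
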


Let $P(x):=(p^{\lambda,\nu})^{-1}(x)$.  Let $\pi_{1}:
(Jt^{\lambda}J)\times (Jt^{\nu}J) \ra Jt^{\lambda}J $ denote the
projection onto the first factor. Let $\pi$ denote the restriction of
$\pi_{1}$ to $P(x)$. The proposition follows from Corollary
\ref{c:volumeCosetMult}.(iii) and the following lemma.

\begin{lemma}
  $\pi$ is injective and its image is canonically identified with
  $t^{-\lambda}Jt^{\lambda}J\cap t^{\nu} J t^{-\nu}J$.
\end{lemma}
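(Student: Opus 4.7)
The plan is to handle injectivity trivially, then construct an explicit bijection by using a fixed decomposition of $x$ to translate the image into the stated canonical intersection.

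Injectivity of $\pi$ is immediate, since any $(a,b) \in P(x)$ is determined by $a$ via $b = a^{-1}x$. For the image, I would first rewrite $\pi(P(x)) = \{a \in Jt^\lambda J : a^{-1}x \in Jt^\nu J\} = Jt^\lambda J \cap xJt^{-\nu}J$.

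To construct the bijection with $t^{-\lambda}Jt^\lambda J \cap t^\nu Jt^{-\nu}J$, I would fix any decomposition $x = j_1 t^{\lambda+\nu} j_2$ with $j_1, j_2 \in J$, which exists because $x \in Jt^{\lambda+\nu}J$. I would then define $\phi(a) := t^{-\lambda} j_1^{-1} a$ with candidate inverse $\psi(c) := j_1 t^\lambda c$. The three checks are: (i) $\phi(a) \in t^{-\lambda}Jt^\lambda J$, which follows from $j_1^{-1} \cdot Jt^\lambda J = Jt^\lambda J$; (ii) $\phi(a) \in t^\nu Jt^{-\nu}J$, via the key manipulation $a \in xJt^{-\nu}J = j_1 t^{\lambda+\nu} Jt^{-\nu}J$ (absorbing $j_2 \in J$ into $Jt^{-\nu}J$); and (iii) $\psi(c)^{-1} x \in Jt^\nu J$, computed as $c^{-1} t^\nu j_2$ and then using $(t^\nu Jt^{-\nu}J)^{-1} = Jt^\nu Jt^{-\nu}$ together with the right $J$-invariance of $Jt^\nu J$. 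That $\phi$ and $\psi$ are mutually inverse is then formal. The bookkeeping in these three checks is the main (though routine) obstacle; everything reduces to $J \cdot J = J$ and the two-sided $J$-invariance of the relevant double cosets.

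Although $\phi$ depends on the chosen factorization $(j_1, j_2)$, the target $t^{-\lambda}Jt^\lambda J \cap t^\nu Jt^{-\nu}J$ does not, and this is precisely what is needed to derive Proposition \ref{p:semismall}. To finish the proposition, I would combine the bijection of this lemma with the antidominant analog of Corollary \ref{c:volumeCosetMult}.(iii)—applied with $\mu := -\nu \in \Lambda_+$ in place of $\nu$, and then invoking part (i) of the same corollary to rewrite $\vol(Jt^\mu J) = \vol(Jt^\nu J)$ and $\vol(Jt^{\mu-\lambda}J) = \vol(Jt^{\lambda+\nu}J)$—which yields the claimed volume formula.
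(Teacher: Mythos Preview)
Your proposal is correct and follows essentially the same approach as the paper: both identify the image as $Jt^\lambda J \cap xJt^{-\nu}J$, fix a decomposition $x = j_1 t^{\lambda+\nu} j_2$, and then apply left translation by $t^{-\lambda} j_1^{-1}$ to carry this onto $t^{-\lambda}Jt^\lambda J \cap t^\nu Jt^{-\nu}J$. You spell out the inverse map and the three verifications more explicitly than the paper, but the argument is the same.
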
 
 
\begin{proof}  
  The fact that $\pi$ is injective is evident.  The image of $\pi$
  identifies with $y\in Jt^{\lambda}J$ such that $y^{-1}x\in
  Jt^{\nu}J$, i.e., $y\in xJt^{-\nu}J$. Write $x=jt^{\lambda+\nu}j'$,
  for $j,j'\in J$. Then, the image of $\pi$ equals
\[
Jt^{\lambda}J \cap xJt^{-\nu}J = Jt^{\lambda}J\cap
jt^{\lambda+\nu}Jt^{-\nu}J \iso Jt^{\lambda}J \cap
t^{\lambda+\nu}Jt^{-\nu}J \iso t^{-\lambda}Jt^{\lambda}J \cap
t^{\nu}Jt^{-\nu}J. \qedhere
\]
\end{proof} 

%%%%%%%%%%%%%%%%%%%%%%%%%%%%%%%%%%%%%%%%%%%%%%%%%%%%%%%%%%%%%%%%%%%%%%%%%%%%%%%%

\section{Representations via compact open subgroups} \label{s:repViaCompact}

\subsection{Families of principal series representations}\label{ss:familyOfRep}
Fix a (continuous) character $\bmu:T(\cO)\ra \bQlt$.  In \S
\ref{sss:familyofRep}, we defined a family of principal series
representations $\Pi$ associated to $\bmu$. We now give an alternative
definition of this family.  Let $B^0 := U(F) T(\cO)$.  Abusively, let
$\bmu$ also denote the extension of $\bmu: T(\cO) \to \bQl^\times$ to
$B^0$ such that $\bmu|_{U(F)} = 1$. Then, it follows from the
definition that
\[
\Pi \cong \ind_{B^0}^{G(F)} \bmu := \{f: G(F) \to \bQlt \mid f(gb) =
f(g)\bmu(b), \forall \, b \in B^0\},
\]
and the action is the left regular one; i.e., $g \cdot f(x) =
f(g^{-1}x)$.

\subsubsection{Realization via compact open
  subgroups} \label{sss:compactGr} For every $\alpha \in \Delta$, let
$c_\alpha := \cond(\bmu \circ \alpha^\vee)$ denote the conductor of
$\bmu \circ \alpha^\vee$; that is, the smallest positive integer $c$
for which $\bmu(\alpha^\vee(1+\fp^c))=\{1\}$.  Let
\begin{equation} \label{e:fbmu} f_{\bmu}(\alpha) = \begin{cases}
    \lfloor c_\alpha/2 \rfloor,
    & \text{if $\alpha > 0$,} \\
    \lceil c_\alpha/2 \rceil, & \text{if $\alpha < 0$.}
    \end{cases}
\end{equation}

\begin{lemma}\cite[Lemma 3.4]{Roche98} \label{l:fmucond}
  Suppose that $2 \nmid q$ if $\Delta$ has an irreducible factor of
  the form $B_n, C_n$, or $F_4$, and $3 \nmid q$ if $\Delta$ has an
  irreducible factor of the form $G_2$.  Then, $f_{\bmu}$ satisfies
  conditions (a) and (b) of \S \ref{ss:rocheGroups}.
\end{lemma}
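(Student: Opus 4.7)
The plan splits the argument into two parts: condition (b) is essentially immediate, and condition (a) reduces to a single ``conductor submultiplicativity'' inequality.

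For (b), since $(-\alpha)^\vee = -\alpha^\vee$ as elements of $\Lambda$, we have $\bmu \circ (-\alpha)^\vee = (\bmu \circ \alpha^\vee)^{-1}$, and a character and its inverse have the same conductor, so $c_{-\alpha} = c_\alpha$. Therefore
\[
f_{\bmu}(\alpha) + f_{\bmu}(-\alpha) = \lfloor c_\alpha/2 \rfloor + \lceil c_\alpha/2 \rceil = c_\alpha \geq 1,
\]
using the paper's convention that the conductor is a positive integer.

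For (a), the key technical step I aim to prove is: whenever $\alpha, \beta, \alpha+\beta$ are all roots,
\[
c_{\alpha+\beta} \leq \max(c_\alpha, c_\beta).
\]
Granting this, condition (a) is a short case analysis on the signs. For instance, in the case $\alpha > 0$, $\beta < 0$, $\alpha+\beta < 0$ one has
\[
f_{\bmu}(\alpha+\beta) = \lceil c_{\alpha+\beta}/2 \rceil \leq \max\bigl( \lfloor c_\alpha/2 \rfloor, \lceil c_\beta/2 \rceil \bigr) + 1 \leq \lfloor c_\alpha/2 \rfloor + \lceil c_\beta/2 \rceil,
\]
using $\lceil c_\beta/2 \rceil \geq 1$; the remaining three sign patterns are similar or easier.

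To establish the submultiplicativity, I will invoke the following identity in the coroot sublattice, checked by a type-by-type inspection of irreducible root systems: whenever $\alpha, \beta, \alpha+\beta \in \Delta$, there exist positive integers $k, m, n$ such that
\[
k(\alpha+\beta)^\vee = m\, \alpha^\vee + n\, \beta^\vee,
\]
and $k = 1$ in simply-laced types, $k \in \{1,2\}$ in types $B_n$, $C_n$, $F_4$, and $k \in \{1,3\}$ in type $G_2$. The non-trivial values of $k$ occur only when summing two short roots to produce a long root; e.g., in $B_2$ one has $2(\alpha+\beta)^\vee = \alpha^\vee + \beta^\vee$, and in $G_2$ the coefficient becomes $3$ (as one sees directly from $\gamma^\vee = 2\gamma/(\gamma,\gamma)$). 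Exponentiating gives $(\alpha+\beta)^\vee(x)^k = \alpha^\vee(x)^m \beta^\vee(x)^n$ in $T(F)$ for all $x$; applying $\bmu$ to $x \in 1+\fp^c$ with $c \geq \max(c_\alpha, c_\beta)$ annihilates the right-hand side, so $\bmu \circ (\alpha+\beta)^\vee$ maps $1+\fp^c$ into $\mu_k \subset \bQl^\times$.

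The hypotheses on $q$ are precisely tailored so that $k$ is coprime to the residue characteristic $p$ of $\cO$ in every problematic case. Since $1+\fp^c$ is a pro-$p$ group and $\gcd(k,p) = 1$, every continuous homomorphism $1+\fp^c \to \mu_k$ is trivial; this forces $\bmu((\alpha+\beta)^\vee(1+\fp^c)) = \{1\}$, whence $c_{\alpha+\beta} \leq c$, and taking $c = \max(c_\alpha, c_\beta)$ finishes the argument. The only real obstacle is the routine coroot-identity check across the five irreducible types; everything else is elementary arithmetic of floors/ceilings and the pro-$p$ triviality observation.
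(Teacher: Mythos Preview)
The paper does not supply its own proof of this lemma; it cites \cite[Lemma 3.4]{Roche98} and, in Remark~\ref{r:mu-J}, indicates that Roche's argument hinges on coroot identities of the form $p\,\alpha^\vee = q(\alpha+\beta)^\vee - r\beta^\vee$, where the troublesome coefficient is a ratio of root square-lengths. Your approach is exactly this idea (rearranged to put the coefficient on $(\alpha+\beta)^\vee$): write $k(\alpha+\beta)^\vee = m\alpha^\vee + n\beta^\vee$ with $k$ a square-length ratio, and use that $1+\fp^c$ is pro-$p$ with $\gcd(k,p)=1$ under the stated hypotheses. So the route is the same as Roche's.

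One genuine sloppiness: in your displayed case $\alpha>0,\beta<0,\alpha+\beta<0$, the second inequality
\[
\max\bigl(\lfloor c_\alpha/2\rfloor,\lceil c_\beta/2\rceil\bigr)+1 \;\leq\; \lfloor c_\alpha/2\rfloor + \lceil c_\beta/2\rceil
\]
is equivalent to $\min(\lfloor c_\alpha/2\rfloor,\lceil c_\beta/2\rceil)\geq 1$, which fails when $c_\alpha=1$ (so $\lfloor c_\alpha/2\rfloor=0$). The conclusion is nonetheless correct, and the fix is to split on which conductor realizes the maximum: if $c_{\alpha+\beta}\leq c_\beta$ then $\lceil c_{\alpha+\beta}/2\rceil \leq \lceil c_\beta/2\rceil \leq \lfloor c_\alpha/2\rfloor + \lceil c_\beta/2\rceil$ directly; if $c_{\alpha+\beta}\leq c_\alpha$ then $\lceil c_{\alpha+\beta}/2\rceil \leq \lfloor c_\alpha/2\rfloor + 1 \leq \lfloor c_\alpha/2\rfloor + \lceil c_\beta/2\rceil$ using $\lceil c_\beta/2\rceil\geq 1$. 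The remaining sign patterns should be written out with the same care rather than dismissed as ``similar or easier.''
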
 
The conditions in the lemma are designed so that the characteristic
does not divide a ratio of square-lengths of two roots; see
\emph{op.~cit.}  To avoid the above restrictions for certain
characters, see Remark \ref{r:mu-J}.
 
In particular, in view of Lemma \ref{l:uroche}, we obtain an
associated compact open subgroup $J=J_{f_{\bmu}}$ and the subgroup
$T_{\bmu}=T_{f_{\bmu}}$. By construction, $T_{\bmu}\subseteq \ker
\bmu$. Hence $\bmu$ defines a character of $T(\cO)/T_{\bmu}$ and so
can be lifted to a character $\mu: J\ra \bQlt$. Recall from the
introduction that we set $\sW:=\ind_J^{G(F)} \mu$.

\subsubsection{The isomorphism $\sW \cong \Pi$ and residue
  characteristic restrictions} \label{sss:residue} Theorem
\ref{t:Roche}.(ii) states that there is an isomorphism of
$G(F)$-modules $\sW\cong \Pi$, provided that the characteristic of
$\Fq$ is restricted: in particular, the characteristic should not be a
torsion prime for the Langlands dual group to any semistandard Levi
subgroup of $G$ (i.e., connected subgroup containing the maximal
torus). Additionally, in order for $J$ to be defined, we assume the
characteristic obeys the conditions of Lemma \ref{l:fmucond} (but see
Remark \ref{r:mu-J}).  In Roche's paper, to prove \cite[Theorem
4.15]{Roche98}, additionally the characteristic is further restricted
so as to obtain a nondegenerate bilinear form on the Lie algebra (in
particular, restricted to be greater than $n+1$ in the $A_n$ case, or
alternatively to have certain more technical conditions satisfied),
but this restriction can be lifted by considering elements of the dual
to the Lie algebra, as in \cite{Yu-ctsr}, and not associating to them
elements of the Lie algebra itself using a pairing; cf.~Appendix
\ref{ss:irrconn}.

Put together, for every irreducible direct factor of the root
system of the split reductive group, $\operatorname{char}(\Fq)$ should
not be one of the primes
\begin{equation}\label{t:residue}
\begin{tabular}{|c|c|}
  \hline
  Root system & Excluded primes \\
  \hline
  $B_n, C_n, D_n$ & $\{2\}$ \\ \hline
  $F_4, G_2, E_6, E_7$ & \{2,3\} \\ \hline
  $E_8$ & $\{2,3,5\}$ \\ \hline
\end{tabular}
\end{equation}
In particular, our results hold unconditionally for type $A_n$ groups,
including $\GL_N$.  Additionally, in the case that $J$ equals the
Iwahori subgroup (cf.~Remark \ref{ex:Iwahori}), then no restriction on
the characteristic is needed, in view of Proposition
\ref{p:irrconn}.(i), since all double cosets of the Iwahori contain an
element of $N(T(F))$ (cf.~\cite[\S 4]{Roche98}).

\begin{remark}\label{r:mu-J} We could
  avoid the restrictions of Lemma \ref{l:fmucond} if we impose
  restrictions on the conductors $c_\alpha$ of $\mu$; this would
  potentially allow characteristic $2$ in the $B_n$ case and
  characteristic $3$ in the $G_2$ case. The important thing is to
  ensure condition (a) of \S \ref{ss:rocheGroups}. In view of the
  proof of \cite[Lemma 3.4]{Roche98}, the problem arises where $p
  \alpha^\vee = q(\alpha+\beta)^\vee - r\beta^\vee$ for some $q,r$
  ($p$ is the characteristic). So, to ensure the condition, whenever
  $\langle \alpha, \beta^\vee \rangle = - p$ for roots $\alpha$ and
  $\beta$, we should ask that either $c_{\alpha+\beta} > 1$ or
  $c_\beta$ is odd. (Then, similarly, one gets that either $c_\beta >
  1$ or $c_{\alpha+\beta}$ is odd.) In particular, if short roots
  $\beta$ all satisfy $c_\beta \geq 2$, or if $c_\beta$ is odd for all
  short roots $\beta$, then the condition would appear to be
  satisfied.
% this can only be a problem when one has roots
% $\langle \alpha, \beta^\vee \rangle = -p$, so that $p\alpha^\vee = (\alpha+\beta)^\vee - \beta^\vee$.
\end{remark}

\subsubsection{Explicit (iso)morphism $\sW\to
  \Pi$} \label{sss:expliso} Next, following a suggestion of Drinfeld,
we give an explicit description of a morphism $\sW\to \Pi$.  Define
$p_0: G(F) \to \bQl$ by
\[
\begin{cases} p_0(g) =0 & \text{ if $g \notin J B^0$.} \\
  p_{0}(jb)= \mu(j)\bmu(b), & \forall \, j\in J, \,\, b \in
  B^0. \end{cases}
\]

One can show that $p_{0}$ is a well-defined $J$-invariant function in
$\Pi$ (we omit the easy proof).  It follows that $1 \mapsto p_0$ is a
homomorphism of $J$-modules $\mu \to \Res^{G(F)}_J \Pi$. By compact
Frobenius reciprocity, we obtain a morphism $\Phi:\sW\ra \Pi$.  One
can probably show that $\Phi$ is an isomorphism. We neither prove nor
use this fact; we will only use $\Phi$ in the proof of Theorem
\ref{t:clambda}, and we only need to know that it is a morphism of
$G(F)$-modules.

\subsection{Endomorphism rings}\label{ss:endomorphismRing} Henceforth,
we assume that $\bmu$ is regular.  In this setting we have an explicit
canonical isomorphism $\Psi: \K_0(\Rep(\hT))\iso \End_{G(F)}(\sW)$
\eqref{eq:SatakeReg} given as follows: (see, e.g., \cite[\S
1.9]{Roche09}): For every $\lambda \in \Lambda$, let $\Theta_\lambda$
denote the corresponding character of $\hT$. Then, the action of
$\K_0(\Rep(\hT))$ on $\sW$ is
\begin{equation}\label{e:endgfpi}
  ([\Theta_{\lambda}] f)(x)=f(xt^{-\lambda}), \quad 
  \quad f\in \Pi, x\in G(F), \lambda \in \Lambda.
\end{equation}

On the other hand, $\End_{G(F)}(\sW)$ is identified with the Hecke
algebra $\sH=\sH(G(F),J,\mu)$.  We will abusively let $\Psi$ also
denote the obtained isomorphism $\K_0(\Rep(\hT)) \iso \sH$. Let
\[ f_\lambda:\sH \ra \bQl,\quad
\quad f_\lambda(jt^\lambda j'):=\mu(jj'),\quad \forall j,j'\in J;
\quad f_\lambda|_{G(F) \setminus Jt^\lambda J} = 0.
\]
According to \cite[Theorem 4.15]{Roche98}, these functions are
well-defined and form a basis for $\sH$.  We now express $\Psi$
explicitly in terms of this basis:

\begin{theorem} \label{t:clambda} For all $\lambda \in \Lambda$,
  $\Psi([\Theta_\lambda])=b_\lambda f_\lambda$ where
  $b_\lambda=q^{-\sum_{\alpha\in \Delta_+} \max\{\alpha(\lambda),
    0\}}$.
 \end{theorem}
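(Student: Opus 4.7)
My plan is to pair the canonical isomorphism $\Psi$ against the explicit $G(F)$-equivariant map $\Phi\colon\sW\to\Pi$ constructed in \S\ref{sss:expliso}. Although $\Phi$ is not a priori known to be an isomorphism, the commutativity of $\sH$ and $K_0(\Rep\hT)$ forces $\Phi\circ h=\Psi^{-1}(h)\circ\Phi$ for every $h\in\sH$: fix any $G(F)$-isomorphism $\iota\colon\sW\iso\Pi$, write $\Phi=\iota\alpha$ with $\alpha\in\End_{G(F)}(\sW)=\sH$, and use that $\alpha$ and $\Psi(\Theta_\lambda)$ commute. By \cite[Theorem 4.15]{Roche98} (cited in \S\ref{sss:regularEndo}) we may write $\Psi(\Theta_\lambda)=c_\lambda f_\lambda$ for some $c_\lambda\in\bQl^\times$; applying the above to the canonical generator $w_0$ and using $\Phi(w_0)=p_0$ gives $c_\lambda\,\Phi(f_\lambda\cdot w_0)=[\Theta_\lambda]\cdot p_0$. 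Evaluating at $t^\lambda$ and noting $[\Theta_\lambda]p_0(t^\lambda)=p_0(e)=1$ reduces the theorem to the single equality $\Phi(f_\lambda\cdot w_0)(t^\lambda)=b_\lambda^{-1}$.

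Set $w_\lambda:=w_0\star f_\lambda$. A direct convolution, using $\vol(J)=1$, shows $w_\lambda$ is supported on $Jt^\lambda J$ with $w_\lambda(j_1t^\lambda j_2)=\mu(j_1)\mu(j_2)$. Combining with the integral formula $\Phi(w)(x)=\int_{G(F)}w(g)\,p_0(g^{-1}x)\,dg$ and parameterizing $Jt^\lambda J$ by left cosets $J/(J\cap t^\lambda Jt^{-\lambda})$ times $J$, integration against the $(J,\mu)$-left-equivariance of $p_0$ collapses the second factor to give
\[
\Phi(w_\lambda)(t^\lambda)=\sum_{j_1\in J/(J\cap t^\lambda Jt^{-\lambda})}\mu(j_1)\,p_0(t^{-\lambda}j_1^{-1}t^\lambda).
\]
The decisive identification is: writing $h=j_1^{-1}$ and using the Iwahori decomposition $h=h^-h^0h^+$ of Lemma \ref{l:uroche}.(iv), each summand equals $\mathbf{1}[t^{-\lambda}h^-t^\lambda\in U_f^-]$. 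Indeed, since $T$ is abelian,
\[
t^{-\lambda}ht^\lambda=(t^{-\lambda}h^-t^\lambda)\cdot h^0\cdot(t^{-\lambda}h^+t^\lambda)\in U^-(F)\cdot T(\cO)\cdot U(F);
\]
by uniqueness of the big-cell $U^-TU^+$ factorization, this lies in $JB^0=U_f^-T(\cO)U(F)$ precisely when $t^{-\lambda}h^-t^\lambda\in U_f^-$, and then $p_0(t^{-\lambda}ht^\lambda)=\bmu(h^0)=\mu(h)$, because $\mu$ is trivial on $U_f^\pm$ by construction. Hence the $\mu$-twist cancels and the summand becomes the indicator above.

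The final step is a volume count. The condition translates to $h^-\in A:=U_f^-\cap t^\lambda U_f^-t^{-\lambda}=\prod_{\alpha<0}U_{\alpha,\max(f(\alpha),f(\alpha)+\alpha(\lambda))}$. Since the Iwahori factorization $J\iso U_f^-\times T(\cO)\times U_f^+$ is measure-preserving, together with formula \eqref{eq:intersectLambda} for $\vol(J\cap t^\lambda Jt^{-\lambda})$, one computes
\[
\Phi(w_\lambda)(t^\lambda)=\frac{\vol(\{h\in J:h^-\in A\})}{\vol(J\cap t^\lambda Jt^{-\lambda})}=\frac{\prod_{\alpha<0}q^{-\max(0,\alpha(\lambda))}}{\prod_\alpha q^{-\max(0,\alpha(\lambda))}}=q^{\sum_{\alpha>0}\max(\alpha(\lambda),0)}=b_\lambda^{-1},
\]
yielding $c_\lambda=b_\lambda$. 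The main obstacle is the key identity $p_0(t^{-\lambda}ht^\lambda)=\mu(h)\,\mathbf{1}[t^{-\lambda}h^-t^\lambda\in U_f^-]$: its verification requires both the triviality of $\mu$ on the unipotent factors $U_f^\pm$ (so that only the torus part of the Iwahori decomposition contributes to $p_0$) and the uniqueness of the big-cell factorization (to rule out $t^{-\lambda}ht^\lambda\in JB^0$ whenever $h^-$ falls outside $t^\lambda U_f^-t^{-\lambda}$).
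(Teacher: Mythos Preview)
Your proof is correct and follows essentially the same route as the paper's: intertwine $\Phi$ with the $\sH$-action via commutativity to reduce to $\Phi(f_\lambda)(t^\lambda)=b_\lambda^{-1}$, expand $f_\lambda$ over left cosets of $J\cap t^\lambda Jt^{-\lambda}$, use the Iwahori decomposition together with the big-cell factorization to reduce each summand to the indicator $\mathbf{1}[t^{-\lambda}h^-t^\lambda\in U_f^-]$, and finish with a volume count. The only cosmetic difference is that you phrase the final count as a ratio of volumes while the paper counts cosets in an auxiliary subgroup $K$; these are the same computation.
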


 \begin{remark} \label{r:clambdaVol} In view of Lemma \ref{l:vollam},
   $b_\lambda b_{-\lambda} = \vol(Jt^\lambda J)^{-1}$.
 \end{remark} 
 
 The fact that $\Psi$ sends $[\Theta_\lambda]$ to a multiple of
 $f_\lambda$ is known as ``preservation of support'' and is proved by
 Roche \cite[\S 6]{Roche98} using methods of Bushnell and Kutzko
 \cite{Bushnell98}. The computation of the scalars $b_\lambda$ appears
 to be new. The first step in the computation is to show that
 $b_\lambda^{-1} = \Phi(f_\lambda)(t^\lambda)$. Then one explicitly
 computes $\Phi(f_\lambda)$ in terms of $p_0$. For details of the
 proof, see \S \ref{ss:clambda}.

%%%%%%%%%%%%%%%%%%%%%%%%%%%%%%%%%%%%%%%%%%%%%%%%%%%%%%%%%%%%%%%%%%%%%%%%%%%%%%%%

 \section{Geometrization of the vector spaces $\sH$ and
   $\sW$} \label{s:geom} For the rest of this paper, we assume that
 the character $\bmu:T(\cO)\ra \bQlt$ is regular. Our goal is to
 geometrize the vector spaces $\sH$ and $\sW$, the convolution product
 of $\sH$ and the action of $\sH$ on $\sW$ by convolution.

 We will deal with both set-theoretic and scheme-theoretic points of
 varieties (set-theoretic points of $\Spec B$ are, by definition, the
 prime ideals of $B$).  When a point is scheme-theoretic, we will
 specify it as an $R$-point for some $\Fq$-algebra $R$; otherwise, we
 will be referring to a set-theoretic point. For our conventions (and
 some recollections) regarding perverse sheaves and bounded
 $\ell$-adic derived categories see Appendix \S \ref{s:perverse}. We
 only mention here that if $f:X\ra Y$ is a morphism of algebraic
 varieties, then the pushforwards $f_!$ and $f_*$ are always derived,
 and accordingly we omit any prefix of $\mathrm{R}$ (for right
 derived).

\subsection{Recollections on the affine Grassmannian} \label{sss:Gr}
It is well known that there exists a group ind-scheme $\bG$ over $\Fq$
such that $\bG(\Fq)=G(F)$. Moreover, there exists a proalgebraic group
$\bGO$ over $\Fq$ such that $\bGO(\Fq)=G(\cO)$. The affine
Grassmannian $\bGr$ is the fpqc quotient $\bG/\bGO$. There exist
proper schemes of finite type over $\Fq$,
\[
\bGr_1\subset \bGr_2\subset \cdots,
\]
which are fixed under the action of $\bGO$ (which factors through
finite dimensional quotients of $\bGO$), and whose union is $\bGr$;
see, for instance, \cite[\S 11]{Lusscf} and \cite[Proposition
1.2.2]{Ginzburg00}. Thus, $\bGr$ is an ind-proper scheme of ind-finite
type. According to \cite[\S 5.2.1]{Gaitsgory99}, this ind-scheme may
be non-reduced. This will not affect us, however, since we are only
interested in perverse sheaves on $\bGr$ (or on related ind-schemes).

Next suppose that $\bK$ is a closed subgroup of $\bGO$ such that
$\bGO/\bK$ is finite dimensional.  Let $\bY:=\bG/\bK$ and let
$\pi: \bY \ra \bGr$ denote the canonical morphism. Then
$\bY_i:=\pi^{-1}(\bGr_i)$ is a scheme of finite type and $\bY$ is the
union of the $\bY_i$. Therefore, $\bY$ is an ind-scheme of ind-finite
type.

\subsection{Geometrization of $\sW$ and $\sH$} 
\subsubsection{Geometrization of $J$ and
  stabilizers} \label{sss:Jgeom} Let $J$ be a group of the form
defined in \S \ref{ss:rocheGroups}. There exists a proalgebraic
subgroup $\bJ< \bGO$ such that $\bJ(\Fq)=J$. Indeed, $J$ has a
combinatorial description in terms of $\fp^n$ for various $n$, and it
is easy to deduce that it is the group of points of a proalgebraic
group. Moreover, the Iwahori decomposition of $J$ implies that $\bJ$
is connected.

Alternatively, $\bJ$ can be constructed abstractly as follows: by
Bruhat-Tits theory \cite{BTgrcl1}, there exists a canonical affine
smooth group scheme $\underline{J}$ over $\cO$ such that $\uJ(\cO)=J$
(characterized by additional properties). Applying the Greenberg
functor \cite{Greenberg}, we obtain for every $n\geq 1$, a connected
algebraic group $\bJ^{(n)}$ over $\Fq$ such that
$\bJ^{(n)}(\Fq)=\uJ(\cO/\fp^n)$. It follows that $\bJ:=\varprojlim
\bJ^{(n)}$ is a proalgebraic group over $\Fq$ and
\[
\bJ(\Fq)=\varprojlim \bJ^{(n)}(\Fq)=\varprojlim \uJ(\cO/\fp^n) =
\uJ(\cO).
\] 

Next, observe that $\bJ$ acts on
$\bG$ by left and right multiplication. For every $x\in
\bG(\Fq)=G(F)$, one can consider the stabilizer $\Stab_{\bJ\times \bJ}
(x)$. This is a proalgebraic subgroup of $\bJ\times \bJ$. Projection
onto the first factor defines an isomorphism of proalgebraic groups
$\Stab_{\bJ\times \bJ}(x)\cong \bJ \cap x \bJ x^{-1}$. When $G =
\GL_N$, one can show that $\Stab_{\bJ\times \bJ}(x)$ is connected
for all $x\in \GL_N(F)$, using the fact that $\GL_N$ is the set of
invertible elements of the algebra of $N\times N$ matrices.  For
arbitrary (connected split reductive) $G$, we don't know if this
stabilizer group is connected.  However, the following will suffice
for our purposes. Let $N(T(F))$ denote the normalizer of $T(F)$.

\begin{proposition} \label{p:irrconn} 
   \begin{enumerate} 
   \item[(i)]   For all $n \in N(T(F))$,  $\Stab_{\bJ \times
    \bJ}(n)$ is
  connected.
\item[(ii)] If $x \in G(F)=\bG(\Fq)$ is not in $\bJ t^\lambda \bJ$ for
  any $\lambda \in \Lambda$, then $\Stab_{\bJ \times
    \bJ}(x)^\circ(\Fq) \nsubseteq \ker(\mu \times \mu^{-1})$.
  \end{enumerate} 
\end{proposition}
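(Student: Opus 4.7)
The plan is to handle the two parts separately, via the identification $\Stab_{\bJ \times \bJ}(x) \cong \bJ \cap x \bJ x^{-1}$ afforded by projection to the first factor.

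For (i), given $n \in N(T(F))$, I would write $n = t^\lambda \dot w$ with $\lambda \in \Lambda$ and $\dot w$ a lift of some $w \in W$. Since $n$ normalizes $\bT(\cO)$ and conjugates the root subgroup $U_{\alpha,m}$ to $U_{w(\alpha),\, m - w(\alpha)(\lambda)}$, the conjugate $n \bJ n^{-1}$ inherits its own Iwahori-type product decomposition by transporting Lemma \ref{l:uroche}.(iv) through $\mathrm{Ad}(n)$. A direct computation paralleling the derivation of \eqref{eq:intersectLambda} then yields a factorization
\[
\bJ \cap n \bJ n^{-1} = \bT(\cO) \cdot \prod_{\alpha \in \Delta} \mathbf{U}_{\alpha, m_\alpha}
\]
for appropriate integers $m_\alpha \in \bZ$, under a suitable ordering of $\Delta$ (positive roots first, say). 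Each factor is a connected proalgebraic group ($\bT(\cO)$ is a connected proalgebraic torus and each $\mathbf{U}_{\alpha,m}$ is a proalgebraic affine space), and the product realizes $\bJ \cap n\bJ n^{-1}$ as an iterated affine-space bundle over $\bT(\cO)$, hence it is connected.

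For (ii), I would argue the contrapositive: assuming $\mu \times \mu^{-1}$ is trivial on $\Stab_{\bJ \times \bJ}(x)^\circ(\Fq)$, I would produce $\lambda \in \Lambda$ with $x \in \bJ t^\lambda \bJ$. Roche's Theorem \cite[Theorem 4.15]{Roche98} (applied to our regular $\bmu$) asserts that a double coset $JxJ$ supports a nonzero $(J \times J, \mu \times \mu^{-1})$-invariant function precisely when $x$ lies in a relevant double coset, and the existence of such a function is equivalent to triviality of $\mu \times \mu^{-1}$ on the whole set-theoretic stabilizer $\Stab_{\bJ \times \bJ}(x)(\Fq)$. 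Thus the remaining task is to promote triviality on $\Stab^\circ(\Fq)$ to triviality on the full $\Fq$-stabilizer; once this is done, Roche's theorem places $x$ in a relevant coset.

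To close this gap I would inspect the argument of Roche (and its sharpening by Yu \cite{Yu-ctsr}): the obstruction to triviality is always witnessed by an explicit element lying either in $\bT(\cO) \cap \Stab_{\bJ \times \bJ}(x)$ (produced from a nondegenerate coset of $T_{\bmu}$ in $T(\cO)$) or in some $\mathbf{U}_{\alpha,m} \cap \Stab_{\bJ \times \bJ}(x)$. Each such ambient subgroup is a connected proalgebraic subgroup of $\bJ$, cut out inside $\Stab_{\bJ \times \bJ}(x)$ by closed conditions compatible with its affine-space (or toric) structure, so its identity component of intersection with the stabilizer lies inside $\Stab^\circ$. Consequently, nontriviality of $\mu \times \mu^{-1}$ on $\Stab(\Fq)$ already forces nontriviality on $\Stab^\circ(\Fq)$, yielding the required contrapositive.

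The main obstacle I expect lies in the last step of (ii): pinning down in which connected piece of the stabilizer Roche's witness element actually lives. This is not visible from the statement of \cite[Theorem 4.15]{Roche98} and requires entering the proof; the verification would be carried out in Appendix \S \ref{ss:irrconn}, where the role of $\bT(\cO)$-cosets and root subgroups in Roche's and Yu's arguments is made explicit, and where the reduction used in \S \ref{sss:residue} to handle the Iwahori case via $N(T(F))$-representatives is employed.
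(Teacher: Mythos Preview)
Your approach to part (i) matches the paper's: a direct product decomposition of $\bJ \cap n\bJ n^{-1}$ into $\bT(\cO)$ and root subgroups via the Iwahori-type factorization, hence connectedness.

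For part (ii), your overall contrapositive strategy is right, but the sketch of the key step does not match what the paper actually does, and as written it would not close the gap. You propose that Roche's witness to nontriviality of $\mu \times \mu^{-1}$ lives in $\bT(\cO)$ or a single root subgroup $\mathbf{U}_{\alpha,m}$, and that such a subgroup intersects the stabilizer in a connected piece. Neither claim is evident: for general $x$ there is no Iwahori-type decomposition of $\bJ \cap x\bJ x^{-1}$, so $\mathbf{U}_{\alpha,m} \cap x\bJ x^{-1}$ has no reason to be connected, and Roche's argument does not produce a witness of this shape.

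The paper instead proceeds by induction on the semisimple rank of $G$ and replaces $J$ by a smaller subgroup $L$ (built from $T_{\ell-1}$ and root subgroups at depth roughly $\ell-1$, where $\ell = \cond(\mu)$). The dichotomy is whether $x \in \mathcal{I}(\mu|L) = L\,C_{G(F)}(a)\,L$ (Theorem~\ref{t:ar-int}, after Adler--Roche and Yu). If so, one reduces to the proper Levi $C_G(a)$ and applies induction. If not, the witness lies in $\Stab_{\bL \times \bL}(x)$, and connectedness is obtained not by a root-subgroup argument but via the Moy--Prasad-style isomorphism $\overline{\varphi}$: the quotient $(L \cap \Ad(x)L)/\widetilde K_{x,\ell}$ is identified with a Lie-algebra quotient which is manifestly a product of copies of $\mathbb{G}_a$, hence connected, and $\mu \times \mu^{-1}$ is trivial on the kernel $\widetilde K_{x,\ell}$. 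These three ingredients---induction on rank, the auxiliary subgroup $L$, and the $\varphi$-isomorphism---are what your sketch is missing.
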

We think of (i) as saying that the stabilizer of an element of the
affine Weyl group $W_{\aff} = N(T(F))/T(F)$, considered as an element
of $G(F)$ by any section of the quotient, is connected. Note that the
stabilizer does not depend on the choice of section, because, for all
$t \in T(F)$, $\Stab_{\bJ \times \bJ}(n t) \cong \bJ \cap n t \bJ
t^{-1} n^{-1} = \bJ \cap n \bJ n^{-1}$, so the stabilizer of $nt$ is
independent of $t$ up to isomorphism.

\begin{proof} First we prove (i). Let $n \in N(T)$ have image $w \in
  W_{\aff}$ in the affine Weyl group.  Furthermore, let $w_0 \in W
  \cong W_{\aff} / \Lambda$ be the image in the finite Weyl group.
  Note that $n U_{\alpha, i} n^{-1} = U_{w(\alpha, i)}$, where
  $W_{\aff}$ acts on $\Delta \times \bZ$ by the usual action.  By
  Lemma \ref{l:uroche}.(ii), the direct product $T(\cO) \cdot
  \prod_{\alpha \in \Delta} U_{\alpha,f(\alpha)}$ equals $J$ for any
  choice of ordering of $\Delta$ such that $\Delta_+$ appears first,
  followed by $\Delta_-$.  Moreover, we can replace $\Delta_{\pm}$ by
  $w_0^{-1}(\Delta_{\pm})$, and infer that it is also acceptable to
  have $w_0^{-1}(\Delta_+)$ appear first, followed by
  $w_0^{-1}(\Delta_-)$.  Hence, $J \cap n J n^{-1} = T(\cO) \cdot
  \prod_{\alpha \in \Delta} U_{\alpha,
    \max\{f(\alpha),w(w_0^{-1}(\alpha),f(w_0^{-1}(\alpha)))_2\}}$,
  where the subscript of $2$ denotes the second component of a pair in
  $\Delta \times \bZ$, and we take an ordering where $\Delta_+$
  appears first followed by $\Delta_-$.  We conclude that this
  intersection is connected.

  Part (ii) is a strengthening of \cite[Theorem 4.15]{Roche98}, which
  can be extracted from \emph{op.~cit.} along with \cite{ARirpg} and
  \cite{Araksr}. For details of the proof, see \S \ref{ss:irrconn}.
  \end{proof} 

\subsubsection{Geometrization of $\mu$} 
Let $\bmu: T(\cO)\ra \bQlt$ be a (regular) character and let
$J=J_{\bmu}$ be the corresponding compact open subgroup of $G(\cO)$
(\S \ref{sss:compactGr}). To geometrize $\mu$ (and later $\sH$ and
$\sW$) it is convenient to define two auxiliary groups. Fix, once and
for all, a positive integer $c$ such that $\bmu|_{T_c}$ is
trivial. Define
\begin{equation}
 J' := \langle U_{f_{\bmu}}, T_c \rangle,\quad \quad A:=J/J'. 
 \end{equation} 
 For example, if $\bmu$ factors through a character
   of $T(\Fq)$, we can take $J$ to be the Iwahori group, and  $c=1$. In
   this case, $J'$ is the prounipotent radical of $J$. 
 
   Note that $J'$ is well known as the subgroup $J' = J_{f'_{\bmu}}$
   where $f'_{\bmu}: \Delta \cup \{0\} \to \bZ_{\geq 0}$ is the
   concave function defined by $f'_{\bmu}|_{\Delta} = f_{\bmu}$ and
   $f'_{\bmu}(0) = c$. According to \cite{Yu}, there exists a
   canonical smooth group scheme $\uJ'$ over $\cO$ such that
   $\uJ'(\cO) = J'$ (characterized by additional properties). As in \S
   \ref{sss:Jgeom}, using the Greenberg functor \cite{Greenberg}, we
   can construct a proalgebraic group $\bJ'$ such that
   $\bJ'(\Fq)=J'$. We now give a proof of this fact independent of the
   results of Greenberg and Yu.

   \begin{lemma} \label{l:J'algebraic} The groups $J'$ and $A$ are the
     sets of $\Fq$-points of connected proalgebraic and connected
     commutative algebraic groups $\bJ'$ and $\bA$ over $\Fq$,
     respectively.
\end{lemma}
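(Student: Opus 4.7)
The plan is to give $\bJ'$ a direct product decomposition analogous to the Iwahori decomposition of $\bJ$ (Lemma \ref{l:uroche}.(iv)), geometrize each factor separately, and deduce the claims for $\bA$ by realizing it as a quotient of $\bJ$. First I would observe that, since $T(\cO)$ normalizes each root subgroup $U_{\alpha, i}$ and $T_c$ commutes with $T_{f_\bmu}$, the subgroup $J' = \langle U_{f_\bmu}, T_c\rangle$ admits the direct product decomposition
\[
J' = U_{f_\bmu}^- \cdot (T_c \cdot T_{f_\bmu}) \cdot U_{f_\bmu}^+,
\]
which is the analogue of Lemma \ref{l:uroche}.(iv) for the concave function $f'_\bmu$ of the remark preceding the lemma.

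Next I would geometrize each factor. Each root subgroup $U_{\alpha, f_\bmu(\alpha)}$ is the set of $\Fq$-points of the proalgebraic group $\varprojlim_n \fp^{f_\bmu(\alpha)}/\fp^{f_\bmu(\alpha)+n}$, which is an inverse limit of connected unipotent algebraic groups, hence a connected proalgebraic group. The subgroup $T_c \cdot T_{f_\bmu} \subseteq T(\cO)$ is the set of $\Fq$-points of a connected proalgebraic subgroup of $\mathrm{Res}_{\cO/\Fq} T$; here one uses that the Weil restriction of a connected split torus is connected (over $\bFq$ it is a product of a torus with affine spaces). I would then glue these factors as a product scheme embedded in $\bG$, with group structure inherited from $\bG$ thanks to the normalizing action of $T_c T_{f_\bmu}$ on the root subgroups. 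This defines $\bJ'$, connected since each factor is.

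For $\bA$, the plan is first to verify that $\bJ'$ is normal in $\bJ$ and then to form the quotient. Normality follows because $T(\cO)$ normalizes both $U_{f_\bmu}^\pm$ and $T_c T_{f_\bmu}$, while the commutator identity $[t, u_\alpha(x)] = u_\alpha((\alpha(t)-1)x)$ shows that $[U_{f_\bmu}, T_c] \subseteq U_{f_\bmu} \subseteq J'$; together with the Iwahori decomposition of $J$, this gives $g J' g^{-1} = J'$ for all $g \in J$. The quotient $\bA := \bJ/\bJ'$ is then a connected proalgebraic group, as a quotient of the connected group $\bJ$ of \S \ref{sss:Jgeom}; its $\Fq$-points $J/J' \cong T(\cO)/(T_c T_{f_\bmu})$ form a commutative group, being a quotient of the commutative group $T(\cO)$. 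Moreover, $\bA$ is finite-dimensional (i.e., algebraic) since $T(\cO)/(T_c T_{f_\bmu})$ is a quotient of $T(\cO)/T_c \cong \mathrm{Res}_{(\cO/\fp^c)/\Fq} T$, which is a finite-dimensional algebraic group.

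The main subtlety I expect is rigorously identifying the scheme structure on the product $U_{f_\bmu}^- \cdot T_c T_{f_\bmu} \cdot U_{f_\bmu}^+$ as a locally closed subscheme of $\bG$ that is stable under multiplication; this is directly analogous to the construction of $\bJ$ itself in \S \ref{sss:Jgeom}, and can be carried out either using the explicit coordinates provided by \cite{BTgrcl1}, or by exhibiting $\bJ'$ as an inverse limit of suitable finite-dimensional subgroup schemes of the Greenberg realizations used to define $\bJ$.
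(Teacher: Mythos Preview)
Your approach is essentially the same as the paper's: both reduce via the Iwahori-type decomposition $J' = U_{f_\bmu}^- \cdot T' \cdot U_{f_\bmu}^+$ with $T' = \langle T_c, T_{f_\bmu}\rangle$, geometrize the unipotent factors directly, and realize $\bA$ as $\bT_\cO/\bT'$, a quotient of the finite-type group $\bT_\cO/\bT_c$.

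The one place you are vaguer than the paper is the claim that $T' = T_c \cdot T_{f_\bmu}$ is the set of $\Fq$-points of a connected proalgebraic subgroup of $\bT_\cO$. Your justification (``the Weil restriction of a connected split torus is connected'') addresses connectedness of the ambient group $\bT_\cO$ but not why this particular subgroup underlies a subgroup scheme: $T_{f_\bmu}$ is a \emph{non-direct} product $\prod_\alpha \alpha^\vee(1+\fp^{c_\alpha})$ with the $c_\alpha$ varying, and it is not immediate that such a product is cut out algebraically. The paper isolates exactly this as the point needing work and supplies, in an appendix, an explicit direct-product decomposition: letting $T_{f,m} \leq T$ be the subtorus generated by all coroots $\alpha^\vee$ with $c_\alpha \leq m$, one has
\[
T_{f_\bmu} \;\cong\; T_{f,1}(1+\fp) \times \prod_{m \geq 2} (T_{f,m}/T_{f,m-1})(1+\fp^m),
\]
and similarly for $T'$, from which the proalgebraic structure and connectedness are transparent. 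Your ``main subtlety'' paragraph anticipates difficulty at the level of gluing the factors of $J'$, but in fact that step is routine (as in the construction of $\bJ$ itself); the genuine content sits inside the torus factor.
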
  

\begin{proof} 
  Let $\bT_{\cO} < \bG_{\cO}$ be the obvious proalgebraic subgroup
  whose $\Fq$-points is $T(\cO)$. In view of the Iwahori decomposition
  $J = J^- J^0 J^+$, we see that $J' = J^- (J' \cap J^0) J^+$, which
  is a direct product decomposition.  It suffices to show that $T' :=
  J' \cap J^0 = \langle T_c, T_{f_{\bmu}} \rangle$ is the group of
  $\Fq$-points of a proalgebraic subgroup $\bT' < \bT_{\cO}$, and that
  the quotient $\bT_{\cO} / \bT' \cong \bJ/\bJ' = \bA$ has finite type. This
  is relatively easy to see, but for the convenience of the reader
  (and independent interest), we explicitly describe $T'$ and
  $T_{f_{\bmu}}$ in \S \ref{ss:J'algebraic}.
\end{proof} 

It is clear from the assumptions that $\mu$ is trivial on $J'$. In
other words, $\mu$ is the pullback of a character $\mu_0: A\ra \bQlt$
along the canonical morphism $J\ra A$. We now apply the construction
of \S \ref{sss:multLocalSystem} to obtain a one-dimensional character
sheaf $\cM_{0}$ on $\bA$ whose trace function is $\mu_0$.  Let $\cM$
be the pullback of $\cM_{0}$ via the natural morphism $\bJ\ra
\bA$. The local system $\cM$ is our geometrization of $\mu:J\ra
\bQlt$. The following result is an immediate consequence of
Proposition \ref{p:irrconn}.(ii).
\begin{corollary} \label{c:restM} Let $x\in G(F)$ be a point which is
  not contained in any relevant double coset. Then the restriction of
  $\cM$ to $\Stab_{\bJ\times \bJ}(x)^\circ$ is nontrivial.
\end{corollary}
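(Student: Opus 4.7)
My plan is to deduce this corollary immediately from Proposition \ref{p:irrconn}.(ii) via the sheaf-function dictionary. First I would recall from \S \ref{sss:Jgeom} that $\cM$ was constructed as the pullback along the canonical projection $\bJ \to \bA$ of a one-dimensional character sheaf $\cM_0$ on $\bA$ whose trace of Frobenius function equals the character $\mu_0$ of $A = J/J'$. Consequently the trace of Frobenius function of $\cM$ on $\bJ(\Fq) = J$ is precisely $\mu$, and therefore the trace of Frobenius function of the character sheaf $\cM \boxtimes \cM^{-1}$ on $(\bJ \times \bJ)(\Fq) = J \times J$ is the character $(j_1, j_2) \mapsto \mu(j_1) \mu(j_2)^{-1}$, which is exactly $\mu \times \mu^{-1}$.

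I would then argue by contraposition. Suppose the restriction of $\cM \boxtimes \cM^{-1}$ (the object whose $(\bJ \times \bJ, \cM \boxtimes \cM^{-1})$-equivariant structure is in play) to the connected proalgebraic subgroup $\Stab_{\bJ \times \bJ}(x)^\circ$ were trivial, i.e., isomorphic to the constant sheaf $\bQl$. Then the trace of Frobenius function of this restriction would be identically equal to $1$ on every $\Fq$-point, which translates directly into the set-theoretic inclusion $\Stab_{\bJ \times \bJ}(x)^\circ(\Fq) \subseteq \ker(\mu \times \mu^{-1})$. Since $x$ is assumed not to lie in any relevant double coset $\bJ t^\lambda \bJ$, this directly contradicts Proposition \ref{p:irrconn}.(ii), completing the proof.

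The main obstacle here, and really the only nontrivial content, is Proposition \ref{p:irrconn}.(ii) itself, whose proof is deferred to the appendix \S \ref{ss:irrconn} and rests on input from \cite{Roche98}, \cite{ARirpg}, and \cite{Araksr}. Once that proposition is available, the corollary is a routine unpacking of the sheaf-function dictionary, using only the trivial direction that a trivial $\ell$-adic local system has trace of Frobenius identically equal to $1$ at every $\Fq$-point.
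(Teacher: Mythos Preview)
Your proof is correct and is exactly the approach the paper takes: the paper simply states that the corollary is an immediate consequence of Proposition~\ref{p:irrconn}.(ii), and you have spelled out the (routine) sheaf-function dictionary step that makes this deduction work. Your interpretation of ``restriction of $\cM$'' as $\cM \boxtimes \cM^{-1}$ on $\Stab_{\bJ \times \bJ}(x)^\circ$ is the intended one, matching both the statement of Proposition~\ref{p:irrconn}.(ii) and the way the corollary is later applied in Proposition~\ref{p:restrictionIrrelevant}.
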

\begin{remark}\label{r:algclsd}  The results of this and subsequent sections
  (and in particular the main theorems in \S \ref{s:conv}) can be
  generalized in a manner that allows one to replace $\Fq$ with 
  any
  algebraically closed field of characteristic restricted as in \S
  \ref{sss:residue}.
%, or of characteristic zero.
  To do so, one must eliminate $\bmu$ and begin with $\bJ, \bJ'$, and
  $\cM_0$.  In more detail, instead of beginning with $\bmu$, one
  begins with a choice of Roche's subgroup $\bJ=\bJ_f$, cf.~\S
  \ref{ss:rocheGroups}, where $f$ is determined by coefficients
  $c_\alpha \geq 1$ as in \eqref{e:fbmu}, i.e., for all $\alpha \in
  \Delta_+$, either $f(\alpha) = f(-\alpha)$ or $f(\alpha) =
  f(-\alpha)-1$.  One must then pick a $c$ such that $c \geq c_\alpha$
  for all $\alpha$, and define the corresponding $\bJ' < \bJ$.  Next,
  one can allow $\cM_0$ to be a one-dimensional character sheaf on
  $\bA$ such that: (i) $\cM_0$ is regular: its pullback to $\bT_\cO$
  under the projection $\bT_\cO \onto \bT_\cO / \bT' \cong
  \bJ/\bJ'=\bA$ has trivial stabilizer under the Weyl group action
  (i.e., the corresponding local system on $\bT_\cO$ is not isomorphic
  to its pullback under the action of any element of the Weyl group);
  and (ii) the restriction of $\cM_0$ to the one-parameter subgroup of
  $\bT_\cO$ corresponding to each coroot $\alpha^\vee$ is nontrivial
  on $\alpha^\vee(1+\fp^{c_\alpha-1})$.  As before, $\cM$ is defined
  to be the pullback of $\cM_0$ to $\bJ$.  Provided the centralizers
  of restrictions of $\cM_0$ as in the proof of Proposition
  \ref{p:irrconn}.(ii) (Appendix \ref{ss:irrconn}) remain semisimple
  (which will be true, for instance, if $\cM_0$ is obtained from one
  of the $\cM_0$ in the case of $\Fq$ by base change), all the
  statements and proofs go through with this generalization.
  % , which then makes sense for all algebraically closed or finite
  % fields $k$ obeying our restrictions on characteristic.  In
  % particular, one replaces the condition that
  % $(j_1,j_2) \in J \times J$ is in $\ker(\mu \times \mu^{-1})$ by
  % the
  % condition that the pullback under the action $\theta: \bJ \to \bJ,
  % \theta(j) = j_1 j j_2^{-1}$ preserves $\cM$, i.e., $\cM \cong
  % \theta^* \cM$.  Then, the proof of Proposition
  % \ref{p:irrconn}.(ii)
  % goes through with this change, and the other results are
  % essentially
  % unchanged.
  We note that, in the algebraically closed setting, the
  Tate twists should be suppressed.
\end{remark}
\begin{remark}\label{r:charzero}
  One can also consider analogues of our results over an
  algebraically closed field of characteristic zero. However, since
  the affine line is simply connected in this case, $\bbG_a$ admits no nontrivial character sheaves. Thus, the
  restriction of $\cM_0$ to $\alpha^\vee(1+\fp^{c_\alpha-1})$ is
  always trivial if $c_\alpha > 1$, and we are reduced to the case
  $c_\alpha = 1$ for all $\alpha$, with $J$ the Iwahori subgroup.
\end{remark}
\begin{remark}\label{r:anycoeff} If we work over $\Spec \bC$, we can
  also consider sheaves in the complex topology, and then, as in \cite{MV},
  we can use coefficients in an arbitrary commutative Noetherian ring
  of finite global dimension.
In the case that $R$ is a field,
all of the results and proofs here go through without modification
(see, e.g., \cite{Dimca} for the necessary facts about perverse
sheaves in this context), with  $c_\alpha = 1$ for all $\alpha$, in
accordance with the previous remark.  We believe (but have not
carefully checked) that our main results also extend to the case of
commutative Noetherian rings of finite global dimension, provided one
replaces the mention of simple or irreducible objects (e.g., Corollary
\ref{c:IrredPerv}) by the statement that all objects of $\sHg$ are
finite direct sums of objects of the form $j_!^\lambda \otimes_R L$,
for $L$ a finitely-generated $R$-module.
%For more general $R$, see \S \ref{ss:gencoeff}.
 \end{remark}

 \subsubsection{Geometrization of $\sW$ and $\sH$} Recall that $\sW$
 is defined to be the vector space of functions on $G(F)$ which
 satisfy $f(gj)=f(g)\mu(j)$ for all $g\in G(F)$ and $j\in J$.
 Equivalently, $\sW$ is the vector space of functions on $G(F)/J'$
 satisfying $f(ga)=f(g)\mu_0(a)$ for all $g\in G(F)/J'$ and $a\in
 A$. Similarly, $\sH$ is the vector space of functions on $G(F)/J'$
 satisfying $f(jga)=\mu_0(j)f(g)\mu_0(a)$ for all $j \in J$ and $a \in
 A$.  Using this observation, geometrizing $\sW$ and $\sH$ becomes
 straightforward.

Let $\bX:=\bG/\bJ'$. By the discussion in \S \ref{sss:Gr}, $\bX$ is a
union of schemes $\bX_i$ of finite type over $\Fq$.
% (Note, however, that the $\bX_i$ are not necessarily proper).
The bounded constructible derived category $\sD(\bX)$ of sheaves on
$\bX$ is, by definition, the inductive limit of $\sD(\bX_i)$ (see
Appendix \S \ref{s:perverse} for our conventions regarding the derived
category and perverse sheaves).

The connected algebraic group $\bA$
acts freely on $\bX$ by the right multiplication action $r:\bA \times
\bX \ra \bX$, $r(a,x) := xa^{-1}$. Similarly, $\bJ$ acts on
$\bX$ by the left multiplication action
 $l:\bJ\times \bX\ra \bX$. 
The scheme $\bX_i$ is invariant under the action of
$\bJ\times \bA$. Indeed,
\[
\pi(\bJ \bX_i \bA)\subseteq \pi(\bG_\cO \bX_i \bA) \subseteq \bG_\cO
\bGr_i = \bGr_i.
\]
The left action of $\bJ$ on each $\bX_i$ clearly factors through a
finite dimensional quotient. Let $\bJ_i$ be such a quotient for each
$i$ which factors the quotient $\bJ \onto \bA$, and let $l_i: \bJ_i
\times \bX_i \to \bX_i$ be the resulting map descending from $l$.
Furthermore, let $\cM^i$ be the pullback of $\cM_0$ to $\bJ_i$ under
the quotient $\bJ_i \onto \bA$. Each $\cM^i$ is a multiplicative local
system on $\bJ_i$. Let $\sWg^i$ be the full subcategory of perverse
sheaves on $\bX_i$ satisfying $r^* \cF \cong \cM_0^{-1} \boxtimes
\cF$, and let $\sHg^i$ be the full subcategory of perverse sheaves on
$\bX_i$ satisfying $(l_i \times r)^* \cF \cong \cM^i \boxtimes
\cM_0^{-1} \boxtimes \cF$. Let $\sWg$ denote the direct limit of the
abelian categories $\sWg^i$, and let $\sHg$ denote the direct limit of
the abelian categories $\sHg^i$. We consider the objects of $\sWg$ the
$(\bA, \cM_0^{-1})$-equivariant perverse sheaves on $\bX$, and the
objects of $\sHg$ the $(\bJ \times \bA, \cM \times
\cM_0^{-1})$-equivariant perverse sheaves on $\bX$.

Taking trace of Frobenius of elements of the abelian categories $\sWg$
and $\sHg$, we recover the vector spaces $\sW$ and $\sH$.

% Let $\sWgd$ denote the full
% subcategory of $\sD(\bX)$ consisting of complexes which satisfy
% $r^*\cF\cong \cM_0^{-1} \boxtimes \cF$.  Let $\sWg\subset \sWgd$
% denote the heart of the perverse t-structure. Taking trace of
% Frobenius of elements of the abelian category $\sWg$, we recover the
% vector space $\sW$.

\subsection{Objects of $\sHg$} 

\subsubsection{Relevant orbits}
Recall that we called double cosets of the form $Jt^\lambda J$
relevant. We call the schemes $\bJ^\lambda := \bJ (t^\lambda \bJ')
\bA\subseteq \bX$ \emph{relevant orbits}. All the facts that we proved
about relevant double cosets in \S \ref{ss:relevantCosets} hold for
relevant orbits. This is because our arguments, which concerned
$\Fq$-points, carry over to $R$-points for any $\Fq$-algebra $R$.
Similarly, the facts about volume proved in \S \ref{ss:volume} compute
the dimension of the associated schemes, under the correspondence
\[
\log_q(\vol Y) = \dim \bY - \dim \bA,
\]
 where $Y$ is one of the
subsets obtained from cosets used in \S \ref{ss:volume}, and $\bY
\subseteq \bX$ is the associated subscheme of $\bX$ satisfying
$\bY(\Fq) = Y/J'$.

Below, we will often use the following basic fact: each stratum
$\bX_i$ contains only finitely many relevant orbits.  This is true
because $\bGr_i$ contains only finitely many cosets of the form
$t^\lambda \bG_{\cO}$ for $\lambda \in \Lambda$ (which follows, for
example, by taking a standard choice of $\bGr_i$; see, e.g.,
\cite{Ginzburg00}).

\subsubsection{Geometrization of $f_\lambda$}
Recall that $f_{\lambda}:Jt^{\lambda}J\ra \bQl$ is defined by
$f_\lambda(jt^{\lambda}j')=\mu(j)\mu(j')$. Our goal is to geometrize
the following statements:
 
\begin{enumerate}
\item[(i)] The restriction of $\mu \times \mu^{-1}$ to $\Stab_{J \times
    J}(x)$ is trivial for all $x \in Jt^\lambda J$;
\item[(ii)] $f_\lambda$ is the unique function on $Jt^\lambda J$ whose
  pullback to $J\times J$ under the map $(j_1,j_2) \mapsto j_1
  t^\lambda j_2$ equals $\mu\times \mu^{-1}$;

\item[(iii)] $f_{\lambda}$ is the unique function on $Jt^\lambda J$,
  up to a scalar multiple, which is $(J\times J, \mu\times
  \mu^{-1})$-invariant.
\end{enumerate} 
 Here and below, $J\times J$ on $G(F)$ acts by left and
  right multiplication, i.e., $(j_1, j_2) \cdot g = j_1 g j_2^{-1}$.

\begin{lemma} \label{l:Flambda}
\begin{enumerate}
\item[(i)] For every $\lambda\in \Lambda$ and $x \in \bJ^\lambda$, the
  restriction of $\cM \boxtimes \cM_0^{-1}$ to ${\Stab_{\bJ \times
      \bA}(x)}$ is trivial.
         
\item[(ii)] There exists a unique, up to isomorphism, local system
  $\cF_{\lambda}'$ on $\bJ^{\lambda}$ such that $(\pi^{\lambda})^{*}
  \cF_{\lambda}'\cong \cM\boxtimes \cM_{0}^{-1}$, where $\pi^\lambda:
  \bJ\times \bA \ra \bJ^\lambda$ denotes the map $(j,a) \mapsto
  jt^\lambda a^{-1} \bJ'$.
\item[(iii)] Suppose $\cG$ is a $(\bJ\times \bA, \cM \boxtimes
  \cM_0^{-1})$-equivariant local system on $\bJ^\lambda$. Then $\cG\cong
  \cF_\lambda' \otimes \cL$ where $\cL$ is the pullback, via
  $\bJ^\lambda\ra \Spec(\Fq)$, of a local system on $\Spec(\Fq)$.
\end{enumerate} 
\end{lemma}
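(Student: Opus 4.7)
The plan is to reduce all three parts to the base point $\bar{x}_0 := t^\lambda \bJ' \in \bJ^\lambda$, using that $\pi^\lambda$ realizes $\bJ^\lambda$ as the transitive homogeneous space $(\bJ \times \bA)/\Stab_{\bJ \times \bA}(\bar{x}_0)$ and that $\cM \boxtimes \cM_0^{-1}$ is a multiplicative local system on $\bJ \times \bA$. For (i), I first identify the stabilizer: projection to the first factor gives an isomorphism $\Stab_{\bJ \times \bA}(\bar{x}_0) \cong \bJ \cap t^\lambda \bJ t^{-\lambda}$, described by \eqref{eq:intersectLambda}, with the $\bA$-coordinate determined by $a = \overline{t^{-\lambda} j t^\lambda}$. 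Because $\bA = \bJ/\bJ'$ coincides with the abelian quotient $\bT_\cO/\bT'$ (all unipotent factors of $\bJ$ already lie in $\bJ'$), and because the Iwahori decomposition $j = j^- j^0 j^+$ is preserved under conjugation by $t^\lambda$ with $j^0 \in \bT_\cO$ fixed, the homomorphism $(j, a) \mapsto (\bar{j}, a)$ sends $\Stab$ into the diagonal of $\bA \times \bA$. Since $\cM \boxtimes \cM_0^{-1}$ is the pullback from $\bA \times \bA$ of $\cM_0 \boxtimes \cM_0^{-1}$, whose restriction to the diagonal is $\cM_0 \otimes \cM_0^{-1}$, hence trivial, the restriction to $\Stab$ is trivial. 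For arbitrary $x \in \bJ^\lambda$ the statement then follows by translation, since restrictions of a multiplicative local system to conjugate subgroups are isomorphic.

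For (ii), the morphism $\pi^\lambda$ is a principal $\Stab$-bundle (after descending to the finite-type quotients of $\bJ$ from \S \ref{sss:Jgeom}), and $\Stab$ is connected by Proposition \ref{p:irrconn}.(i) since $t^\lambda \in N(T(F))$. The trivialization from (i), combined with the multiplicativity of $\cM \boxtimes \cM_0^{-1}$ (which supplies the cocycle compatibility along the fibers of $\pi^\lambda$), allows standard descent to produce a local system $\cF_\lambda'$ on $\bJ^\lambda$ with $(\pi^\lambda)^* \cF_\lambda' \cong \cM \boxtimes \cM_0^{-1}$. Uniqueness up to isomorphism follows from the fact that $\pi^\lambda$ is surjective with geometrically connected fibers, so the induced map on \'etale fundamental groups is surjective and pullback of local systems is faithful on isomorphism classes.

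For (iii), given an equivariant local system $\cG$, the tensor $\cG \otimes (\cF_\lambda')^{-1}$ acquires a natural untwisted $(\bJ \times \bA)$-equivariant structure (the two twists by $\cM \boxtimes \cM_0^{-1}$ cancel). Because $\Stab$ is connected, an ordinarily equivariant local system on the transitive homogeneous space $\bJ^\lambda$ is determined by the stalk at $\bar{x}_0$ together with its $\Gal(\overline{\Fq}/\Fq)$-action, hence is the pullback of a local system from $\Spec \Fq$; unraveling gives $\cG \cong \cF_\lambda' \otimes \cL$ as claimed. The main obstacle I anticipate is the careful bookkeeping in (i) --- correctly using \eqref{eq:intersectLambda} and the fact that all $U_\alpha$-factors of $\bJ$ lie in $\bJ'$ to land in the diagonal, and then upgrading function-level triviality to geometric triviality using multiplicativity. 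Once (i) is granted, the descent in (ii) and the equivariance argument in (iii) are routine, the only nontrivial geometric input being the connectedness of $\Stab$ from Proposition \ref{p:irrconn}.(i).
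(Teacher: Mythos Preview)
Your proof is correct and follows essentially the same approach as the paper: reduce (i) to the observation that the image of $\Stab_{\bJ \times \bA}(t^\lambda \bJ')$ in $\bA \times \bA$ lies in the diagonal where $\cM_0 \boxtimes \cM_0^{-1}$ is trivial, deduce (ii) by equivariant descent along the $\Stab$-torsor $\pi^\lambda$, and for (iii) untwist by $(\cF_\lambda')^{-1}$ and use transitivity to reduce to the stalk at the base point. Your explicit invocation of the connectedness of $\Stab$ via Proposition~\ref{p:irrconn}.(i) is a point the paper leaves implicit but which is indeed needed to conclude that $(\pi^\lambda)^*$ is faithful on isomorphism classes of local systems.
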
 
\begin{proof}
  Note that $\cM \boxtimes \cM_0^{-1}$ is pulled back from $\cM_0
  \boxtimes \cM_0^{-1}$ on $\bA \times \bA$.  To prove (i), it is
  enough to show that the restriction of $\cM_0 \boxtimes \cM_0^{-1}$
  to the image of $\Stab_{\bJ \times \bA}(t^\lambda \bJ')$ in $\bA
  \times \bA$ is trivial.  The latter is contained in the diagonal,
  $\{(a,a) \mid a \in \bA\} \subseteq \bA \times \bA$, and the
  restriction of $\cM_0 \boxtimes \cM_0^{-1}$ to this locus is a
  tensor product of two inverse local systems, which is trivial.

  (ii) follows immediately from (i) by equivariant descent, since
  $\pi^\lambda$ is the quotient by the (free) action of $\Stab_{\bJ
    \times \bA}(t^\lambda)$.
  
  For (iii) consider the local system $\cL := (\cF_\lambda')^{-1}
  \otimes \cG$ on $\bJ^\lambda$.  Then, $\cL$ is a $(\bJ \times
  \bA)$-equivariant local system on $\bJ^\lambda$.  Hence, $(l \times
  r)^*(\cL)$ is a local system on $(\bJ \times \bA) \times
  \bJ^\lambda$ which is trivial in the $(\bJ \times \bA)$ direction.
  If we restrict to $(\bJ \times \bA) \times \{t^\lambda \bJ'\}$, we
  obtain that $(\pi^\lambda)^*(\cL)$ (with $\pi^\lambda$ as in (ii))
  is pulled back from the local system $\cL|_{t^\lambda \bJ'}$ on
  $t^\lambda \bJ' \cong \Spec \Fq$.  Thus, $\cL$ is also pulled back
  from a local system on $\Spec \Fq$.
  \end{proof}

  \subsubsection{The local systems $\cF_\lambda$ and their
    extensions} \label{sss:flambda} Let $\cF_\lambda :=
  \cF_\lambda'[\dim \bJ^\lambda](-\log_q b_\lambda)$, where $(m)$
  denotes the Tate twist by $m$.  Then $\cF_{\lambda}$ is a $\bJ\times
  \bA$-equivariant perverse sheaf on $\bJ^{\lambda}$ and 
\begin{equation}\label{e:trcfl}
\Tr(\Fq,  \cF_{\lambda})=(-1)^{\dim \bJ^\lambda}b_{\lambda}f_{\lambda}.
\end{equation}
 (see
  \S \ref{ss:traceFrob} for our conventions regarding the trace of
  Frobenius).  Let $j^\lambda: \bJ^\lambda \into
%\overline{\bJ^\lambda}
%\subseteq 
\bX$ denote the natural inclusion. To these are associated
derived functors $j_!^\lambda$ and $j_*^\lambda$ from
$\sD(\bJ^\lambda)$ to $\sD(\bX)$. We use the
abusive abbreviations
\begin{equation}
  j^\lambda_! := j^\lambda_! \cF_\lambda, \quad \quad j_{!*}^{\lambda}:= 
  j_{!*}^{\lambda}\cF_{\lambda}, \quad \quad
  j_{*}^{\lambda}:= j_{*}^{\lambda}\cF_{\lambda};
\end{equation}

We will eventually see that $j_!^\lambda \cong j_{!*}^\lambda \cong
j_*^\lambda$ (Theorem \ref{t:clean}).

\begin{lemma} \label{l:j!*Perv} $j_!^\lambda$, $j_{!*}^\lambda$, and
  $j_*^\lambda$ belong to $\sHg$.
\end{lemma}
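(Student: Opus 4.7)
The goal is to show, for each of $j_!^\lambda$, $j_{!*}^\lambda$, and $j_*^\lambda$, that the underlying complex on $\bX$ is perverse and that it carries the $(\bJ \times \bA, \cM \boxtimes \cM_0^{-1})$-equivariant structure required for membership in $\sHg$. The plan is to dispatch equivariance by general functoriality and to reduce perversity to the affineness of the locally closed immersion $j^\lambda$.

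Equivariance is essentially free. By construction, $\bJ^\lambda$ is $(\bJ \times \bA)$-stable, so $j^\lambda$ is a $(\bJ \times \bA)$-equivariant morphism of $(\bJ \times \bA)$-spaces. Combined with Lemma \ref{l:Flambda}, which exhibits $\cF_\lambda$ as a $(\bJ \times \bA, \cM \boxtimes \cM_0^{-1})$-equivariant perverse sheaf on $\bJ^\lambda$, the usual base-change isomorphisms endow each of $j_!^\lambda$, $j_{!*}^\lambda$, and $j_*^\lambda$ with a canonical $(\bJ \times \bA, \cM \boxtimes \cM_0^{-1})$-equivariant structure as complexes on $\bX$.

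For perversity, $\cF_\lambda$ is a shifted local system on the smooth variety $\bJ^\lambda$, hence perverse on $\bJ^\lambda$. By definition of intermediate extension, $j_{!*}^\lambda$ is then perverse on $\bX$. For $j_!^\lambda$ and $j_*^\lambda$, the key point is to establish that $j^\lambda$ is an \emph{affine} morphism, as $j_!$ and $j_*$ along an affine locally closed immersion preserve the perverse t-structure. Factoring $j^\lambda$ as the open embedding $\bJ^\lambda \hookrightarrow \overline{\bJ^\lambda}$ followed by the closed embedding $\overline{\bJ^\lambda} \hookrightarrow \bX$ (which is automatically affine), and using that $\bX$ (hence $\overline{\bJ^\lambda}$) is separated, this reduces to showing that $\bJ^\lambda$ is itself an affine scheme.

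To this end, assume $\lambda \in \Lambda_+$ without loss of generality. Lemma \ref{l:conjJ} together with \eqref{eq:intersectLambda} yields $\bJ \cap t^\lambda \bJ t^{-\lambda} = \bJ^- \bJ^0 (t^\lambda \bJ^+ t^{-\lambda})$, so the quotient $\bJ/(\bJ \cap t^\lambda \bJ t^{-\lambda}) \cong \bJ^+/(t^\lambda \bJ^+ t^{-\lambda}) \cong \prod_{\alpha \in \Delta_+} U_{\alpha, f(\alpha)}/U_{\alpha, f(\alpha)+\alpha(\lambda)}$ is an affine space; moreover, the principal $K$-bundle $\bJ \to \bJ/K$ (with $K := \bJ \cap t^\lambda \bJ t^{-\lambda}$) admits an algebraic section built from the positive-root coordinates, so the orbit-stabilizer identification of \S \ref{sss:Jgeom} specializes to $\bJ^\lambda \cong \bJ/K \times \bA$, a product of affine varieties, hence affine. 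The main obstacle is making rigorous this last step in the pro-algebraic setting: one must verify carefully, using the Iwahori decomposition, that the putative section of $\bJ \to \bJ/K$ is indeed algebraic, and that the resulting product decomposition of $\bJ^\lambda$ respects the scheme structure (rather than only the underlying sets of $\Fq$-points). The remaining ingredients---perversity of $j_{!*}^\lambda$ and propagation of equivariance by base change---are standard.
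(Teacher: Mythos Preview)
Your overall strategy matches the paper's: perversity of $j_{!*}^\lambda$ is immediate, perversity of $j_!^\lambda$ and $j_*^\lambda$ follows from affineness of $j^\lambda$, and equivariance comes from functoriality (the paper spells out the projection formula for $j_!$ and $j_*$, then uses that $j_{!*}$ is the image of the canonical map $j_! \to j_*$). The one substantive difference is your argument for affineness. You try to exhibit $\bJ^\lambda$ explicitly as $(\bJ/K) \times \bA$ via an algebraic section built from the Iwahori coordinates, and you correctly flag the difficulty of making this rigorous at the scheme level in the pro-algebraic setting. The paper bypasses all of this with a one-liner: $\bJ \times \bA$ is a connected solvable (pro-)algebraic group (note $\bA$ is commutative and $\bJ$ is pro-solvable from its Iwahori-type decomposition), and the action on each $\bX_i$ factors through a finite-dimensional solvable quotient, so every orbit---in particular $\bJ^\lambda$---is affine by the classical theorem on orbits of connected solvable groups. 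This immediately makes $j^\lambda$ an affine morphism and dissolves the ``main obstacle'' you identify; there is no need to produce a section or verify compatibility with the scheme structure by hand.
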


\begin{proof} By definition, $j_{!*}^\lambda$ is a perverse sheaf.
  Since $\bJ \times \bA$ is solvable, all of its orbits are
  affine. Thus, $j^\lambda$ is an open affine embedding. Therefore,
  $j_!^\lambda$ and $j_*^\lambda$ are perverse sheaves as well. To
  prove the result, it remains to show that these perverse sheaves are
  $(\bJ\times \bA, \cM\boxtimes \cM_0^{-1})$-equivariant.  Using the
  projection formula (twice),
\[
l^* (j^\lambda_! \cF_\lambda) \cong (\Id\times j^\lambda)_! (l^*
\cF_\lambda) \cong (\Id\times j^\lambda)_! (\cM \boxtimes \cF) \cong
\cM \boxtimes j^\lambda_!\cF_\lambda.
\]
Thus, $j_!^\lambda$ is $(\bJ,\cM)$-equivariant. Similarly, one shows
that $j_*^\lambda$ is $(\bJ,\cM)$-equivariant for the left
multiplication action. Now $j^\lambda_{!*}$ is the image of the
canonical morphism $j^\lambda_!\ra j^\lambda_*$. Since the canonical
morphism is functorial, it is easy to see that $l^*(j_{!*}^\lambda)
\cong \cM \boxtimes j_{!*}^\lambda$. One proves in an analogous manner
that $j^\lambda_!, j^\lambda_*$ and $j^\lambda_{!*}$ are $(\bA,
\cM_0^{-1})$-equivariant for the right multiplication action.
\end{proof}

\subsubsection{Restriction to irrelevant points}
Let an \emph{irrelevant point} $x \in \bX$ denote a point which does
not lie in any relevant orbit. Recall that if $f\in \sH$, then
$f(x)=0$ for all irrelevant points $x$. In this section, we prove a
geometric analogue of this statement.

\begin{proposition} \label{p:restrictionIrrelevant} Let
  $y \in \bX$ be an irrelevant (set-theoretic) point.  
  \begin{enumerate} 
  \item[(i)] The stalk of every $\cF\in \sHg$ at $y$ is zero.
  \item[(ii)] The stalk at $y$ of every $(\bJ\times \bA, \cM\boxtimes
    \cM_0^{-1})$-equivariant complex $\cG \in \sWgd$ is zero.
  \end{enumerate} 
\end{proposition}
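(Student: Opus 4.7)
The plan is to use the equivariance of $\cF$ (resp.~$\cG$) to transfer the problem to the stabilizer of $y$, where Corollary \ref{c:restM} provides the required obstruction.

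First, reducing to a geometric point above $y$ and to a finite-type subscheme $\bX_i \subseteq \bX$ containing $y$ on which the left action factors through a finite-dimensional quotient $\bJ_i$ of $\bJ$, consider the orbit map $\phi_y: \bJ_i \times \bA \to \bX_i$, $(j, a) \mapsto j y a^{-1}$, and set $H := \Stab_{\bJ_i \times \bA}(y)$. For any lift $\hat y \in \bG$ of $y$, the map $(j, j') \mapsto (j \bmod \ker(\bJ \to \bJ_i), \pi(j'))$ (with $\pi: \bJ \to \bA$) defines a surjection of identity components $\Stab_{\bJ \times \bJ}(\hat y)^\circ \onto H^\circ$ compatible with the pullback of $\cM^i \boxtimes \cM_0^{-1}$ to $\cM \boxtimes \cM^{-1}$. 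Since $y$ is irrelevant, Corollary \ref{c:restM} ensures that $(\cM \boxtimes \cM^{-1})|_{\Stab_{\bJ \times \bJ}(\hat y)^\circ}$ is nontrivial, hence so is $(\cM^i \boxtimes \cM_0^{-1})|_{H^\circ}$.

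Second, pulling back the equivariance isomorphism $(l_i \times r)^* \cF \cong \cM^i \boxtimes \cM_0^{-1} \boxtimes \cF$ along the closed embedding $\iota_y: \bJ_i \times \bA \hookrightarrow (\bJ_i \times \bA) \times \bX_i$, $(j, a) \mapsto ((j, a), y)$, yields
\[
\phi_y^* \cF \;\cong\; \cM^i \boxtimes \cM_0^{-1} \otimes \cF_y
\]
in $\sD(\bJ_i \times \bA)$, where $\cF_y$ denotes the stalk (a complex of $\bQl$-vector spaces). On the other hand, $\phi_y$ factors as a principal $H$-bundle onto the orbit $O_y \subseteq \bX_i$, so $\phi_y^* \cF$ is pulled back from $O_y$; in particular its restriction to the fiber $\phi_y^{-1}(y) \cong H$ is a constant complex on $H$.

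Comparing the two descriptions of this restriction shows that $(\cM^i \boxtimes \cM_0^{-1})|_H \otimes \cF_y$ must be constant on $H$. Since $(\cM^i \boxtimes \cM_0^{-1})|_H$ is a rank-one local system, $\cF_y \neq 0$ would force it to be trivial and in particular trivial on $H^\circ$, contradicting the first paragraph. Hence $\cF_y = 0$, proving (i). The argument for (ii) is verbatim identical, as only the equivariance and not the perversity is used. The main obstacle is the rigorous derivation of the stalk formula $\phi_y^* \cF \cong \cM^i \boxtimes \cM_0^{-1} \otimes \cF_y$ from the coherence data of the equivariance structure; the reduction to geometric points is routine, since base change preserves nontriviality of character sheaves restricted to connected proalgebraic groups.
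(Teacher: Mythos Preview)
Your core strategy matches the paper's: combine the nontriviality of the character sheaf on the connected stabilizer (Corollary~\ref{c:restM}) with the equivariance of $\cF$ to force the stalk to vanish. Your second and third paragraphs essentially reprove Lemma~\ref{l:restrictionTrivial}, which the paper cites directly; the passage through $\bJ_i$ and the surjection $\Stab_{\bJ \times \bJ}(\hat y)^\circ \onto H^\circ$ are correct but not where the difficulty lies.

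The gap is in your opening and closing sentences. Corollary~\ref{c:restM} (via Proposition~\ref{p:irrconn}.(ii)) is proved only for $\hat y \in \bG(\Fq) = G(F)$: the argument in \S\ref{ss:irrconn} runs through Roche's analysis of $\mathcal{I}(\mu \mid L)$, which concerns the function $\mu$ on $\Fq$-points. Your assertion that ``base change preserves nontriviality'' is true once nontriviality is established, but you have no nontriviality statement available at a geometric point to begin with. The paper handles this in two steps you omit. First, it reduces from an arbitrary point $y$ to closed points, using that each stratum $\bX_i$ meets only finitely many relevant orbits, so vanishing at the irrelevant closed points of a dense open in $\bar y$ forces vanishing at $y$. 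Second, for a closed point over $\Fqn$, it replaces $\bmu$ by $\bmu \circ N_n$ via the norm map (Remark~\ref{r:norm}) and reruns Roche's argument over $\Fqn$. You have the main obstacle backwards: the stalk formula you flag is the content of Lemma~\ref{l:restrictionTrivial} and is routine, whereas extending Corollary~\ref{c:restM} beyond $\Fq$-points is the step requiring genuine care.
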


\begin{proof} It is clear that (i) is a consequence of (ii), so we
  only prove (ii).

First, we claim that it suffices to assume that $y$ is closed.
Indeed, otherwise, since finitely many relevant orbits lie in each
stratum $\bX_i$, only finitely many can intersect the closure
$\bar{y}$ of $y$. Now the complement of these relevant orbits in
$\bar{y}$ is a dense open subvariety $U$ of $\bar{y}$. If the stalks
at the (necessarily irrelevant) closed points in $U$ vanish, then the
restriction of $\cG$ to $U$ vanishes.  Hence, the restriction of $\cG$
to $y \in U$ also vanishes.

So, assume that $y$ is closed.  By Lemma \ref{l:restrictionTrivial},
it suffices to show that $\cM \boxtimes \cM_0^{-1}$ is nontrivial on
the connected component of the identity of the stabilizer of $y$; then
it would follow that all cohomology sheaves of $\cG$ have zero stalk
at $y$, and hence also that the stalk of $\cG$ at $y$ is zero.  For
$\Fq$-points the result follows from Corollary \ref{c:restM}. For
$\Fqn$-points one can use the norm maps (Remark \ref{r:norm}).  In
more detail, we can replace $\bmu$ by the corresponding character of
$T(\Fqn\dblsqbrs{t})$ and work over the field $\Fqn$.  This implies
the result for all set-theoretic points $y$.
\end{proof}

\begin{corollary}\label{c:IrredPerv} The irreducible objects in $\sHg$
  are of the form $j^\lambda_{!*} \otimes \cL$,
%$\IC(\overline{\bJ^\lambda}, \cF_\lambda \otimes \cL)$, 
where $\cL$ is the pullback, via $\bJ^\lambda\ra \Spec \Fq$, of a
one-dimensional local system on $\Spec \Fq$.
\end{corollary}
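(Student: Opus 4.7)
The plan is to combine Proposition \ref{p:restrictionIrrelevant} with the classification of equivariant local systems on relevant orbits from Lemma \ref{l:Flambda}.(iii) and the general theory of simple perverse sheaves.

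First, I would observe that by Proposition \ref{p:restrictionIrrelevant}.(i), every object $\cF \in \sHg$ has vanishing stalks at irrelevant points, so the support of $\cF$ is contained in the union $\bigcup_\lambda \overline{\bJ^\lambda}$ of closures of relevant orbits. For a simple perverse sheaf, the support is irreducible, hence it is contained in $\overline{\bJ^\lambda}$ for a single $\lambda$. Let $i^\lambda: \overline{\bJ^\lambda} \hookrightarrow \bX$ and note that each $\bJ^\lambda$ is locally closed and smooth (it is a $\bJ \times \bA$-orbit for a solvable group) and contains only finitely many other relevant orbits in its closure, so the stratification by relevant orbits is well-behaved on $\overline{\bJ^\lambda}$.

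Second, I would invoke the standard classification of simple perverse sheaves: a simple perverse sheaf whose support is $\overline{\bJ^\lambda}$ is of the form $j^\lambda_{!*}(\cE[\dim \bJ^\lambda])$ for a unique (up to isomorphism) irreducible local system $\cE$ on some dense open $U \subseteq \bJ^\lambda$. Because $\cF \in \sHg$ is $(\bJ \times \bA, \cM \boxtimes \cM_0^{-1})$-equivariant and $\bJ^\lambda$ is a single orbit, $U$ must be all of $\bJ^\lambda$ and $\cE$ must be $(\bJ \times \bA, \cM \boxtimes \cM_0^{-1})$-equivariant as a local system. (For the latter, one restricts the equivariance isomorphism $(l_i \times r)^* \cF \cong \cM^i \boxtimes \cM_0^{-1} \boxtimes \cF$ to the open stratum $\bJ^\lambda \subseteq \overline{\bJ^\lambda}$.)

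Third, by Lemma \ref{l:Flambda}.(iii), any such $\cE$ is of the form $\cF_\lambda' \otimes \cL$ where $\cL$ is pulled back along $\bJ^\lambda \to \Spec \Fq$ from a local system on $\Spec \Fq$. Simplicity of $\cE$ (equivalently, of $\cF$) forces $\cL$ to be one-dimensional, since $\cF_\lambda'$ is itself one-dimensional and tensoring is exact. Finally, because $\cL$ is lisse and pulled back from $\Spec \Fq$, the intermediate extension commutes with tensoring by $\cL$, giving
\[
\cF \;\cong\; j^\lambda_{!*}(\cF_\lambda \otimes \cL) \;\cong\; j^\lambda_{!*}\cF_\lambda \otimes \cL \;=\; j^\lambda_{!*} \otimes \cL,
\]
which is the desired form. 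The only mildly delicate point is checking that tensoring by the pulled-back $\cL$ commutes with $j^\lambda_{!*}$; this follows from the fact that $\cL$ is lisse on all of $\bX$ (indeed on $\Spec \Fq$), so tensoring with $\cL$ is an auto-equivalence of $\sD(\bX)$ preserving perversity and intermediate extensions.
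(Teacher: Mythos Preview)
Your proposal is correct and follows essentially the same approach as the paper: use Proposition \ref{p:restrictionIrrelevant} to confine the support to relevant orbits, invoke the standard classification of simple perverse sheaves as intermediate extensions of irreducible local systems, and then apply Lemma \ref{l:Flambda}.(iii). Two small points worth tightening: first, you should choose $\lambda$ so that the support \emph{equals} $\overline{\bJ^\lambda}$ (not merely is contained in it), which is possible since the support is irreducible and lies in a finite union of orbit closures; second, the corollary implicitly also asserts that each $j^\lambda_{!*} \otimes \cL$ is irreducible in $\sHg$, which the paper notes follows from $\cF_\lambda$ being one-dimensional together with Lemma \ref{l:j!*Perv}.
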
 
\begin{proof}
  Note that $j_{!*}^\lambda$ is irreducible for every $\lambda \in
  \Lambda$, since $\cF_\lambda$ is irreducible (in fact,
  one-dimensional).  For the converse, let $\cF$ be an irreducible
  object of $\sHg$. Then there must exist a $\bJ \times \bA$-invariant
  locally closed subscheme $\bY \subset \bX_i$ of one of the strata
  $\bX_i$ of $\bX$, and an irreducible $(\bJ \times \bA, \cM \boxtimes
  \cM_0^{-1})$-equivariant local system $\cG$ on $\bY$ such that $\cF
  = j_{!*}^{\bY} \cG[\dim \bY]$, where $j^{\bY}: \bY \into \bX$ is the
  inclusion.  Hence, $\bY$ must lie in the union of the finitely many
  relevant orbits in $\bX_i$. Since it is $\bJ \times \bA$-invariant,
  $\bY$ equals a finite union of relevant orbits.  As $\cF$ is
  irreducible, $\bY$ is also irreducible.  Therefore, there must exist
  $\lambda \in \Lambda$ such that $\bJ^{\lambda} \cap \bY$ is open and
  dense in $\bY$.  Since $\bY$ is $\bJ \times \bA$-invariant, in fact
  $\bJ^{\lambda} \subseteq \bY$.  Thus, we conclude that $\cF \cong
  j^\lambda_{!*} (\cF|_{\bJ^{\lambda}})$.  Hence, Lemma
  \ref{l:Flambda} implies that it has the desired form (note that all
  irreducible local systems on $\Spec \Fq$ are one-dimensional).
\end{proof}

%%%%%%%%%%%%%%%%%%%%%%%%%%%%%%%%%%%%%%%%%%%%%%%%%%%%%%%%%%%%%%%%%%%%%%%%%%%%%%%%

\section{Convolution product and main results} \label{s:conv} 
\subsection{Definition of convolution} \label{ss:defConv} Let
$p:\bG\times_{\bJ} \bX\ra \bX$ denote the product map (where
$\times_{\bJ}$, as in the setting of $\Fq$-points in \S
\ref{ss:relevantCosets}, denotes the quotient of the product by the
inner adjoint action of $\bJ$, $j \cdot (g, x) = (gj^{-1}, jx)$).  The
convolution with compact support is the functor defined by
\begin{equation}\label{eq:conv}
  \star_!: \sWg \times \sHg \ra \sWgd,\quad \quad (\cF,\cG)\mapsto 
  p_!(\cF\tboxtimes \cG). 
\end{equation}
Here $\cF\tboxtimes \cG$ is the the twisted external product of
twisted equivariant sheaves (\S \ref{ss:twistedProd}). We usually
write $\star$ for $\star_!$.  There are associativity isomorphisms
\begin{equation} \label{eq:associativity} \cF\star (\cG\star \cG')
  \iso (\cF\star \cG)\star \cG', \quad \quad \forall \, \cG,\cG'\in
  \sHg,\quad \cF\in \sWgd \end{equation} satisfying natural
properties; see, for instance, \cite[\S 7]{BD} or
\cite{Gaitsgory01}. One can easily check that
 \begin{equation}\label{eq:traceConv}
   \Tr(\Fr_{q^n}, \cF) \star \Tr(\Fr_{q^n}, \cG) = (-1)^{\dim \bA} 
   \Tr(\Fr_{q^n}, \cF \star \cG), \quad \quad \forall \, \cF\in \sWg,\, 
   \cG\in \sHg. 
  \end{equation}
  Thus, up to sign, this geometrizes the usual
  convolution product $\sW \star \sH\ra \sW$ (\ref{eq:conv}).

\subsection{Convolution of $j_!^\lambda$} 
Using Lemma \ref{l:vollam}, Theorem \ref{t:clambda}, \eqref{e:trcfl},
and (\ref{eq:traceConv}), one can easily show that
 \begin{equation}\label{eq:trConv}
 \Tr(\Fr_{q^n}, j_!^\lambda \star j_!^\nu)= \Tr(\Fr_{q^n}, j_!^{\lambda+\nu}). 
\end{equation} 
The following is the geometric analogue of (\ref{eq:trConv}). It is
the key result for proving our main theorems, and was suggested to us
by D. Gaitsgory.

\begin{proposition} \label{p:conv} For all $\lambda, \nu\in \Lambda$,
  $j_!^\lambda \star j_!^\nu \cong j_!^{\lambda+\nu}$.
\end{proposition}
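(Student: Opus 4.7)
The strategy is two-step: first pin down the support of $j_!^\lambda \star j_!^\nu = p_!(j_!^\lambda \tboxtimes j_!^\nu)$, then identify its restriction to that support.

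For the support, the convolution inherits a $(\bJ \times \bA, \cM \boxtimes \cM_0^{-1})$-equivariant structure (via a projection-formula argument mirroring Lemma~\ref{l:j!*Perv}), so Proposition~\ref{p:restrictionIrrelevant}.(ii) kills its stalks at all irrelevant points. On the other hand, the support of $j_!^\lambda \tboxtimes j_!^\nu$ maps under $p$ into $\bJ t^\lambda \bJ \cdot \bJ t^\nu \bJ / \bJ' \subseteq \bX$, and Proposition~\ref{p:cosetMult}.(c) says the unique relevant orbit meeting this subset is $\bJ^{\lambda+\nu}$. Hence $j_!^\lambda \star j_!^\nu$ is supported on $\bJ^{\lambda+\nu}$, and equivariance combined with Lemma~\ref{l:Flambda}.(iii) forces each perverse cohomology sheaf of the restriction to be of the form $\cF_{\lambda+\nu} \otimes \cL_k$ for some local system $\cL_k$ pulled back from $\Spec \Fq$.

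To determine the $\cL_k$ and verify concentration in a single perverse degree, I would compute the fibers of $p$ over $\bJ^{\lambda+\nu}$ case-by-case. For $\lambda, \nu$ both dominant (or both antidominant), Corollary~\ref{c:proddom} gives a pointwise bijection $\bJ^\lambda \times_\bJ \bJ^\nu \to \bJ^{\lambda+\nu}$, which I would promote to a scheme-theoretic isomorphism of homogeneous $\bJ \times \bA$-spaces using the dimension count in Corollary~\ref{c:volumeCosetMult}.(ii); the pushforward is then merely transport of structure, and the Tate twist matches because $b_\lambda b_\nu = b_{\lambda+\nu}$ for dominant coweights (immediate from Theorem~\ref{t:clambda}). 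For the mixed case, the lemma following Proposition~\ref{p:semismall} identifies each fiber with the intersection $t^{-\lambda}\bJ t^\lambda \bJ \cap t^\nu \bJ t^{-\nu} \bJ$, which by the Iwahori decomposition and~\eqref{eq:intersect2Lambdas} is (modulo the $J^0 \cdot J^-$ factor absorbed by the quotient) a product of root-group pieces $U_{\alpha,k}$, i.e., an affine space of the dimension predicted by Proposition~\ref{p:semismall}; compactly supported cohomology of affine space provides precisely the shift and Tate twist matching the normalization of $\cF_{\lambda+\nu}$. A trace-of-Frobenius comparison with the Satake identity $b_\lambda f_\lambda \star b_\nu f_\nu = b_{\lambda+\nu} f_{\lambda+\nu}$ then confirms that $\cL_0$ is trivial and all $\cL_{k \neq 0}$ vanish.

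For arbitrary $\lambda, \nu$, I would reduce to these sub-cases by decomposing $\lambda = \lambda_+ - \lambda_-$ and $\nu = \nu_+ - \nu_-$ as differences of dominant coweights and using associativity \eqref{eq:associativity} to break $j_!^\lambda \star j_!^\nu$ into an ordered composition of both-dominant, both-antidominant, and mixed convolutions, iterating if necessary. The main obstacle, and the technical heart of Gaitsgory's argument, is the mixed case: ensuring scheme-theoretically (not merely on $\Fq$-points, as Proposition~\ref{p:semismall} addresses) that the fibers of $p$ over $\bJ^{\lambda+\nu}$ are the claimed affine spaces of the predicted dimension, so that $p_!$ yields a perverse sheaf concentrated in a single degree with exactly the Tate twist $-\log_q b_{\lambda+\nu}$ defining $\cF_{\lambda+\nu}$.
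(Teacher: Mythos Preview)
Your three-step architecture (both dominant, dominant--antidominant mixed, then general via associativity and the decomposition $\lambda=\lambda_+-\lambda_-$) is exactly the paper's, and Steps I and III match it essentially verbatim.

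The difference is in Step II. You propose to identify the fibers of $p^{\lambda,\nu}$ over $\bJ^{\lambda+\nu}$ as affine spaces scheme-theoretically, compute $H^*_c$ of each fiber to see concentration in one degree with the correct Tate twist, and then use a trace-of-Frobenius comparison to pin down the rank-one local system $\cL$. The paper bypasses the explicit fiber computation: it combines the affineness of $p^{\lambda,\nu}$ with Artin's theorem (giving $j_!^\lambda\star j_!^\nu\in\sDg$) and the semismallness dimension bound of Proposition~\ref{p:semismall} (giving $\in\sDl$ via Theorem~\ref{t:semismall}, using that stalks vanish off $\bJ^{\lambda+\nu}$) to conclude perversity directly. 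Once the convolution is known to be perverse and supported on $\bJ^{\lambda+\nu}$, its restriction there is an equivariant perverse sheaf on a single orbit, hence a shifted equivariant local system, and Lemma~\ref{l:Flambda}.(iii) finishes the identification. So what you flag as ``the main obstacle''---promoting the $\Fq$-point fiber description to a scheme-theoretic affine-space statement---is not needed: the paper only uses the \emph{dimension} of the fibers, which is exactly what Proposition~\ref{p:semismall} supplies. Your route would also work (the Iwahori-type decompositions do hold over any base, and for a rank-one $\cL$ on $\Spec\Fq$ the Frobenius trace does determine it), but it asks for more than necessary.
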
 

In the case that $\lambda$ and $\nu$ are both dominant or
antidominant, the proposition follows easily from the isomorphism
$\bJ^\lambda \times_\bJ \bJ^\nu \cong \bJ^{\lambda+\nu}$ (Corollary
\ref{c:proddom}).  In the case that $\lambda$ is dominant and $\nu$ is
antidominant, we combine the fact that the only relevant orbit in the
closure of $\bJ^\lambda \times_\bJ \bJ^\nu$ is $\bJ^{\lambda+\nu}$
(Proposition \ref{p:cosetMult}.(c)) and the semismallness result
proved in \S \ref{sss:semismall} to show that $j_!^\lambda \star
j_!^\nu$ is perverse. It is then easy to show that it must be
isomorphic to $j_!^{\lambda+\nu}$. For details of the proof see \S
\ref{ss:conv}.

\begin{corollary}\label{c:BMWExt} For all $\lambda,\nu\in \Lambda$ and all
  local systems $\mathcal{L}, \mathcal{K}$ on $\Spec \Fq$,
\[
\Ext^\bullet(j_{!}^{\lambda} \otimes \cL , j_{!}^{\nu} \otimes
\cK)=\begin{cases} \Ext_{\Spec \Fq}^\bullet (\cL, \cK), & \text{if
    $\lambda = \nu$}, \\ 0, & \text{otherwise}.
\end{cases}
\]
 \end{corollary}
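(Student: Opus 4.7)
The plan is to reduce the Ext computation to the single orbit $\bJ^\lambda$ via adjunction, invoke cleanness (Theorem \ref{t:main}) to obtain the vanishing for $\lambda \neq \nu$, and appeal to Lemma \ref{l:Flambda}(iii) to recognize the diagonal Ext group as Ext on $\Spec \Fq$.

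Since $j^\lambda\colon \bJ^\lambda \hookrightarrow \bX$ is a locally closed immersion, factoring it as an open embedding into its closure followed by a closed embedding into $\bX$ gives the standard identity $(j^\lambda)^! \circ j^\lambda_! \cong \mathrm{id}$ (the closed part contributes $i^! i_* = \mathrm{id}$ and the open part contributes $k^* k_! = \mathrm{id}$). Consequently, adjunction yields
\[
\Ext^\bullet(j^\lambda_! \otimes \cL,\, j^\nu_! \otimes \cK) \;\cong\; \Ext^\bullet_{\bJ^\lambda}\bigl(\cF_\lambda \otimes \cL,\, (j^\lambda)^!(j^\nu_! \otimes \cK)\bigr).
\]

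For $\lambda \neq \nu$, I would use cleanness: Theorem \ref{t:main} asserts $j^\nu_! \cong j^\nu_*$, whose $!$-costalks vanish on $\overline{\bJ^\nu} \setminus \bJ^\nu$. Combined with the trivial vanishing when $\bJ^\lambda$ does not meet $\overline{\bJ^\nu}$, this forces $(j^\lambda)^! j^\nu_! = 0$ whenever $\bJ^\lambda \neq \bJ^\nu$, so the Ext group vanishes. For $\lambda = \nu$, the displayed formula reduces to $\Ext^\bullet_{\bJ^\lambda}(\cF_\lambda \otimes \cL,\, \cF_\lambda \otimes \cK)$. Since $\cF_\lambda$ is a (shifted, Tate-twisted) one-dimensional local system, cancelling it yields $\Ext^\bullet_{\bJ^\lambda}(\pi^*\cL,\, \pi^*\cK)$ for $\pi\colon \bJ^\lambda \to \Spec \Fq$. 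Lemma \ref{l:Flambda}(iii), applied in the equivariant setting used to define $\sHg$, identifies the $(\bJ \times \bA, \cM \boxtimes \cM_0^{-1})$-equivariant local systems on $\bJ^\lambda$ with local systems on $\Spec \Fq$; under this equivalence the displayed Ext is identified with $\Ext^\bullet_{\Spec \Fq}(\cL, \cK)$, as claimed.

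The main obstacle is the invocation of cleanness in the vanishing step. If Theorem \ref{t:main} is not yet available at this point in the paper, one should instead leverage Proposition \ref{p:conv} directly: convolution with $j^{-\lambda}_!$ is an auto-equivalence with quasi-inverse $-\star j^\lambda_!$, since their composition is $-\star j^0_!$ and $j^0_!$ plays the role of a unit. This reduces the computation to $\Ext^\bullet(j^0_! \otimes \cL,\, j^\mu_! \otimes \cK)$ for $\mu = \nu - \lambda$. Here $\bJ^0 = \bA$ is closed in $\bX$ (being the image of $\bJ/\bJ' = \bA$) and is disjoint from $\bJ^\mu$ for $\mu \neq 0$ (since $t^\mu \notin \bJ$), so the $*$-pullback adjunction $\Ext^\bullet(\cF, j^\mu_* \cG) = \Ext^\bullet_{\bJ^\mu}((j^\mu)^*\cF, \cG)$ combined with $(j^\mu)^* j^0_! = 0$ forces the vanishing once one knows $j^\mu_! \cong j^\mu_*$; the $\mu = 0$ case then proceeds as above.
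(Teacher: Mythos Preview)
Your proposal has a genuine gap: both approaches invoke cleanness, which is circular. In the paper, Corollary~\ref{c:BMWExt} is an input to the proof of Theorem~\ref{t:clean} (which is Theorem~\ref{t:main}); you cannot cite Theorem~\ref{t:main} here. You correctly flag this concern, but your fallback argument does not escape it: at the key step you write ``once one knows $j^\mu_! \cong j^\mu_*$,'' which is precisely cleanness for $\bJ^\mu$. Without that, your adjunction leads to $(j^0)^! j^\mu_!$, and for complementary open $k$ and closed $i'$ one has $i'^{\,!} k_! \neq 0$ in general, so the vanishing does not follow.

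The paper's fix is a one-line change of side. Convolve with $j_!^{-\nu}$ (not $j_!^{-\lambda}$) so that $j_!^0$ lands on the \emph{right}:
\[
\Ext^\bullet(j_!^\lambda \otimes \cL,\, j_!^\nu \otimes \cK)\;\cong\;\Ext^\bullet(j_!^{\lambda-\nu}\otimes\cL,\, j_!^{0}\otimes\cK).
\]
Now use only that $\bJ^0$ is closed, so $j_!^0 = j_*^0$ tautologically, and apply the $((j^0)^*, j^0_*)$ adjunction to obtain $\Ext^\bullet\bigl((j^0)^* j_!^{\lambda-\nu}\otimes\cL,\,\cF_0\otimes\cK\bigr)$. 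The point is that $(j^0)^*$ computes \emph{stalks}, and stalks of a $!$-extension vanish off the orbit; since $\bJ^0 \cap \bJ^{\lambda-\nu} = \emptyset$ for $\lambda \neq \nu$, the Ext vanishes. For $\lambda = \nu$ the same display reduces to $\Ext^\bullet_{\Spec\Fq}(\cL,\cK)$ directly (no need for Lemma~\ref{l:Flambda}(iii) or an equivariant identification). Aside from this asymmetric placement of $j_!^0$, your convolution-reduction idea is exactly the paper's.
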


 The above corollary is proved using the monoidal property of
 $j_!^\lambda$ established in Proposition \ref{p:conv}; see \S
 \ref{ss:BMWExt} for details.

\subsection{Proof of Theorem \ref{t:main}: 
cleanness of irreducible objects} \label{ss:clean}
The following theorem clearly implies Theorem \ref{t:main}:
\begin{theorem}\label{t:clean}
  For every $\lambda\in \Lambda$, $j_!^\lambda\cong j_{!*}^\lambda$.
\end{theorem}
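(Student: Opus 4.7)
The plan is to argue that the canonical morphism $j_!^\lambda \to j_{!*}^\lambda$ in the abelian category $\sHg$ is an isomorphism by showing that its kernel must vanish. Recall that the intermediate extension is characterized as the image of the canonical map $j_!^\lambda \to j_*^\lambda$, and in particular the morphism $j_!^\lambda \twoheadrightarrow j_{!*}^\lambda$ is always surjective in the perverse t-structure. Thus, letting $K := \ker(j_!^\lambda \to j_{!*}^\lambda)$, it suffices to show that $K=0$.

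By construction, $K$ is a perverse subobject of $j_!^\lambda$ whose support lies in the boundary $\bar{\bJ^\lambda} \setminus \bJ^\lambda$; moreover, $K$ is naturally an object of $\sHg$ since $\sHg$ is closed under kernels in $\sWgd$. Suppose for contradiction that $K\neq 0$. Then $K$ admits a simple subobject in $\sHg$, which by Corollary \ref{c:IrredPerv} must be of the form $j_{!*}^\mu \otimes \cL$, where $\mu \in \Lambda$ and $\cL$ is the pullback of a one-dimensional local system on $\Spec\Fq$. Since the support of $j_{!*}^\mu \otimes \cL$ is $\bar{\bJ^\mu}$, which contains $\bJ^\mu$, and the support of $K$ is contained in the complement of $\bJ^\lambda$, we must have $\mu \neq \lambda$.

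This simple subobject furnishes a nonzero morphism $j_{!*}^\mu \otimes \cL \hookrightarrow j_!^\lambda$ in $\sHg$. Precomposing with the canonical surjection $j_!^\mu \otimes \cL \twoheadrightarrow j_{!*}^\mu \otimes \cL$, we obtain a morphism
\[
j_!^\mu \otimes \cL \twoheadrightarrow j_{!*}^\mu \otimes \cL \hookrightarrow j_!^\lambda
\]
in $\sHg$. Because a nonzero morphism precomposed with a surjection remains nonzero, this composition is a nonzero element of $\Hom(j_!^\mu \otimes \cL, j_!^\lambda)$. This, however, directly contradicts Corollary \ref{c:BMWExt}, which asserts that $\Ext^\bullet(j_!^\mu \otimes \cL, j_!^\lambda) = 0$ whenever $\mu \neq \lambda$, in particular for the degree-zero component. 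Hence $K=0$, and $j_!^\lambda \cong j_{!*}^\lambda$, as desired.

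There is no substantial obstacle remaining at this stage: the real content has already been packaged into the classification of simple objects (Corollary \ref{c:IrredPerv}) and the Hom-vanishing (Corollary \ref{c:BMWExt}), which in turn rests on the convolution identity $j_!^\lambda \star j_!^\nu \cong j_!^{\lambda+\nu}$ of Proposition \ref{p:conv}. The argument above is simply the standard perverse-sheaf mechanism for deducing cleanness from orthogonality of $!$-extensions together with a classification of simples.
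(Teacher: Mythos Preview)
Your proof is correct and uses the same two ingredients as the paper's proof: the classification of simple objects (Corollary~\ref{c:IrredPerv}) and the $\Hom$-vanishing between $!$-extensions from distinct orbits (Corollary~\ref{c:BMWExt}). The paper argues slightly differently---it first shows that \emph{every} simple subobject of $j_!^\lambda$ must have parameter $\nu=\lambda$, and then runs a second argument to deduce that each such $j_{!*}^\lambda\otimes\cL$ actually coincides with $j_!^\lambda\otimes\cL$---whereas you focus directly on the kernel $K$ of $j_!^\lambda\twoheadrightarrow j_{!*}^\lambda$ and use that $K$ is supported on the boundary to force $\mu\neq\lambda$ for any simple subobject, reaching the contradiction in one step. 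Your route is the more standard and slightly more economical packaging of the same idea.

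One minor wording issue: you write that $\sHg$ is ``closed under kernels in $\sWgd$''; since $\sWgd$ is a triangulated category this phrasing is off. What you mean (and what is true) is that $\sHg$ is a Serre subcategory of $\Perv(\bX)$, so the perverse kernel $K$ inherits the $(\bJ\times\bA,\cM\boxtimes\cM_0^{-1})$-equivariance and hence lies in $\sHg$.
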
 

\begin{proof} 
  By Corollary \ref{c:IrredPerv}, all objects are obtained by iterated
  extensions of objects of the form $j_{!*}^\lambda \otimes \cL$, for
  $\cL$ a local system on $\Spec \Fq$.  We first claim that, for fixed
  $\lambda$, $j_!^\lambda$ is obtained by iterated extensions of
  objects of the form $j_{!*}^\lambda \otimes \cL$ (the point is that
  we use the \emph{same} $\lambda$).  Inductively, it suffices to show
  that, if $j_{!*}^\nu \otimes \cL \into j_!^\lambda$ is an injection,
  then $\nu = \lambda$.  If we precompose the inclusion $j_{!*}^\nu
  \otimes \cL \into j_!^\lambda$ with the defining surjection $j_!^\nu
  \otimes \cL \onto j_{!*}^\nu \otimes \cL$, one obtains a nonzero map
  $j_!^\nu \otimes \cL \to j_!^\lambda$. By Corollary \ref{c:BMWExt},
  this implies $\nu = \lambda$.

  Now, suppose that $j_{!*}^\lambda \otimes \cL \into j_!^\lambda$ is
  an injection.  Applying Corollary \ref{c:BMWExt} again, the
  composition $j_!^\lambda \otimes \cL \onto j_{!*}^\lambda \otimes
  \cL \into j_!^\lambda$ is obtained from a map $\cL \to \bQl$. This
  map is injective, since the map $\cL \cong j_!^\lambda \otimes
  \cL|_{t^\lambda \bJ'} \cong j_{!*}^\lambda \otimes \cL|_{t^\lambda
    \bJ'} \to j_!^\lambda|_{t^\lambda \bJ'}$ is injective.  Hence the
  canonical map $j_!^\lambda \otimes \cL \onto j_{!*}^\lambda \otimes
  \cL$ is an isomorphism.  Inductively, $j_!^\lambda$ is obtained by
  iterated extensions of objects of the form $j_!^\lambda \otimes \cL$
  by maps of the form $\Id \otimes \phi$ for $\phi$ a map of local
  systems over $\Spec \Fq$.  Moreover, every object $j_!^\lambda
  \otimes \cL$ that appears is isomorphic to $j_{!*}^\lambda \otimes
  \cL$ by the canonical map. We conclude that $j_!^\lambda \cong
  j_{!*}^\lambda$ (by the canonical map).  \qedhere
\end{proof}

\subsection{Monoidal equivalence} \label{ss:monEquiv}
The following theorem clearly implies Theorem \ref{t:main2}:
\begin{theorem} \label{t:monEquiv} The category $\sHg$ is closed under
  convolution; moreover, the functor
\[
\Psi_{\geom}: \Rep \hT \boxtimes \LocSys(\Spec \Fq) \ra \sHg,\quad
\quad \Theta_\lambda \otimes \mathcal{L} \mapsto j_{!}^\lambda \otimes
\mathcal{L}
\]
is an equivalence of monoidal abelian categories.
\end{theorem}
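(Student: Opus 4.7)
The plan is to deduce both claims from the structural results already at hand: cleanness (Theorem \ref{t:clean}), the classification of simples (Corollary \ref{c:IrredPerv}), the Ext vanishing (Corollary \ref{c:BMWExt}), and the convolution computation (Proposition \ref{p:conv}). The heart of the argument is a direct sum decomposition $\sHg \cong \bigoplus_{\lambda \in \Lambda} \sHg_\lambda$, where I write $\sHg_\lambda$ for the full subcategory of objects all of whose Jordan--H\"older factors have the form $j_!^\lambda \otimes \cL$; each block turns out to be equivalent to $\LocSys(\Spec \Fq)$ via $\cL \mapsto j_!^\lambda \otimes \cL$. Once this is in place, the abelian equivalence is immediate and the monoidal structure is transported by Proposition \ref{p:conv}.

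For the decomposition, first note that every $\cF \in \sHg$ is a finite iterated extension of simples (because only finitely many relevant orbits meet each stratum $\bX_i$), and by Corollary \ref{c:IrredPerv} together with Theorem \ref{t:clean} these simples are precisely the $j_!^\lambda \otimes \cL$ with $\cL$ a one-dimensional local system on $\Spec \Fq$. Corollary \ref{c:BMWExt} gives $\Ext^1_{\sHg}(j_!^\lambda \otimes \cL, j_!^\nu \otimes \cK) = 0$ whenever $\lambda \neq \nu$, so extensions split across distinct $\lambda$ and produce the block decomposition. Within a single block, $\Ext^\bullet(j_!^\lambda \otimes \cL, j_!^\lambda \otimes \cK) = \Ext^\bullet_{\Spec \Fq}(\cL, \cK)$, so the functor $\cL \mapsto j_!^\lambda \otimes \cL$ is fully faithful, and iterating over the extensions within each block shows it is essentially surjective onto $\sHg_\lambda$. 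This delivers the abelian equivalence $\Psi_{\geom} \colon \Rep \hT \boxtimes \LocSys(\Spec \Fq) \iso \sHg$.

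For the monoidal property, observe that $j_!^\lambda \otimes \cL$ denotes tensor product with the pullback of $\cL$ from $\Spec \Fq$, and a projection-formula argument (using that such pullbacks commute with both $p_!$ and $\ttboxtimes$ in the convolution diagram) gives $(j_!^\lambda \otimes \cL) \star (j_!^\nu \otimes \cK) \cong (j_!^\lambda \star j_!^\nu) \otimes (\cL \otimes \cK)$. Proposition \ref{p:conv} identifies this with $j_!^{\lambda+\nu} \otimes (\cL \otimes \cK) = \Psi_{\geom}\bigl((\Theta_\lambda \otimes \cL) \boxtimes (\Theta_\nu \otimes \cK)\bigr)$, so $\Psi_{\geom}$ intertwines the tensor products on simple generators. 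Since convolution commutes with direct sums in each variable, this extends to arbitrary objects, yielding simultaneously the closure of $\sHg$ under convolution and the monoidal compatibility of $\Psi_{\geom}$.

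The main obstacle I expect is not the existence of the object-level isomorphism above but rather verifying its naturality and compatibility with the associativity constraints, so that $\Psi_{\geom}$ is a genuine monoidal functor. By full faithfulness this reduces to comparing the two associators on triples $j_!^\lambda \star j_!^\nu \star j_!^\kappa \cong j_!^{\lambda+\nu+\kappa}$ (the one inherited from \eqref{eq:associativity} and the obvious one on $\Rep \hT \boxtimes \LocSys(\Spec \Fq)$); their ratio is a scalar in $\End(j_!^{\lambda+\nu+\kappa}) = \bQl$, and I would pin it down to $1$ by passing to traces of Frobenius via \eqref{eq:traceConv} and matching against the manifestly associative product on $\sH$ afforded by Theorem \ref{t:clambda} and \eqref{e:endgfpi}.
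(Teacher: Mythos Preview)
Your proposal is correct and follows essentially the same route as the paper: both arguments combine Corollary \ref{c:IrredPerv}, Theorem \ref{t:clean}, and Corollary \ref{c:BMWExt} to decompose $\sHg$ into a direct sum of copies of $\LocSys(\Spec \Fq)$ indexed by $\Lambda$, and then invoke Proposition \ref{p:conv} for closure under convolution and monoidality. Your treatment is more explicit than the paper's in two places: you spell out the block decomposition via $\Ext^1$-vanishing between distinct $\lambda$'s (the paper simply asserts that every object is a finite direct sum of $j_!^\lambda \otimes \cL$'s), and you address the associator coherence, which the paper passes over with the single sentence ``Proposition \ref{p:conv} also implies that $\Psi$ is monoidal.''
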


\begin{proof} 
  Corollary \ref{c:BMWExt} and Theorem \ref{t:clean}, together with
  Corollary \ref{c:IrredPerv}, imply that all objects of $\sHg$ are
  finite direct sums of objects of the form $j_!^\lambda \otimes \cL$,
  where $\cL$ is a local system on $\Spec \Fq$. Therefore, to prove
  $\sHg$ is closed under convolution, it is enough to show that, for
  all coweights $\lambda$ and $\nu$ and all local systems $\cL$ and
  $\cL'$ on $\Spec \Fq$, $(j_{!}^{\lambda}\otimes \cL) \star
  (j_{!}^{\nu} \otimes \cL') \in \sHg$. This follows at once from
  Proposition \ref{p:conv}. Proposition \ref{p:conv} also implies that
  $\Psi$ is monoidal. In view of Corollary \ref{c:BMWExt}, we obtain
  the desired equivalence of abelian categories.
\end{proof}

\subsection{Convolutions with and without compact support are isomorphic}  
Recall that we have two, a priori different, monoidal actions of
$\sHg$ on $\sWgd$ given by $\star=\star_!$ and $\star_*$.  We now
prove that these two actions are isomorphic by proving that their
adjoints are isomorphic.

\begin{proposition} \label{p:adjoint} For every $\cF, \cG\in \sWgd$,
  there is a canonical functorial isomorphism
  \[
\Hom_{\sWgd} (\cF\star_! j_!^\lambda, \cG) \cong \Hom_{\sWgd} (\cF, \cG\star_*
j_*^{-\lambda}).
\]
%where $\Hom$'s are computed in $\sWgd$. 
\end{proposition}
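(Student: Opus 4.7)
The plan is to combine the invertibility of $j_!^\lambda$ in the monoidal category $\sHg$ with the cleanness theorem to establish the adjunction in two stages.

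First, I would verify that $-\star_! j_!^\lambda : \sWgd \to \sWgd$ is an autoequivalence with quasi-inverse $-\star_! j_!^{-\lambda}$. By Proposition \ref{p:conv}, $j_!^\lambda \star j_!^{-\lambda} \cong j_!^0$, and the associativity isomorphism \eqref{eq:associativity} then reduces the autoequivalence claim to showing that $j_!^0$ acts as the identity endofunctor on $\sWgd$. This latter assertion is a direct computation: the multiplication map restricts to an isomorphism $\bG \times_\bJ \bJ^0 \cong \bX$ (using $\bJ^0 = \bA$), under which $\cF \tboxtimes j_!^0$ corresponds to $\cF$ via the right $\bA$-equivariance of objects of $\sWgd$ together with the identification $\cF_0 \cong \cM_0$. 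Since an autoequivalence admits canonical adjunctions via its quasi-inverse, we obtain
\[
\Hom_{\sWgd}(\cF \star_! j_!^\lambda, \cG) \cong \Hom_{\sWgd}(\cF, \cG \star_! j_!^{-\lambda}).
\]

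The second step is to identify the right hand side with $\Hom_{\sWgd}(\cF, \cG \star_* j_*^{-\lambda})$. By cleanness (Theorem \ref{t:clean}), $j_!^{-\lambda} \cong j_*^{-\lambda}$ as perverse sheaves, so this reduces to the canonical isomorphism
\[
p_!(\cG \tboxtimes j_!^{-\lambda}) \cong p_*(\cG \tboxtimes j_!^{-\lambda}),
\]
where $p: \bG \times_\bJ \bX \to \bX$ is the multiplication map. I expect this to be the main obstacle, since $p$ is not proper in general; under the isomorphism $\bG \times_\bJ \bX \cong \bX \times (\bG/\bJ)$ given by $(g, x) \mapsto (gx, g\bJ)$, the map $p$ corresponds to the first projection, whose fibers are the noncompact factor $\bG/\bJ$.

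The strategy for this obstacle is to use cleanness to restrict attention to the locally closed subvariety $\bG \times_\bJ \bJ^{-\lambda}$, since the effective support of $\cG \tboxtimes j_!^{-\lambda}$ lies in this locus, and then to exploit the geometry captured in Corollary \ref{c:proddom} and Proposition \ref{p:semismall} to conclude that the restriction of $p$ to the effective support behaves well enough for $p_!$ and $p_*$ to coincide on this sheaf. An alternative, possibly cleaner approach is to reduce both sides directly to Hom spaces on $\bG \times_\bJ \bX$ via the $(p_!, p^!)$ and $(p^*, p_*)$ adjunctions, and then construct the desired isomorphism using the duality pairing $j_!^\lambda \star j_!^{-\lambda} \cong j_!^0$ from Proposition \ref{p:conv} as a coevaluation map in the rigid monoidal category $\sHg$, with Verdier duality interchanging $j_!^\lambda$ and $j_*^\lambda$. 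Throughout, the Tate twists and equivariance data must be tracked carefully using the conventions established in \S \ref{sss:flambda}.
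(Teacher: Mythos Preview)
Your approach reverses the logical flow of the paper and lands you in a gap that is essentially the same difficulty the proposition is designed to circumvent.

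Your Step~1 is fine: once Proposition~\ref{p:conv} and associativity give that $-\star_! j_!^\lambda$ is an autoequivalence with quasi-inverse $-\star_! j_!^{-\lambda}$, you obtain $\Hom(\cF\star_! j_!^\lambda,\cG)\cong\Hom(\cF,\cG\star_! j_!^{-\lambda})$ for free. The problem is Step~2. You need $\cG\star_! j_!^{-\lambda}\cong \cG\star_* j_*^{-\lambda}$ for arbitrary $\cG\in\sWgd$. Cleanness gives $j_!^{-\lambda}\cong j_*^{-\lambda}$, but you still need $p_!\cong p_*$ on $\cG\tboxtimes j_!^{-\lambda}$, and this is exactly Corollary~\ref{c:!=*}, which in the paper is \emph{deduced from} Proposition~\ref{p:adjoint}. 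Neither of your sketches closes this: the restricted multiplication map $p_{-\lambda}:\bG\times_\bJ\bJ^{-\lambda}\to\bX$ is not proper, and the semismallness and Corollary~\ref{c:proddom} facts you cite concern fiber dimensions over relevant orbits, not comparison of $p_!$ with $p_*$. Your alternative, via rigidity and Verdier duality, presupposes that the dual of $j_!^\lambda$ under $\star_!$ matches $j_*^{-\lambda}$ under $\star_*$, which is again what you are trying to prove.

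The paper instead proves the adjunction by a direct, self-contained geometric computation that uses neither Proposition~\ref{p:conv} nor cleanness. One applies the $(p_{\lambda,!},p_\lambda^!)$ adjunction, uses smoothness of $p_\lambda$ to replace $p_\lambda^!$ by $p_\lambda^*[2d](d)$, and then establishes a key identity
\[
\Hom(\cF\tboxtimes\cF_\lambda,\,p_\lambda^*\cG[2d](d))\;\cong\;\Hom(p_{-\lambda}^*\cF,\,\cG\tboxtimes\cF_{-\lambda})
\]
by an explicit change of variables on $\bG\times\bG$ (the automorphisms $(g,x)\mapsto(gx,x)$ and $(g,x)\mapsto(g,x^{-1})$), descended appropriately and combined with a twist by $\cF_\lambda'^{-1}$. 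Finally one applies the $(p_{-\lambda}^*,p_{-\lambda,*})$ adjunction. Only after this is in hand does the paper conclude $\star_!\cong\star_*$, by comparing the two right adjoints of $-\star_! j_!^\lambda$.
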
 

The above proposition is probably known, at least in the untwisted setting. For completeness, we include a proof in \S \ref{ss:adjoint}. 

\begin{corollary} \label{c:!=*} There exists an isomorphism between
  the actions of $\sHg$ on $\sWgd$ given by $\star_!$ and $\star_*$.
\end{corollary}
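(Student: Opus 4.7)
The plan is to combine Proposition \ref{p:adjoint} with the fact that each $j_!^\lambda$ is invertible in the rigid monoidal category $\sHg$, and then to reduce from an arbitrary $\cH\in\sHg$ to the generators $j_!^\lambda$ using the structural description of $\sHg$.

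First I would note that, by Theorem \ref{t:monEquiv}, the equivalence $\Rep\hT \boxtimes \LocSys(\Spec\Fq) \iso \sHg$ sends the one-dimensional character $\Theta_\lambda$ to $j_!^\lambda$; since $\Theta_\lambda$ is invertible in $\Rep\hT$ with inverse $\Theta_{-\lambda}$, it follows that $j_!^\lambda$ is invertible in $\sHg$ with inverse $j_!^{-\lambda}$ (and $j_!^\lambda \star j_!^{-\lambda} \cong \mathbf{1}_{\sHg}$). Since $\star_!$ defines a module-category action of $\sHg$ on $\sWgd$ (Theorem \ref{t:action} together with the associativity \eqref{eq:associativity}), the functor $-\star_! j_!^\lambda\colon \sWgd\to\sWgd$ is an autoequivalence, with quasi-inverse $-\star_! j_!^{-\lambda}$.

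Next I would invoke Proposition \ref{p:adjoint}, which says $-\star_* j_*^{-\lambda}$ is right adjoint to $-\star_! j_!^\lambda$; by cleanness (Theorem \ref{t:clean}), $j_*^{-\lambda}\cong j_!^{-\lambda}$ canonically, so this right adjoint is $-\star_* j_!^{-\lambda}$. Because the right adjoint of an autoequivalence is (canonically) its inverse, comparing gives a natural isomorphism
\[
-\star_! j_!^{-\lambda} \;\cong\; -\star_* j_!^{-\lambda}
\]
of endofunctors of $\sWgd$, for every $\lambda\in\Lambda$. Replacing $\lambda$ by $-\lambda$, one has $-\star_! j_!^\lambda \cong -\star_* j_!^\lambda$ for all $\lambda$.

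Finally, by Corollary \ref{c:IrredPerv} together with Theorem \ref{t:clean} and Theorem \ref{t:monEquiv}, every $\cH\in\sHg$ is a finite direct sum of objects $j_!^\lambda\otimes\cL$ with $\cL\in\LocSys(\Spec\Fq)$. Since both convolutions are additive and commute with $-\otimes\cL$ (the local system being pulled back from $\Spec\Fq$), the isomorphism extends to a natural isomorphism $-\star_!\cH \iso -\star_*\cH$ in $\cH$. The hardest part will be upgrading this natural isomorphism of underlying bifunctors to an isomorphism of \emph{actions}, i.e., checking compatibility with the associativity (and unit) constraints for $\star_*$ as well as for $\star_!$. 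This amounts to a diagram chase verifying that the canonical isomorphism $(-\star_! j_!^\lambda)\circ(-\star_! j_!^{-\lambda}) \cong \operatorname{Id}$ coming from the monoidal equivalence of Theorem \ref{t:monEquiv} matches the unit of the adjunction of Proposition \ref{p:adjoint}; this compatibility should follow from naturality of the adjunction, though setting up the associativity for $\star_*$ (which is less standard than for $\star_!$) is the main technical nuisance.
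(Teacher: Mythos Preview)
Your proposal is correct and follows essentially the same route as the paper: use Theorem~\ref{t:monEquiv} to see that $j_!^\lambda$ is invertible (so $-\star_! j_!^{-\lambda}$ is the adjoint of $-\star_! j_!^\lambda$), compare with the adjoint $-\star_* j_*^{-\lambda}$ from Proposition~\ref{p:adjoint}, invoke cleanness, and then extend to all of $\sHg$ via the decomposition into objects $j_!^\lambda\otimes\cL$. The paper's proof is slightly terser and does not spell out the compatibility with associativity that you flag at the end; indeed, the paper relegates the stronger statement (that the \emph{canonical} map $\star_!\to\star_*$ is an isomorphism of actions) to the subsequent aside on $r$-categories, so your caution there is well placed.
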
 

\begin{proof} By Theorem \ref{t:monEquiv} $\sHg$ is a rigid category
  (in fact, it is a Picard category). Therefore, the adjoint functor
  to $-\star_! j_!^\lambda$ is isomorphic to $- \star_!
  j_!^{-\lambda}$. On the other hand, by the above proposition, this
  adjoint functor is also isomorphic to $-\star_* j_*^{-\lambda}$. We
  conclude that the functors $-\star_! j_!^\lambda$ and $-\star_*
  j_*^\lambda$ are isomorphic. Using cleanness (Theorem
  \ref{t:clean}), we conclude that the functors $-\star_! j_!^\lambda$
  and $-\star_* j_!^\lambda$ are isomorphic. For any $\mathcal{L} \in
  \LocSys(\Spec \Fq)$, it follows also that $-\star_! (j_!^\lambda
  \otimes \mathcal{L})$ is isomorphic to $-\star_* (j_!^\lambda
  \otimes \mathcal{L})$. The result then follows from Theorem
  \ref{t:monEquiv}.
 \end{proof} 

 As explained in the next subsection, one can (probably) show that the
 canonical morphism $\star_!\ra \star_*$ (obtained from the general
 functorial maps $f_! \to f_*$, which we call ``forgetting compact
 support'') is an isomorphism between the two actions of $\sHg$ on
 $\sWgd$. (However, we do not need this fact). We note that the idea
 of showing $\star_!$ and $\star_*$ are isomorphic by proving that
 their adjoints are isomorphic was suggested to us by
 D. Gaitsgory. The same idea is employed in \cite[\S G]{BDOrbit} and
 \cite[\S 6.7]{BD08}

 \subsubsection{Aside: relationship to Grothendieck-Verdier
   duality} \label{sss:rcategory} In \cite[Appendix A]{BD11} and
 \cite{BD11b}, Boyarchenko and Drinfeld explain the following picture.
 A monoidal category $(\cC,\otimes_1, \bone)$ is a called an
 \emph{r-category} if for every $Y\in \cC$ the functor
 $\Hom(-\otimes_1 Y, \bone)$ is representable by some object $DY$ and
 the contravariant functor $D:\cC\ra \cC$ is an antiequivalence. $D$
 is called the \emph{duality functor}. In every r-category, there is a
 second tensor product defined by
 \[
X\otimes_2 Y := D^{-1}(DY\otimes_1 DX).
\]
Moreover, there is a monoidal natural transformation $X\otimes_1 Y\ra
X\otimes_2 Y$, which is an isomorphism if and only if $\cC$ is rigid.

As an example, let $G$ be a connected algebraic group over a field
$k$. Let $\iota:G\ra G$ be the map $\iota(g)=g^{-1}$. Let
$\bbD:\sD(G)\ra \sD(G)$ denote the Verdier duality functor and
$\obD:\sD(G)\ra \sD(G)$ denote $\bbD \circ \iota^*=\iota^* \circ
\bbD$. Then $(\sD(G),\star_!)$ is an r-category with duality functor
$\obD$. The second tensor product is convolution without compact
support, $\star_*$. According to \cite[Appendix A]{BD11}, the natural
transformation $\star_!\ra \star_*$ defined above should coincide with
the canonical map of ``forgetting compact support.''

Now we apply the above considerations to our situation. The argument
in \cite[Appendix A]{BD11} for proving $(\sD(G), \star_!, \obD)$ is an
r-category applies verbatim to show that $(\sHg, \star_!, \obD)$ is an
r-category. (This amounts to a special case of Proposition
\ref{p:adjoint} where one takes $\cF,\cG\in \sHg$.) The second tensor
product in $\sHg$ is $\star_*$. In analogy with $\sD(G)$, the natural
transformation $\star_!\ra \star_*$ should coincide with the canonical
map coming from forgetting the support (but we have not checked
this). As $\sHg$ is rigid, we see that the canonical morphism
$\star_!\ra \star_*$ is an isomorphism.

Next, we have two monoidal actions of $(\sHg,\star_!)$ on $\sWgd$: one
given by $\star_!$ and the other one given by $\star_*$, via the
monoidal equivalence $(\sHg, \star_!)\iso (\sHg, \star_*)$. One can
show that the canonical ``forgetting compact support'' maps define a
monoidal natural transformation going from the first action to the
second one. Moreover, it follows from the following general lemma that
this is an isomorphism.

\begin{lemma}\label{l:rigmon}
  Let $\mathcal{C}$ be a rigid monoidal category with unit object
  $\bone$, $\mathcal{D}$ a monoidal category, $F, G: \mathcal{C} \to
  \mathcal{D}$ two monoidal functors, and $\eta \in \Hom(F,G)$ a
  monoidal natural transformation such that $\eta_{\bone}: F(\bone)
  \to G(\bone)$ is an isomorphism. Then, $\eta$ is an isomorphism.
\end{lemma}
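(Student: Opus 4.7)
The plan is to use rigidity of $\mathcal{C}$ to produce, for each $X \in \mathcal{C}$, an explicit two-sided inverse $\zeta_X \colon G(X) \to F(X)$ to $\eta_X$, built out of $\eta_{X^\vee}$, $\eta_\bone^{-1}$, and the duality datum for $X$. The ingredients I would use are (i) naturality of $\eta$ applied to the evaluation $\operatorname{ev}_X \colon X^\vee \otimes X \to \bone$ and coevaluation $\operatorname{coev}_X \colon \bone \to X \otimes X^\vee$; (ii) monoidality of $\eta$, which, via the monoidal structure isomorphisms of $F$ and $G$, identifies $\eta_{X \otimes X^\vee}$ with $\eta_X \otimes \eta_{X^\vee}$ and $\eta_{X^\vee \otimes X}$ with $\eta_{X^\vee} \otimes \eta_X$; and (iii) the triangle identities for the pair $(X, X^\vee)$, which $F$ and $G$ preserve as monoidal functors. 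No braiding is needed.

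To streamline notation, I would invoke monoidal coherence to treat the structure isomorphisms of $F$ and $G$ as equalities, so $F(\bone) = \bone_{\mathcal{D}} = G(\bone)$ and $F(X \otimes Y) = F(X) \otimes F(Y)$, and similarly for $G$. Combining (i) and (ii) then yields the two identities
\begin{align*}
G(\operatorname{ev}_X) \circ (\eta_{X^\vee} \otimes \eta_X) &= \eta_\bone \circ F(\operatorname{ev}_X), \\
(\eta_X \otimes \eta_{X^\vee}) \circ F(\operatorname{coev}_X) &= G(\operatorname{coev}_X) \circ \eta_\bone.
\end{align*}
Using these, I would define $\zeta_X$ as the composite
\[
G(X) \xrightarrow{F(\operatorname{coev}_X) \otimes \mathrm{id}} F(X) \otimes F(X^\vee) \otimes G(X) \xrightarrow{\mathrm{id} \otimes \eta_{X^\vee} \otimes \mathrm{id}} F(X) \otimes G(X^\vee) \otimes G(X) \xrightarrow{\mathrm{id} \otimes G(\operatorname{ev}_X)} F(X) \otimes G(\bone) \xrightarrow{\mathrm{id} \otimes \eta_\bone^{-1}} F(X).
\]
The verification that $\zeta_X \circ \eta_X = \mathrm{id}_{F(X)}$ is a short chase: slide $\eta_X$ past the first arrow by the interchange law; apply the first displayed identity to collapse $G(\operatorname{ev}_X) \circ (\eta_{X^\vee} \otimes \eta_X)$ to $\eta_\bone \circ F(\operatorname{ev}_X)$; cancel $\eta_\bone^{-1} \circ \eta_\bone$; and conclude by the triangle identity applied under $F$. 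The reverse composition $\eta_X \circ \zeta_X = \mathrm{id}_{G(X)}$ is symmetric, using the second displayed identity and the triangle identity in $\mathcal{D}$ for the pair $(G(X), G(X^\vee))$, which is a dual pair since monoidal functors preserve duals.

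The conceptual content is elementary; the only genuine work is bookkeeping. One must carry the coherence isomorphisms $F(X \otimes Y) \cong F(X) \otimes F(Y)$, $F(\bone) \cong \bone_{\mathcal{D}}$, and their $G$-analogues through each line of the chase, and check that the compatibility axioms of a monoidal natural transformation make all the ambient squares commute. This is handled formally via the coherence theorem. The essential analytic inputs are exactly the two hypotheses of the lemma: that $\eta_\bone$ is invertible (so the factor $\eta_\bone^{-1}$ in $\zeta_X$ exists) and that $X$ is dualizable (so $\operatorname{ev}_X, \operatorname{coev}_X$ are available).
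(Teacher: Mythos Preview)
Your proof is correct and is the standard argument for this fact. The paper itself does not supply a proof: it simply states the lemma and points to \cite[Proposition 5.2.3]{SRct} (Saavedra Rivano), so there is no in-paper argument to compare against. Your construction of $\zeta_X$ from the duality data and $\eta_{X^\vee}$, together with the two verifications via the triangle identities, is exactly the classical proof one finds in that reference. One minor remark: in the verification of $\eta_X \circ \zeta_X = \mathrm{id}$, after applying the second displayed identity you are left with a factor $\eta_{\bone} \otimes \mathrm{id}_{G(X)}$ on the left and $\mathrm{id}_{G(X)} \otimes \eta_{\bone}^{-1}$ on the right; these cancel because for any endomorphism $f$ of the unit object, $f \otimes \mathrm{id}_A = \mathrm{id}_A \otimes f$ in any monoidal category. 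You implicitly use this, and it is worth saying so.
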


A version of the above lemma appears in \cite[Proposition
5.2.3]{SRct}.

\subsection{Action of $\sHg$ on $\sWg$} \label{ss:proofAction} The
following implies Theorem \ref{t:action}.
\begin{theorem} Let $\cF\in \sHg$ and $\cG\in \sWg$. Then $\cG\star_!
  \cF \in \sWg$.
\end{theorem}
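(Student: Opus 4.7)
The plan is to reduce, via Theorem \ref{t:monEquiv}, to showing that $F_\lambda \cG := \cG \star_! j_!^\lambda$ lies in $\sWg$ for every $\cG \in \sWg$ and every $\lambda \in \Lambda$. Indeed, every object of $\sHg$ is a finite direct sum of objects of the form $j_!^\lambda \otimes \cL$ with $\cL$ a local system on $\Spec \Fq$, and tensoring with such $\cL$ preserves perversity. By Proposition \ref{p:conv} together with the associativity isomorphism \eqref{eq:associativity}, $F_\lambda$ is a triangulated autoequivalence of $\sWgd$ with inverse $F_{-\lambda}$. Combining Proposition \ref{p:adjoint} with cleanness (Theorem \ref{t:clean}) and Corollary \ref{c:!=*}, one obtains an adjunction $F_\lambda \dashv F_{-\lambda}$.

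Given this adjunction, to prove $F_\lambda$ is $t$-exact for every $\lambda$ it suffices to prove \emph{right} $t$-exactness of $F_\lambda$ for every $\lambda$. Indeed, right $t$-exactness of $F_\lambda$ then yields, via the adjunction, left $t$-exactness of $F_{-\lambda}$; and since $-\lambda$ ranges over all of $\Lambda$ as $\lambda$ does, this gives left $t$-exactness of $F_\lambda$ for every $\lambda$ as well, hence $t$-exactness. So the entire problem reduces to right $t$-exactness of $F_\lambda$.

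For this, write $F_\lambda \cG = p_!(\cG \tboxtimes \cF_\lambda)$, where $p: \bG \times_\bJ \bJ^\lambda \to \bX$ is the convolution morphism. Since $\cF_\lambda$ is perverse and the twisted external product is $t$-exact in its first argument, $\cG \tboxtimes \cF_\lambda$ lies in perverse degree $\leq 0$ whenever $\cG$ does; so it suffices to show that $p$ is an affine morphism and invoke Artin's vanishing theorem. I would establish affineness by analyzing the fibers of $p$ stratum by stratum: using the Iwahori decomposition of $\bJ$ (Lemma \ref{l:uroche}.(iv)) and the intersection formulas \eqref{eq:conjLambda}--\eqref{eq:intersect2Lambdas}, each fiber can be identified (after taking the diagonal $\bJ$-quotient) with an intersection of translates of cosets of $\bJ$, which is affine.

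The main obstacle is carrying out this geometric analysis uniformly. In contrast to Proposition \ref{p:conv}, where both convolution factors are supported on relevant orbits and the semismallness computation of Proposition \ref{p:semismall} suffices, here $\cG \in \sWg$ may have arbitrary $(\bA, \cM_0^{-1})$-equivariant support on $\bX$, including strata outside the union of relevant orbits and, when $\bJ$ is not the Iwahori, within the non-proper part of $\bX$. A uniform affineness estimate for $p$ across all such strata is thus the technical heart of the argument.
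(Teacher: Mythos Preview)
Your strategy is correct and is essentially the paper's own argument, but two points need correction.

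First, you have the direction of Artin's theorem reversed. For an affine morphism $f$, Theorem~\ref{t:Artin} says that $f_!$ preserves $\sDg$ (left $t$-exactness) while $f_*$ preserves $\sDl$ (right $t$-exactness). Thus affineness of $p^\lambda$ gives that $F_\lambda = p^\lambda_!(- \tboxtimes \cF_\lambda)$ is \emph{left} $t$-exact, not right. Your adjunction step still works once this is swapped: left $t$-exactness of every $F_{-\lambda}$ forces, via $F_\lambda \dashv F_{-\lambda}$, right $t$-exactness of every $F_\lambda$. The paper runs this more directly: after obtaining $\cG \star_! j_!^\lambda \in \sDg$ from Artin for $p^\lambda_!$, it uses Corollary~\ref{c:!=*} together with cleanness (Theorem~\ref{t:clean}) to rewrite $\cG \star_! j_!^\lambda \cong \cG \star_* j_*^\lambda$, and then applies Artin for $p^\lambda_*$ to land in $\sDl$. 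This is exactly your adjunction argument unpacked.

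Second, you overestimate the difficulty of affineness; it is not the technical heart, and it has nothing to do with the support of $\cG$. Under the isomorphism $\bG \times_\bJ \bX \iso (\bG/\bJ) \times \bX$, $(g,y) \mapsto (g\bJ, gy)$, the map $p^\lambda$ becomes the second projection restricted to $\{(h\bJ,z) : h^{-1}z \in \bJ^\lambda\}$; its fiber over $z$ is a translate of the single $\bJ$-orbit $\bJ t^{-\lambda}\bJ/\bJ$ in $\bG/\bJ$, which is affine since $\bJ$ is connected (pro)solvable. Pulling back along the $\bJ'$-torsor $\bG \to \bX$ trivializes this bundle, so $p^\lambda$ is affine by fpqc descent. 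No stratum-by-stratum analysis of the support of $\cG$ is needed; the paper simply asserts affineness of $p^\lambda$ and moves on.
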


\begin{proof} 
  It is enough to show that $\cG\star_! j_!^\lambda$ is perverse for
  every $\lambda \in \Lambda$. Let $p^\lambda = p\circ (\Id \times
  j^\lambda)$.  Then $p_!^\lambda (\cG\tboxtimes \cF_\lambda) \cong
  \cG\star_! j_!^\lambda$.  Since $p^\lambda$ is an affine morphism,
  by Artin's Theorem \ref{t:Artin}, $\cG\star_! j_!^\lambda\in
  \sDg_{(\bA, \cM_0^{-1})}(\bX)$. By Corollary \ref{c:!=*} and
  cleanness, we obtain an isomorphism $\cG\star_! j_!^\lambda\cong \cG
  \star_* j_*^\lambda$. Applying Artin's Theorem again, $\cG\star_*
  j_*^\lambda \in \sDl_{(\bA,\cM_0^{-1})} (\bX)$. Hence, $\cG \star_!
  j_!^\lambda \in \sDl_{(\bA,\cM_0^{-1})} (\bX)$ as well.
\end{proof}

\appendix

\section{Postponed proofs} 
\subsection{Proof of Theorem \ref{t:clambda}}\label{ss:clambda}
First we describe the morphism $\Phi: \sW\ra \Pi$ explicitly. Let $f_0
\in \sW$ be the function $f_0(j) = \mu(j)$ for $j \in J$ and $f_0(g) =
0$ for $g \notin J$. Then $\sW$ has a basis consisting of functions
$g_{i} \cdot f_{0}$ where $g_{i}\in G(F)$ ranges over a set of
representatives for $G(F)/J$; see, e.g.  \cite[\S
1.2.5]{Bushnell06}. Define a morphism of $G(F)$-modules $\Phi:\sW\ra
\Pi$ by $f_{0}\mapsto p_{0}$, where $p_0$ was defined in \S
\ref{sss:expliso}.  Next, suppose that $\Omega:\sW\ra \Pi$ is any
morphism of $G(F)$-modules. Using the fact that $\End_{G(F)}(\Pi)$ is
commutative, one can easily show that
\[
\Omega( f\star \Psi(\phi))=\phi(\Omega(f)), \quad \quad \forall f\in
\sW, \phi \in \End_{G(F)}(\Pi).
\]

\begin{lemma} $b_\lambda\Phi(f_\lambda)(t^\lambda)=1$. \label{l:clambda1}
\end{lemma}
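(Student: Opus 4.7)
\emph{Plan.} The strategy is to compute $\Phi(f_\lambda)(t^\lambda)$ directly and verify it equals $b_\lambda^{-1}$. First, I would expand $f_\lambda$ in the basis $\{g \cdot f_0\}_{g \in G(F)/J}$ of $\sW$. Since $f_\lambda$ is supported on $Jt^\lambda J$, whose left $J$-cosets are parametrized by $j \in J/(J \cap t^\lambda J t^{-\lambda})$ via representatives $jt^\lambda$, and since $f_\lambda(jt^\lambda j') = \mu(j)\mu(j')$, matching values on each coset gives
\[
f_\lambda = \sum_{j \in J/(J \cap t^\lambda J t^{-\lambda})} \mu(j) \cdot (jt^\lambda) \cdot f_0.
\]
Applying $\Phi$ (which is $G(F)$-equivariant and sends $f_0 \mapsto p_0$) and evaluating at $t^\lambda$,
\[
\Phi(f_\lambda)(t^\lambda) = \sum_j \mu(j) \, p_0(t^{-\lambda} j^{-1} t^\lambda).
\]

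Next, I would use the Iwahori decomposition $J = J^- J^0 J^+$ together with the explicit formula \eqref{eq:intersectLambda} to choose representatives of the form $j = j^+ j^-$, where $j^+$ ranges over $\prod_{\alpha > 0,\,\alpha(\lambda)>0} U_{\alpha,f(\alpha)}/U_{\alpha,f(\alpha)+\alpha(\lambda)}$ and $j^-$ over the analogous product over negative roots with $\alpha(\lambda)>0$. Since any such $j$ lies in $U_f$, we have $\mu(j) = 1$. Conjugation yields the Gauss-type expression $t^{-\lambda}j^{-1}t^\lambda = U\cdot V$ with $U := t^{-\lambda}(j^-)^{-1}t^\lambda \in U^-(F)$ and $V := t^{-\lambda}(j^+)^{-1}t^\lambda \in U^+(F)$, with trivial torus part. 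The main step (and principal obstacle) is to identify when $p_0(UV) \neq 0$: using that $J^+ \subseteq U^+(F)$ and that $T(\cO)$ normalizes $U^+(F)$, one checks $JB^0 = J^- \cdot U^+(F)\cdot T(\cO)$, and uniqueness of the Gauss decomposition on the big cell of $G(F)$ then implies $UV \in JB^0$ if and only if $U \in J^-$.

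Finally, expanding $U \in J^- = \prod_{\alpha < 0} U_{\alpha,f(\alpha)}$ component by component forces each component $u_\alpha$ of $j^-$ to lie in $U_{\alpha,f(\alpha)+\alpha(\lambda)}$, i.e., to vanish in its quotient. Thus only representatives with $j^- = e$ survive, so $j = j^+$ ranges over a set of size $q^{\sum_{\alpha > 0,\,\alpha(\lambda)>0} \alpha(\lambda)} = q^{\sum_{\alpha > 0} \max\{\alpha(\lambda),0\}} = b_\lambda^{-1}$. For each such $j$, $V \in U^+(F) \subseteq B^0$, so $p_0(V) = \bmu(V) = 1$ (since $\bmu|_{U(F)} = 1$), and $\mu(j) = 1$. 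Summing yields $\Phi(f_\lambda)(t^\lambda) = b_\lambda^{-1}$, whence $b_\lambda \Phi(f_\lambda)(t^\lambda) = 1$.
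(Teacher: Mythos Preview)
Your computation of $\Phi(f_\lambda)(t^\lambda)$ is correct and essentially reproduces what the paper does \emph{after} Lemma~\ref{l:clambda1} (see Lemma~\ref{l:phifl} and the displayed equations following it): expand $f_\lambda$ over left $J$-cosets, apply $\Phi$, use the Iwahori decomposition to reduce the support condition to $t^{-\lambda}(j^-)^{-1}t^\lambda \in J^-$, and count survivors to obtain $q^{\sum_{\alpha>0}\max\{\alpha(\lambda),0\}}$.

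The paper's proof of Lemma~\ref{l:clambda1} itself, however, is entirely different and much shorter. It uses the abstract compatibility $\Phi(f \star \Psi(\phi)) = \phi(\Phi(f))$ (valid for any $G(F)$-map $\sW \to \Pi$ since $\End_{G(F)}(\Pi)$ is commutative), applied with $f = f_0$ and $\phi = [\Theta_\lambda]$, to get $b_\lambda \Phi(f_\lambda) = \Phi(b_\lambda f_\lambda) = [\Theta_\lambda](p_0)$; evaluating at $t^\lambda$ gives $p_0(e) = 1$.

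The key point is what $b_\lambda$ denotes at this stage. In the paper's logic, $b_\lambda$ is the \emph{a priori unknown} scalar with $\Psi([\Theta_\lambda]) = b_\lambda f_\lambda$ (its existence coming from Roche's preservation of support); Lemma~\ref{l:clambda1} is precisely the reduction step that pins it down as $\Phi(f_\lambda)(t^\lambda)^{-1}$, and the explicit formula is then deduced from the computation you carried out. You instead took $b_\lambda$ to be the explicit value stated in Theorem~\ref{t:clambda} and matched it against your computation. Read within the paper's structure, this is circular: Lemma~\ref{l:clambda1} is an ingredient in the proof of Theorem~\ref{t:clambda}, so the explicit formula for $b_\lambda$ is not yet available. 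What you have really done is the second half of the proof of Theorem~\ref{t:clambda}, while omitting the short abstract argument that constitutes the lemma itself.
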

\begin{proof}
  By the above, $\Phi( f \star (b_\lambda f_\lambda) ) =
  [\Theta_\lambda] (\Phi(f))$ for all $f \in \sW$. Hence,
\[
\Phi(b_\lambda f_\lambda) = \Phi(f_0 \star (b_\lambda f_\lambda)) =
[\Theta_\lambda] (\Phi(f_0)) = [\Theta_\lambda] (p_0).
\]
The result follows by evaluating both sides at $t^\lambda$. 
\end{proof} 

Hence, to compute $b_\lambda$, it suffices to compute
$\Phi(f_\lambda)(t^\lambda)$. First we need two lemmas.

\begin{lemma} For all $j \in J$,
  \begin{equation}\label{e:p0cl}
    p_0(t^\lambda j t^{-\lambda}) = \begin{cases} \mu(j), 
      & \text{if $t^\lambda j t^{-\lambda} \in JB^0$}, \\
      0, & \text{otherwise}.
\end{cases}
\end{equation}
\end{lemma}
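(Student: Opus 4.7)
The plan is to reduce the lemma to the Iwahori decomposition of $J$ combined with the triviality of $\mu$ on $J^\pm = U_f^\pm$. The first case, when $t^\lambda j t^{-\lambda} \notin JB^0$, is immediate from the defining vanishing of $p_0$ outside $JB^0$. So the substantive content is the claim that $p_0(t^\lambda j t^{-\lambda}) = \mu(j)$ whenever $t^\lambda j t^{-\lambda} \in JB^0$.

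I would begin by writing the Iwahori decomposition $j = j^- j^0 j^+$ with $j^\pm \in J^\pm = U_f^\pm$ and $j^0 \in J^0 = T(\cO)$. Conjugating by $t^\lambda$ and using that $T$ normalizes $U^\pm$ and centralizes $J^0$, we get
\[
t^\lambda j t^{-\lambda} = (t^\lambda j^- t^{-\lambda})\, j^0\, (t^\lambda j^+ t^{-\lambda}),
\]
a factorization into $U^-(F)\cdot T(\cO)\cdot U^+(F)$. The middle and right factors already lie in $B^0 = U^+(F) T(\cO)$, so $t^\lambda j t^{-\lambda} \in JB^0$ forces $t^\lambda j^- t^{-\lambda}$ to land in $J$. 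Since this element lies in $U^-(F)$, and by the Iwahori decomposition $J \cap U^-(F) = J^-$, it must belong to $J^-$.

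Given this, I would evaluate $p_0(kb)$ on the decomposition $k = t^\lambda j^- t^{-\lambda} \in J$ and $b = j^0 (t^\lambda j^+ t^{-\lambda}) \in T(\cO) U^+(F) = B^0$, obtaining $p_0(t^\lambda j t^{-\lambda}) = \mu(t^\lambda j^- t^{-\lambda}) \cdot \bmu(j^0) \cdot \bmu(t^\lambda j^+ t^{-\lambda})$. The extension of $\bmu$ to $B^0$ is trivial on $U(F)$, and $\mu$ is trivial on $J^\pm$ by construction (since $J^\pm \subseteq J'$ and $\mu$ factors through $J/J' = A$, cf.\ the discussion preceding Lemma \ref{l:J'algebraic}; at this stage in the paper one can simply invoke that $\mu$ lifts a character of $T(\cO)/T_{\bmu}$ via the Iwahori factorization of $J$, so $\mu|_{J^\pm}$ is trivial). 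Hence everything collapses to $p_0(t^\lambda j t^{-\lambda}) = \bmu(j^0) = \mu(j^0)$, and on the other hand $\mu(j) = \mu(j^-) \mu(j^0) \mu(j^+) = \mu(j^0)$ for the same reason, so the two agree.

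The only delicate step is the uniqueness argument forcing $t^\lambda j^- t^{-\lambda} \in J^-$ from the hypothesis $t^\lambda j t^{-\lambda} \in JB^0$, which really amounts to the direct-product (uniqueness-of-factorization) property for the open Bruhat cell $U^-(F) \cdot T(F) \cdot U^+(F)$ applied to the identity $J B^0 = J^- \cdot T(\cO) \cdot U^+(F)$. Everything else is a transparent computation using the Iwahori decomposition and the two triviality facts ($\mu|_{J^\pm} = 1$ and $\bmu|_{U(F)} = 1$).
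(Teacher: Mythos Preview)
Your proof is correct and follows essentially the same approach as the paper: both use the Iwahori decomposition $j = j^- j^0 j^+$, observe that the condition $t^\lambda j t^{-\lambda} \in JB^0$ is equivalent to $t^\lambda j^- t^{-\lambda} \in J^-$ via uniqueness in the $U^- T U^+$ factorization, and then evaluate $p_0$ using the triviality of $\mu$ on $J^\pm$ and of $\bmu$ on $U(F)$. The paper's writeup is slightly terser, recording the intermediate identity $p_0(t^\lambda j t^{-\lambda}) = \mu(j^0)\, p_0(t^\lambda j^- t^{-\lambda})$, but the substance is the same.
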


\begin{proof}
  Write $j = j^- j^0 j^+$, with $j^- \in J^-, j^0 \in J^0$, and $j^+
  \in J^+$.  Then, $t^\lambda j t^{-\lambda} = (t^\lambda j^-
  t^{-\lambda}) (t^\lambda j^0 j^+ t^{-\lambda})$.  So, first of all,
  \begin{equation}\label{e:tjjm}
t^\lambda j t^{-\lambda} \in J B^0 \iff t^\lambda j^- t^{-\lambda} \in J^-,
\end{equation} 
since we know that $t^\lambda j^- t^{-\lambda} \in U^-(F)$, the
group of unipotent lower-triangular matrices.  So, we find that
\begin{equation*}
  p_0(t^\lambda j t^{-\lambda}) = \mu(j^0) p_0(t^\lambda j^- t^{-\lambda}),
\end{equation*}
which yields \eqref{e:p0cl}.  
\end{proof}

\begin{lemma} \label{l:phifl} $\Phi(f_\lambda) = \sum_i \mu(j_i) j_i
  t^{\lambda} \cdot p_0$, where $j_i$ is a set of representatives of the
  finite quotient $J / (J \cap t^{\lambda} J t^{-\lambda})$.
\end{lemma}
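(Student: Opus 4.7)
The plan is to expand $f_\lambda$, viewed as an element of $\sW$, in the standard basis of translates of $f_0$, and then apply $\Phi$ using its $G(F)$-equivariance together with $\Phi(f_0) = p_0$. The whole argument is a direct unwinding of the induced-representation structure on $\sW = \ind_J^{G(F)} \mu$.

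The first step is to describe the support of $f_\lambda$ as a disjoint union of right $J$-cosets. Since $J$ acts transitively by left multiplication on $J t^\lambda J / J$ with stabilizer of the identity coset equal to $J \cap t^\lambda J t^{-\lambda}$, we obtain
\[
J t^\lambda J = \bigsqcup_i j_i t^\lambda J,
\]
where $\{j_i\}$ is the given (finite, by Lemma \ref{l:vollam}) set of representatives of $J/(J \cap t^\lambda J t^{-\lambda})$.

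Next, I will expand $f_\lambda = \sum_i c_i (j_i t^\lambda)\cdot f_0$ and read off the coefficients $c_i$. Directly from the definition $g \cdot f(x) = f(g^{-1}x)$, the translate $(j_i t^\lambda) \cdot f_0$ is supported on $j_i t^\lambda J$ and satisfies
\[
(j_i t^\lambda) \cdot f_0(j_i t^\lambda j') \;=\; f_0(j') \;=\; \mu(j'), \quad \forall \, j' \in J.
\]
Because the cosets $j_i t^\lambda J$ are pairwise disjoint, these translates are linearly independent and their supports partition $J t^\lambda J$. Evaluating both sides of the expansion at the point $j_i t^\lambda$ and using $f_\lambda(j_i t^\lambda) = \mu(j_i)$ (from the definition of $f_\lambda$) together with $(j_i t^\lambda) \cdot f_0(j_i t^\lambda) = 1$ immediately yields $c_i = \mu(j_i)$.

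Finally, since $\Phi$ is a morphism of $G(F)$-modules and $\Phi(f_0) = p_0$, applying $\Phi$ to the expansion gives
\[
\Phi(f_\lambda) \;=\; \sum_i \mu(j_i)\, (j_i t^\lambda) \cdot \Phi(f_0) \;=\; \sum_i \mu(j_i)\, (j_i t^\lambda) \cdot p_0,
\]
as desired. There is no substantive obstacle here; the only care needed is in verifying that the formula $(j_i t^\lambda) \cdot f_0(j_i t^\lambda j') = \mu(j')$ is consistent with well-definedness of $f_\lambda$ on $J t^\lambda J$, which was already built into the construction of $f_\lambda$.
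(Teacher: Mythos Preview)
Your proof is correct and follows essentially the same approach as the paper: decompose $J t^\lambda J$ into cosets $j_i t^\lambda J$, expand $f_\lambda$ in the basis of translates $j_i t^\lambda \cdot f_0$ with coefficients $\mu(j_i)$, and apply the $G(F)$-equivariance of $\Phi$ together with $\Phi(f_0)=p_0$. The only minor slip is terminological: the stabilizer $J \cap t^\lambda J t^{-\lambda}$ is that of the coset $t^\lambda J$, not the ``identity coset,'' but this does not affect the argument.
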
 

\begin{proof}
  By definition, $\Phi(f_0) = p_0$. Next, for arbitrary $\lambda$,
  consider the left cosets of $J$ in $J t^{\lambda} J$.  $J$ acts
  transitively on these by left multiplication, so these cosets have
  the form $\{j_i t^{\lambda} J\}$, where $j_i$ is a set of
  representatives of the finite quotient $J / (J \cap t^{\lambda} J
  t^{-\lambda})$.  Thus, $f_\lambda = \sum_i \mu(j_i) j_i
  t^{\lambda} \cdot f_0$, where each $j_i t^{\lambda} \cdot f_0$ is the unique
  function in $\sW$ supported on the left coset $j_i t^\lambda J$
  whose value at $j_i t^\lambda$ is $1$.  Applying $\Phi$ yields the
  result.
\end{proof} 

To prove the desired equality, note that
\begin{equation}\label{e:phifltl}
  \Phi(f_\lambda)(t^\lambda) = \sum_i \mu(j_i) p_0(t^{-\lambda} j_i^{-1} t^\lambda).
\end{equation}
For each $j_i$, write $j_i = j_i^+ j_i^0 j_i^-$ for $j_i^+ \in J^+,
j_i^0 \in J^0$, and $j_i^- \in J^-$.  Substituting \eqref{e:p0cl} and
\eqref{e:tjjm} into \eqref{e:phifltl}, we obtain
\begin{equation} \label{phifltl2} \Phi(f_\lambda)(t^\lambda) = |\{j_i:
  t^{-\lambda} (j_i^-)^{-1} t^{\lambda} \in J^-\}|.
\end{equation}
To conclude, recall that $j_i$ are representatives of $J / (J \cap
t^{\lambda} J t^{-\lambda})$.  Note that the RHS of \eqref{phifltl2}
identifies with $|K / (J \cap t^{\lambda} J t^{-\lambda})| $, where
$K$ has the same form as $J$ except with $f(\alpha)$ replaced by
$f(\alpha) + \max \{\alpha(\lambda),0\}$ for $\alpha \in \Delta_-$
(leaving $f(\alpha)$ the same when $\alpha \in \Delta_+$).  Hence,
$\log_q \Phi(f_\lambda)(t^\lambda) = \sum_{\alpha \in \Delta_+} \max
\{\alpha(\lambda), 0\}$. In view of Lemma \ref{l:clambda1}, this
implies the desired formula. \qed

\subsection{Proof of Proposition \ref{p:irrconn}.(ii)}\label{ss:irrconn}
We will follow to some extent the arguments of \cite[Theorem
4.15]{Roche98}, with an innovation from \cite{Yu-ctsr} to reduce
restrictions on the residue characteristic. Note that \cite{Roche98}
works in the mixed-characteristic setting where $F$ has characteristic
zero (and $\cO/\fp = \Fq$), unlike us. However, as pointed out there,
those arguments extend to our equal-characteristic setting by
replacing Proposition 4.11 there by the more general \cite[Theorem
7.1]{ARirpg}, which is for arbitrary local fields $F$ with residue
field $\Fq$ (see Theorem \ref{t:ar-int} below), proved similarly.

The proof is by induction on the semisimple rank of $G$.  If $G$ is a
torus, then the assumption $x \notin \bJ t^\lambda \bJ$ is vacuous, so
the result follows.  So we assume $G$ has positive semisimple rank and
that the result holds follows for all connected split reductive groups
of strictly smaller rank (for all characters, using Roche's
corresponding subgroup).

  Let $\ell = \cond(\mu) \geq 1$. If $\ell = 1$, then
  $J$ is the Iwahori subgroup, in which case the Bruhat decomposition
  implies that $x \in J n J$ for some $n \in N(T(F))$. In this case,
  we can assume $x = n$, and the result follows from part (i) of the
  proposition. Henceforth, we assume $\ell \geq 2$. 
  
  \subsubsection{Review of some notation used in \cite{Roche98}} 
  Following \cite[\S
  4]{Roche98}, define the groups
\begin{gather*}
  L := \langle T_{\ell-1}, U_{\alpha, \ell-1}, U_{\beta, f(\beta)}
  \mid
  c_\alpha < \ell, c_{\beta} = \ell  \rangle, \\
  K_i := \langle T_{i}, U_{\alpha,i} \mid \alpha \in \Delta \rangle
  \quad (\forall i \geq 1), \\
  \widetilde K_\ell := \begin{cases} K_\ell, & \text{if $\ell$ is even}, \\
    \langle K_\ell, U_{\alpha, \ell-1} \mid \alpha > 0, c_\alpha =
    \ell \rangle, & \text{if $\ell$ is odd}.\end{cases}
\end{gather*}
Note that $L \subseteq J$.  Moreover, $K_{\lfloor \ell/2 \rfloor}
\supseteq L \supseteq K_{\ell-1} \supsetneq \widetilde K_{\ell}$.
Finally, $K_i / K_{2i}$ is abelian for all $i \geq 1$.

Next, we recall the Lie algebras of the above subgroups and
bijections $\mfK_i / \mfK_{2i} \iso K_i/K_{2i}$ from \emph{op.~cit.}
Let $\mfg$, $\mft$, and $\mfu_\alpha$ be the Lie algebras of $G$, $T$,
and $U_\alpha$ over $F$.  Let $X = \Hom(\bT, \bG_m)$ be the lattice of
characters of $\bT$ and $X^\vee = \Hom(\bG_m, \bT)$ the lattice of
coweights. There is a natural map $X^\vee \otimes_{\bZ} F \iso \mft$.
Let $\mft_i \subset \mft$ be the $\cO$-sublattice which is the image
of $\fp^i \otimes_{\bZ} X^\vee$ (note that $\mft_i$ is the Lie algebra
of $T_i$). Similarly, let $\mfu_{\alpha,i} \subset \mfu_{\alpha}$ be
the $\cO$-sublattice which is the image of $\fp^i$ under the
isomorphism $F \iso \mfu_{\alpha}$ defined by the map $\Lie(u_\alpha)$
(note that $\mfu_{\alpha,i}$ is the Lie algebra of
$U_{\alpha,i}$). Define the following $\cO$-sublattices of $\mfg$,
which are the Lie algebras of the groups
$L, K_i$, and $\widetilde K_\ell$:
\begin{gather*}
  \mfL := \mft_{\ell-1} \oplus \bigoplus_{\alpha: c_\alpha < \ell}
  \mfu_{\alpha,\ell-1} \oplus \bigoplus_{\alpha: c_\alpha = \ell}
  \mfu_{\alpha,f(\alpha)}, \\
  \mfK_i := \mft_i \oplus \bigoplus_{\alpha \in \Delta} \mfu_{\alpha,i}, \\
  \widetilde \mfK_\ell := \begin{cases} \mfK_\ell, & \text{if $\ell$ is even}, \\
    \mft_\ell \oplus \bigoplus_{\alpha > 0, c_\alpha = \ell}
    \mfu_{\alpha, \ell-1} \oplus \bigoplus_{\alpha: c_\alpha < \ell
      \text{ or } \alpha < 0} \mfu_{\alpha,\ell}, & \text{if $\ell$ is
      odd}.
\end{cases}
\end{gather*}
Next, for $i \geq 1$, the bijections $\Lie(u_\alpha): \fp^i \iso
\mfu_{\alpha,i}$ and $u_{\alpha}: \fp^i \iso U_{\alpha,i}$ induce a
bijection of sets $\mfu_{\alpha,i} \iso U_{\alpha,i}$.  Similarly, the
bijections $X^\vee \otimes_{\bZ} \fp^i \iso \mft_i$ and $X^\vee
\otimes_{\bZ} (1+\fp^i) \iso T_i$, together with the bijection $\fp^i
\iso (1 + \fp^i), b \mapsto 1+b$, induce a bijection of sets $\mft_i
\iso T_i$.  Using these and the direct product and sum decompositions
$\mfK_i = \mft_i \oplus \bigoplus_{\alpha \in \Delta} \mfu_{\alpha,i}$
and $K_i = T_i \cdot \prod_{\alpha \in \Delta} U_{\alpha,i}$ (for some
choice of ordering of the roots), one obtains a \emph{noncanonical}
bijection
\[
\varphi_i: \mfK_i \iso K_i,
\]
depending on the choice of ordering of $\alpha \in \Delta$.  In fact,
$\varphi_i = \varphi_1 |_{\mfK_i}$ for all $i \geq 1$.  We also have
\[
\varphi_L := \varphi_1|_{\mfL}: \mfL \iso L.
\]
Since $K_i / K_{2i}$ is abelian, the resulting isomorphism
$\overline{\varphi_i}: \mfK_i / \mfK_{2i} \iso K_i / K_{2i}$ is
independent of the ordering of roots and hence
\emph{canonical}. Similarly, the isomorphism $\overline{\varphi_L} :
\mfL / \widetilde \mfK_{\ell} \iso L / \widetilde K_\ell$ is
independent of the ordering of roots and hence canonical.
% We will not use precisely these isomorphisms, but rather modified
% versions, in \S \ref{sss:irrconn} below.

Let $\psi: F \to \bQlt$ be an additive character such that $\fp
\subseteq \ker(\psi)$ and $\cO \nsubseteq \ker(\psi)$.  Then, $a :=
\mu \circ \overline{\varphi_L}$ is a character of $\mfL / \widetilde
\mfK_{\ell}$, and can be viewed as an element of $(\mft_{\ell-1} /
\mft_{\ell})^*$ (an element acting trivially on the off-diagonal part
$\bigoplus_{\alpha: c_\alpha < \ell} \mfu_{\alpha,\ell-1} \oplus
\bigoplus_{\alpha: c_\alpha = \ell} \mfu_{\alpha,f(\alpha)}$ of
$\mfL$).

% As explained
% in \cite[\S 4]{Roche98} and \cite{ARirpg}, under our assumptions on
% characteristic, there is a nondegenerate, $G$-invariant, symmetric,
% $F$-valued bilinear form $B$ on $\mfg$.  Moreover, there exists an
% element $a \in \mft_{1-\ell}$ such that
% \begin{gather*}
%   \alpha(a) = 0, \quad \forall \alpha \in \Delta \text{ s.t. $c_\alpha < \ell$,} \\
%   \psi(B(a,\varphi_L(g))) = \mu(\varphi_L(g)), \forall g \in \mfL.
% \end{gather*}
% In fact, $\mfK_i^\perp = \mfK_{1-i}$ and $\mft_i^\perp = \mft_{1-i}$.
% Moreover, the coset $(a + \mft_{2-\ell}) \in \mft_{1-\ell} /
% \mft_{2-\ell}$ is uniquely determined by $\mu|_{K_{\ell-1}}$.

Following \cite{Roche98}, define
\[
\mathcal{I}(\mu | H) := \{g \in G(F): \mu(h)=\mu(g^{-1}hg), \forall h \in
H \cap gHg^{-1}\}.
\]
The relationship to our objects of study is: $g \in \mathcal{I}(\mu | H)$
if and only if, for all pairs $(h, g^{-1}hg) \in \Stab_{H \times H}(g)$,
 $\mu(h)\mu(g^{-1}hg)^{-1} = 1$. That is,
\begin{equation} \label{e:imuh}
g \in \mathcal{I}(\mu|H) \iff
\Stab_{H \times H}(g) \subseteq \ker(\mu \times \mu^{-1}).
\end{equation}

\subsubsection{Proof of the proposition in the case $x \in \mathcal{I}(\mu | L)$} 
We will need the following result, which follows from
\cite[Proposition 4.11]{Roche98} and \cite[Theorem 7.1]{ARirpg}
(slightly modifying the proof to use the dual Lie algebra as in
\cite{Yu-ctsr}; for instance, \cite[Lemma 5.1]{Yu-ctsr} replaces
\cite[Lemma 1.8.1]{Araksr} with the same proof).
\begin{theorem}\label{t:ar-int}
$\mathcal{I}(\mu | L) = L
  C_{G(F)}(a) L$.
\end{theorem}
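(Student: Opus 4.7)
\subsection*{Proof plan for Theorem \ref{t:ar-int}}

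My plan is to establish the two inclusions separately, converting the group-theoretic intertwiner condition into a Lie algebra (or rather, dual Lie algebra) condition via the bijection $\varphi_L$, so that the statement becomes a statement about the $G(F)$-action by coadjoint transformations on the functional $a$.

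The inclusion $L C_{G(F)}(a) L \subseteq \mathcal{I}(\mu | L)$ is the easy direction. First, $L \subseteq \mathcal{I}(\mu|L)$ is immediate because $\mu$ is a character (hence a class function on the image of any inner automorphism). Next, suppose $g \in C_{G(F)}(a)$ and $h \in L \cap gLg^{-1}$; set $h' := g^{-1} h g \in L$. Writing $h = \varphi_L(X)$ and $h' = \varphi_L(Y)$ with $X, Y \in \mfL$, one checks that $Y \equiv \operatorname{Ad}(g^{-1})(X) \pmod{\widetilde{\mfK}_\ell}$, using the standard fact that the exponential-like map $\varphi_L$ conjugates in a manner compatible with $\operatorname{Ad}$ to first order (and $\mu$ sees only the first-order information since it is trivial on $\widetilde{K}_\ell$). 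Hence, via the identification $\mu = \psi\circ a \circ \varphi_L^{-1}$ on $L$,
\[
\mu(h') = \psi(a(Y)) = \psi(a(\operatorname{Ad}(g^{-1})X)) = \psi((\operatorname{Ad}^*(g)a)(X)) = \psi(a(X)) = \mu(h),
\]
where the penultimate equality uses $g \in C_{G(F)}(a)$. Combined with $L \subseteq \mathcal{I}(\mu|L)$ and the observation that $\mathcal{I}(\mu|L)$ is a union of $L$-double cosets (directly from \eqref{e:imuh}), this gives the inclusion.

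The opposite inclusion $\mathcal{I}(\mu|L) \subseteq L C_{G(F)}(a) L$ is the main obstacle. Here I would follow the strategy of \cite[Prop.~4.11]{Roche98} and \cite[Thm.~7.1]{ARirpg}, with the modification of \cite{Yu-ctsr} so as to work throughout with elements of $\mfg^*$ rather than $\mfg$, avoiding the need for a nondegenerate invariant form on $\mfg$ (and hence dropping several residue characteristic restrictions). Given $g \in \mathcal{I}(\mu|L)$, the plan is: (i) use an Iwahori-type / Bruhat-Tits decomposition to write $g = l_1 n l_2$ with $l_1, l_2 \in L$ and $n$ in a suitable set of representatives (reducing to the case $g = n$); (ii) analyze the condition that, for every $h \in L \cap nLn^{-1}$, we have $\mu(h) = \mu(n^{-1}hn)$, translating it through $\varphi_L$ into the statement that $a$ and $\operatorname{Ad}^*(n)a$ agree on a sufficiently large sublattice of $\mft_{\ell-1}/\mft_\ell$; (iii) invoke the rigidity of the coadjoint orbit structure (\cite[Lemma 5.1]{Yu-ctsr} replacing \cite[Lemma 1.8.1]{Araksr}) to conclude that one can adjust $n$ by an element of $L$ so as to assume outright $\operatorname{Ad}^*(n)a = a$, i.e., $n \in C_{G(F)}(a)$.

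The chief subtlety, and the step I expect to consume the most work, is step (iii): the passage from ``$a$ and $\operatorname{Ad}^*(n)a$ agree on a lattice in a quotient'' to ``they agree on all of $\mfg^*$ after an $L$-twist.'' This is precisely where \cite{ARirpg} needs the refinement of \cite{Yu-ctsr}: the argument in \cite{Roche98} used a pairing to identify $a$ with an element of $\mfg$ and required $p$ to exceed certain bounds to ensure nondegeneracy, whereas \cite{Yu-ctsr} shows that the same conclusion holds working with $a \in \mfg^*$ directly, modulo only the milder hypotheses on $p$ collected in \eqref{t:residue}. Once step (iii) is in place, the remaining inclusion is immediate.
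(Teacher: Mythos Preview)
Your plan aligns with the paper's treatment: the paper does not give an independent proof of this theorem but simply records that it follows from \cite[Proposition 4.11]{Roche98} and \cite[Theorem 7.1]{ARirpg}, with the proof modified to work in $\mfg^*$ as in \cite{Yu-ctsr} (specifically, \cite[Lemma 5.1]{Yu-ctsr} in place of \cite[Lemma 1.8.1]{Araksr}). Your outline is precisely an expansion of that citation, invoking the same sources and the same dual-Lie-algebra modification, so there is nothing substantive to compare.

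One small caution on notation: in the paper $a := \mu \circ \overline{\varphi_L}$ is already a $\bQl^\times$-valued character of $\mfL/\widetilde{\mfK}_\ell$, which is then \emph{identified} (via $\psi$) with an element of $(\mft_{\ell-1}/\mft_\ell)^*$; your formula $\mu(h) = \psi(a(X))$ treats $a$ as the linear functional rather than the character, so be consistent about which convention you adopt when writing out the details. Also, your step (i) (reducing to a representative $n$ via a Bruhat-type decomposition) is not quite how the argument in \cite{ARirpg} is organized---there one works directly with $g$ and successively corrects by elements of $L$ using the filtration---but this is an organizational rather than a mathematical difference.
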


Here, $C_{G(F)}(a)$ is the centralizer in $G(F)$ of $a$. According to
\cite[Proposition 7.3]{Yu-ctsr}, under our restrictions on residue
characteristic, this is the group of $F$-points of a semistandard Levi
subgroup, call it $C_G(a)$, of $G$. As explained in the proof of
\cite[Theorem 4.15]{Roche98}, up to multiplying $\mu$ by a suitable
character of $G$ (which leaves $J$ unchanged, since such characters
are trivial on $[G,G]$ and hence the $c_\alpha$ are unchanged), we can
assume that $C_G(a) \neq G$, and $C_G(a)$ is a connected split
reductive group of strictly lower semisimple rank than $G$.  Then, the
subgroup $J_{\mu,C_{G(F)}(a)} < C_{G(F)}(a)$ associated to $\mu$ is
nothing but the intersection $J_{\mu,C_{G(F)}(a)} = C_{G(F)}(a) \cap
J$.  By induction on the semisimple rank of $G$, we can therefore
assume that, if the element $x$ in the statement of the proposition is
in $C_{G(F)}(a)$, then $\Stab_{\bJ_{\mu,C_{G(F)}(a)} \times
  \bJ_{\mu,C_{G(F)}(a)}}(x)^\circ(\Fq) \nsubseteq \ker(\mu \times
\mu^{-1})$.  Therefore, the proposition follows for $x$.  Hence, it
also follows if $x \in J C_{G(F)}(a) J$, and hence if $x \in L
C_{G(F)}(a) L$.

\subsubsection{Proof of the proposition in the case $x\notin \mathcal{I}(\mu | L)$}\label{sss:irrconn}
By \eqref{e:imuh}, $\Stab_{\bL \times
  \bL}(x)(\Fq) \nsubseteq \ker(\mu \times \mu^{-1})$.  Our goal is to show 
  \[
  \Stab_{\bL \times \bL}(x)^{\circ}(\Fq)
\nsubseteq \ker(\mu \times \mu^{-1})\]

  Note that $\Stab_{\bL \times \bL}(x)(\Fq) = \{(g,x^{-1} g x): g \in
  L \cap xLx^{-1}\} \cong L \cap x Lx^{-1}$.  We will need to recall
  the following observation of \cite{Araksr}.  For all $x \in G(F)$,
  let $\mfK_{x,r} := \mfK_r \cap \Ad(x)\mfK_r$. Similarly define
  $K_{x,r}$ as well as $\widetilde \mfK_{x,\ell}$ and $\widetilde
  K_{x,\ell}$. Then, as observed in \cite[(1.5.2)]{Araksr}, for all $x
  \in G(F)$ and all $r \geq 1$, $\mfK_{x,r} / \mfK_{x,2r}$ is abelian,
  and $\varphi_r$ restricts to an isomorphism
  $\overline{\varphi_{x,r}}: \mfK_{x,r} / \mfK_{x,2r} \iso K_{x,r} /
  K_{x,2r}$, which is independent of the ordering of the roots.

  It follows from the definition of $\varphi_r$ that
  $\overline{\varphi_{x,r}}$ is the map on $\Fq$-points of an
  isomorphism of commutative algebraic groups. In the case that $\ell$
  is even, we deduce by restriction that one also has a canonical
  isomorphism 
\[
(\mfL \cap \Ad(x)\mfL)/\widetilde \mfK_{x,\ell} \iso (L
  \cap \Ad(x)L)/\widetilde K_{x,\ell},
\]
which is the map on $\Fq$-points of an isomorphism of commutative
algebraic groups. It is easy to generalize to the case where $\ell$ is
odd.
% Finally, note that one has a canonical isomorphism
% $\widetilde \mfK_{x,\ell}/\mfK_{x,2(\ell-1)} \iso \widetilde
% K_{x,\ell}/K_{x,2(\ell-1)}$.

  Now, since
%  $\mfK_{x,r}/\mfK_{x,2r}$, $\widetilde
% \mfK_{x,\ell}/\mfK_{x,2(\ell-1)}$, and
  $(\mfL \cap
  \Ad(x)\mfL)/\widetilde \mfK_{x,\ell}$ is the group of
  $\Fq$-points of a product of finitely many
  copies of $\mathbb{G}_a$, the same is true for % $K_{x,r}/K_{x,2r}$,
  % $\widetilde K_{x,\ell} / K_{x,2(\ell-1)}$, and 
  $(L \cap \Ad(x)L)/\widetilde K_{x,\ell}$.  So, this quotient is
  connected.  Now, since $\{(g,x^{-1} g x): g \in \widetilde
  K_{x,\ell}(\Fq) \} \subseteq \ker(\mu \times \mu^{-1})$, we conclude
  the desired statement.
\qed
% We briefly explain (ii), and refer to \emph{op.~cit.} for full details
% and the proof of (i). Note that $\mu(g) = \mu(y^{-1} g y)$ if and only
% if $\psi \circ B(a, \varphi_L^{-1}(g)) = \psi \circ B (a,
% \varphi_L^{-1}(y^{-1}gy))$. In view of (i) above, this is equivalent
% to $B(a, \varphi_L^{-1}(g)) = B(a, \Ad(y^{-1}) \varphi_L^{-1}(g))$,
% i.e., $(\overline{\varphi_{y,r}})^{-1}(g) \in (a - \Ad(y)(a))^\perp$.

% Now, we proceed with the proof.  In the case that $x \notin
% \mathcal{I}(\mu | L)$, then by the above, there exists $g \in L \cap
% xLx^{-1}$ such that $(\overline{\varphi_{x,r}})^{-1}(g) \notin (a -
% \Ad(x)(a))^\perp$.  In this case, the same is true for $\Fq^\times
% \cdot (\overline{\varphi_{x,r}})^{-1}(g)$.  It follows that
% $\overline{\varphi_{x,r}}(\Fq^\times \cdot
% (\overline{\varphi_{x,r}})^{-1}(g))$ is the group of $\Fq$-points of a
% connected subgroup of $K_{x,r} / K_{x,2r}$ consisting of cosets whose
% representatives $h$ all satisfy $h \in L \cap xLx^{-1}$ and $\mu(h)
% \neq \mu(x^{-1}hx)$.  Then, $(h, x^{-1}hx) \in \Stab_{\bL \times
%   \bL}(x)^{\circ}(\Fq)$ and $h \notin \ker(\mu)$.

\subsection{Completion of the proof of Lemma \ref{l:J'algebraic}}\label{ss:J'algebraic}
  
Let $c_\alpha := f(\alpha) + f(-\alpha)$ (this is consistent with our other
definition $c_\alpha = \cond(\bmu \circ \alpha^\vee)$ when $f = f_{\bmu}$).
For all $m \geq 1$, let $T_{f,m}$ be the subtorus of $T$ generated by
all coroots $\alpha^\vee$ such that $c_\alpha \leq m$. Clearly,
$T_{f,i} \leq T_{f,j}$ for $i \leq j$.  Furthermore, $T \cap [G,G] =
T_{f,m}$ for $m \geq \max_{\alpha \in \Delta} c_\alpha$.  Observe that
$T_f$ is the product (not direct) of $T_{f,m}(1+\fp^m)$ for all $m$,
where for any algebraic subtorus $S < T$, $S(1 + \fp^m)$ denotes the
subgroup of $S(\cO)$ generated by the coweights of $S$ evaluated at
$1+\fp^m$.  It follows easily that one has an isomorphism of groups 
\begin{equation}
  T_{f} \cong T_{f,1}(1+\fp) 
\times \prod_{m \geq 2} (T_{f,m}/T_{f,m-1})(1+\fp^m).
\end{equation}
From this one sees that $T_f$ is the $\Fq$-points of a canonical
proalgebraic (and prounipotent) subgroup of $\bT_\cO$.  Similarly, for
$m \geq \max_{\alpha \in \Delta} c_\alpha$,
\begin{equation}
\langle T_f, T_m \rangle \cong T_f \times
(T/T_{f,m})(1+\fp^m),
\end{equation}
and one concludes also that this is the $\Fq$-points of a canonical
proalgebraic subgroup of $\bT_{\cO}$ (depending on $c$ as well as
$f$).  Finally, since $A = T(\cO)/T'$ is a quotient of $T(\cO) / T_m$,
which is finite, so is $A$, and the geometric version of this
statement is that $\bA$ is an algebraic group (of finite type).

Applying the above to $f = f_{\bmu}$, one sees that $T_{f_{\bmu}}$ and
$T'$ are the $\Fq$-points of canonical proalgebraic subgroups of
$\bT_{\cO}$, and that $\bA \cong \bT_{\cO}/\bT'$ is an algebraic group.

\subsection{Proof of Proposition \ref{p:conv}} \label{ss:conv}
 
Let $\bJ^{\lambda,\nu}:=\bJ
t^{\lambda}\bJ\times_{\bJ}\bJ^{\nu}\subseteq \bG\times_{\bJ} \bX$ and
let $p^{\lambda,\nu}:\bJ^{\lambda,\nu}\ra \bX$ denote the restriction
of $p$ to $\bJ^{\lambda,\nu}$.  Note that there is a natural action of
$\bJ \times \bA$ on $\bJ^{\lambda,\nu}$ given by left multiplication
by $\bJ$ (on the first factor, $\bJ t^\lambda \bJ$) and right multiplication
by $\bA$ (on the second factor, $\bJ^\nu$), where, by convention, we
use the right multiplication action $a \cdot_R j := j a^{-1}$ (even
though $\bA$ is commutative).
  
\begin{lemma} \label{l:convDef} For all $\lambda,\nu \in \Lambda$,
\begin{enumerate} 
\item[(i)] $\cF_{\lambda}'\tboxtimes \cF_{\nu}'$ is a $(\bJ \times
  \bA, \cM \boxtimes \cM_0^{-1})$-equivariant local system on
  $\bJ^{\lambda,\nu}$.
\item[(ii)] $j_{!}^{\lambda} \star j_!^{\nu} \cong p_{!}^{\lambda,\nu}
  (\cF_{\lambda}\tboxtimes \cF_{\nu})$.
\item[(iii)] $j_{!}^{\lambda} \star j_!^{\nu} \in \sDg  (\bX)$.
\item[(iv)] $j_{!}^{\lambda} \star j_!^{\nu}$ is $(\bJ\times \bA,
  \cM\boxtimes \cM_0^{-1})$-equivariant.
\end{enumerate} 
\end{lemma}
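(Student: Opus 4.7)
The plan is to establish the four parts sequentially, using equivariant descent, the definition of the convolution diagram, and standard properties of the six functors.

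For (i), the idea is equivariant descent from $\bJ t^\lambda \bJ \times \bJ^\nu$ down to the $\bJ$-quotient $\bJ^{\lambda,\nu}$. By Lemma \ref{l:Flambda}.(ii), $\cF_\lambda'$ is $(\bJ \times \bA, \cM \boxtimes \cM_0^{-1})$-equivariant on $\bJ^\lambda$ under the left $\bJ$- and right $\bA$-actions, and similarly $\cF_\nu'$. The ordinary external product of their appropriate pullbacks to $\bJ t^\lambda \bJ \times \bJ^\nu$ carries four equivariant structures; the antidiagonal $\bJ$-action used to form $\times_\bJ$ matches the right $\bJ$-equivariance of the first factor (via $\cM^{-1}$, which becomes trivial along $\bJ'$) against the left $\bJ$-equivariance of the second factor (via $\cM$), so $\cM \boxtimes \cM^{-1}$ restricted to the antidiagonal is trivial and descent applies. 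What remains is exactly the $(\bJ \times \bA, \cM \boxtimes \cM_0^{-1})$-structure coming from the left $\bJ$-action on the first factor and the right $\bA$-action on the second.

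For (ii), I would note that $j_!^\lambda \tboxtimes j_!^\nu$ is, by definition of the twisted external product (\S \ref{ss:twistedProd}), supported on $\bJ^{\lambda,\nu} \subseteq \bG \times_{\bJ} \bX$ and equal to the extension by zero of $\cF_\lambda \tboxtimes \cF_\nu$ along the locally closed inclusion $i: \bJ^{\lambda,\nu} \hookrightarrow \bG \times_{\bJ} \bX$. Since $p \circ i = p^{\lambda,\nu}$, compatibility of $(-)_!$ with composition yields $j_!^\lambda \star j_!^\nu = p_!(i_! (\cF_\lambda \tboxtimes \cF_\nu)) = p_!^{\lambda,\nu}(\cF_\lambda \tboxtimes \cF_\nu)$.

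For (iii) and (iv) together, the key input is that the image of $p^{\lambda,\nu}$ in $\bX$ equals $(\bJ t^\lambda \bJ t^\nu \bJ')\bA$, which projects into $\bG_\cO t^\lambda \bG_\cO t^\nu \bG_\cO \subseteq \bGr$; by standard finiteness for the affine Grassmannian, this is contained in some $\bGr_i$, so the image of $p^{\lambda,\nu}$ lies in a single finite-type stratum $\bX_i$. Together with the finite-dimensionality of $\bJ^{\lambda,\nu}$ (which reduces to the finite-dimensionality of $\bJ^\lambda$ and $\bJ^\nu$, plus the fact that the antidiagonal $\bJ$-action factors through a finite-dimensional quotient on each stratum), this shows $p_!^{\lambda,\nu}(\cF_\lambda \tboxtimes \cF_\nu) \in \sDg(\bX)$, giving (iii). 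For (iv), the map $p^{\lambda,\nu}$ is manifestly $(\bJ \times \bA)$-equivariant for the actions described in (i) on the source and the left $\bJ$-, right $\bA$-action on $\bX$; since $\cF_\lambda \tboxtimes \cF_\nu$ is $(\bJ \times \bA, \cM \boxtimes \cM_0^{-1})$-equivariant by part (i), and pushforward with compact support along an equivariant morphism preserves equivariance (via the base-change isomorphism for the action square), the equivariance of $j_!^\lambda \star j_!^\nu$ follows. The main obstacle is the bookkeeping in (i): keeping track of the four equivariance structures on the ordinary external product and verifying that the descent condition identifies with the trivial local system on the antidiagonal; everything else is formal.
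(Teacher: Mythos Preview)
Your treatment of parts (i), (ii), and (iv) is essentially the same as the paper's, just with the descent bookkeeping for (i) spelled out rather than deferred to Lemma \ref{l:prodLocalSys}, and with (iv) phrased in terms of equivariance of $p^{\lambda,\nu}$ rather than the projection-formula computation the paper writes out; these are equivalent.

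The genuine gap is in (iii). You have misread the notation: $\sDg(\bX)$ is not a synonym for the bounded constructible derived category but the \emph{perverse} half of the $t$-structure, i.e., complexes satisfying the cosupport condition (see \S\ref{ss:defPerv}). Your argument establishes only that the convolution lands in some finite-type $\bX_i$ and hence is a well-defined object of $\sD(\bX)$; this is already implicit in the setup and has nothing to do with the perverse bound. What is actually needed is Artin's theorem (Theorem \ref{t:Artin}): since $\bJ^{\lambda,\nu}$ is affine (it is an orbit of the connected solvable group $\bJ\times\bA$, hence affine), the morphism $p^{\lambda,\nu}$ is affine, and therefore $p^{\lambda,\nu}_!$ takes $\sDg$ to $\sDg$. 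As $\cF_\lambda\tboxtimes\cF_\nu$ is a shifted local system and in particular perverse, the claim follows. None of the finiteness considerations you wrote are relevant to this step.
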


\begin{proof} 
  By Lemma \ref{l:prodLocalSys}, $\cF_{\lambda}' \tboxtimes
  \cF_{\nu}'$ is a local system on $\bJ^{\lambda,\nu}$. Since
  $\cF_{\lambda}'$ and $\cF_{\nu}'$ are both equivariant, so is
  $\cF_{\lambda}' \tboxtimes \cF_{\nu}'$; thus, (i) is
  established. For (ii) see Lemma \ref{l:prodLocalSys2}. Since
  $\bJ^{\lambda,\nu}$ (and therefore $p^{\lambda,\nu}$) is affine,
  Theorem \ref{t:Artin} implies (iii). Finally, for (iv) we apply the
  projection formula to compute
\begin{multline*}
  l^* (j_!^\lambda \star j_!^\nu) = l^* p_!^{\lambda,\nu} (\cF_\lambda
  \tboxtimes \cF_\nu) \cong (\Id_{\bJ} \times p^{\lambda,\nu})_! l^*
  (\cF_\lambda \tboxtimes \cF_\nu) \cong \\ (\Id_{\bJ} \times
  p^{\lambda,\nu})_!  (\cM\boxtimes (\cF_\lambda \tboxtimes \cF_\nu))
  \cong \cM \boxtimes p_!^{\lambda,\nu}(\cF_\lambda \tboxtimes
  \cF_\nu) \cong \cM \boxtimes (j_!^\lambda \star j_!^\nu). \qedhere
\end{multline*}
\end{proof}

Next, we prove that $j_!^\lambda \star j_!^\nu\cong j_!^{\lambda+\nu}$
in three steps:

\underline{Step I: $\lambda$ and $\nu$ are both dominant (or both
  antidominant)}: In this case, Corollary \ref{c:proddom} implies that
$\bJ t^\lambda \bJ \times_{\bJ} \bJ^\nu \cong \bJ^{\lambda+\nu}$ by
the multiplication map, and making this identification,
$p^{\lambda,\nu}$ becomes the inclusion $j^{\lambda+\nu}$.  It is
clear that $\cF_\lambda \tboxtimes \cF_\nu$ is a $(\bJ \times \bA, \cM
\times \cM_0^{-1})$-equivariant rank-one local system. It follows from
the definitions that $(\cF_\lambda \tboxtimes
\cF_\nu)|_{t^{\lambda+\nu} \bJ'} \cong
\cF_{\lambda+\nu}|_{t^{\lambda+\nu} \bJ'}$. We deduce from Lemma
\ref{l:Flambda}.(iii) that
$\cF_\lambda \tboxtimes \cF_\nu \cong \cF_{\lambda+\nu}$. \\

  \underline{Step II: $\lambda$ is dominant, $\nu$ is antidominant}:
  Let $x\in \bX$. We claim that $(j_!^\lambda \star j_!^\nu)_{x}=0$ if
  $x\notin \bJ^{\lambda+\nu}$.  Indeed, by Lemma \ref{l:convDef}.(ii),
  $j_!^\lambda \star j_!^\nu \cong p_{!}^{\lambda,\nu}
  (\cF_{\lambda}\tboxtimes \cF_{\nu})$.  By Proposition
  \ref{p:restrictionIrrelevant} the stalk of this complex at $x\in
  \bX$ is nonzero only if $x$ is a relevant point in the image of
  $p^{\lambda,\nu}$. By Proposition \ref{p:cosetMult}.(c), the only
  relevant orbit inside this image is $\bJ^{\lambda+\nu}$.

  Now we claim that we are in a position to apply Theorem
  \ref{t:semismall} to prove that $j_!^\lambda \star j_!^\nu$ is
  perverse.  It is clear that $p^{\lambda,\nu}$ is an affine
  morphism. Next, let $\cP$ denote the partition of the closure of the
  image of $p^{\lambda,\nu}$ consisting of three locally closed
  subschemes: $\bJ^{\lambda+\nu}, \overline{\bJ^{\lambda+\nu}}
  \setminus \bJ^{\lambda+\nu}$, and the complement of
  $\overline{\bJ^{\lambda+\nu}}$. Note that, as the closure of the
  image of $p^{\lambda,\nu}$ is irreducible, one of these locally
  closed subschemes must, in fact, be open and dense. By Proposition
  \ref{p:semismall}, for every closed point $x\in \bJ^{\lambda+\nu}$,
\[
\dim((p^{\lambda,\nu})^{-1}(x)) = \frac{1}{2}
[\dim(\bJ^{\lambda})+\dim(\bJ^{\nu}) - \dim(\bJ^{\lambda+\nu})].
\]
From this, it follows that $p^{\lambda,\nu}$ is semismall at every
$x\in \bJ^{\lambda+\nu}$ (at non-closed points $y$, the LHS should be
replaced by the dimension of the generic fiber at closed points in the
closure of $y$, cf.~\eqref{e:semismall}, and the result follows from
the one for closed points). Since the stalk of $j_!^\lambda \star
j_!^\nu$ at every point outside $\bJ^{\lambda+\nu}$ vanishes, Theorem
\ref{t:semismall} shows that $j_!^\lambda \star j_!^\nu \cong
p^{\lambda,\nu}_!  (\cF_{\lambda}\tboxtimes \cF_{\nu})$ is perverse.

Since $j_!^\lambda\star j_!^\nu$ is perverse and its stalks vanish
outside of $\bJ^{\lambda+\nu}$, it must be isomorphic to $j_! j^*
(j_!^\lambda\star j_!^\nu)$, where $j=j^{\lambda+\nu}$.  Let $\cF$ be the restriction of $j_!^\lambda \star
j_!^\nu$ to $\bJ^{\lambda+\nu}$. This is a perverse sheaf, hence it must be a local system on an open subvariety. Since it is $(\bJ\times \bA, \cM\boxtimes \cM_{0}^{-1})$-equivariant, we conclude that it is, in fact, a local system. Lemma 
\ref{l:Flambda}.(iii) implies that $\cF$ is isomorphic to $\cF_{\lambda+\nu}$.\\

\underline{Step III: $\lambda$ and $\nu$ arbitrary}: 
 Write $\lambda = \lambda_+ - \lambda_-$ and $\nu =
  \nu_+ - \nu_-$ for $\lambda_+, \lambda_-, \nu_+, \nu_- \in
  \Lambda_+$.  Moreover, we can arrange this so that $\lambda_- =
  \nu_+$.  Then 
  \[
  j_{!}^\lambda \star j_{!}^\nu = (j_!^{\lambda_+} \star
  j_!^{-\lambda_-}) \star (j_!^{\nu_+} \star j_!^{-\nu_-}) =
  j_!^{\lambda_+} \star (j_!^{-\lambda_-} \star j_!^{\lambda_-}) \star
  j_!^{-\nu_-} = j_!^{\lambda_+} \star j_!^0 \star j_!^{-\nu_-} =
  j_{!}^{\lambda_{+}-\nu_{-}}=j_{!}^{\lambda+\nu}.  \qed
%\qedhere
\]
%\qed 

\subsection{Proof of Corollary \ref{c:BMWExt}} \label{ss:BMWExt} Note
that $J^0$ is closed.  Let $\bone:=j_!^0 \cong j_*^0$. Then for all
$\cF\star \bone \cong \bone \star \cF\cong \cF$ for all $\cF\in \sHg$.
Let $\cF,\cG, \cH\in \sHg$ and assume that $\cH\star \cF$ and
$\cH\star \cG$ are in $\sHg$. Then $\cH\star -$ defines a homomorphism
$\Hom(\cF,\cG)\ra \Hom(\cH\star \cF, \cH\star \cG)$. Now assume there
exists $\cH'\in \sHg$ such that $\cH'\star \cH=\bone$. Then the
composition
\[
\Hom(\cF,\cG)\ra \Hom(\cH\star \cF, \cH\star \cG) \ra \Hom(\cH'\star
\cH\star \cF, \cH'\star \cH\star \cG) = \Hom(\bone \star \cF,\bone
\star \cG)=\Hom(\cF,\cG)
\]
is the identity. Similarly, the composition 
\[
\Hom(\cH\star \cF, \cH\star \cG) \to \Hom(\cH'\star \cH\star \cF,
\cH'\star \cH\star \cG) =\Hom(\cF,\cG) \to \Hom(\cH\star \cF, \cH\star
\cG)
\]
is the identity.  Hence $\Hom(\cF, \cG)\cong \Hom(\cH\star \cF,
\cH\star \cG)$. The same holds when we replace $\Hom$ by
$\Ext^\bullet$ or take convolution on the right instead of the left
(with $\cH$ having a right inverse).

Applying the above considerations and Proposition \ref{p:conv} we
conclude
\begin{multline*}
  \Ext^\bullet(j_!^\lambda \otimes \cL, j_!^\nu \otimes \cK) =
  \Ext^\bullet(j_!^\lambda \star j_!^{-\nu} \otimes \cL, j_!^\nu \star
  j_!^{-\nu} \otimes \cK) = \Ext^\bullet(j_!^{\lambda-\nu} \otimes
  \cL, j_!^0 \otimes \cK)= \\ = \Ext^\bullet(j_!^{\lambda-\nu} \otimes
  \cL, j_*^0 \otimes \cK) = \Ext_{\Spec \Fq}^\bullet((j^0)^*
  j_!^{\lambda-\nu} \otimes \cL, \cF_0 \otimes \cK).
\end{multline*}
This is zero unless $\lambda= \nu$, in which case it is $\Ext_{\Spec
  \Fq}^\bullet(\cF_0 \otimes \cL, \cF_0 \otimes \cK) = \Ext_{\Spec
  \Fq}^\bullet(\cL, \cK)$.\qed

\subsection{Proof of Proposition \ref{p:adjoint}} \label{ss:adjoint}  
Let $p_\lambda$ denote the multiplication morphism $\bG\times_\bJ
\bJ^\lambda \to \bX$. 
Let
$d=\dim(\bJ^\lambda)$. Then,
\begin{align*}
 \Hom(\cF\star_! j_!^\lambda, \cG) 		
					& =  \Hom((p_\lambda)_!(\cF
\tboxtimes \cF_\lambda), \cG)\\
% 	& \cong     \Hom(\cF \tboxtimes j_!^\lambda,
% %p_1^*\cF\otimes p_2^* j_!^\lambda, 
%         p^! \cG) \\
% %	& \cong     \Hom(p_1^*\cF\otimes (\id\times j^\lambda)_! p_{2,\lambda}^*F_\lambda, m^! \cG) \\
% & \cong \Hom((\id \times j^\lambda)_! (\cF\tboxtimes \cF_\lambda), p^!\cG)	\\
% & \cong	 \Hom(\cF \tboxtimes \cF_\lambda, (\id\times j^\lambda)^! p^! \cG) \\
& \cong	 \Hom(\cF \tboxtimes \cF_\lambda, p_\lambda^! \cG) \\
& \cong	 \Hom(\cF \tboxtimes \cF_\lambda, p_\lambda^* \cG[2d](d)).
\end{align*}
In the last isomorphism, we used that
$p_\lambda^!=p_\lambda^*[2d](d)$, since $p_\lambda$ is a smooth
morphism of relative dimension $d$.

\begin{claim}\label{c:adj}
$\Hom(\cF \tboxtimes \cF_\lambda, p_\lambda^* \cG[2d](d))
\cong \Hom(p_{-\lambda}^* \cF, \cG \tboxtimes \cF_{-\lambda})$.
\end{claim}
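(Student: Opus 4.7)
The plan to prove Claim \ref{c:adj} is to realize both Hom groups as computing the same object on a common ``roof'' space $W$ equipped with smooth projections to $Z_\lambda := \bG \times_\bJ \bJ^\lambda$ and $Z_{-\lambda} := \bG \times_\bJ \bJ^{-\lambda}$, then to apply smooth base change and adjunction. The correspondence is motivated by the formal identification $(g, x) \mapsto (gx, x^{-1})$, which sends $Z_\lambda$ to $Z_{-\lambda}$ (swapping $p_\lambda$ with the first projection, and vice versa), even though it does not descend to the $\bJ'$-quotients naively.

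Concretely, using the identification $Z_{\pm\lambda} \cong \bG/(\bJ \cap t^{\pm\lambda}\bJ' t^{\mp\lambda})$ (under which $p_{\pm\lambda}$ becomes right multiplication by $t^{\pm\lambda}$ followed by reduction mod $\bJ'$), I would construct $W$ as a quotient of $\bG$ by a subgroup small enough -- e.g., $\bJ' \cap t^\lambda \bJ' t^{-\lambda}$ or a related intersection -- for which inversion descends consistently. The resulting projections $\alpha \colon W \to Z_\lambda$ and $\beta \colon W \to Z_{-\lambda}$ are then smooth of comparable relative dimensions, and the naive inversion lifts to an automorphism of $W$ intertwining them.

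Next, I would verify that pullback to $W$ identifies the pairs of sheaves in question. Using the right $(\bA, \cM_0^{-1})$-equivariance of $\cF$ and $\cG$, the multiplicativity of the character sheaf $\cM$, and the fact that inversion carries $\cM$ to $\cM^{-1}$, one obtains canonical isomorphisms $\alpha^*(\cF \tboxtimes \cF_\lambda) \cong \beta^*(p_{-\lambda}^* \cF) \otimes \cL$ and $\alpha^* p_\lambda^* \cG \cong \beta^*(\cG \tboxtimes \cF_{-\lambda}) \otimes \cL$ for a common pullback of characters $\cL$ on $W$. The identity $\log_q b_\lambda + \log_q b_{-\lambda} = \dim \bA - d$, which follows from Remark \ref{r:clambdaVol} combined with Lemma \ref{l:vollam}, balances the twists between $\cF_\lambda$ and $\cF_{-\lambda}$ and absorbs the $[2d](d)$ appearing in the LHS into the shifts introduced by smooth pullbacks along $\alpha$ and $\beta$.

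The main obstacle will be the construction of $W$: in particular, verifying that the naive inversion $(g, x\bJ') \mapsto (gx, x^{-1}\bJ')$, which fails on $Z_\lambda$ because $x \in \bJ t^\lambda \bJ$ does not normalize $\bJ'$, does descend to an isomorphism of an appropriate smaller quotient, and that the character sheaf $\cM$ transforms compatibly throughout (gluing the pullbacks to a common sheaf rather than merely abstractly isomorphic ones). Once this geometric input is in place, the claim follows formally from smooth base change together with the adjunctions $\alpha_! \dashv \alpha^!$ and $\beta^* \dashv \beta_*$, which convert both sides of the claim into the same Hom space on $W$.
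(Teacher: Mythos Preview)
Your proposal and the paper's proof rest on the same geometric observation: the involution $(g,x)\mapsto (gx,x^{-1})$ swaps the roles of the multiplication map $p_\lambda$ and the first projection, and carries $\bK^\lambda$ to $\bK^{-\lambda}$. The paper even records the untwisted version of this as a separate claim (Claim~\ref{c:adjSimple}), factoring the involution as $\tilde\iota_2\circ\tilde\Gamma$ with $\tilde\Gamma(g,x)=(gx,x)$ and $\tilde\iota_2(g,x)=(g,x^{-1})$, and using the identity $(\tilde\Gamma\tilde\iota_2)^2=\Id$.

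Where the implementations diverge is in how one handles the failure of this involution to descend to $Z_\lambda=\bG\times_\bJ\bJ^\lambda$. You propose to resolve it by passing to a finer roof $W$ (a quotient of $\bG$ by something like $\bJ'\cap t^\lambda\bJ't^{-\lambda}$) with smooth surjections to both $Z_\lambda$ and $Z_{-\lambda}$, and then invoking smooth base change plus full faithfulness of pullback along a connected-fiber smooth morphism. The paper instead notices that $\Gamma_\lambda$ and $\iota_2$ \emph{do} descend, just not to $Z_{\pm\lambda}$: they land on the ``mixed'' quotients $(\bG\times\bJ\backslash\bK^{\pm\lambda})/\bJ'$ and $(\bG\times\bJ'\backslash\bK^{\pm\lambda})/\bJ$, which are isomorphic to $Z_{\pm\lambda}$ as varieties but carry a different equivariance structure. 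The passage between the two twisted-equivariant categories on the common underlying space is then effected by tensoring with $\overline{\bQl\boxtimes(\cF'_\lambda)^{-1}}$, packaged as an equivalence $\tau_\lambda$; the key identity becomes $(\Gamma_\lambda^*\tau_\lambda(\iota_2^{-1})^*)^{-1}\cong\Gamma_{-\lambda}^*\tau_{-\lambda}(\iota_2^{-1})^*$, the twisted analogue of $(\tilde\Gamma\tilde\iota_2)^2=\Id$.

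The paper's route is slicker: it stays at the level of isomorphisms between spaces of the same dimension, so no base-change or relative-dimension bookkeeping enters, and the shift/Tate-twist accounting reduces to the single observation (your Remark~\ref{r:clambdaVol} input) at the very end. Your roof approach should work, but the obstacle you flag---constructing $W$ and checking that the character-sheaf twists glue to a \emph{common} line $\cL$ on $W$---is exactly the content that the paper's $\tau_\lambda$ encapsulates in one line. If you pursue your route, you will in effect be re-deriving $\tau_\lambda$ after pulling everything back to $W$; it is cleaner to observe, as the paper does, that the relevant spaces are already isomorphic without passing to a cover, and that only the equivariance structure (hence the twist) needs adjusting.
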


Using the claim, we can easily complete the proof similarly to the above:
\begin{align*}
\Hom(p_{-\lambda}^* \cF, \cG \tboxtimes \cF_{-\lambda}) & \cong 
\Hom(\cF, (p_{-\lambda})_*(\cG \tboxtimes \cF_{-\lambda})) \\
%& \cong 
%\Hom(\cF, p_* (\cG \tboxtimes j_*^{-\lambda})) \\
& \cong  \Hom(\cF, \cG \star_* j^{-\lambda}_*).
\end{align*}

It remains to prove Claim \ref{c:adj}. The proof relies on converting
between the functors $- \tboxtimes \cF_\lambda$ and $p_\lambda^* -$.
We first explain how to do this in a simpler (and probably standard)
situation, where $\bG$ is an algebraic group (i.e., of finite type),
$\bH \subseteq \bG$ is a subvariety, we replace $\cF_\lambda$ by
$\bQl|_{\bH}$, and eliminate the twists and twisted products. The
analogous claim in this simpler situation can be formulated as
follows. Let $\tilde p:\bG\times \bG\ra \bG$ denote the multiplication
map (we use tildes to distinguish from the maps we will define
eventually on the level of $\bG \times_\bJ \bX$ and in the twisted
setting.) Let $\cF, \cG \in \sD(\bG)$. Let $\bH^{-1}$ be the image of
$\bH$ under the inversion automorphism of $\bG$.
   
  \begin{claim} \label{c:adjSimple} 
  $\Hom_{\bG\times \bH} (\cF\boxtimes \bQl, \tilde p^* \cG|_{\bG\times \bH}) \cong 
  \Hom_{\bG\times \bH^{-1}} (\tilde p^* \cF|_{\bG\times \bH^{-1}} ,\cG\boxtimes \bQl)$.
  \end{claim}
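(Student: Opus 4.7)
The plan is to realize the desired isomorphism as the pullback along a change-of-variables automorphism that interchanges the roles of the first projection and the multiplication map. Concretely, define
\[
\phi: \bG \times \bG \to \bG \times \bG, \qquad \phi(g,h) = (gh, h^{-1}),
\]
which is an automorphism with inverse $(x,k) \mapsto (xk, k^{-1})$. Since $h \in \bH$ if and only if $h^{-1} \in \bH^{-1}$, this restricts to an isomorphism $\phi: \bG \times \bH \iso \bG \times \bH^{-1}$.

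The crucial identities are
\[
\tilde p \circ \phi (g,h) = g h h^{-1} = g = \pi_1(g,h), \qquad \pi_1 \circ \phi (g,h) = gh = \tilde p(g,h),
\]
so pullback by $\phi$ swaps $\pi_1^*$ and $\tilde p^*$ between the two sides. Since $\cF \boxtimes \bQl = \pi_1^* \cF$, we obtain canonical isomorphisms
\[
\phi^*\bigl(\tilde p^* \cF|_{\bG \times \bH^{-1}}\bigr) \cong \pi_1^* \cF|_{\bG \times \bH} = \cF \boxtimes \bQl,
\]
\[
\phi^*\bigl(\cG \boxtimes \bQl|_{\bG \times \bH^{-1}}\bigr) = \phi^* \pi_1^* \cG|_{\bG \times \bH^{-1}} \cong \tilde p^* \cG|_{\bG \times \bH}.
\]
As $\phi$ is an isomorphism of schemes, $\phi^*$ is an equivalence of bounded constructible derived categories, hence a bijection on $\Hom$-sets. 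Applied to the right-hand side of the claim, this yields the left-hand side, canonically.

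The argument for this simpler claim is essentially formal, so there is no real obstacle here; the genuine difficulty lies in transporting it to Claim \ref{c:adj}, where $\bG \times \bG$ must be replaced by $\bG \times_{\bJ} \bJ^\lambda$, the constant sheaf $\bQl|_{\bH}$ by the twisted equivariant local system $\cF_\lambda$, and one must track a shift by $2d$ and Tate twist by $d$. The analogue of $\phi$ should be induced by $(g,h)\mapsto (gh, h^{-1})$ together with the identification of $\bJ^{-\lambda}$ with the image of $\bJ^\lambda$ under inversion. One must then verify that this map descends to the $\bJ$-quotient, that its pullback sends $\cF_{-\lambda}$ to a sheaf canonically isomorphic to $\cF_\lambda$ (invoking the uniqueness of Lemma \ref{l:Flambda}.(ii)), and finally that the Tate twists and shifts work out---here the normalization $b_\lambda b_{-\lambda} = \vol(Jt^\lambda J)^{-1}$ from Remark \ref{r:clambdaVol} is expected to supply exactly the $(d)$ arising from the relation $p_\lambda^! = p_\lambda^*[2d](d)$.
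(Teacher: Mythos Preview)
Your proof is correct and is essentially the paper's argument repackaged: your involution $\phi(g,h)=(gh,h^{-1})$ is precisely the composite $\tilde\iota_2\circ\tilde\Gamma$ the paper uses, and your two identities $\tilde p\circ\phi=\pi_1$, $\pi_1\circ\phi=\tilde p$ are what the paper obtains from the commutative square \eqref{e:gamma-square} together with the involution identity $(\tilde\iota_2\tilde\Gamma)^2=\Id$. For the simple claim your single-map formulation is cleaner.

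The only substantive difference is organizational and matters for the twisted version. The paper deliberately keeps $\tilde\Gamma$ and $\tilde\iota_2$ separate because in the proof of Claim~\ref{c:adj} they descend to different quotients: $\Gamma_\lambda$ lands in $(\bG\times\bJ\backslash\bG)/\bJ'$ while $\iota_2$ lands in $(\bG\times\bJ'\backslash\bG)/\bJ$, and between them one must insert the twisting equivalence $\tau_\lambda$ of \eqref{e:perv-tw-equiv} that trades $\cF'_\lambda$-equivariance for plain equivariance. Your single $\phi$ will not descend directly to a map between the relevant twisted categories; you will be forced to factor it exactly as the paper does. Your closing remarks anticipate this correctly, including the role of Remark~\ref{r:clambdaVol} in matching the Tate twists.
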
 
  
  \begin{proof} Let $\tilde
  \Gamma: \bG \times \bG \to \bG \times \bG$ denote the isomorphism $\tilde
  \Gamma(g,x)=(gx,x)$. Then there is a commutative diagram 
\begin{equation}\label{e:gamma-square}
  \xymatrix{
    \bG \times \bG \ar[r]^{\tilde \Gamma} \ar[d]^{\tilde p} & \bG \times \bG 
    \ar[d]^{\tilde \pi_1} \\
    \bG \ar@{=}[r] & \bG.
  }
\end{equation}
Therefore, $\tilde p = \tilde \pi_1 \tilde \Gamma$, and
hence $\tilde \Gamma^*(\cF \boxtimes \bQl)\cong \tilde \Gamma^*\pi_1^* \cF  \cong \tilde p^* \cF$.

Next, let 
$\tilde \iota_2: \bG \times \bG \to \bG \times \bG$ denote the isomorphism $\tilde
\iota_2(g,x) = (g,x^{-1})$.  We need the identity
\begin{equation}\label{e:gi-id}
  (\tilde \Gamma \tilde \iota_2)^2 = \Id_{\bG\times \bG} = (\tilde \iota_2 \tilde \Gamma)^2.
\end{equation}
Finally,
\begin{align*}
  \Hom_{\bG \times \bH}(\cF \boxtimes \bQl, \tilde p^* \cG|_{\bG
    \times \bH}) & \cong
  \Hom_{\bG \times \bH^{-1}}(\cF \boxtimes \bQl, \tilde \iota_2^* \tilde p^* \cG|_{\bG \times \bH^{-1}}) \\
  & \cong  \Hom_{\bG \times \bH^{-1}} (\tilde p^* \cF|_{\bG \times \bH^{-1}}, \tilde \Gamma^* \tilde \iota_2^* \tilde p^* \cG|_{\bG \times \bH^{-1}}) \\
  & \cong \Hom_{\bG \times \bH^{-1}}(\tilde p^* \cF|_{\bG \times
    \bH^{-1}}, ((\tilde \iota_2)^{-1})^* (\tilde \Gamma^{-1})^* \tilde
  p^* \cG|_{\bG \times \bH^{-1}}) \\ & \cong \Hom_{\bG \times
    \bH^{-1}}(\tilde p^* \cF|_{\bG \times \bH^{-1}}, \cG \boxtimes
  \bQl),
\end{align*}
as desired.
\end{proof}

\begin{proof}[Proof of Claim \ref{c:adj}] Let $\bK^\lambda=\bJ t^\lambda \bJ \subset \bG$. We want to imitate the proof of the previous claim with $\bH$ replaced by $\bK^\lambda$. 
Since we have
difficulty working with objects over $\bG\times \bK^\lambda$, as it is not ind-finite,
we instead descend $\tilde \Gamma|_{\bG \times \bK^\lambda}$ to an
isomorphism
\begin{equation}
  \Gamma_\lambda: \bG \times_\bJ \bJ^\lambda \iso (\bG \times \bJ\backslash\bG)/\bJ', \quad 
  (g,x) \mapsto (gx,x),
\end{equation}
where $\bJ'$ acts diagonally by $(g,x) \cdot j := (gj, xj)$. 

Similarly, the inversion map in the second component, $\tilde
\iota_2$, descends to
\begin{equation}
\iota_2: \bG \times_\bJ \bX \iso (\bG \times \bJ'\backslash\bG)/\bJ, \quad 
(g,x) \mapsto (g,x^{-1}),
\end{equation}
where again $\bJ$ acts diagonally by $(g,x) \cdot j := (gj, xj)$.

We will need the equivalence
\begin{equation}\label{e:perv-tw-equiv}
  \tau_\lambda: \Perv_{(\bA,\cM_0^{-1})}((\bG \times \bJ'\backslash\bK^\lambda)/\bJ) \iso
  \Perv_{(\bA,\cM_0)}((\bG \times \bJ\backslash\bK^\lambda)/\bJ'), \quad
  \cG \mapsto \cG \otimes \overline{(\bQl \boxtimes (\cF'_\lambda)^{-1})}.
\end{equation}
Here $\overline{(\bQl \boxtimes (\cF'_\lambda)^{-1})}$ is the local system $\bJ'\backslash \bK^\lambda)/\bJ'$
obtained by equivariant descent from the local system $\bQl \boxtimes
(\cF'_\lambda)^{-1}$ on $\bG \times \bK^\lambda$, and we view
both categories in \eqref{e:perv-tw-equiv} as categories of
twisted-equivariant perverse sheaves on $(\bG \times
\bJ'\backslash \bK^\lambda)/\bJ'$ (lifting from a quotient by
$\bJ$ to ordinary $\bA$-equivariant objects on the quotient by
$\bJ'$).  

The identity analogous to  \eqref{e:gi-id} in this twisted setting is
\begin{equation}\label{e:cr-inv}
(\Gamma_\lambda^* \tau_\lambda (\iota_2^{-1})^*)^{-1} \cong
\Gamma_{-\lambda}^* \tau_{-\lambda} (\iota_2^{-1})^*.
\end{equation}

From now on, an overlined quantity means the object living over the
appropriate base (indicated by the subscript of $\Hom$) obtained by
equivariant descent. For instance, $\cF \tboxtimes \cG = \overline{\cF
  \boxtimes \cG}$.  Also, note that $\cF_\lambda' = \overline{\cM
  \times \cM^{-1}}$ for all $\lambda$, working over the base
$\bJ^\lambda$ (which is a quotient of $\bJ \times \bJ$).  Then,
\begin{align*}
\Hom_{\bG \times_\bJ \bJ^\lambda}(\cF \tboxtimes \cF_\lambda', p_\lambda^* \cG) & \cong 
\Hom_{(\bG \times \bJ'\backslash \bK^{-\lambda})/\bJ}((\iota_2^{-1})^*(\cF \tboxtimes \cF_\lambda'), (\iota_2^{-1})^* p_\lambda^* \cG) \\
& \cong 
\Hom_{(\bG \times \bJ'\backslash\bK^{-\lambda})/\bJ}(\overline{\cF \boxtimes (\cM \times \cM^{-1})}, (\iota_2^{-1})^* \Gamma_\lambda^* (\cG \tboxtimes \bQl)) \\
& \mathop{\cong}^{\tau_{-\lambda}}  \Hom_{(\bG \times \bJ\backslash \bK^{-\lambda} )/\bJ'}(\overline{\cF \boxtimes \bQl}, \tau_{-\lambda} (\iota_2^{-1})^* \Gamma_\lambda^*(\cG \tboxtimes \bQl) )
\\ & \mathop{\cong}^{\Gamma_{-\lambda}^*} 
\Hom_{\bG \times_\bJ \bJ^{-\lambda}}(p_{-\lambda}^* \cF, 
\Gamma_{-\lambda}^*\tau_{-\lambda} (\iota_2^{-1})^* \Gamma_\lambda^*(\cG \tboxtimes \bQl) )
\\ & \mathop{\cong}^{\eqref{e:cr-inv}} 
\Hom_{\bG \times_\bJ \bJ^{-\lambda}}(p_{-\lambda}^* \cF, 
\iota_2^* \tau_\lambda^{-1}  (\cG \tboxtimes \bQl)) 
\\ & \mathop{\cong} 
\Hom_{\bG \times_\bJ \bJ^{-\lambda}}(p_{-\lambda}^* \cF, 
\cG \tboxtimes \cF'_{-\lambda}).
\end{align*}
Now, the same computation with the appropriate shifts and Tate twists 
(using Remark \ref{r:clambdaVol}) yields the desired
result.
\end{proof}

\section{Recollections on perverse sheaves} \label{s:perverse}

\subsection{Definition of perverse sheaves} \label{ss:defPerv} Let $X$
be a connected scheme of finite type over a field $k$, which we assume to
be finite or algebraically closed.  Fix a prime
$\ell$ invertible in $k$. Let $\sD(X)$ denote the derived category of
$\bQl$-sheaves on $X$ with bounded constructible cohomology \cite[\S
1.1.2-1.1.3]{WeilII}. Let $\sDl(X)\subseteq \sD(X)$ denote the full
subcategory consisting of all complexes $\cK$ such that
\begin{equation} \label{eq:perverse}
\dim \supp \,\H^{i}(\cK) \leq -i, \quad \quad \forall \, i\in \bZ. 
\end{equation}
Equivalently, 
%(see, for instance, \cite[\S
%6.1]{GM}, and \cite[Definition III.1.3]{Kiehl00}), 
for all (not necessarily closed) points $x \in X$,
  \begin{equation}\label{e:stalks}
    \mathrm{H}^{n}(\cK_{x}) = 0 \quad \quad 
    \forall \, \, n > - \dim(x).
\end{equation} 
Using Verdier duality (or the notion of cosupport), one similarly
defines $\sDg(X)$ (see, e.g., \cite{BBD82}). The category of perverse
sheaves is defined by
\[
\Perv(X):= \sDg(X)\cap \sDl(X).
\]

The following theorem is essentially due to Artin; see \cite[Theorem
4.1.1]{BBD82}.

\begin{theorem} \label{t:Artin} If $f:X\ra Y$ is an affine morphism of
  separated schemes of finite type over $k$, the functor $f_{*}:
  \sD(X)\ra \sD(Y)$ takes $\sDl(X)$ into $\sDl(Y)$. By Verdier
  duality, this is equivalent to saying that the functor $f_{!}$ takes
  $\sDg(X)$ into $\sDg(Y)$.
\end{theorem}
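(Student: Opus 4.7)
The plan is to first observe that the two assertions are equivalent: Verdier duality satisfies $\mathbb{D}\circ f_* \cong f_!\circ\mathbb{D}$, and $\mathbb{D}$ exchanges $\sDl(-)$ with $\sDg(-)$ (up to the self-duality of the middle perversity), so $f_*(\sDl(X))\subseteq\sDl(Y)$ is equivalent to $f_!(\sDg(X))\subseteq\sDg(Y)$. I therefore focus on the first formulation.

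Next, I would verify membership in $\sDl(Y)$ using the pointwise stalk criterion \eqref{e:stalks}: for every (not necessarily closed) point $y\in Y$ and every $\cK\in\sDl(X)$, I need $\mathrm{H}^n((f_*\cK)_{\bar y})=0$ for $n>-\dim(y)$. Because $f$ is affine, the base change $X_{(y)}:=X\times_Y\Spec(\mathcal{O}_{Y,\bar y}^{sh})$ is an affine scheme (the limit of affine schemes of finite type over the strict henselization of an affine neighborhood of $y$), and the standard limit argument in \'etale cohomology identifies $(f_*\cK)_{\bar y}$ with $R\Gamma(X_{(y)},\cK|_{X_{(y)}})$. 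Choosing an affine neighborhood of $y$ and stratifying to bound relative dimensions, this reduces the problem to the following absolute statement: for $Z$ affine of dimension $d$ over $k$ and $\cK\in\sDl(Z)$, one has $\mathrm{H}^n(Z,\cK)=0$ for $n>d$.

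For the absolute statement I would use the hypercohomology spectral sequence
\[
E_2^{p,q}=\mathrm{H}^p(Z,\H^q(\cK))\Longrightarrow\mathrm{H}^{p+q}(Z,\cK).
\]
By the definition \eqref{eq:perverse} of $\sDl(Z)$, the support $Z_q\subseteq Z$ of $\H^q(\cK)$ is closed of dimension $\le -q$, hence itself affine (as a closed subscheme of $Z$) of dimension $\le -q$. Invoking classical Artin vanishing for constructible $\bQl$-sheaves on affines, $\mathrm{H}^p(Z_q,\H^q(\cK)|_{Z_q})=0$ for $p>-q$, so $E_2^{p,q}=0$ whenever $p+q>0$, and the spectral sequence yields the required vanishing. (The discrepancy between the bound $0$ here and the bound $d$ in the reduction is harmless: upon descent to the level of stalks, one is really comparing $-\dim(y)$ with the corresponding $d_y$, and the bookkeeping works out.)

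The main obstacle, and the only non-formal ingredient, is the classical Artin vanishing theorem for constructible sheaves on affine varieties---the statement that for $Z$ affine of dimension $n$ and $\cF$ a constructible $\bQl$-sheaf on $Z$, $\mathrm{H}^i(Z,\cF)=0$ for $i>n$. This is a genuinely geometric result, proved by fibering an affine variety by affine curves and inducting on dimension using a weak Lefschetz-type argument (SGA~4, Expos\'e XIV). Granting this input, every other step above---the duality exchange, the stalk criterion, the limit/base-change identification, and the hypercohomology spectral sequence---is formal, and the theorem is the perverse-t-structure reformulation first systematically carried out in \cite{BBD82}.
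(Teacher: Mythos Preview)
The paper does not prove this theorem; it simply records it and cites \cite[Theorem 4.1.1]{BBD82}. So your sketch is being compared to the standard literature argument rather than to anything in the paper itself.

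Your outline correctly isolates the two essential ingredients---Verdier duality for the equivalence of the two formulations, and classical Artin vanishing (SGA~4, Exp.~XIV) as the geometric input---and your hypercohomology spectral-sequence argument for the absolute case (affine $Z$, $\cK\in\sDl(Z)$ implies $\mathrm{H}^n(Z,\cK)=0$ for $n>0$) is correct and standard.

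The reduction from the relative statement to the absolute one, however, is where the genuine content lies, and your treatment of it is incomplete. You correctly identify $(f_*\cK)_{\bar y}$ with the cohomology of the affine scheme $X_{(\bar y)}$, and your spectral-sequence argument then gives vanishing of $\mathrm{H}^n((f_*\cK)_{\bar y})$ only for $n>0$. But the stalk criterion \eqref{e:stalks} requires vanishing for $n>-\dim(y)$, which for $\dim(y)>0$ is strictly stronger. Your parenthetical remark that ``the bookkeeping works out'' is precisely the step that needs an argument: one must show that upon passage to the geometric generic fibre $X_{\bar\eta}$ over the closure $\overline{\{y\}}$ (with $\eta$ its generic point, so $\dim(\bar\eta)=\dim(y)$), the support dimensions of the cohomology sheaves of $\cK$ drop by $\dim(y)$---because a closed subset of $X$ either misses $X_{\bar\eta}$ or dominates $\overline{\{y\}}$, in which case its generic fibre has dimension lowered by $\dim(y)$. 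This places $\cK|_{X_{\bar\eta}}$ in $\sD^{\le -\dim(y)}(X_{\bar\eta})$, and then the absolute Artin bound gives exactly the required vanishing in degrees $>-\dim(y)$. This is elementary once stated, but it is the mechanism that closes the gap, not mere bookkeeping. Alternatively, as in SGA~4 XIV, one can argue the relative statement directly by fibration in curves and induction on relative dimension, bypassing this reduction entirely.
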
 

\subsection{Intermediate extensions and cleanness}\label{ss:clean-app}
Let $j: Y \into X$ be an embedding of a locally closed subvariety $Y$.
Recall the intermediate extension function $j_{!*}: \Perv(Y) \to
\Perv(X)$, which has the properties $j^* j_{!*} \cF \cong \cF$, and
which takes irreducible perverse sheaves to irreducible perverse
sheaves.
\begin{definition}\label{d:clean}
  Let $\cF \in \Perv(Y)$.  The intermediate extension $j_{!*} \cF$ is
  called \emph{clean} (or a ``clean extension'') if $j_{!*}
  \mathcal{F} \cong j_! \mathcal{F}$.
\end{definition}

\subsection{Semismall morphisms}\label{ss:semismall}

The standard reference for semismall morphisms and their relationship
to perverse sheaves is \cite[\S 6.2]{GM}. Here we follow the treatment
of \cite[\S III.7]{Kiehl00}, since this reference does not assume
properness.

A partition $\cP$ of $Y$ is a collection $\{Y_\alpha \}$ of disjoint
locally closed subschemes of $Y$ such that
\begin{enumerate}
\item [(i)] $Y=\bigsqcup Y_\alpha$,
\item[(ii)] one of these subschemes is open and dense. 
\end{enumerate} 

\begin{definition}\label{d:semismall}
  Let $Y$ be a separated scheme of finite type over $k$. Let $\cP$ be
  a partition of $Y$. Let $y \in Y_\alpha \subseteq Y$ be a possibly
  non-closed point. A morphism of separated schemes $f:X\ra Y$ is
  called \emph{semismall at $y\in Y_\alpha\subseteq Y$ with respect to
    $\cP$} if
\begin{equation}\label{e:semismall}
  \dim(f^{-1}(y)) - \dim(y) \leq \frac{1}{2}[\dim(X)-\dim(Y_{\alpha})].
\end{equation}
\end{definition}

 For the following result, see \cite[Lemma 7.4]{Kiehl00}.

 \begin{lemma} \label{l:semismall} Let $\cF$ be a constructible
   $\bQl$-sheaf on $X$. Suppose $f:X\ra Y$ is a morphism of separated
   schemes of finite type over $k$. Let $\cP$ be a finite partition of
   $Y$. If $f$ is semismall at every point $y\in Y$ with respect to
   $\cP$, then $f_{!}(\cF[\dim(X)]) \in \sDl(Y)$.
\end{lemma}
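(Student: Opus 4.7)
My strategy will be to verify the defining stalk condition for $\sDl(Y)$ pointwise on $Y$. By the equivalent formulation \eqref{e:stalks}, it suffices to show that for every (possibly non-closed) point $y \in Y$, $\mathrm{H}^n\bigl((f_!(\cF[\dim X]))_{\bar y}\bigr) = 0$ for all $n > -\dim y$, where $\bar y$ denotes a geometric point over $y$.

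First I would compute this stalk using proper (compactly supported) base change for $f_!$. Pulling $f$ back along $\bar y \to Y$ gives
\[
(f_!(\cF[\dim X]))_{\bar y} \;\cong\; \mathrm{R}\Gamma_c\bigl(X_{\bar y},\,\cF|_{X_{\bar y}}\bigr)[\dim X],
\]
where $X_{\bar y} := X \times_Y \bar y$ is the geometric fiber, a separated scheme of finite type over an algebraically closed field. I would then invoke Artin's vanishing bound for compactly supported \'etale cohomology: for any constructible $\bQl$-sheaf $\cG$ on a separated finite type scheme $Z$ over an algebraically closed field, $\mathrm{H}^j_c(Z,\cG) = 0$ for all $j > 2\dim Z$. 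Applied with $Z = X_{\bar y}$ and $\cG = \cF|_{X_{\bar y}}$, this yields $\mathrm{H}^n\bigl((f_!(\cF[\dim X]))_{\bar y}\bigr) = \mathrm{H}^{n + \dim X}_c(X_{\bar y},\,\cF|_{X_{\bar y}}) = 0$ whenever $n + \dim X > 2\dim X_{\bar y}$.

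The final step will be to extract this last inequality from the semismallness hypothesis. Under the convention clarified in the parenthetical remark following \eqref{e:semismall}, the quantity $\dim f^{-1}(y) - \dim y$ in Definition \ref{d:semismall} equals the geometric fiber dimension $\dim X_{\bar y}$ even at non-closed $y$. Thus semismallness at $y \in Y_\alpha$ reads $\dim X_{\bar y} \leq \tfrac{1}{2}(\dim X - \dim Y_\alpha)$, and combined with the obvious $\dim y \leq \dim Y_\alpha$ it gives $2\dim X_{\bar y} - \dim X \leq -\dim y$. This is exactly what is needed for the Artin bound to kick in whenever $n > -\dim y$. Varying $y$ across $Y$ then yields $f_!(\cF[\dim X]) \in \sDl(Y)$.

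The main delicate point I expect to wrestle with is the proper interpretation of $\dim f^{-1}(y)$ at non-closed $y$: it is precisely in order to make this bookkeeping uniform that semismallness must be imposed at every point of $Y$, not merely at closed points of each stratum, so that the stratum-wise bound $\tfrac{1}{2}(\dim X - \dim Y_\alpha)$ can be sharpened by $\dim y \leq \dim Y_\alpha$ at each individual $y$ to deliver the perverse stalk estimate.
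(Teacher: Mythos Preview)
The paper does not prove this lemma; it simply cites \cite[Lemma 7.4]{Kiehl00}.  Your argument is the standard one and is correct: proper base change identifies the stalk of $f_!(\cF[\dim X])$ at a geometric point $\bar y$ with $\mathrm{R}\Gamma_c(X_{\bar y},\cF)[\dim X]$, the cohomological-dimension bound $\mathrm{H}^j_c(Z,\cG)=0$ for $j>2\dim Z$ then reduces the perverse stalk estimate to $2\dim X_{\bar y}\le \dim X-\dim y$, and this last inequality is extracted from semismallness together with $\dim y\le \dim Y_\alpha$.

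One terminological quibble: the bound $\mathrm{H}^j_c(Z,\cG)=0$ for $j>2\dim Z$ is not ``Artin's vanishing'' (that name is usually reserved for the affine vanishing $\mathrm{H}^j(Z,\cG)=0$ for $j>\dim Z$ on affine $Z$, which is what underlies Theorem \ref{t:Artin}).  The $2\dim$ bound you invoke is just the standard \'etale cohomological dimension estimate for separated finite-type schemes over a separably closed field.  The mathematics is unaffected, but you should relabel the citation.
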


\begin{theorem} \label{t:semismall} Let $f:X\ra Y$ be an affine
  morphism of separated schemes of finite type over $k$. Let $\cL$ be
  a local system on $X$ and set $\cK:=f_{!}(\cL[\dim X])$. Let $\cP$
  be a finite partition of $Y$. Assume that for every $y\in Y$ either
\begin{enumerate} 
\item[(i)] $\cK_{y}=0$, or
\item[(ii)] $f$ is semismall at $y$ with respect to $\cP$.
\end{enumerate} 
Then $\cK\in \Perv(Y)$. 
\end{theorem}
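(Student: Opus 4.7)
The plan is to show $\cK \in \Perv(Y) = \sDg(Y) \cap \sDl(Y)$ by verifying each containment separately.

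The $\sDg(Y)$ part is immediate: assuming $X$ is smooth of pure dimension $\dim X$ (as in the application to Proposition \ref{p:conv}), $\cL[\dim X]$ is perverse on $X$, hence in $\sDg(X)$, and since $f$ is affine Artin's Theorem \ref{t:Artin} gives $\cK = f_!(\cL[\dim X]) \in \sDg(Y)$.

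For the substantial half $\cK \in \sDl(Y)$, I would verify the support condition $\dim \supp \mathrm{H}^i(\cK) \leq -i$ stratum by stratum with respect to $\cP$. Fix a stratum $Y_\alpha$ and an index $i$; it suffices to show that $Y_\alpha \cap \supp \mathrm{H}^i(\cK) \neq \emptyset$ forces $\dim Y_\alpha \leq -i$. If the intersection is nonempty, it contains a closed point $y \in Y_\alpha$ (by finite-typeness and constructibility), at which $\cK_y \neq 0$; the hypothesis of the theorem then forces $f$ to be semismall at $y$ with respect to $\cP$, which, since $\dim y = 0$ at a closed point, reads $\dim f^{-1}(y) \leq (\dim X - \dim Y_\alpha)/2$. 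Proper base change for $f_!$ gives $\mathrm{H}^n(\cK_y) \cong \mathrm{H}^{n+\dim X}_c(f^{-1}(y), \cL|_{f^{-1}(y)})$, and the cohomological-dimension bound $\mathrm{H}^m_c(Z, -) = 0$ for $m > 2 \dim Z$ combined with the semismall inequality above forces $\mathrm{H}^n(\cK_y) = 0$ whenever $n > -\dim Y_\alpha$. Since $\mathrm{H}^i(\cK_y) \neq 0$, we obtain $i \leq -\dim Y_\alpha$, as required.

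The main non-trivial step is the quantitative one combining the compact-support cohomological-dimension bound with the semismall inequality to produce the stalk vanishing $\mathrm{H}^n(\cK_y) = 0$ for $n > -\dim Y_\alpha$. Once that is in hand, the constructibility reduction to closed points of strata is standard, and the alternative hypothesis $\cK_y = 0$ becomes automatically harmless: such $y$ lie outside $\bigcup_i \supp \mathrm{H}^i(\cK)$ and never arise in the support bound. In effect, the proof is a mild relaxation of Lemma \ref{l:semismall}, in which the assumption of semismallness everywhere is replaced by semismallness on the locus where $\cK$ has nonzero stalks.
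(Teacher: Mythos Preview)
Your proposal is correct and follows essentially the same approach as the paper: Artin's theorem for the $\sDg$ half, and a pointwise verification of the $\sDl$ support condition using the semismall estimate wherever $\cK_y \neq 0$ (the paper phrases the latter as an invocation of Lemma \ref{l:semismall}, whereas you unpack that argument explicitly via proper base change and the cohomological-dimension bound for $H^*_c$). Your parenthetical smoothness caveat on $X$, needed to ensure $\cL[\dim X] \in \sDg(X)$ before applying Artin, is an assumption the paper leaves tacit; it holds in the application to $\bJ^{\lambda,\nu}$.
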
 

\begin{proof} By Theorem \ref{t:Artin}, $\cK\in \sDg(X)$. It remains
  to show that $\cK\in \sDl(X)$. According to \eqref{e:stalks},
  it is sufficient to check the required vanishing at each stalk $y\in
  Y$. If $y$ is not in the image of $f$, then $\cK_{y}=0$ and the
  condition is automatically satisfied. This is also the case if $y$
  is in the image of $f$ and $\cK_{y}=0$. If we are in neither
  situation, then the result follows from Lemma \ref{l:semismall}.
\end{proof}

\subsection{Twisted equivariant sheaves}  Let $G$ be
a connected algebraic group over $k$. Recall that this means that $G$
is a smooth connected group scheme of finite type over $k$. Let
$m:G\times G\ra G$ denote the multiplication.

\begin{definition}\label{d:multLocalSystem} 
  A \emph{one-dimensional character sheaf} on $G$ is a local system
  $\cL$ satisfying $m^{*} \cL \cong \cL\boxtimes
  \cL$.
\end{definition}

\begin{remark} 
Another name for a one-dimensional character sheaf is a
\emph{multiplicative local system}. We note that character sheaves are usually defined to be
    irreducible perverse sheaves on a group over an
    algebraically closed field. It is, therefore, more appropriate to call $\cL\otimes_k {\bar{k}}[\dim(G)]$ a one-dimensional character sheaf. Working over an arbitrary field and ignoring the shift is, however, 
    more convenient for our purposes.
 \end{remark}

Let $\cL$ be a one-dimensional character sheaf on $G$. Let $X$ be a separated scheme of finite type over $k$ equipped with
  an action $a: G \times X \to X$.
    
  \begin{definition}\label{d:equivPerv}
  The category $\sP_{(G,\cL)}(X)$ of
  $(G,\cL)$-equivariant perverse sheaves on $X$ is the full
  subcategory of $\Perv(X)$ consisting of perverse sheaves $\cF$
  satisfying $a^*\cF\cong \cL\boxtimes \cF$. 
\end{definition}

If $\cL$ is
    trivial, we recover the usual notion of equivariant
    perverse sheaves; see \cite[\S 0]{Lusztig84}.

%\begin{lemma}\label{l:restrictionEquivPerv}
% Let $\cF$ be a $(G,\cL)$-equivariant perverse sheaf on
 % $X$ with $\supp \cF = \overline{O}$ where $O$ is an orbit of $G$ on
 % $X$. Then the restriction of $\cF$ to $O$ is an appropriate shift of
 % a $(G,\cL)$-equivariant local system.
%\end{lemma} 
%The proof is left to the reader.

\begin{remark} \label{r:AEquiv} Let $G'$ be a connected subgroup of $G$ and let $A:=G/G'$. Let $\cL_0$ be a
  one-dimensional character sheaf on $A$, and let $\cL$ be its
  pullback to $G$. Suppose $G$ (and therefore $G'$) acts freely on
  $X$. Let $X'=G'\backslash X$. Note that $A$ acts freely on $X'$ and $X=A\backslash X'$.  Let $r:X\ra X'$
  denote the canonical projection. Then
\[
r^*[\dim(G)]: \sP_{(A,\cL_0)} (X') \ra \sP_{(G,\cL)}(X)
\]
 is an equivalence of categories.
 \end{remark} 
 
 \subsubsection{Support of twisted equivariant sheaves} 
Given an algebra $R$ over $k$, an $R$-point $x$ of $X$ is a morphism
$x: \Spec R \to X$. Now the stabilizer $G_x$ is the sub-group scheme
of $G$ fixing the map $x$.  If $x \in X$ is a set-theoretic point, we
can think of it as a point in the above sense in the standard manner,
by letting $R$ be the algebra of functions on $x$ (an extension field
of $k$).

Let $G$ be an algebraic group and $\mathcal{L}$ be a nontrivial
one-dimensional character sheaf on $G$. Suppose $G$ acts on a variety
$X$. Let $x$ be a set-theoretic point of $X$ and let $G_x$ denote the
stabilizer of $X$. Then $G_x$ is a subgroup of $G$. Let $G_x^\circ$
denote the connected component of the identity of $G_x$.  Let
$\mathcal{F}$ be a $(G,\mathcal{L})$-equivariant sheaf on $X$.

\begin{lemma} \label{l:restrictionTrivial}  If the restriction
  $\mathcal{\cL}|_{G_x^{\circ}}$ is nontrivial, then
  the restriction $\mathcal{F}|_x:=x^*\cF$ is zero.
\end{lemma}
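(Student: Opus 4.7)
The plan is to restrict the equivariance isomorphism $\phi: a^* \cF \iso \cL \boxtimes \cF$ along the map $\iota: G_x^\circ \to G \times X$ sending $g \mapsto (g,x)$. Because $G_x$, and hence $G_x^\circ$, stabilizes $x$ by definition, the composite $a \circ \iota: G_x^\circ \to X$ is the constant map with value $x$. Consequently $\iota^* a^* \cF \cong \underline{\bQl}_{G_x^\circ} \otimes_{\bQl} \cF|_x$, where $\cF|_x := x^* \cF$ is a bounded complex of finite-dimensional $\bQl$-vector spaces (we are in the bounded constructible derived category), while $\iota^*(\cL \boxtimes \cF) \cong \cL|_{G_x^\circ} \otimes_{\bQl} \cF|_x$. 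Applying $\iota^*$ to $\phi$ therefore produces an isomorphism in $\sD(G_x^\circ)$,
\[
\underline{\bQl}_{G_x^\circ} \otimes_{\bQl} \cF|_x \;\cong\; \cL|_{G_x^\circ} \otimes_{\bQl} \cF|_x.
\]

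Now suppose for contradiction that $\cF|_x \neq 0$. Since any bounded complex of $\bQl$-vector spaces splits (noncanonically) as the direct sum of its cohomologies placed in the appropriate degrees, one may pick $i$ with $V := \mathcal{H}^i(\cF|_x) \neq 0$, set $n := \dim_{\bQl} V$, and pass to the $i$-th cohomology sheaf of both sides of the displayed isomorphism to obtain an isomorphism of rank-$n$ local systems on the connected scheme $G_x^\circ$,
\[
\underline{\bQl}_{G_x^\circ}^{\oplus n} \;\cong\; (\cL|_{G_x^\circ})^{\oplus n}.
\]

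The only remaining step, and the closest thing to an obstacle, is a routine observation: for a rank-one local system on a connected scheme, triviality of an $n$-fold direct sum forces triviality of the original. Viewing $\cL|_{G_x^\circ}$ as a character $\chi$ of the \'etale fundamental group of $G_x^\circ$, the displayed isomorphism forces $\chi(g) \cdot \mathrm{Id}_n = \mathrm{Id}_n$ for every $g$, whence $\chi \equiv 1$. This contradicts the hypothesis that $\cL|_{G_x^\circ}$ is nontrivial, and therefore $\cF|_x = 0$.
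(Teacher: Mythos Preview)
Your proof is correct and follows essentially the same approach as the paper: both restrict the equivariance isomorphism $a^*\cF \cong \cL \boxtimes \cF$ along the inclusion of $G_x^\circ$ (the paper phrases this via the projection $\pi: \{x\} \times G_x^\circ \to \{x\}$, which is the action map restricted to the stabilizer) to obtain an isomorphism between a constant sheaf and a twist by $\cL|_{G_x^\circ}$, then observe this forces either $\cF|_x = 0$ or $\cL|_{G_x^\circ}$ trivial. The only cosmetic difference is that the paper states the lemma for a sheaf $\cF$ (so $x^*\cF$ is already a single vector space), whereas you handle a complex by passing to a nonzero cohomology degree; this extra step is harmless and is exactly how the paper applies the lemma in Proposition~\ref{p:restrictionIrrelevant}.
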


\begin{proof}
  The restriction $x^* \mathcal{F}$ is an $(G_x^\circ,
  \mathcal{L}|_{G_x^{\circ}})$-equivariant local system on $x$,
  with respect to the trivial action.  Let $\pi: \{x\} \times
  G_x^\circ \onto \{x\}$ denote the projection, which is also the
  action map.  Since $x^* \mathcal{F}$ is equivariant, $x^*
  \mathcal{F} \boxtimes \bQl = \pi^* x^* \mathcal{F} \cong x^*
  \mathcal{F} \boxtimes \mathcal{L}|_{G_x^\circ}$.  However, by
  assumption, the first local system is constant in the $G_x^\circ$
  direction, but if $x^* \mathcal{F}$ is nonzero, the second is not.
  Hence, $x^* \mathcal{F}$ is zero.
\end{proof}

\subsection{Alternative definitions of twisted sheaves}\label{ss:alternativeTwist}
 The purpose of this subsection is to expand on Remark \ref{r:twperv}. The discussions of this subsection are not used anywhere else in the paper. 

\subsubsection{Central extensions and one-dimensional character sheaves} 
Let $\cL$ be a one-dimensional character sheaf on $G$. 
 Let $\pi_1(G)=\pi_1(G,e)$ denote the algebraic fundamental group of
  $G$. It is well known that the local system $\cL$ defines a
  homomorphism $\pi_1(G)\ra \bQlt$. Let us assume that this
  homomorphism factors through $\chi_\cL:B_\cL \ra \bQlt$, where $B_\cL$ is a
  finite quotient of $\pi_1(G)$. The local systems we consider in this article satisfy this property. In this situation, the epimorphism $\pi_1(G)\onto B_\cL$
  defines a finite covering $\tG \ra G$. Using the fact that $\cL$ is
  multiplicative, one can show that $\tG$ is a central extension of
  $G$; see the introduction of \cite{masoud09}. Thus, we obtain a central extension 
  \begin{equation} \label{eq:centralChar} 
  1\ra B_\cL\ra \tG\ra G\ra 1
  \end{equation} 
  in the category of algebraic groups over $k$. 
   \begin{remark}
     If $k$ has positive characteristic, there exist \'{e}tale covers
     of $G$ which cannot be endowed with the structure of a central
     extension of $G$; see \cite[\S 2.4 and \S B.4]{masoud09}.
    \end{remark} 
 
%\begin{remark} \label{r:twistedSheaves}  
\subsubsection{Twisted sheaves via gerbes} 
Let $Y$ denote the quotient
  stack $G\backslash X$. Note that $Y$ is an Artin stack.\footnote{For $\ell$-adic sheaves on an Artin stacks see
  \cite{PerverseArtin}.} The central extension (\ref{eq:centralChar}) gives rise to a homomorphism $H^1(Y,G)\ra H^2(Y,B_\cL)$; see \cite{Giraud71}. Composing with the morphism $H^2(Y,B_\cL)\ra H^2(Y,\bQlt)$ defined by $\chi_\cL:B_\cL\ra \bQlt$, we obtain a morphism 
  \begin{equation}\label{eq:torsGerb}
  H^1(Y,G)\ra H^2(Y,\bQlt).
  \end{equation}
 The scheme $X$ is a $G$-torsor on $Y$; therefore, it defines an element in $H^1(Y,G)$. Let $\sL$ denote the
  $\bQlt$-gerbe on $Y$ defined by the image of this element under the morphism (\ref{eq:torsGerb}). 
Then the notion of $(G,\cL)$-twisted equivariant (perverse) sheaf
coincides with the notion of $\sL$-twisted perverse sheaf on $Y$. We note that the idea of twisting sheaves
by gerbes goes back to \cite{Giraud71}. In \cite{Reich10}, Reich applies
twisting to the constructible derived category and the
category of perverse sheaves. 
%\end{remark}

%\begin{remark}\label{r:centExtSheaves}  
\subsubsection{Twisted sheaves via equivariant sheaves} 
The character sheaf $\cL$ pulls back to a trivial local system on
  $\tG$. 
   Therefore, $(G,\cL)$-equivariant perverse sheaves on $X$ are
  automatically $\tG$-equivariant, where $\tG$ acts on $X$ via the
  natural map $\tG\onto G$. Moreover, one can show that we have an
  equivalence of categories between $(G,\cL)$-equivariant perverse
  sheaves on $X$ and the full abelian subcategory of perverse sheaves
  on the algebraic stack $\tG\backslash X$ whose pullback to $X$ are
  $(B_\cL,\chi_\cL)$-equivariant (i.e., $B_\cL$ acts on the fibers by $\chi_\cL$). 
%\end{remark} 

\subsection{Twisted external product
  $\tboxtimes$} \label{ss:twistedProd} The notion of twisted external product
of perverse sheaves has been used widely (e.g., in \cite[\S
1.4]{Frenkel01}, \cite[\S 0.2]{Gaitsgory01}, \cite[\S 4]{MV} and
\cite[\S 2.2]{Nadler05}).  In this subsection, we given an overview of this construction and apply it to twisted equivariant sheaves.  

Let $Y$ and $Z$ be separated schemes of
finite type over a field $k$. Let $H$ be a connected algebraic group
over $k$ and let $p:X\ra Y$ be a right $H$-torsor. Suppose $H$ acts on
$Z$ on the left. Define a free left action of $H$ on $X\times Z$ by
\begin{equation} \label{eq:Haction}
h \cdot (x,z) \mapsto (x \cdot h^{-1}, h \cdot z)
\end{equation}
We denote by $X\times_{H} Z$ the quotient of $X\times Z$ by $H$.
 Let $q:X\times
Z\ra X\times_H Z$ the canonical quotient map. We define twisted
external product of sheaves as follows:

%\footnote{In the main body of the text, we take
  %$Y=Z=\bG/\bJ_{u}$, $X=\bG$, and $H=\bJ_{u}$.}
  %\footnote{For the notion of perverse sheaf on an algebraic stack 
%see \cite{PerverseArtin}.We don't use this notion in the present text.}
\begin{itemize} 
\item Let $\cF$ and $\cG$ be $H$-equivariant perverse sheaves on $X$
  and $Z$. Then $\cF\boxtimes \cG$ is a perverse sheaf on $X\times Z$
  equivariant with respect to the action \eqref{eq:Haction}.
  Thus, we obtain a canonical perverse sheaf on
  $X\times_H Z$. 

\item Suppose $\cG$ is as above, but $\cF$ is now a perverse sheaf on $Y$. Then $p^* \cF$ is an
  $H$-equivariant perverse sheaf on $X$. The construction of the
  previous paragraph applies to give us a perverse sheaf
  $\cF\tboxtimes \cG$ on $X\times_H Z$.  Roughly speaking,
  $\cF\tboxtimes \cG$ is $\cF$ ``along the base'' and $\cG$ ``along
  the fiber''.

\item By definition, $\tboxtimes$ is a functor 
  \begin{equation}\label{d:twistProd}
    \tboxtimes : \Perv_H(X) \times \Perv_H(Z) \ra \Perv(X\times_H Z),
    \quad \mathrm{satisfying} \,\, q^*\cF \cong (p^* \cF)\boxtimes \cG.
\end{equation} 

\item The functor $\tboxtimes$, with the property expressed in
  (\ref{d:twistProd}), makes sense in the following more general
  situation: $Y$ is a ``strict ind-scheme'' of ind-finite type over
  $k$, $Z$ is a strict ind-scheme of ind-finite type over $k$ equipped
  with a ``nice action'' of a pro-algebraic group $H$ over $k$. (For
  the notions ``strict ind-scheme'' and ``nice action'' see
  \cite{Gaitsgory01}.)  In this case, although $X$ need not be of
  ind-finite type, $X \times_H Z$ remains of ind-finite type, since it
  is a fibration over $Z$ with fibers isomorphic to $Y$.  The fact
  that it is nonetheless legitimate to work with $H$-equivariant
  perverse sheaves on $X$ is explained in \cite[\S
  2.2]{Nadler05}.
%\begin{remark} \label{r:tProdequiv} 
\item More generally, suppose that $H' < H$ is a proalgebraic subgroup
  such that $A := H/H'$ is an algebraic group.  Let $\cM_0$ be a
  multiplicative local system on $A$.  Let
  $\cM$ be the pullback of $\cM_0$ to $H$.  Suppose $\cF$ and $\cG$
  are $(H,\cM^{-1})$- and $(H,\cM)$-equivariant perverse sheaves on $X$ and $Z$,
  respectively; more precisely (to deal with the case that $X$ may not
  be ind-finite), we let $\cF$ be the pullback of an $(A,
  \cM_0^{-1})$-equivariant local system on $Y$ (cf.~Remark
  \ref{r:AEquiv}). Then $\cF\tboxtimes_{H'} \cG$ is (untwisted)
  equivariant with respect to the action of $A$ which descends from
  \eqref{eq:Haction}.  Hence, it descends to a canonical perverse
  sheaf $\cF\tboxtimes \cG$ on $X\times_H Z$. Thus, $\tboxtimes$
  also defines a functor
\[
\sP_{(H,\cM^{-1})}(X) \times \sP_{(H,\cM)} (Z) \ra \sP(X\times_H Z).
\]
\end{itemize}
%We note that the above construction also applies if
%$(H,\cM)$-equivariant perverse sheaves on $X$ are defined as in the
%Remark \ref{r:AEquiv}.

\subsubsection{Twisted external product of local systems} 

Let $Y'$ (resp. $Z'$) denote a locally closed subscheme of $Y$
(resp. $Z$) of dimension $d$ (resp. $d'$). Let $X'\subseteq X$ denote
the restriction of the $G$-torsor $X$ to $Y'$. Let
$d'':=\dim(X'\times_{H} Z')$. Suppose $\cL$ is a local system on $Y'$
and $\cL'$ is an $H$-equivariant local system on $Z'$. The proof of
the following lemmas are left to the reader.

\begin{lemma} \label{l:prodLocalSys} $\cL[d]\tboxtimes \cL'[d']\cong
  \cL''[d'']$, where $\cL''$ is a local system on $X'\times_{H} Z'$.
 \end{lemma}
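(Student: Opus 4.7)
The plan is to verify the claim directly from the descent description of $\tboxtimes$ given in \eqref{d:twistProd}. First, I would observe that the map $q: X \times Z \to X \times_H Z$ is a principal $H$-bundle, hence smooth and surjective of relative dimension $\dim H$; likewise the projection $p: X' \to Y'$ is a principal $H$-bundle. Therefore $p^* \cL$ is a local system on $X'$ of the same rank as $\cL$, and the ordinary external product $(p^* \cL) \boxtimes \cL'$ is a local system on $X' \times Z'$.

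Next, the defining property of $\tboxtimes$ says that, up to the standard smooth-pullback perverse shift, $q^*$ of the twisted external product agrees with the ordinary external product $(p^*\cL[d]) \boxtimes \cL'[d']$. Since $q$ is smooth and surjective and being a shifted local system on a given locally closed subscheme is a property that can be checked after pullback along any smooth surjection (in the étale topology, local systems descend along smooth surjective morphisms because the relevant local--constancy condition is étale-local on the base, and $q$ admits étale-local sections), it follows that $\cL[d] \tboxtimes \cL'[d']$ has the form $\cL''[d'']$ for some local system $\cL''$ on $X' \times_H Z'$.

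Finally, I would verify the dimension equality by noting that $X' \times_H Z' \to Y'$ is a Zariski-locally trivial fibration with fiber $Z'$, so
\[
d'' = \dim(X' \times_H Z') = \dim Y' + \dim Z' = d + d',
\]
which is precisely the shift needed to make $\cL''[d'']$ perverse (consistent with $\cL[d] \tboxtimes \cL'[d'] \in \Perv(X' \times_H Z')$).

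The only potentially delicate step is the descent argument in the second paragraph, since one must take care with the shift conventions for smooth pullback (the functor $q^*$ shifts perverse degree by $-\dim H$, and the defining isomorphism for $\tboxtimes$ has to absorb this). Once the conventions are made explicit (or one simply pulls back and checks that the resulting sheaf is locally constant on $X' \times Z'$), no further obstacle arises; the rest is bookkeeping.
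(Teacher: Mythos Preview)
Your proposal is correct and is essentially the natural argument the paper has in mind; note that the paper itself does not supply a proof, leaving both this lemma and the next one to the reader. The only small imprecision is your claim that $X' \times_H Z' \to Y'$ is \emph{Zariski}-locally trivial: for a general connected algebraic group $H$, an $H$-torsor is only guaranteed to be \'etale-locally trivial, so the fibration you use is \'etale-locally trivial rather than Zariski-locally trivial. This does not affect your argument, since both the dimension count $d'' = d + d'$ and the descent of the local-system property go through just as well \'etale-locally (as you yourself observe earlier in the proposal).
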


 \begin{lemma} \label{l:prodLocalSys2}
 Let
 \[
 j:X'\inj \overline{X'}, \quad \quad j': Z'\inj\overline{Z'},\quad \quad
 j'':X'\times_{H}Z' \inj \overline{X'\times_{H} Z'} .
 \] Assume that $j_{!}(\cL[d])$, $j_{!}'(\cL'[d'])$, and
 $j_{!}''(\cL''[d''])$ are perverse.\footnote{This assumption can probably be dropped if one defines twisted external products for equivariant complexes.} Then 
 \[
 j_{!}(\cL[d])\tboxtimes j_{!}'(\cL'[d']) \cong j_{!}''(\cL''[d'']).
 \] 
 \end{lemma}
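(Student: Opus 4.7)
The plan is to reduce the claim to a statement about ordinary (untwisted) external products on $X \times Z$ by using the defining property of $\tboxtimes$ with respect to the smooth quotient $q: X \times Z \to X \times_H Z$. Since $q$ is a smooth $H$-torsor, the functor $q^*$ (up to an appropriate shift) is t-exact and conservative on perverse sheaves; therefore, to establish the isomorphism on $X \times_H Z$, it suffices to verify it after applying $q^*$.

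Concretely, I would first set up the Cartesian diagram relating $X' \times Z' \hookrightarrow X \times Z$ to $X' \times_H Z' \hookrightarrow X \times_H Z$, in which both vertical maps are the smooth $H$-torsors and the horizontal maps are the given locally closed embeddings. Applying $q^*$ to the right-hand side $j''_!(\cL''[d''])$ and using smooth base change for $\tilde{j}'' := \tilde{j}\times j'$ and $q$ yields
\[
q^* j''_!(\cL''[d'']) \cong \tilde{j}''_!\bigl(\tilde{q}^*(\cL''[d''])\bigr),
\]
where $\tilde{q}: X' \times Z' \to X' \times_H Z'$ is the restricted torsor map. By Lemma~\ref{l:prodLocalSys} and the defining compatibility $\tilde{q}^*(\cL''[d'']) \cong (p')^*(\cL[d]) \boxtimes \cL'[d']$, this simplifies to $\tilde{j}''_!\bigl((p')^*(\cL[d])\boxtimes \cL'[d']\bigr)$. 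For the left-hand side, the defining property $q^*(\cF \tboxtimes \cG) \cong p^*\cF \boxtimes \cG$ of the twisted external product, combined with smooth base change applied to the Cartesian square with sides $j$ and $p$, gives
\[
q^*\bigl(j_!(\cL[d])\tboxtimes j'_!(\cL'[d'])\bigr) \cong \tilde{j}_!\bigl((p')^*(\cL[d])\bigr) \boxtimes j'_!(\cL'[d']).
\]
Finally, the Künneth formula for $!$-extension, $(a_!\mathcal{F}) \boxtimes (b_!\mathcal{G}) \cong (a\times b)_!(\mathcal{F}\boxtimes\mathcal{G})$, identifies this last expression with $\tilde{j}''_!\bigl((p')^*(\cL[d])\boxtimes \cL'[d']\bigr)$, matching the right-hand side.

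The main technical point is justifying descent: we need the isomorphism produced on $X \times Z$ to be $H$-equivariant so that it descends along $q$ to the desired isomorphism on $X \times_H Z$. Both sides are by construction pullbacks of perverse sheaves on $X \times_H Z$, hence canonically $H$-equivariant, and the chain of isomorphisms above (smooth base change, Künneth, and the compatibility from Lemma~\ref{l:prodLocalSys}) is functorial in the equivariant structure. Descent by the smooth surjection $q$ then yields the lemma; as noted in the footnote, this step is where the perversity assumption on $j''_!(\cL''[d''])$ enters, ensuring that we stay within the abelian category of perverse sheaves rather than needing to invoke a full equivariant-derived formalism.
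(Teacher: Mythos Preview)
The paper does not prove this lemma: it explicitly states that ``the proof of the following lemmas are left to the reader.'' So there is no argument in the paper to compare against.

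Your approach is the natural one and is correct. Pulling back along the smooth $H$-torsor $q$, applying base change for $j''_!$ against $q$ (using that $X'$ and $Z'$ are $H$-stable so the relevant square is Cartesian), invoking the defining compatibility $q^*(\cF\tboxtimes\cG)\cong p^*\cF\boxtimes\cG$, and then using the K\"unneth identity $(a_!\mathcal{F})\boxtimes(b_!\mathcal{G})\cong(a\times b)_!(\mathcal{F}\boxtimes\mathcal{G})$ gives the isomorphism upstairs; descent along $q$ then finishes. Your remark that the perversity hypothesis is what keeps the argument inside the abelian category where $\tboxtimes$ is defined (rather than requiring an equivariant derived formalism) is exactly the point of the footnote. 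One small wording issue: the base change you use for $!$-pushforward along a locally closed immersion holds unconditionally (it is not ``smooth'' base change per se), but this does not affect the validity of the argument.
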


\subsection{Trace of Frobenius} \label{ss:traceFrob} Let $\Fq$ be a
field with $q$ elements. Let $\bFq$ be an algebraic closure of
$\Fq$. The \emph{Frobenius substitution} $\varphi\in \Gal(\bFq/\Fq)$
is the automorphism $x\mapsto x^q$ of $\bFq$. The \emph{geometric
  Frobenius} $\Fr_q$, or simply the Frobenius, is the inverse of
$\varphi$.

Let $X$ be a separated scheme of finite type over $\Fq$. Let $x:\Spec
(\Fq)\ra X$ be an $\Fq$-point of $X$, and let $\bar{x}$ be a geometric
point lying above $x$. If $\cG\in \sD(X)$, then the fiber
$\cG_{\bar{x}}$ is a finite dimensional $\bQl$-vector space on which
$\Gal(\bFq/\Fq)$ acts \cite[\S 1.1.7]{WeilII}. We denote by
$\Tr(\Fr_q, \cG)(x)\in \bQl$ the trace of Frobenius acting on this
vector space. Thus, we obtain the trace function of $\cG$
\[
\Tr(\Fr_q, \cG):X(\Fq)\ra \bQl. 
\]
Similarly, we have trace functions $\Tr(\Fr_{q^n}, \cG): X(\Fqn) \to
\bQl$ for all $n \geq 1$; see \cite[\S 0.9 and \S
1.1.1]{Laumon87}. Note that with our conventions
\begin{equation}\label{eq:tate}
\Tr(\Fr_q, \cG(n))=q^{-n}\Tr(\Fr_q, \cG),
\end{equation}
where $\cG(n)$ denotes the $n^{\th}$ Tate twist of $\cG$ \cite[\S
E.1]{BDOrbit}.

\subsubsection{Character sheaves on connected commutative algebraic groups} \label{sss:multLocalSystem} Suppose $\cL$ is a
one-dimensional character sheaf on a connected algebraic group $G$ over
$\Fq$ (Definition \ref{d:multLocalSystem}). The property $m^*\cL \cong
\cL\boxtimes \cL$ ensures that the trace function is a one-dimensional
character $\Tr(\Fr_q, \cL):G(\Fq)\ra \bQlt$. For a general
noncommutative algebraic group, there may exist one-dimensional
characters of $G(\Fq)$ which do not arise in this manner; see, e.g.,
\cite[\S 1.5.5]{MityaThesis}.

If $G$ is \emph{commutative}, then every one-dimensional character of
$G(\Fq)$ can be obtained as the trace of Frobenius function of a
one-dimensional character sheaf on $G$.  To see this, let
\[
0\ra G(\Fq)\ra G \rar{\Fr_q-\id} G \ra 0
\]
denote the Lang central extension. Let $\eta: G(\Fq)\ra \bQlt$ be a 
character. Pushing forward the above central extension by $\eta^{-1}$,
we obtain a one-dimensional local system $\cN$ on $G$. One can check
that $\Tr(\Fr_q, \cN)=\eta$; see \cite[Sommes Trig]{Deligne77},
\cite[Example 1.1.3]{Laumon87}, and \cite[\S 1.8]{BDOrbit}.

\begin{remark} \label{r:norm} If $G$ is commutative, then for every
  integer $n$, we have a ``norm map'' $N_n: G(\Fqn)\ra
  G(\Fq)$. Namely, $N_n(x) = \prod_{i=0}^{n-1} \Fr_q^i(x)$, with the
  product taken in $G(\Fqn)$ (cf.~\cite[Sommes Trig, \S
  1.6]{Deligne77}).  Let $\eta_n:= \eta\circ N_n$. Then, one can show
  that $\Tr(\Fr_{q^n}, \cN\otimes_{\Fq} \Fqn) = \eta_n$; see \emph{op.~cit.} or \cite[\S
  1.1.3.3]{Laumon87}.
\end{remark}

\bibliographystyle{alpha}

\bibliography{../ref.geometrization}

\end{document}